\author{Bram Mesland}
\title[Bivariant $K$-theory and correspondences]{Unbounded bivariant $K$-theory and correspondences in noncommutative geometry}
\date\today
\keywords{$KK$-theory, Kasparov product, spectral triples, operator modules}
\address{Mathematisch Instituut \\ Universiteit Utrecht \\ Budapestlaan 6 \\ 3584 CD Utrecht \\ The Netherlands}
\email{brammesland@gmail.com}
\theoremstyle{plain}
\newtheorem{theorem}{Theorem}[subsection]
\newtheorem{corollary}[theorem]{Corollary}
\newtheorem*{thm}{Theorem}
\newtheorem{proposition}[theorem]{Proposition}
\newtheorem{lemma}[theorem]{Lemma}
\theoremstyle{definition}
\newtheorem{definition}[theorem]{Definition}
\newtheorem{remark}[theorem]{Remark}
\renewcommand{\thedefinition}{\arabic{subsection}.\arabic{definition}}
\renewcommand{\thetheorem}{\arabic{section}.\arabic{subsection}.\arabic{theorem}}
\renewcommand{\thecorollary}{\arabic{section}.\arabic{subsection}.\arabic{corollary}}
\renewcommand{\theexample}{\arabic{section}.\arabic{subsection}.\arabic{example}}
\renewcommand{\thelemma}{\arabic{section}.\arabic{subsection}.\arabic{lemma}}
\renewcommand{\theproposition}{\arabic{section}.\arabic{subsection}.\arabic{proposition}}
\renewcommand{\theequation}{\arabic{section}.\arabic{equation}}
\renewcommand{\theremark}{\arabic{section}.\arabic{subsection}.\arabic{remark}}
\DeclareFontFamily{OT1}{pzc}{}
\DeclareFontShape{OT1}{pzc}{m}{it}{<-> s * [1.20] pzcmi7t}{}
\DeclareMathAlphabet{\mathpzc}{OT1}{pzc}{m}{it}
\newcommand{\R}{\mathbb{R}}
\newcommand{\Z}{\mathbb{Z}}
\newcommand{\N}{\mathbb{N}}
\newcommand{\K}{\mathbb{K}}
\newcommand{\C}{\mathbb{C}}
\newcommand{\Hom}{\textnormal{Hom}}
\newcommand{\Homst}{\textnormal{Hom}^{*}}
\newcommand{\Mor}{\textnormal{Mor}}
\newcommand{\Aut}{\textnormal{Aut}}
\newcommand{\im}{\mathfrak{Im}}
\newcommand{\id}{\textnormal{id}}
\newcommand{\pr}{\textnormal{pr}}
\newcommand{\e}{\varepsilon}
\newcommand{\Dom}{\mathfrak{Dom}}
\newcommand{\tildeotimes}{\tilde{\otimes}}
\newcommand{\minotimes}{\overline{\otimes}}
\newcommand{\lrh}{\leftrightharpoons}
\newcommand{\End}{\textnormal{End}}
\newcommand{\Endst}{\textnormal{End}^{*}}
\newcommand{\Fin}{\textnormal{Fin}}
\newcommand{\Cor}{\mathfrak{Cor}}
\newcommand{\Sp}{\textnormal{Sp}}
\newcommand{\Sob}{\textnormal{Sob}}
\begin{document}
\begin{abstract}
By introducing a notion of smooth connection for unbounded $KK$-cycles, we show that the Kasparov product of such cycles can be defined directly, by an algebraic formula. 
In order to achieve this it is necessary to develop a framework of smooth algebras and a notion of differentiable $C^{*}$-module. The theory of operator spaces provides the required tools. 
Finally, the above mentioned $KK$-cycles with connection can be viewed as the morphisms in a category whose objects are spectral triples. \newline\newline
Keywords: $KK$-theory; Kasparov product; spectral triples; operator modules.
\end{abstract}

\maketitle
\small

\normalsize
\tableofcontents
\section*{Introduction}
Spectral triples \cite{Conspec} are a central notion in Connes' noncommutative geometry. The data for a spectral triple consist of a $\Z/2$-graded $C^{*}$-algebra $A$, acting on a likewise graded Hilbert space $\mathpzc{H}$, and a selfadjoint unbounded odd operator $D$ in $\mathpzc{H}$, with compact resolvent, such that the subalgebra
\[\mathcal{A}:=\{a\in A:[D,a]\in B(\mathpzc{H})\},\]
is dense in $A$. The above commutator is understood to be graded. The motivating example is the Dirac operator acting on the Hilbert space of $L^{2}$-sections of a compact spin manifold $M$. 
The $C^{*}$-algebra in question is then just $C(M)$. Over the years, many noncommutative examples of this structure have arisen, in particular in foliation theory \cite{CS} and examples dealing with non-proper group actions.\newline

Shortly after Connes introduction of spectral triples as cycles for $K$-homology \cite{Con}, Baaj and Julg \cite{BJ} generalized this notion to a bivariant setting, by replacing the Hilbert space $\mathpzc{H}$ by a 
$C^{*}$-module $\mathpzc{E}$ over a second $C^{*}$-algebra $B$. The notion of unbounded operator with compact resolvent extends to $C^{*}$-modules, and the commutator condition is left unchanged. Such an object 
$(\mathpzc{E},D)$ can be thought of as a field of spectral triples parametrized by $B$. Baaj and Julg showed, moreover, that such objects can be taken as the cycles for Kasparov's $KK$-theory \cite{Kas}, and the external 
product in $KK$-theory simplifies in this picture. It is given by an algebraic formula.\newline

The main topic of this paper is the construction of a category $\Psi$ of unbounded $KK$-cycles, together with a functor $\Psi\rightarrow KK$, i.e. composition of morphisms in $\Psi$ corresponds to the Kasparov product in $KK$-theory. In order to achieve this, a notion of smoothness for spectral triples is introduced, and this notion is weaker than that of regularity \cite{Conspec} (also known in the literature as $QC^{\infty}$). It is based on the fact that a selfadjoint operator in a Hilbert space $\mathpzc{H}$ is again selfadjoint viewed as an operator in its own graph. Thus, it induces an inverse system of Hilbert spaces
\[\cdots\rightarrow\mathfrak{G}(D_{n})\rightarrow\mathfrak{G}(D_{n-1})\rightarrow\cdots\mathfrak{G}(D)\rightarrow\mathpzc{H},\]
its \emph{Sobolev chain}.
The algebra $\mathcal{A}$ mentioned above can be given an operator space topology by realizing it as matrices through the representation
\[\pi_{1}^{D}:a\mapsto\begin{pmatrix}a & 0 \\ [D,a] & (-1)^{\partial a}a\end{pmatrix}\in B(\mathpzc{H}\oplus\mathpzc{H}).\]
These matrices preserve the graph of $D$, and as such, one can commute them with $D$. This leads one to consider the *-algebra of elements for which the commutators $[D,\pi_{1}^{D}(a)]$ are bounded in $\mathfrak{G}(D)$.  Proceeding inductively, this leads to an inverse system
\[\cdots\rightarrow\mathcal{A}_{k}\rightarrow\mathcal{A}_{k-1}\rightarrow\cdots\mathcal{A}\rightarrow A,\]
acting on the Sobolev chain of $D$. These are \emph{involutive operator algebras}, meaning that the involution $a\mapsto a^{*}$ is completely bounded. Note that this involution is different from that in the containing $C^{*}$-algebra. The definition of $k$-smoothness now entails that the algebra $\mathcal{A}_{k}$ be dense in $A$. In that case, the algebras $\mathcal{A}_{k}$ turn out to be stable under holomorphic functional calculus in $A$. A $C^{k}$-\emph{algebra} will be a $C^{*}$-algebra together with a fixed $C^{k}$-spectral triple in the above sense. This mimicks the definition of a manifold as a topological space equipped with extra structure. \newline

Subsequently we study a class of smooth modules for such algebras. Given a $C^{*}$-module $\mathpzc{E}$ over a sufficiently smooth $C^{*}$-algebra $B$, the existence of an approximate unit which is well behaved with respect to the topology on $\mathcal{B}_{k}$, allows for the resolution of $\mathpzc{E}$ by differentiable submodules
\[\cdots\subset E^{k}\subset E^{k-1}\subset\cdots\subset E^{1}\subset \mathpzc{E}.\]
The notions of adjointable and unbounded regular operators make sense on such modules, and yield properties analoguous to those in $C^{*}$-modules. In particalar, the algebras $\Endst_{\mathcal{B}_{k}}(E^{k})$ and $\K_{\mathcal{B}_{k}}(E^{k})$ are involutive operator algebras. A similar type of module has been studied extensively by Blecher (\cite{Blech2}, \cite{Blech}) and the theory developed here makes essential use of his results. The Haagerup tensor product plays a crucial r\^{o}le. It linearizes the multiplication in algebras of operators on Hilbert spaces. As such, we base the definition of $\Omega^{1}(\mathcal{B}_{k})$, the noncommutative differential forms, on it and we consider connections \[\nabla:E^{k}\rightarrow E^{k}\tildeotimes_{\mathcal{B}_{k}}\Omega^{1}(\mathcal{B}_{k}),\]
on the smooth submodules of $\mathpzc{E}$. When $(\mathpzc{H}, D)$ is a spectral triple for $B$, such that $B$ acts on the Sobolev chain of $D$ up to degree $k$, we can form the operator
\[1\otimes_{\nabla}D: (e\otimes f)\mapsto (-1)^{\partial e}(e\otimes Df+\nabla_{D}(e)f).\]
Its $k$-th Sobelev space is isomorphic to $E^{k}\tildeotimes_{\mathcal{B}_{k}}\mathfrak{G}(T_{k})$. The notion of smoothness also allows us to deal with sums of selfadjoint operators. When the module $\mathpzc{E}$ comes equipped with a selfadjoint regular operator $S$ in $E^{k}$ and the connection is 1-smooth with respect to $S$, then the operator
\[S\otimes 1+1\otimes_{\nabla}D,\]
is selfadjoint in $E^{k}\tildeotimes_{\mathcal{B}_{k}}\mathpzc{H}$. Moreover, we show it has compact resolvent whenever both $D$ and $S$ do so, and thus that this operator defines a spectral triple for $A$ whenever $(\mathpzc{E},S,\nabla)$ is a sufficiently smooth $KK$-cycle with connection. 

More generally, $C^{k}$-cycle is a triple $(E^{k},S,\nabla)$ which is a $(\mathcal{A}_{k},\mathcal{B}_{k})$-bimodule $E^{k}$ with unbounded regular operator $S$ such that $(S\pm i)^{-1}\in\K_{\mathcal{B}_{k}}(E^{k})$ and a sufficiently smooth connection $\nabla:E^{k}\rightarrow E^{k}\tildeotimes_{\mathcal{B}_{k}}\Omega^{1}(\mathcal{B}_{k})$. The isomorphism classes of such cycles are denoted $\Psi_{0}^{k}(A,B)$. We show that such cycles can be composed by the following algebraic formula:
\begin{thm}[ \ref{smregular} ]The composition of $C^{k}$ cycles with connection
\[(E^{k},S,\nabla)\circ(F^{k},T,\nabla')=(E^{k}\tildeotimes_{\mathcal{B}_{k}} F^{k},S\otimes 1 +1\otimes_{\nabla}T,1\otimes_{\nabla}\nabla'),\]
yields a $C^{k}$-cycle with connection, and is associative up to isomorphism.
\end{thm}
That is, this composition \emph{preserves all smoothness properties}. Note that this product is defined on the level of the involutive operator algebras $\mathcal{A}_{k}$ coming from the spectral triple on $A$, and that the $\mathcal{A}_{k}$ are not $C^{*}$-algebras.

Smooth bimodules can then be interpreted as morphisms of spectral triples. This can be captured in a diagram:
\begin{diagram} A &\rightarrow & (\mathpzc{H},D) & \leftrightharpoons & \C \\
\downarrow & & & &\parallel  \\
(E^{k}, S,\nabla)&&&&\C \\
\downharpoonleft\upharpoonright &&&&\parallel\\
B &\rightarrow&(\mathpzc{H}',D')&\leftrightharpoons&\C. \end{diagram}
We use the notation $\mathpzc{E}\leftrightharpoons B$ to indicate that $\mathpzc{E}$, the $C^{*}$-completion of $E^{k}$,  is a $C^{*}$-module over $B$. This also emphasizes the asymmtery, and hence the direction, of the morphisms. It seems appropriate to refer to a bimodule with connection $(E^{k}, S,\nabla)$ as a \emph{geometric correspondence}. 

The  composition of geometric correspondences is the unbounded version of the Kasparov product in $KK$-theory. Recall that the Kasparov product (\cite{Kas})
\[KK_{i}(A,B)\otimes KK_{j}(B,C)\rightarrow KK_{i+j}(A,C),\]
allows one to view the $KK$-groups as morphisms in a category whose objects are all $C^{*}$-algebras. $KK$ is a triangulated category and is universal for $C^{*}$-stable, split-exact functors on the category of $C^{*}$-algebras \cite{Hig}. The degree of a $KK$-cycle is determined by the action of a Clifford algebra. In particular spectral triples can be assigned a degree. Denote the set of unitary isomorphism classes of $k$-smooth geometric correspondences of  the above spectral triples, which we assume to have degrees $i$ and $j$, respectively, by $\mathfrak{Cor}_{k}(D,D')$. The main result of this paper states that 
\begin{thm}[ \ref{functor} ] The bounded transform $\mathfrak{b}:D\mapsto D(1+D^{2})^{-\frac{1}{2}}$ defines a functor
\[\begin{split}\mathfrak{b}:\mathfrak{Cor}_{k}(D,D')&\rightarrow KK_{i-j}(A,B)\\
(E^{k},S,\nabla) &\mapsto [(\mathpzc{E},\mathfrak{b}(D))].\end{split}\]
\end{thm}
This is done by taking $C^{*}$-completions, and forgetting all smoothness and the connection.
In particular it follows that the map $K^{j}(B)\rightarrow K^{i}(A)$ defined by the correspondence maps the $K$-homology class of $(B,\mathpzc{H}',D')$ to that of $(A,\mathpzc{H},D)$.  \newline

The structure of the paper is as follows. In the first three sections we review the theory of $C^{*}$-modules, unbounded operators, $KK$-theory and operator modules. Some of this material is well known, but we introduce 
several constructions that will be used extensively later in the paper. We describe some results that are not stated explicitly in the literature, or emphasize the interconnection of the theories. This should make the second part of the paper an easier read. In section 4 we introduce smoothness
for spectral triples and describe the properties of smooth algebras, smooth modules, and operators thereon. For theoretical purposes this notion is easier to work with and it allows for the definition of a general notion of smooth $C^{*}$-module.
In section 5 we adapt the  theory of connections to the operatore module setting and obtain results on the structure of the graphs of unbounded operators twisted by such a connection. This is used in
section 6 to show that the twisting construction is in fact the Kasparov product in disguise. That in turn leads to the definition of the category of spectral triples described above.
\subsection*{Acknowledgements} This paper was conceived during my Ph.D. studies at the Max Planck Institut f\"{u}r Mathematik in Bonn, Germany. The support of Matilde Marcolli during this period has been of great value. The work was finalized during my stay at Utrecht University, the Netherlands. I thank both intitutions for their support. I am grateful to Florida State University and the California Institute of Technology for their hospitality and support. Many thanks as well to Nigel Higson, for useful and motivating correspondence and conversations. I thank Saad Baaj, Alain Connes, Andre Henriqu\'{e}s, Matthias Lesch, Uuye Otgonbayer and Walter van Suijlekom for useful correspondence and discussions. I am indebted to Nikolay Ivankov for carefully reading the manuscript and numerous useful conversations. Finally I thank Javier Lopez for several conversations we had in the early stages of this project.

\section{$C^{*}$-modules}
From the Gelfand-Naimark theorem we know that $C^{*}$-algebras are a natural
generalization of locally compact Hausdorff topological spaces. In the same vein,
the Serre-Swan theorem tells us that finite projective modules are analogues of locally
trivial finite-dimensional complex vector bundles over a topological
space.The subsequent theory of $C^{*}$-modules, pioneered by Paschke and
Rieffel, should be viewed in the light of these theorems. They are like
Hermitian vector bundles over a space.
\subsection{$C^{*}$-modules and their endomorphism algebras}
In the subsequent review of the established theory, we will assume all
$C^{*}$-algebras and Hilbert spaces to be separable, and all modules to be
countably generated. This last assumption means that there exists a countable
set of generators whose algebraic span is dense in the module. 
\begin{definition}\label{C*mod} Let $B$ be a $C^{*}$-algebra. A \emph{right} $C^{*}$-$B$-\emph{module} is a complex vector space $\mathpzc{E}$ which is also a right $B$-module, equipped with a bilinear pairing
\[\begin{split}\mathpzc{E}\times\mathpzc{E} &\rightarrow B \\
(e_{1},e_{2}) &\mapsto \langle e_{1},e_{2}\rangle, \end{split}\]
such that
\begin{itemize}\item $\langle e_{1},e_{2}\rangle=\langle e_{2},e_{1}\rangle^{*},$
\item $\langle e_{1},e_{2}b\rangle=\langle e_{1},e_{2}\rangle b,$ 
\item $\langle e,e\rangle\geq 0$ and $\langle e,e\rangle=0\Leftrightarrow e=0,$\item $\mathpzc{E}$ is complete in the norm $\|e\|^{2}:=\|\langle e,e\rangle\|.$\end{itemize}
We use Landsman's notation (\cite{Klaas2}) $\mathpzc{E}\leftrightharpoons B$ to indicate this structure. The closure of the linear span of elements of the form $\langle e_{1},e_{2}\rangle$ is an ideal in $\mathpzc{E}$.The module $\mathpzc{E}$ is said to be \emph{full} if this ideal is all of $B$.
\end{definition}
For two such modules, $\mathpzc{E}$ and $\mathpzc{F}$, one can consider operators 
$T:\mathpzc{E}\rightarrow\mathpzc{F}$. As opposed to the case of a Hilbert space 
($B=\C$), such operators need not always have an adjoint with respect to the
inner product. Therefore let 
\[\Homst_{B}(\mathpzc{E},\mathpzc{F}):=\{T:\mathpzc{E}\rightarrow\mathpzc{F}:\quad\exists T^{*}:\mathpzc{F}\rightarrow\mathpzc{E}, \quad\langle Te_{1},e_{2}\rangle=\langle e_{1},T^{*}e_{2}\rangle\}.\]
Elements of $\Homst_{B}(\mathpzc{E},\mathpzc{F})$ are called \emph{adjointable
operators}. When $\mathpzc{E}=\mathpzc{F}$, $\Endst_{B}(\mathpzc{E})$ denote the
 adjointable endomorphisms of the $C^{*}$-module
$\mathpzc{E}$.
It is a $C^{*}$-algebra and contains the canonical $C^{*}$-subalgebra of $B$-\emph{compact operators} denoted by
$\mathbb{K}_{B}(\mathpzc{E})$, constructed as follows. The involution on $B$ allows for considering $\mathpzc{E}$ as a left
$B$-module via $be:=eb^{*}$. The inner product can be used to turn the algebraic tensor product
$\mathpzc{E}\otimes_{B}\mathpzc{E}$ into a $*$-algebra:
\[ e_{1}\otimes e_{2}\circ f_{1}\otimes f_{2}:=e_{1}\langle
e_{2},f_{1}\rangle\otimes f_{2},\quad (e_{1}\otimes e_{2})^{*}:=e_{2}\otimes
e_{1}.\] This algebra is denoted by $\Fin_{B}(\mathpzc{E})$, and $\K_{B}(\mathpzc{E})$ is its norm closure.
\newline\newline
A \emph{grading} on a $C^{*}$-algebra $B$ is an element
$\hat{\gamma}\in\Aut^{*} B$ (a *-automorphism), of order 2. If such a grading is present, $B$ decomposes as $B^{0}\oplus
B^{1}$, where $B^{0}$ is the $C^{*}$-subalgebra of \emph{even} elements, and
$B^{1}$ the closed subspace of \emph{odd} elements. We have $B^{i}B^{j}\subset
B^{i+j}$ for $i,j\in \Z/2\Z$. For $b\in B^{i}$, we denote the \emph{degree} of
$b$
by $\partial b\in\Z/2\Z$. A \emph{graded *-homomorphism} $\phi:A\rightarrow B$ between graded $C^{*}$-algebras, is a *-homomorphism that respects the gradings, i.e. $\phi\circ\hat{\gamma}_{A}=\hat{\gamma}_{B}\circ\phi$.
From now on, we assume all $C^{*}$-algebras to be
graded, possibly trivially, i.e. $\hat{\gamma}=1$.
\begin{definition}\label{graded} A $C^{*}$-module $\mathpzc{E}\lrh B$ is
\emph{graded} if it comes equipped with an element
$\gamma\in\Aut_{\C}(\mathpzc{E})$, of order 2, such that 
\begin{itemize}\item$\gamma(eb)=\gamma(e)\hat{\gamma}(b),$
\item $\langle\gamma(e_{1}),\gamma(e_{2})\rangle=\hat{\gamma}\langle
e_{1},e_{2}\rangle.$\end{itemize} \end{definition}
In this case $\mathpzc{E}$ also decomposes as
$\mathpzc{E}^{0}\oplus\mathpzc{E}^{1}$, and we have $\mathpzc{E}^{i}B^{j}\subset
\mathpzc{E}^{i+j}$ for $i,j\in \Z/2\Z$. The algebras
$\End_{B}(\mathpzc{E}),\Endst_{B}(\mathpzc{E})$ and $\K_{B}(\mathpzc{E})$
inherit a natural grading from $\mathpzc{E}$ by setting
$(\hat{\gamma}T)(e):=\gamma (T\gamma(e))$. For $e\in \mathpzc{E}^{i}$, we denote the
\emph{degree} of $e$
by $\partial e\in\Z/2\Z$.From now on we assume all $C^{*}$-modules to
be graded, possibly trivially.

\subsection{Tensor products}
For a pair of $C^{*}$-modules $\mathpzc{E}\lrh A$ and $\mathpzc{F}\lrh B$, the
vector space tensor product $\mathpzc{E}\otimes\mathpzc{F}$ can be made into
a $C^{*}$-module over the minimal $C^{*}$-tensor product $A\minotimes B$. The
minimal or \emph{spatial}  $C^{*}$-tensor product is obtained as
the closure of $A\otimes B$ in
$\mathbb{B}(\mathpzc{H}\otimes\mathpzc{K})$, where $\mathpzc{H}$ and
$\mathpzc{K}$ are graded Hilbert spaces that carry faithful graded representations of $A$ and $B$
respectively. In order to make $A\minotimes B$ into a graded algebra, the
multiplication law is defined as
\begin{equation}\label{gradedtensor} (a_{1}\otimes b_{1})(a_{2}\otimes b_{2})=(-1)^{\partial
b_{1}\partial a_{2}}a_{1}a_{2}\otimes b_{1}b_{2}.\end{equation}
The completion of $\mathpzc{E}\otimes\mathpzc{F}$ in the inner product
\begin{equation}\nonumber\langle e_{1}\otimes f_{1},e_{2}\otimes f_{2}\rangle:=\langle
e_{1},e_{2}\rangle\otimes\langle f_{1},f_{2}\rangle,\end{equation}
is a $C^{*}$-module denoted by $\mathpzc{E}\minotimes\mathpzc{F}$. It inherits a
grading by setting $\gamma:=\gamma_{\mathpzc{E}}\otimes\gamma_{\mathpzc{F}}.$

The graded module so obtained is
the \emph{exterior tensor product} of $\mathpzc{E}$ and
$\mathpzc{F}$. The \emph{graded tensor product} of maps
$\phi\in\Endst_{A}(\mathpzc{E})$ and $\psi\in\Endst_{B}(\mathpzc{F})$ is defined
by
\begin{equation}\nonumber\phi\otimes\psi(e\otimes
f):=(-1)^{\partial(e)\partial(\psi)}\phi(e)\otimes\psi(f),\end{equation}
gives a graded inclusion
\[\Endst_{A}(\mathpzc{E})\minotimes\Endst_{B}(\mathpzc{F})\rightarrow
\Endst_{A\minotimes B}(\mathpzc{E}\minotimes\mathpzc{F}),\] which restricts to an
isomorphism
\[\K_{A}(\mathpzc{E})\minotimes\K_{B}(\mathpzc{F})\rightarrow \K_{A\minotimes
B}(\mathpzc{E}\minotimes\mathpzc{F}).\]

A *-homomorphism $A\rightarrow\End^{*}_{B}(\mathpzc{E})$ is said to be \emph{essential} if
\[A\mathpzc{E}:=\{\sum_{i=0}^{n}a_{i}e_{i}:a_{i}\in A,e_{i}\in\mathpzc{E}, n\in\N\},\]
is dense in $\mathpzc{E}$. If a graded essential *-homomorphism
$B\rightarrow\Endst_{C}(\mathpzc{F})$ is given, one can complete the algebraic
tensor product $\mathpzc{E}\otimes_{B}\mathpzc{F}$ to a $C^{*}$-module
$\mathpzc{E}\tildeotimes_{B}\mathpzc{F}$ over $C$. The norm in which to complete
comes from the $B$-valued inner product
\begin{equation}\langle e_{1}\otimes f_{1},e_{2}\otimes f_{2}\rangle:=\langle
f_{1},\langle e_{1},e_{2}\rangle f_{2}\rangle.\end{equation}
There is a *-homomorphism
\begin{eqnarray}\nonumber\Endst_{B}(\mathpzc{E})&\rightarrow &\Endst_{C}(\mathpzc{E}\tildeotimes_{B}\mathpzc{F})\\
\nonumber T&\mapsto & T\otimes 1,\end{eqnarray}
which restricts to a homomorphism $\K_{B}(\mathpzc{E})\rightarrow \K_{C}(\mathpzc{E}\tildeotimes_{B}\mathpzc{F})$.If $\mathpzc{E}$ carries an (essential) $A$-representation, then so does $\mathpzc{E}\tildeotimes_{B}\mathpzc{F}$.\newline

We write $\mathpzc{H}_{B}$ for the graded tensor product $\mathpzc{H}\tildeotimes_{\C} B$, where $\mathpzc{H}=\ell^{2}(\Z\setminus\{0\})\cong \ell^{2}(\N)\oplus\ell^{2}(\N)$ with its usual grading. For nonunital $B$ one sets $\mathpzc{H}_{B}:=\mathpzc{H}_{B_{+}}B$. $\mathpzc{H}_{B}$ absorbs any countably generated
$C^{*}$-module. The direct sum $\mathpzc{E}\oplus\mathpzc{F}$ of
$C^{*}$-$B$-modules becomes a $C^{*}$-module in the inner product \[\langle
(e_{1},f_{1}),(e_{2},f_{2})\rangle:=\langle e_{1},e_{2}\rangle +\langle
f_{1},f_{2}\rangle.\]
\begin{theorem}[Kasparov \cite{Kas}] \label{stab}Let $\mathpzc{E}\lrh B$ be a countably generated
graded $C^{*}$-module. Then there exists a graded unitary isomorphism
$\mathpzc{E}\oplus\mathpzc{H}_{B}\xrightarrow{\sim}\mathpzc{H}_{B}$.
\end{theorem}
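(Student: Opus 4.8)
The plan is to realise $\mathpzc{E}$ as an orthogonally complemented $C^{*}$-submodule of $\mathpzc{H}_{B}$ and then to conclude by an Eilenberg--swindle rearrangement of countable orthogonal direct sums. For the second half: once $\mathpzc{H}_{B}\cong\mathpzc{E}\oplus\mathpzc{D}$ for some (necessarily countably generated, graded) module $\mathpzc{D}$, reindexing the canonical basis gives $\mathpzc{H}_{B}\cong\bigoplus_{n\geq 1}\mathpzc{H}_{B}$, so that
\[\mathpzc{E}\oplus\mathpzc{H}_{B}\ \cong\ \mathpzc{E}\oplus\bigoplus_{n\geq 1}(\mathpzc{E}\oplus\mathpzc{D})\ \cong\ \bigoplus_{n\geq 0}(\mathpzc{E}\oplus\mathpzc{D})\ \cong\ \bigoplus_{n\geq 0}\mathpzc{H}_{B}\ \cong\ \mathpzc{H}_{B},\]
the middle isomorphism re-pairing the $n$-th copy of $\mathpzc{E}$ with the $(n+1)$-st copy of $\mathpzc{D}$; every arrow is a graded unitary, assembled from graded unitaries of summands and a permutation of the index set. (One could instead invoke Theorem \ref{frank} and produce only a topological isomorphism.)

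For the complemented embedding I would proceed as follows. From the discussion around \eqref{rig} take finite-rank adjointable contractions $\phi_{\beta}\colon\mathpzc{E}\to B^{m_{\beta}}$ with $\phi_{\beta}^{*}\phi_{\beta}=:w_{\beta}$, $0\leq w_{\beta}\leq\id$ and $w_{\beta}\to\id_{\mathpzc{E}}$ strongly. Separately, as $\K_{B}(\mathpzc{E})$ is $\sigma$-unital, fix an \emph{increasing} approximate unit $\{u_{\alpha}\}_{\alpha\in\N}$ of $\K_{B}(\mathpzc{E})$ with $0\leq u_{\alpha}\leq\id$, and set $v_{\alpha}:=u_{\alpha}-u_{\alpha-1}\geq 0$ (with $u_{0}:=0$), so that $0\leq v_{\alpha}\leq\id$, $v_{\alpha}^{1/2}\in\K_{B}(\mathpzc{E})$ and $\sum_{\alpha\leq N}v_{\alpha}=u_{N}\to\id_{\mathpzc{E}}$ strongly. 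Since $v_{\alpha}^{1/2}$ is $B$-compact while $w_{\beta}\to\id$ strongly, one has $v_{\alpha}^{1/2}w_{\beta}\to v_{\alpha}^{1/2}$ \emph{in norm}, so for each $\alpha$ I may choose an index $\beta(\alpha)$ with $\|v_{\alpha}^{1/2}(\id-w_{\beta(\alpha)})\|<\varepsilon_{\alpha}$, where $\sum_{\alpha}\varepsilon_{\alpha}<\tfrac12$. Put $R_{\alpha}:=\phi_{\beta(\alpha)}\,v_{\alpha}^{1/2}\colon\mathpzc{E}\to B^{m_{\beta(\alpha)}}$ and assemble these into
\[R\colon\mathpzc{E}\longrightarrow\mathpzc{G}:=\bigoplus_{\alpha\in\N}B^{m_{\beta(\alpha)}},\qquad Re:=(R_{\alpha}e)_{\alpha},\]
where $\mathpzc{G}$, being a countable orthogonal sum of finite free modules, is unitarily isomorphic to $\mathpzc{H}_{B}$ (or a complemented submodule of it, in degenerate cases).

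The key point is that $R_{\alpha}^{*}R_{\alpha}=v_{\alpha}^{1/2}w_{\beta(\alpha)}v_{\alpha}^{1/2}$ obeys $v_{\alpha}-\varepsilon_{\alpha}\id\leq R_{\alpha}^{*}R_{\alpha}\leq v_{\alpha}$. The upper bound, together with the norm-convergence of $\sum_{\alpha}\langle e,v_{\alpha}e\rangle=\langle e,e\rangle$ (valid because $u_{N}e\to e$ in $\mathpzc{E}$, the $\K_{B}(\mathpzc{E})$-action being non-degenerate), shows that $Re$ lies in $\mathpzc{G}$, that $R$ is a contractive module map, and — by a tail estimate of the same kind — that $R^{*}\big((x_{\alpha})_{\alpha}\big)=\sum_{\alpha}R_{\alpha}^{*}x_{\alpha}$ converges and is the adjoint, so $R\in\Homst_{B}(\mathpzc{E},\mathpzc{G})$. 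Summing the lower bound over $\alpha$ and passing to the strong limit gives
\[\tfrac12\,\id\ \leq\ R^{*}R=\sum\nolimits_{\alpha}R_{\alpha}^{*}R_{\alpha}\ \leq\ \id ,\]
so $R^{*}R$ is invertible in $\Endst_{B}(\mathpzc{E})$. The polar decomposition $R=V(R^{*}R)^{1/2}$, with $V:=R(R^{*}R)^{-1/2}\in\Homst_{B}(\mathpzc{E},\mathpzc{G})$, then gives an isometry $V^{*}V=\id_{\mathpzc{E}}$ whose range is complemented, $VV^{*}$ being a self-adjoint projection in $\Endst_{B}(\mathpzc{G})$; so $V$ identifies $\mathpzc{E}$ unitarily with the complemented submodule $VV^{*}\mathpzc{G}$ of $\mathpzc{G}\cong\mathpzc{H}_{B}$, as the first paragraph requires. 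Taking the $u_{\alpha}$ even and the generators in \eqref{rig} homogeneous makes all of the above operators even, so the resulting isomorphism is graded; I suppress this routine bookkeeping.

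The genuine obstacle, I expect, is securing the lower bound $R^{*}R\geq\tfrac12\,\id$ while keeping $R$ valued in $\mathpzc{H}_{B}$. Embedding $\mathpzc{E}$ naively by a single sequence of rescaled generators $\xi_{n}\mapsto 2^{-n}\xi_{n}$ yields only a dense, non-closed range and an $R^{*}R$ with $0$ in its spectrum, and repeating generators does nothing against the decay. Partitioning an approximate unit into the summands $v_{\alpha}$ and exploiting the \emph{norm}-convergence $v_{\alpha}^{1/2}w_{\beta}\to v_{\alpha}^{1/2}$ — which is available precisely because $v_{\alpha}^{1/2}$ is $B$-compact, not merely because $w_{\beta}\to\id$ strongly — is exactly what makes the errors $\|v_{\alpha}^{1/2}(\id-w_{\beta(\alpha)})\|$ summable and hence keeps $\sum_{\alpha}R_{\alpha}^{*}R_{\alpha}$ a small perturbation of the identity. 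The remaining work is routine: one checks that the various series converge in the $C^{*}$-module norm, which in each case reduces to the norm-convergence of $\sum_{\alpha}\langle e,v_{\alpha}e\rangle$.
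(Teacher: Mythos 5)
The paper states Kasparov's stabilization theorem with a citation and no proof; the closest thing it has to a proof is the smooth stabilization theorem (\ref{smoothstab}), which is established by Dupr\'e--Fillmore's ``almost orthogonalization'' — an inductive Gram--Schmidt construction of an orthonormal sequence with dense $\mathscr{A}$-span. Your argument is genuinely different: rather than orthogonalizing generators one at a time, you build a single adjointable embedding $R\colon\mathpzc{E}\to\mathpzc{G}\subset\mathpzc{H}_{B}$ with $R^{*}R$ bounded below by $\tfrac12\id$, so the polar decomposition is elementary and immediately exhibits $\mathpzc{E}$ as a complemented direct summand; the Eilenberg swindle then finishes. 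Your method is in the spirit of Mingo--Phillips, but the decomposition of the identity as $\sum v_{\alpha}$ with $v_{\alpha}=u_{\alpha}-u_{\alpha-1}$, paired slice-by-slice with the approximate-projectivity maps $\phi_{\beta(\alpha)}$ and with a summable error budget, is a nice refinement: it replaces the usual weighted map (whose $T^{*}T$ only has dense range) with one whose $R^{*}R$ is actually invertible, avoiding the functional-calculus subtleties in the polar decomposition of an operator with non-closed range. What the paper's Gram--Schmidt approach buys, by contrast, is compatibility with the whole inverse system of Sobolev/smooth modules, which is why the paper uses it for Theorem \ref{smoothstab}; your construction as written produces the unitary at the $C^{*}$ level only.

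One step deserves more care than ``a tail estimate of the same kind.'' The estimate $R_{\alpha}^{*}R_{\alpha}\leq v_{\alpha}$ together with the norm convergence of $\sum_{\alpha}\langle e,v_{\alpha}e\rangle$ does show $Re\in\mathpzc{G}$ and $\|R\|\leq 1$, but the analogous manipulation for $R^{*}\big((x_{\alpha})\big)=\sum_{\alpha}R_{\alpha}^{*}x_{\alpha}$ produces off-diagonal terms $\langle x_{\alpha},R_{\alpha}R_{\alpha'}^{*}x_{\alpha'}\rangle$ with $v_{\alpha}^{1/2}v_{\alpha'}^{1/2}$ factors that do not collapse to a diagonal sum, so it is not literally ``the same kind'' of estimate. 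Adjointability does hold, but the clean route is: the truncations $R_{N}:=(R_{1},\dots,R_{N},0,\dots)$ are finite-rank adjointable, with $\|R_{N}\|^{2}=\|\sum_{\alpha\leq N}R_{\alpha}^{*}R_{\alpha}\|\leq 1$, so the $R_{N}^{*}$ are uniformly bounded; they converge on the dense set of finitely supported $(x_{\alpha})$, hence strongly on all of $\mathpzc{G}$ to a contraction $T$, and $\langle Re,x\rangle=\lim\langle R_{N}e,x\rangle=\lim\langle e,R_{N}^{*}x\rangle=\langle e,Tx\rangle$ identifies $T=R^{*}$. (Alternatively, choosing the approximate unit with $u_{\alpha+1}u_{\alpha}=u_{\alpha}$ forces $v_{\alpha}v_{\alpha'}=0$ for $|\alpha-\alpha'|\geq 2$ and makes the matrix $(R_{\alpha}R_{\alpha'}^{*})$ banded, but this extra hypothesis is unnecessary.) The remaining ingredients — $kw_{\beta}\to k$ in norm for compact $k$ when $w_{\beta}\to\id$ strongly, the lower bound $R^{*}R\geq\tfrac12\id$ from $v_{\alpha}-R_{\alpha}^{*}R_{\alpha}=v_{\alpha}^{1/2}(\id-w_{\beta(\alpha)})v_{\alpha}^{1/2}\leq\varepsilon_{\alpha}\id$, the polar decomposition, and the swindle — are all correct, and the grading bookkeeping is routine as you say.
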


\subsection{Unbounded operators}
Similar to the Hilbert space setting, there is a notion of unbounded operator on
a $C^{*}$-module. Many of the already subtle issues in the theory of unbounded
operators should be handled with even more care. This is mostly due to the fact
that closed submodules of a $C^{*}$-module need not be orthogonally
complemented. We refer to \cite{Baaj}, \cite{Lan} and \cite{Wor} for detailed
expositions of this theory.
\begin{definition}[\cite{BJ}]\label{reg} Let $\mathpzc{E}$ be a 
$C^{*}$-$B$-module. A densely defined closed operator
$D:\mathfrak{Dom}D\rightarrow\mathpzc{E}$ is called
\emph{regular} if 
\begin{itemize} \item $D^{*}$ is densely defined in $\mathpzc{E}$
\item $1+D^{*}D$ has dense range.
\end{itemize}\end{definition}
Such an operator is automatically $B$-linear, and $\mathfrak{Dom}D$ is a $B$-submodule of $\mathpzc{E}$. 
There are two operators,
$\mathfrak{r}(D),\mathfrak{b}(D)\in\Endst_{B}(\mathpzc{E})$
canonically associated with a regular operator $D$. They are the \emph{inverse modulus} of $D$
\begin{equation} \mathfrak{r}(D):=(1+D^{*}D)^{-\frac{1}{2}},\end{equation}
and the \emph{bounded transform}
\begin{equation}\mathfrak{b}(D):=D(1+D^{*}D)^{-\frac{1}{2}}.\end{equation}
A regular operator $D$ is $\emph{symmetric}$ if $\mathfrak{Dom} D\subset\mathfrak{Dom}D^{*}$ and $D=D^{*}$ on $\mathfrak{Dom} D$. It is \emph{selfadjoint} if it is symmetric and $\mathfrak{Dom}D=\mathfrak{Dom} D^{*}$.
\begin{proposition}\label{core} If $D:\mathfrak{Dom}D\rightarrow\mathpzc{E}$ is regular,
then $D^{*}D$ is selfadjoint and regular. Moreover, $\Dom D^{*}D$ is a core for $D$ and $\im
\mathfrak{r}(D)=\mathfrak{Dom} D$.
\end{proposition}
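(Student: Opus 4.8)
The plan is to organise everything around the graph $G(D)=\{\xi\oplus D\xi:\xi\in\Dom D\}$, a closed submodule of $\mathpzc{E}\oplus\mathpzc{F}$ since $D$ is closed, and the standard characterisation (see \cite{Lan}) that regularity of $D$ is exactly the statement that $G(D)$ is orthogonally complemented, with orthocomplement $\{-D^{*}\eta\oplus\eta:\eta\in\Dom D^{*}\}$; write $P\in\Endst_{B}(\mathpzc{E}\oplus\mathpzc{F})$ for the projection onto $G(D)$. Every assertion follows once the operator $Q:=(1+D^{*}D)^{-1}$ is understood, so I would begin there. The operator $D^{*}D$ is symmetric and positive on $\Dom D^{*}D$, hence $1+D^{*}D\geq 1$, and Cauchy--Schwarz for the $B$-valued inner product gives $\|(1+D^{*}D)\xi\|\geq\|\xi\|$. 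It is moreover closed: if $\xi_{n}\to\xi$ and $D^{*}D\xi_{n}\to\eta$, the identity $\langle D^{*}D(\xi_{n}-\xi_{m}),\xi_{n}-\xi_{m}\rangle=\langle D(\xi_{n}-\xi_{m}),D(\xi_{n}-\xi_{m})\rangle$ makes $(D\xi_{n})$ Cauchy, so closedness of $D$ and then of $D^{*}$ gives $\xi\in\Dom D^{*}D$, $D^{*}D\xi=\eta$. A closed, bounded-below operator with dense range --- the last property being the definition of regularity --- is a bijection onto $\mathpzc{E}$; thus $Q$ is everywhere defined with $\|Q\|\leq 1$, symmetry makes it selfadjoint and positive, it is injective, and $\im Q=\Dom D^{*}D$.

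From this, $1+D^{*}D=Q^{-1}$ is the inverse of an injective positive adjointable operator, hence selfadjoint, and so is $D^{*}D$; this is the first claim. For regularity of $D^{*}D$ I would show $1+(D^{*}D)^{2}$ is onto: from $(1+D^{*}D)Q=\id$, i.e. $D^{*}DQ=\id-Q$, one gets $\Dom(D^{*}D)^{2}=\im Q^{2}$ and that $1+(D^{*}D)^{2}$ carries $Q^{2}\theta$ to $(2Q^{2}-2Q+\id)\theta$; since $t\mapsto 2t^{2}-2t+1\geq\tfrac12$ on $\Sp(Q)\subseteq[0,1]$, the operator $2Q^{2}-2Q+\id$ is invertible, so $1+(D^{*}D)^{2}$ is surjective and $D^{*}D$ is regular. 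One may then set $\mathfrak{r}(D):=Q^{1/2}$, a positive injective contraction with $\mathfrak{r}(D)^{2}=Q$, and note that the estimate $\|D\mu\|^{2}\leq\langle(1+D^{*}D)\mu,\mu\rangle$ for $\mu\in\Dom D^{*}D$, together with closedness of $D$, makes $D\mathfrak{r}(D)$ extend to a bounded adjointable operator, so that $\mathfrak{b}(D)=D\mathfrak{r}(D)$ makes sense.

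Next, comparing coordinates of $(\id-P)(\theta\oplus 0)=-D^{*}\eta\oplus\eta\in\ker P$ with $P(\theta\oplus 0)=\alpha\oplus D\alpha\in\im P$ yields $(1+D^{*}D)\alpha=\theta$, i.e. $P(\theta\oplus 0)=Q\theta\oplus DQ\theta$. As $\theta$ ranges over $\mathpzc{E}$, $Q\theta$ ranges over $\im Q=\Dom D^{*}D$, so $G(D|_{\Dom D^{*}D})=P(\mathpzc{E}\oplus 0)$. If $y\in\im P$ is orthogonal to $P(\mathpzc{E}\oplus 0)$, then since $P=P^{*}$ it is orthogonal to $\mathpzc{E}\oplus 0$, hence lies in $0\oplus\mathpzc{F}$; but $y\in\im P=G(D)$ forces $y=0$. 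As $\im P$ is closed, $P(\mathpzc{E}\oplus 0)$ is dense in $G(D)$, which says exactly that $\Dom D^{*}D$ is a core for $D$. Finally $\im\mathfrak{r}(D)=\Dom D$: for $\subseteq$, approximate $\mathfrak{r}(D)\zeta$ by $\mathfrak{r}(D)g_{k}(Q)\zeta$ with $g_{k}$ continuous, vanishing near $0$, and $g_{k}\to 1$ on $(0,1]$; then $\mathfrak{r}(D)g_{k}(Q)\zeta\in\im Q=\Dom D^{*}D$ and $\|D\mathfrak{r}(D)(g_{k}-g_{l})(Q)\zeta\|\leq\|(g_{k}-g_{l})(Q)\zeta\|\to 0$, so closedness of $D$ places $\mathfrak{r}(D)\zeta$ in $\Dom D$; for $\supseteq$, given $\xi\in\Dom D$, use the core property to choose $\xi_{n}=Q\theta_{n}\in\Dom D^{*}D$ with $\xi_{n}\to\xi$, $D\xi_{n}\to D\xi$, observe $\|Q^{1/2}(\theta_{n}-\theta_{m})\|^{2}=\|\langle\xi_{n}-\xi_{m},(1+D^{*}D)(\xi_{n}-\xi_{m})\rangle\|\leq\|\xi_{n}-\xi_{m}\|^{2}+\|D\xi_{n}-D\xi_{m}\|^{2}\to 0$, and conclude $Q^{1/2}\theta_{n}\to\eta$ for some $\eta$ with $\mathfrak{r}(D)\eta=\lim Q\theta_{n}=\xi$.

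The hardest part is the two density arguments in the last step: Hilbert $C^{*}$-modules carry no useful weak compactness, so one cannot enter the domain of an unbounded operator from a mere norm bound, and the trick is to produce genuine Cauchy sequences --- here by applying continuous functions of the bounded operator $Q$ and using the identity $\langle(1+D^{*}D)\mu,\mu\rangle=\langle\mu,\mu\rangle+\langle D\mu,D\mu\rangle$ on $\Dom D^{*}D$ --- before invoking closedness of $D$. The rest is bookkeeping with $P$; a reader reluctant to quote the complementation theorem can instead define $P$ as the $2\times 2$ matrix over $\Endst_{B}$ with diagonal entries $Q$ and $\id-(1+DD^{*})^{-1}$ and off-diagonal entries $DQ$ and $(DQ)^{*}$, and verify directly that it is a selfadjoint idempotent with range $G(D)$.
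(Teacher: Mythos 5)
The paper states Proposition \ref{core} without proof, citing \cite{Baaj}, \cite{Lan}, \cite{Wor}, so there is no in-text argument to compare against; what follows assesses your proposal on its own terms. Your strategy — organize everything around $Q=(1+D^{*}D)^{-1}$ and the Woronowicz projection $P$ onto $\mathfrak{G}(D)$ — is the classical one, and several steps are fine: closedness and bijectivity of $1+D^{*}D$, boundedness, positivity and symmetry of $Q$, regularity of $D^{*}D$ once selfadjointness is granted, and the Cauchy argument for $\Dom D\subseteq\im\mathfrak{r}(D)$. But there is a recurring genuine gap: in Hilbert $C^{*}$-modules, trivial orthocomplement does \emph{not} imply density, and you invoke this Hilbert-space fact in two essential places. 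It is implicit in passing from ``$Q$ positive injective'' to ``$Q^{-1}$ selfadjoint,'' which needs $\im Q=\Dom D^{*}D$ to be dense; that is not automatic (take $B=\mathpzc{E}=C[0,1]$ and $Q$ multiplication by $t$: positive, injective, range not dense). It is explicit in your core argument, where from $y\in\mathfrak{G}(D)$, $y\perp P(\mathpzc{E}\oplus 0)\Rightarrow y=0$ you deduce density of $P(\mathpzc{E}\oplus 0)$ in $\mathfrak{G}(D)$; this implication fails for general closed submodules of a $C^{*}$-module. You rightly flag the absence of weak compactness as the danger, but the orthogonality-to-density step is exactly the same kind of error. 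Finally, in the $\subseteq$ inclusion, the estimate $\|D\mathfrak{r}(D)(g_{k}-g_{l})(Q)\zeta\|\leq\|(g_{k}-g_{l})(Q)\zeta\|$ is correct, but the right-hand side need not tend to zero: $\|g_{k}-g_{l}\|_{\infty}$ stays $1$ and there is no spectral theorem on $\mathpzc{E}$ forcing $g_{k}(Q)\zeta\to\zeta$ merely because $\ker Q=0$, so this Cauchy argument is also broken.

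The repair, which is the standard route, replaces both orthogonality steps by direct use of $\ker P=v\mathfrak{G}(D^{*})$ together with the density of $\Dom D^{*}$ (part of the definition of regularity). For $\eta\in\Dom D^{*}$ one has $(D^{*}\eta,-\eta)\in v\mathfrak{G}(D^{*})=\ker P$, hence $P(0\oplus\eta)=P(D^{*}\eta\oplus 0)\in P(\mathpzc{E}\oplus 0)$. Since $\Dom D^{*}$ is dense in $\mathpzc{F}$, it follows that $P(0\oplus\mathpzc{F})\subset\overline{P(\mathpzc{E}\oplus 0)}$, and therefore $\overline{P(\mathpzc{E}\oplus 0)}\supseteq P(\mathpzc{E}\oplus\mathpzc{F})=\mathfrak{G}(D)$, giving the core statement with no orthogonality argument. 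This yields density of $\Dom D^{*}D=\im Q$, hence of $\im\mathfrak{r}(D)$, and then selfadjointness of $D^{*}D$ goes through as you wrote. With that density, your estimate $\|D\mathfrak{r}(D)\nu\|\leq\|\nu\|$ on $\im\mathfrak{r}(D)$ extends $D\mathfrak{r}(D)$ to the bounded adjointable operator $\mathfrak{b}(D)$ on all of $\mathpzc{E}$; then for arbitrary $\zeta$, approximating $\zeta$ by $\mathfrak{r}(D)\zeta_{n}$ gives $Q\zeta_{n}\to\mathfrak{r}(D)\zeta$ with $DQ\zeta_{n}=\mathfrak{b}(D)\mathfrak{r}(D)\zeta_{n}$ convergent, and closedness of $D$ places $\mathfrak{r}(D)\zeta$ in $\Dom D$, completing $\im\mathfrak{r}(D)=\Dom D$ without any functional-calculus cut-off.
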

It follows that $D$ is completely determined by $\mathfrak{b}(D)$, as $\mathfrak{r}(D)^{2}=1-\mathfrak{b}(D)^{*}\mathfrak{b}(D)$.  
Recall that a submodule $\mathpzc{F}\subset\mathpzc{E}$ is \emph{complemented} if $\mathpzc{E}\cong\mathpzc{F}\oplus\mathpzc{F}^{\perp}$, where
\[\mathpzc{F}^{\perp}:=\{e\in\mathpzc{E}:\forall f\in\mathpzc{F}\quad \langle e,f\rangle=0\}.\] Contrary to the Hilbert space case, closed submodules of a $C^{*}$-module need not be
complemented. The complemented submodules of a $C^{*}$-module $\mathpzc{E}$ are precisely those of the form $p\mathpzc{E}$, with $p$ a projection in $\Endst_{B}(\mathpzc{E})$. \newline\newline The \emph{graph of}
$D$ is the closed submodule
\[\mathfrak{G}(D):=\{(e,De):e\in\mathfrak{Dom}(D)\}\subset\mathpzc{E}\oplus\mathpzc{E}.\]
There is a canonical unitary $v\in\Endst_{B}(\mathpzc{E}\oplus\mathpzc{E})$, defined by $v(e,f):=(-f,e)$. Note that $\mathfrak{G}(D)$ and $v\mathfrak{G}(D^{*})$ are orthogonal submodules of $\mathpzc{E}\oplus\mathpzc{E}.$
The following algebraic characterization of regularity is due to Woronowicz.  
\begin{theorem}[\cite{Wor}] \label{Worthm} A densely defined closed operator
$D:\mathpzc{E}\rightarrow\mathpzc{E}$, with densely defined adjoint is regular if and only if 
$\mathfrak{G}(D)\oplus v\mathfrak{G}(D^{*})\cong\mathpzc{E}\oplus\mathpzc{E}.$
\end{theorem}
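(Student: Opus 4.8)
The plan is to read the stated isomorphism, for a closed densely defined $D$, as the assertion that $\mathfrak{G}(D)$ is an orthogonally complemented submodule of $\mathpzc{E}\oplus\mathpzc{F}$. First I would record the elementary identity $\mathfrak{G}(D)^{\perp}=v\mathfrak{G}(D^{*})$: a pair $(a,b)$ satisfies $\langle a,x\rangle+\langle b,Dx\rangle=0$ for all $x\in\Dom D$ exactly when $\langle b,Dx\rangle=\langle -a,x\rangle$ for all such $x$, i.e. when $b\in\Dom D^{*}$ and $D^{*}b=-a$. Since $\mathfrak{G}(D)$ and $v\mathfrak{G}(D^{*})$ are already orthogonal, as noted above, the isomorphism $\mathfrak{G}(D)\oplus v\mathfrak{G}(D^{*})\cong\mathpzc{E}\oplus\mathpzc{F}$ is equivalent to $\mathfrak{G}(D)+v\mathfrak{G}(D^{*})=\mathpzc{E}\oplus\mathpzc{F}$, i.e. to the existence of an adjointable projection onto $\mathfrak{G}(D)$. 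I would also note at the outset that regularity forces $D$ to be closed, and that closedness is what makes $\mathfrak{G}(D)$ a closed submodule with the above orthogonal complement, so $D$ is tacitly closed throughout.

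For the implication from regularity to complementedness I would invoke the functional calculus of Theorem \ref{funcalc} together with Proposition \ref{core}. Writing $\mathfrak{r}:=\mathfrak{r}(D)$ and $\mathfrak{b}:=\mathfrak{b}(D)=D\mathfrak{r}$, these operators are adjointable, $\mathfrak{r}$ is selfadjoint with $\im\mathfrak{r}=\Dom D$, and $\mathfrak{r}^{2}+\mathfrak{b}^{*}\mathfrak{b}=1$. Then the operator $W:\mathpzc{E}\to\mathpzc{E}\oplus\mathpzc{F}$, $We:=(\mathfrak{r}e,\mathfrak{b}e)=(\mathfrak{r}e,D\mathfrak{r}e)$, is adjointable with $W^{*}(e,f)=\mathfrak{r}e+\mathfrak{b}^{*}f$, so $W^{*}W=\mathfrak{r}^{2}+\mathfrak{b}^{*}\mathfrak{b}=1$ and $W$ is an isometry whose range is contained in $\mathfrak{G}(D)$. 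In fact the range is all of $\mathfrak{G}(D)$, since any $(x,Dx)$ with $x\in\Dom D=\im\mathfrak{r}$ may be written $x=\mathfrak{r}e$, whence $We=(x,D\mathfrak{r}e)=(x,Dx)$. Therefore $P:=WW^{*}$ is an adjointable selfadjoint projection with range $\mathfrak{G}(D)$, and $\mathpzc{E}\oplus\mathpzc{F}=\mathfrak{G}(D)\oplus\mathfrak{G}(D)^{\perp}=\mathfrak{G}(D)\oplus v\mathfrak{G}(D^{*})$.

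For the converse I would start from the decomposition itself: every element of $\mathpzc{E}\oplus\mathpzc{F}$ is uniquely $(x,Dx)+(-D^{*}y,y)$ with $x\in\Dom D$ and $y\in\Dom D^{*}$. Feeding in $(e,0)$ yields $y=-Dx$ and hence $e=x+D^{*}Dx=(1+D^{*}D)x$, so $1+D^{*}D$ is surjective and in particular has dense range. The remaining point, and the one I expect to be genuinely delicate, is that $\Dom D^{*}$ is dense. Given $z\in\mathpzc{F}$ with $z\perp\Dom D^{*}$, I would decompose $(0,z)=(x,Dx)+(-D^{*}y,y)$, so that $x=D^{*}y$ and $z=Dx+y$; then $\langle z,y\rangle=0$, and computing $\langle z,z\rangle$ both as $\langle z,Dx\rangle$ and by expanding $\langle Dx+y,Dx+y\rangle$, in each case using $\langle y,Dx\rangle=\langle Dx,y\rangle=\langle D^{*}y,x\rangle=\langle x,x\rangle$, gives $\langle z,z\rangle=\langle Dx,Dx\rangle+\langle x,x\rangle$ and $\langle z,z\rangle=\langle Dx,Dx\rangle+2\langle x,x\rangle+\langle y,y\rangle$. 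Comparing these forces $\langle x,x\rangle+\langle y,y\rangle=0$, so $x=y=0$ and $z=0$; hence $\Dom D^{*}$ is dense and $D$ is regular. The main obstacle is precisely this last step: surjectivity of $1+D^{*}D$ is immediate, but density of the adjoint's domain has to be extracted from the orthogonality of the two graphs, either by the inner-product computation above or, equivalently, by showing that the compression of $P$ to $\mathpzc{F}$ has trivial kernel.
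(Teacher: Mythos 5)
The paper does not actually give a proof of this theorem: it is attributed to Woronowicz and stated without argument, the only proof-relevant datum the text supplies being the explicit formula for the projection $p_{D}$ in equation~(5). Against that modest benchmark, your forward direction is exactly right and essentially reconstructs $p_{D}$: your isometry $W e=(\mathfrak{r}e,\mathfrak{b}e)$ satisfies $W^{*}W=\mathfrak{r}^{2}+\mathfrak{b}^{*}\mathfrak{b}=1$ by the identity $\mathfrak{r}(D)^{2}=1-\mathfrak{b}(D)^{*}\mathfrak{b}(D)$ recorded after Proposition~1.5.3, its range is all of $\mathfrak{G}(D)$ because $\im\mathfrak{r}(D)=\Dom D$, and $WW^{*}$ is the paper's $p_{D}$. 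Your preliminary observation $\mathfrak{G}(D)^{\perp}=v\mathfrak{G}(D^{*})$ is also correct and is the same fact the paper uses when it notes the two graphs are orthogonal. So the forward implication is fine and in harmony with the text.

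The converse has a genuine gap, and it sits exactly where you predicted it would. The computation you do is correct and establishes that $(\Dom D^{*})^{\perp}=\{0\}$ in $\mathpzc{F}$ (your two expansions of $\langle z,z\rangle$ are both valid $B$-valued identities, and positivity forces $x=y=0$, hence $z=0$). But in a $C^{*}$-module, triviality of the orthogonal complement of a submodule does \emph{not} imply density of that submodule. This is one of the well-known failures of Hilbert-space intuition: for instance, the closed submodule $\{f\in C[0,1]:f(0)=0\}$ of $C[0,1]$ over itself has zero orthogonal complement yet is proper; equivalently, a positive injective adjointable operator (such as multiplication by $t$) can fail to have dense range. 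The step ``$(\Dom D^{*})^{\perp}=0$, hence $\Dom D^{*}$ dense'' is therefore an appeal to a Hilbert-space fact that does not transfer, and the same objection applies to your alternative suggestion of showing a corner of the projection has trivial kernel: the corner $(1-p_{22})$ is indeed positive, adjointable and injective, but that alone does not give dense range. Your surjectivity argument for $1+D^{*}D$, by contrast, is correct as it stands and does yield dense range of that operator (once density of $\Dom D^{*}$ is in hand). Closing the remaining gap --- deducing density of $\Dom D^{*}$ from complementedness of $\mathfrak{G}(D)$ together with density of $\Dom D$ --- is precisely the delicate content of Woronowicz's converse and needs a genuinely different argument than orthogonality alone; as written, your proof of the ``if'' direction is incomplete at that point.
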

The isomorphism is given by coordinatewise addition. Moreover, the operator
\begin{equation}\label{Wor}p_{D}:=\begin{pmatrix}\mathfrak{r}(D)^{2} &
\mathfrak{r}(D)\mathfrak{b}(D)^{*}\\\mathfrak{b}(D)\mathfrak{r}(D)&
\mathfrak{b}(D)\mathfrak{b}(D)^{*}\end{pmatrix}\end{equation}
satisfies $p_{D}^{2}=p_{D}^{*}=p_{D}$, i.e. it is a projection, and
$p_{D}(\mathpzc{E}\oplus\mathpzc{E})=\mathfrak{G}(D).$ When $D$ is an odd operator, the grading $\gamma\oplus(-\gamma)$ on $\mathpzc{E}\oplus\mathpzc{E}$ respects the decompositon from theorem \ref{Worthm}. We will always consider $\mathpzc{E}\oplus\mathpzc{E}$ with this grading. In case $D$ is selfadjoint, the above projection takes the form
\[ p_{D}=\begin{pmatrix}(1+D^{2})^{-1} & D(1+D^{2})^{-1} \\ D(1+D^{2})^{-1} & D^{2}(1+D^{2})^{-1}\end{pmatrix},\] so the components are algebraic functions of $D$. Moreover $vpv^{*}=1-p$ in this case. These two facts will play a crucial r\^{o}le in this paper.\newline

The module $\mathfrak{G}(D)$, which
is naturally in bijection with $\mathfrak{Dom}(D)$, 
inherits the structure of a $C^{*}$-module from $\mathpzc{E}\oplus\mathpzc{E}$. We denote its inner product by $\langle\cdot,\cdot\rangle_{1}$. It is a well known fact that 
\[\mathfrak{r}(D)^{2}=(D+i)^{-1}(D-i)^{-1},\]
and the operators $D\pm i$ are bijections $\mathfrak{Dom} D\rightarrow \mathpzc{E}$. We refer to the operators $(D\pm i)^{-1}$ as the \emph{resolvents} of $D$. Since $D$ commutes with $(D\pm i)^{-1}$, $D$ maps $(D\pm i)^{-1}\mathfrak{G}(D)$ into $\mathfrak{G}(D)$. We denote this operator by $D_{2}$.
\begin{proposition}\label{D2} Let $D:\Dom D\rightarrow\mathpzc{E}$ be a selfdajoint regular operator. Then $D_{2}:(D\pm i)^{-1}\mathfrak{G}(D)\rightarrow \mathfrak{G}(D)$ is a selfadjoint regular operator. When $D$ is odd, so is $D_{2}$.
\end{proposition}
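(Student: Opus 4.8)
The plan is to realize $D_{1}$ as the conjugate of $D$ itself by a unitary isomorphism of $C^{*}$-modules, and then to use that such a conjugation carries selfadjoint regular operators to selfadjoint regular operators. The implementing unitary will be the resolvent $\mathfrak{r}(D)$, no longer viewed as an endomorphism of $\mathpzc{E}$ but as a map of $\mathpzc{E}$ onto the graph module $\mathfrak{G}(D)$.

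First I would fix the identifications. As a set $\mathfrak{G}(D)$ is $\Dom D$, equipped with the inner product $\langle e,f\rangle_{1}=\langle e,f\rangle+\langle De,Df\rangle$; the submodule $\mathfrak{r}(D)\mathfrak{G}(D)$ is $\mathfrak{r}(D)(\Dom D)=\im\mathfrak{r}(D)^{2}=\Dom D^{*}D$, which by Proposition \ref{core} (the core property of $\Dom D^{*}D$) is dense in $\mathfrak{G}(D)$, and on it $D_{1}$ is simply the restriction of $D$. The key assertion is then that $W:=\mathfrak{r}(D)\in\Endst_{B}(\mathpzc{E})$, regarded as a map $W\colon\mathpzc{E}\to\mathfrak{G}(D)$, is a unitary isomorphism of $C^{*}$-modules. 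It is $B$-linear and injective, and by the equality $\im\mathfrak{r}(D)=\Dom D$ of Proposition \ref{core} it is onto $\mathfrak{G}(D)$. It preserves inner products: for $e,f\in\mathpzc{E}$, using $D\mathfrak{r}(D)=\mathfrak{b}(D)\in\Homst_{B}(\mathpzc{E})$ and the identity $\mathfrak{r}(D)^{2}=1-\mathfrak{b}(D)^{*}\mathfrak{b}(D)$ recorded after Proposition \ref{core},
\[\langle We,Wf\rangle_{1}=\langle e,\mathfrak{r}(D)^{2}f\rangle+\langle e,\mathfrak{b}(D)^{*}\mathfrak{b}(D)f\rangle=\langle e,f\rangle.\]
A $B$-linear inner-product-preserving bijection is isometric and adjointable with $W^{*}=W^{-1}$ (since $\langle We,\xi\rangle_{1}=\langle We,W(W^{-1}\xi)\rangle_{1}=\langle e,W^{-1}\xi\rangle$), hence unitary.

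Next I would check that $D_{1}=WDW^{-1}$ as unbounded operators. The domain of $WDW^{-1}$ is $W(\Dom D)=\mathfrak{r}(D)\mathfrak{G}(D)$, which is precisely $\Dom D_{1}$; and for $\xi$ in this domain, write $\xi=W\eta$ with $\eta\in\Dom D$, so that $\mathfrak{r}(D)\eta=\xi$. Then, using that $D$ commutes with $\mathfrak{r}(D)$ (Theorem \ref{funcalc}),
\[WDW^{-1}\xi=\mathfrak{r}(D)(D\eta)=D(\mathfrak{r}(D)\eta)=D\xi=D_{1}\xi.\]
Since $W$ is a topological (indeed unitary) isomorphism, $D_{1}=WDW^{-1}$ is closed, $D_{1}^{*}=WD^{*}W^{-1}$ is densely defined because $D^{*}$ is, and $1+D_{1}^{*}D_{1}=W(1+D^{*}D)W^{-1}$ has dense range because $1+D^{*}D$ does; moreover $D_{1}^{*}=WD^{*}W^{-1}=WDW^{-1}=D_{1}$. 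Thus $D_{1}$ is selfadjoint and regular.

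The one point requiring care is the domain bookkeeping needed to upgrade the obvious inclusion of graphs to the genuine operator identity $D_{1}=WDW^{-1}$; this is where the identification $\mathfrak{G}(D)\cong\Dom D$ and the core property in Proposition \ref{core} must be invoked precisely. Everything else — that $W$ is unitary onto the graph module, the commutation $D\mathfrak{r}(D)=\mathfrak{r}(D)D$, and the stability of regularity and selfadjointness under unitary conjugation — is routine.
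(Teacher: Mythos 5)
Your proof is correct, and it takes a genuinely different and more structural route than the paper's.

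The paper proceeds by hand: after observing $\Dom D_{1}=\mathfrak{r}(D)\mathfrak{G}(D)=\Dom D^{2}$, it verifies closedness of $D_{1}$ by a sequence argument, symmetry by a direct inner-product calculation, regularity by checking that $(1+D^{2})\mathfrak{r}(D)^{4}\mathpzc{E}=\mathfrak{r}(D)^{2}\mathpzc{E}$ is dense, and selfadjointness by a separate computation pinning down $\Dom D_{1}^{*}$. You instead notice that the bounded operator $\mathfrak{r}(D)$, reinterpreted as a map $W\colon\mathpzc{E}\to\mathfrak{G}(D)$, is a $B$-linear bijection (injectivity plus $\im\mathfrak{r}(D)=\Dom D$ from Proposition \ref{core}) which preserves the two inner products thanks to the identity $\mathfrak{r}(D)^{2}+\mathfrak{b}(D)^{*}\mathfrak{b}(D)=1$, and hence is a unitary; you then verify $D_{1}=WDW^{-1}$ on the nose, using that $D$ and $\mathfrak{r}(D)$ commute, so that closedness, density of the adjoint's domain, range-density of $1+D_{1}^{*}D_{1}$, and selfadjointness are all inherited at once from $D$. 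Your domain bookkeeping $\Dom(WDW^{-1})=W(\Dom D)=\mathfrak{r}(D)\mathfrak{G}(D)$ is carried out correctly, and the passage $(WDW^{-1})^{*}=WD^{*}W^{-1}$ is legitimate because $W$ and $W^{-1}$ are bounded and invertible. The payoff of your method is that it isolates a clean structural fact — the resolvent is a unitary from $\mathpzc{E}$ onto the graph module — which immediately shows that every Sobolev module $\mathpzc{E}_{i}$ in Corollary \ref{Sobolev} is unitarily isomorphic to $\mathpzc{E}$ and each $D_{i}$ is a unitary conjugate of $D$; the paper's computational proof does not make that visible. The one thing the paper's approach has going for it is that it never needs the (slightly nonstandard but correct) adjoint-of-composition manipulations for unbounded operators, relying only on elementary estimates.
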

\begin{proof} From proposition \ref{core} it follows that \[ (D\pm i)^{-1}\mathfrak{G}(D)=\mathfrak{r}(D)^{2}\mathpzc{E}=\Dom D^{2}.\] $D_{2}$ is closed as an operator in $\mathfrak{G}(D)$ for if $\mathfrak{r}(D)^{2}e_{n}\rightarrow
\mathfrak{r}(D)^{2}e$ and $D\mathfrak{r}(D)^{2}e_{n}\rightarrow e'$ in the topology of $\mathfrak{G}(D)$, then it follows immediately that \[e'=D(D\mathfrak{r}(D)^{2}e)=D^{2}\mathfrak{r}(D)^{2}e.\] It is straightforward to check that
$D_{2}$ is symmetric for the inner product of $\mathfrak{G}(D)$. Hence it is regular, because $(1+D^{2})\mathfrak{r}(D)^{4}\mathpzc{E}=\mathfrak{r}(D)^{2}\mathpzc{E}$. To prove selfadjointness, suppose
$y\in\Dom D$ is such that there exists $z\in\Dom D$ such that for all $x\in\mathfrak{r}(D)^{2}\mathpzc{E}$ $\langle D_{2}x,y \rangle_{1}=\langle x,z\rangle _{1}$. Then $z=Dy$, because
\[\begin{split}\langle Dx,y \rangle_{1} &= \langle Dx,y\rangle +\langle D^{2}x, Dy\rangle \\
&=\langle D\mathfrak{r}(D)^{2}e,y\rangle +\langle D^{2}\mathfrak{r}(D)^{2}e, Dy\rangle \\
&=\langle \mathfrak{r}(D)^{2}e,Dy\rangle +\langle D^{2}\mathfrak{r}(D)^{2}e, Dy\rangle \\
&=\langle e, Dy\rangle. \end{split}\]
A similar computation shows that $\langle x,z\rangle_{1}=\langle e,z\rangle$. Since $\mathfrak{r}(D)^{2}$ is injective this holds for all $e\in\mathpzc{E}$, and hence $z=Dy$. Therefore
\[\Dom D_{2}^{*}=\{y\in\Dom D:Dy\in\Dom D\}=\Dom D^{2}=\mathfrak{r}(D)^{2}\mathpzc{E}=\Dom D_{2},\] so $D_{2}$ is selfadjoint.\end{proof}
\begin{corollary}\label{Sobolev} A selfadjoint regular operator $D:\Dom D\rightarrow\mathpzc{E}$ induces a morphism of inverse systems of $C^{*}$-modules:
\begin{diagram}\cdots &\rTo & \mathpzc{E}_{i+1} &\rTo & \mathpzc{E}_{i} &\rTo & \mathpzc{E}_{i-1} & \rTo & \cdots &\rTo & \mathpzc{E}_{1} &\rTo & \mathpzc{E} \\
                      &\rdTo^{D_{i+1}}&               &\rdTo^{D_{i}}&             &\rdTo^{D_{i-1}}             &  &\rdTo^{D_{i-2}}&  &\rdTo^{D_{2}} &            &\rdTo^{D_{1}=D}&  \\ 
  \cdots &\rTo & \mathpzc{E}_{i+1} &\rTo & \mathpzc{E}_{i} &\rTo & \mathpzc{E}_{i-1} & \rTo & \cdots &\rTo & \mathpzc{E}_{1} &\rTo & \mathpzc{E} \end{diagram}
  \end{corollary}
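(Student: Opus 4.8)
The plan is to prove the corollary by induction, simply iterating the preceding Proposition. Put $\mathpzc{E}_{0}:=\mathpzc{E}$ and $D_{0}:=D$, and suppose inductively that we have produced a $C^{*}$-$B$-module $\mathpzc{E}_{n}$ carrying a selfadjoint regular operator $D_{n}$. One then defines $\mathpzc{E}_{n+1}$ to be $\mathfrak{G}(D_{n})$, naturally identified with $\Dom D_{n}$ and equipped with the graph inner product inherited from $\mathpzc{E}_{n}\oplus\mathpzc{E}_{n}$, namely
\[\langle e,f\rangle_{n+1}:=\langle e,f\rangle_{n}+\langle D_{n}e,D_{n}f\rangle_{n},\]
and $D_{n+1}$ to be the operator $(D_{n})_{1}$ furnished by the Proposition, i.e. the restriction of $D_{n}$ to $\mathfrak{r}(D_{n})\mathfrak{G}(D_{n})$, regarded as an operator on $\mathpzc{E}_{n+1}$. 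The Proposition, applied to $D_{n}$ in the role of $D$, says precisely that $D_{n+1}$ is again selfadjoint and regular, so the induction goes through; and Proposition \ref{core}, applied to $D_{n}$, gives $\Dom D_{n+1}=\mathfrak{r}(D_{n})\mathfrak{G}(D_{n})=\Dom D_{n}^{2}$. Unwinding these identities one sees that, as a module, $\mathpzc{E}_{n}$ is nothing but $\Dom D^{n}$ with the iterated (order-$n$ Sobolev type) graph norm, and that each $D_{n}$ acts on underlying vectors exactly as the original operator $D$ — this is how $(D_{n-1})_{1}$ is defined.

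Next I would observe that the maps appearing in the diagram are continuous $B$-module homomorphisms. From the displayed formula for $\langle\cdot,\cdot\rangle_{n+1}$, together with monotonicity of the norm on positive elements of a $C^{*}$-algebra, one obtains $\|e\|_{n}\leq\|e\|_{n+1}$ and $\|D_{n}e\|_{n}\leq\|e\|_{n+1}$ for every $e\in\mathpzc{E}_{n+1}$. Hence the inclusion $\iota_{n}\colon\mathpzc{E}_{n+1}\hookrightarrow\mathpzc{E}_{n}$ and the map $D_{n}\colon\mathpzc{E}_{n+1}\to\mathpzc{E}_{n}$ (now read as a genuine morphism between $C^{*}$-modules, its source being the graph module) are contractions, and they commute with the $B$-action; that is, both belong to $\Hom_{B}(\mathpzc{E}_{n+1},\mathpzc{E}_{n})$. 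Thus the top and bottom rows of the diagram are honest inverse systems of $C^{*}$-modules with connecting maps $\iota_{n}$, and each diagonal is a morphism $\mathpzc{E}_{n+1}\to\mathpzc{E}_{n}$ in the category of $C^{*}$-modules with continuous module maps.

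It then remains to check that $(D_{n})_{n\geq 0}$ is a morphism of inverse systems, i.e. that every square of the diagram commutes:
\[\iota_{n-1}\circ D_{n}=D_{n-1}\circ\iota_{n}\colon\ \mathpzc{E}_{n+1}\longrightarrow\mathpzc{E}_{n-1}\qquad(n\geq 1).\]
Since the $\iota$'s are the identity on underlying vectors and, as observed above, every $D_{k}$ acts as $D$, both composites send $e\in\mathpzc{E}_{n+1}=\Dom D^{n+1}$ to $De\in\mathpzc{E}_{n-1}$, so the identity holds. I do not expect any real obstacle: the whole statement is an unfolding of the preceding Proposition together with the tautology that the operators $D_{n}$ are all restrictions of $D$. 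The only points deserving a little attention are the bookkeeping of domains and graph norms needed to recognize $\mathpzc{E}_{n}$ as the order-$n$ domain $\Dom D^{n}$, and the remark that the connecting maps $\iota_{n}$ lie in $\Hom_{B}$ but are in general \emph{not} adjointable — they are dense inclusions with respect to a strictly finer norm, precisely the phenomenon exhibited by non-complemented submodules of $C^{*}$-modules.
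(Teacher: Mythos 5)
Your construction is the same as the paper's: iterate the preceding proposition, set $\mathpzc{E}_{n+1}=\mathfrak{G}(D_{n})$ with the graph inner product, take $D_{n+1}=(D_{n})_{1}$, and observe that the squares commute because all maps are restrictions of $D$ to smaller domains. That part is fine.

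Your closing remark, however, is wrong and directly contradicts the point the paper's proof is making. You assert that the connecting maps $\iota_{n}\colon\mathpzc{E}_{n+1}\hookrightarrow\mathpzc{E}_{n}$ are ``in general not adjointable,'' on the grounds that they are dense inclusions for a strictly finer norm. But adjointability has nothing to do with the image being closed or complemented; it only asks for an operator $\iota_{n}^{*}\colon\mathpzc{E}_{n}\to\mathpzc{E}_{n+1}$ satisfying the defining identity. Here such an adjoint exists and is easy to write down: identifying $\mathpzc{E}_{n+1}$ with $\mathfrak{G}(D_{n})\subset\mathpzc{E}_{n}\oplus\mathpzc{E}_{n}$, the map $\iota_{n}$ is the projection onto the first coordinate, and $D_{n}$ (as a map $\mathpzc{E}_{n+1}\to\mathpzc{E}_{n}$) is the projection onto the second coordinate; their adjoints are
\[
\iota_{n}^{*}(f)=\bigl(\mathfrak{r}(D_{n})^{2}f,\ D_{n}\mathfrak{r}(D_{n})^{2}f\bigr),\qquad
D_{n}^{*}(f)=\bigl(D_{n}\mathfrak{r}(D_{n})^{2}f,\ D_{n}^{2}\mathfrak{r}(D_{n})^{2}f\bigr),
\]
which are exactly the columns of the Woronowicz projection $p_{D_{n}}$ of equation \ref{Wor}. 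One checks, using selfadjointness of $D_{n}$ on the core $\Dom D_{n}^{2}$, that $\langle \iota_{n}(e,D_{n}e),f\rangle_{n}=\langle e,f\rangle_{n}=\langle e,(1+D_{n}^{2})\mathfrak{r}(D_{n})^{2}f\rangle_{n}=\langle (e,D_{n}e),\iota_{n}^{*}f\rangle_{n+1}$, and similarly for $D_{n}$. So both families of maps lie in $\Hom^{*}_{B}$, not merely in $\Hom_{B}$; this is precisely what the paper's proof records, and it is used later (e.g.\ when restricting Sobolev chains to rigged submodules). You should drop the non-adjointability remark and instead note the explicit adjoints coming from $p_{D}$.
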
    
  \begin{proof} Set $\mathpzc{E}_{i}=\mathfrak{G}(D_{i}).$ Then the maps $\mathpzc{E}_{i}\rightarrow \mathpzc{E}_{i-1}$ are just projection on the first coordinate, whereas the maps
  $D_{i}:\mathpzc{E}_{i}\rightarrow\mathpzc{E}_{i-1}$ are the projections on the second coordinates. These maps are adjointable, and we have 
  \[D_{i}^{*}(e_{i})=(D_{i}\mathfrak{r}(D_{i})^{2}e_{i}, D^{2}_{i}\mathfrak{r}(D_{i})^{2}e_{i}),\quad \phi_{i}^{*}(e_{i})=(\mathfrak{r}(D_{i})^{2},D_{i}\mathfrak{r}(D_{i})^{2}).\] These are exactly the components of the
Woronowicz projection \ref{Wor}.\end{proof}
We will refer to this inverse system as the \emph{Sobolev chain} of $D$. Almost selfadjoint operators were introduced by Kucerovsky in \cite{Kuc2}. They are adjointable perturbations of selfadjoint operators.
\begin{definition}\label{almostdef} Let $D$ be a regular operator in a $C^{*}$-$B$-module $\mathpzc{E}$. $D$ is \emph{almost selfadjoint} if $\mathfrak{Dom} D=\mathfrak{Dom}D^{*}$ and  $D-D^{*}$ extends to an element in $\Endst_{B}(\mathpzc{E}).$
\end{definition}
 The following result is implicit in \cite{Kuc2}.
\begin{proposition}\label{almost} Let $D$ be an almost selfadjoint operator on a $C^{*}$-$B$-module $\mathpzc{E}$ and $b=D^{*}-D\in\Endst_{B}(\mathpzc{E})$. For $|\lambda|>\|b\|$, the operators $D+\lambda i$, $D^{*}-\lambda i$ are bijections $\mathfrak{Dom} D\rightarrow\mathpzc{E}$.
\end{proposition} 
\begin{proof} The operator $T:=D+D^{*}$ is selfadjoint, and $D=T+b$.  The operators $T+\lambda i$ are bijections $\mathfrak{Dom} D\rightarrow \mathpzc{E}$, and $\|(T+\lambda i)^{-1}\|\leq  \frac{1}{\lambda}$.  Since
\[(D+\lambda i)(T+\lambda i)^{-1}= 1 + b(T+\lambda i)^{-1},\]
and $1 + b(T+\lambda i)^{-1}$ is invertible whenever $|\lambda|>\|b\|$,  we see that $D+\lambda i$ is surjective.
It is injective because
\[\begin{split}\langle (D+\lambda i )e,(D+\lambda i)e\rangle &=\langle De, De\rangle -\lambda i\langle e, De\rangle +\lambda i\langle De, e\rangle +\lambda^{2}\langle e,e\rangle \\
&= \langle De, De\rangle -\lambda i\langle be, e\rangle +\lambda^{2}\langle e,e\rangle \\
&\geq\langle(\lambda ib+\lambda^{2})e,e\rangle +\lambda^{2}\langle e,e\rangle\\
&\geq\lambda^{2}\langle e,e\rangle.\end{split}\]

Reversing the roles of $D$ and $D^{*}$ shows that $D^{*}-\lambda i$ is bijective as well.

\end{proof}
\begin{corollary}\label{idempotent} Let $D$ be an almost selfadjoint regular operator in $\mathpzc{E}$. Then $\Dom D^{*}D=\Dom D^{2}$, and the operator
\[1+\frac{D^{2}}{\lambda^{2}}:\Dom D^{2}\rightarrow \mathpzc{E},\] is bijective for $\lambda$ sufficiently large. Moreover, define
\[p:=\begin{pmatrix}(1+\frac{D^{2}}{\lambda^{2}})^{-1} & \frac{D}{\lambda^{2}}(1+\frac{D^{2}}{\lambda^{2}})^{-1}\\
D(1+\frac{D^{2}}{\lambda^{2}})^{-1} &  \frac{D^{2}}{\lambda^{2}}(1+\frac{D^{2}}{\lambda^{2}})^{-1}\end{pmatrix},\quad v_{\lambda}:=\begin{pmatrix} 0&-\lambda^{-1} \\ \lambda &0\end{pmatrix}, \]
then $p$ is an idempotent and $v_{\lambda}$ an invertible in $\Endst_{B}(\mathpzc{E})$ such that $\mathfrak{Im} p=\mathfrak{G}(D)$ and $v_{\lambda}pv^{-1}_{\lambda}=1-p$.
\end{corollary}
\begin{proof}Since $\Dom D^{*}=\Dom D$, we have $\Dom D^{*}D=\Dom D^{2}$. By proposition \ref{almost} $D\pm \lambda i$ are bijections $\Dom D\rightarrow \mathpzc{E}$. Thus, 
\[\lambda^{2}+D^{2}=(D+\lambda i)(D-\lambda i):\Dom D^{2}\rightarrow \mathpzc{E},\]
bijectively as well. Moreover, the inverse $(\lambda^{2}+D^{2})^{-1}=(D+\lambda i)^{-1}(D-\lambda i)^{-1}$ is bounded and adjointable. That $p$ is idempotent is now easily checked, as well the property $v_{\lambda}pv^{-1}_{\lambda}=1-p$. It is immediate that $\mathfrak{Im} p \subset\mathfrak{G}(D)$ and $\mathfrak{Im} p^{*} \subset\mathfrak{G}(D^{*})$. Therefore
\[\ker p=\mathfrak{Im}(1-p^{*})=\mathfrak{Im}vp^{*}v^{*}\subset v\mathfrak{G}(D^{*}),\]
which implies that $\mathfrak{Im} p=\mathfrak{G}(D)$.
\end{proof}
Thus, for an almost selfadjoint operator there is an invertible adjointable operator
\[g:\mathfrak{G}(D)\oplus v_{\lambda}\mathfrak{G}(D)\xrightarrow{\sim}\mathpzc{E}\oplus\mathpzc{E},\] 
which is a key example of the following definition.
\begin{definition}\label{topiso} Two $C^{*}$-modules $\mathpzc{E}\lrh B$ and $\mathpzc{F}\lrh B$ are \emph{topologically isomorphic} if there are $g\in\Homst_{B}(\mathpzc{E},\mathpzc{F})$ and $g^{-1}\in\Homst_{B}(\mathpzc{F},\mathpzc{E})$ with \[gg^{-1}=1_{\mathpzc{F}},\quad g^{-1}g=1_{\mathpzc{E}}.\] Such a $g$ is called a \emph{topological isomorphism}.
\end{definition}

\begin{proposition} An almost selfadjoint operator $D$ is almost selfadjoint in its own graph, and hence induces a Sobolev chain as in the selfadjoint case.
\end{proposition}
\begin{proof} We define $D$ in its own graph on the domain $(D+\lambda i)^{-1}\mathfrak{G}(D)$. This is dense since $\Dom D^{*}D=\Dom D^{2}$ is a core for $D$. It is straightforward to check that $D_{2}$ is closed on this domain, and that $2R=D_{2}-D_{2}^{*}$ is bounded adjointable. Moreover, by definition $D_{2}+\lambda i$ is surjective and has adjointable inverse. Therefore
\[\frac{1}{2}(D_{2}+D_{2}^{*}+\lambda i)(D_{2}+\lambda i)^{-1}=1+R(D_{2}+\lambda i)^{-1},\] is invertible for $\lambda $ sufficiently large, and $D_{2}+D_{2}^{*}$ is selfadjoint.
\end{proof}
\section{$KK$-theory}
Kasparov's bivariant $K$-theory $KK$ \cite{Kas} has become a central tool in noncommutative geometry since its creation. It is a bifunctor on pairs of $C^{*}$-algebras, associating to $(A,B)$ a $\Z/2\Z$-graded group $KK_{*}(A,B)$.
It unifies $K$-theory and $K$-homology in the sense that \[KK_{*}(\C,B)\cong K_{*}(B) \textnormal{  and  } KK_{*}(A,\C)\cong K^{*}(A).\]
Much of its usefulness comes from the existence of internal and external product structures, by which $KK$-elements induce homomorphisms between $K$-theory and $K$-homology groups. In Kasparov's original approach, the
definition and computation of the products is very complicated. In order to simplify the external product, Baaj and Julg \cite{BJ} introduced another model for $KK$, in which the external product is given by a simple
algebraic formula. The price one has to pay is working with unbounded operators.
\subsection{The bounded picture}
The main idea behind Kasparov's approach to $K$-homology and $KK$-theory is that of a family of abstract elliptic operators. This was an idea pioneered by Atiyah, in his construction of $K$-homology for spaces and the
family index theorem. 
\begin{definition}[\cite{Kas}]\label{Kasparov} Let $A\rightarrow\mathpzc{E}\leftrightharpoons B$ be a graded bimodule and $F\in\Endst_{B}(\mathpzc{E})$ an odd operator. $(\mathpzc{E},F)$ is a \emph{Kasparov $(A,B)$-bimodule} if, for all $a\in A$, 
\begin{itemize}\item $ [F,a], a(F^{2}-1),a(F-F^{*})\in\K_{B}(\mathpzc{E})$.\end{itemize}
\end{definition}
The set of Kasparov modules up to unitary equivalence is denoted $\mathbb{E}_{0}(A,B)$, and $\mathbb{E}_{j}(A,B):=\mathbb{E}_{0}(A,B\minotimes\C_{j})$, where $\C_{j}$ is the $j$-th complex Clifford algebra.
The set of \emph{degenerate} elements consists of bimodules for which
 \[\forall a\in A:[F,a]=a(F^{2}-1)=a(F-F^{*})=0.\] Denote by $e_{i}:C[0,1]\minotimes B\rightarrow B$ the evalution map at $i\in[0,1]$. Two Kasparov $(A,B)$-bimodules $(\mathpzc{E}_{i},F_{i})\in\mathbb{E}_{j}(A,B)$, $i=0,1$ 
 are \emph{homotopic} if there exists a Kasparov $(A,C[0,1]\minotimes B)$-module $(\mathpzc{E},F)\in\mathbb{E}_{j}(A,C[0,1]\otimes B)$ for which $(\mathpzc{E}\otimes_{e_{i}}B,F\otimes 1)$ is unitarily equivalent to $(\mathpzc{E}_{i},F_{i})$, $i=0,1$. It is an equivalence relation, denoted $\sim$. Define
\[KK_{j}(A,B):=\mathbb{E}_{j}(A,B)/\sim.\] 
$KK_{j}$ is a bifunctor, contravariant in $A$, covariant in $B$, taking values in abelian groups. It is not hard to show that $KK_{*}(\C,A)$ and $KK_{*}(A,\C)$ are naturally isomorphic to the $K$-theory and $K$-homology of
$A$, respectively. Moreover, Kasparov proved the following deep theorem.
\begin{theorem}[\cite{Kas}] For any $C^{*}$-algebras $A,B,C$ there exists an associative bilinear pairing
\[KK_{i}(A,B)\otimes_{\Z} KK_{j}(B,C)\xrightarrow{\otimes_{B}} KK_{i+j}(A,C).\] Therefore, the groups $KK_{*}(A,B)$ are the morphism sets of a category $KK$ whose objects are all $C^{*}$-algebras.
\end{theorem}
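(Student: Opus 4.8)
The statement is Kasparov's theorem, so the plan is to construct his intersection product in the bounded picture of \ref{Kasparov}, with the technical theorem of \cite{Kas} as the analytic engine; no unbounded machinery is needed. Fix classes represented by Kasparov modules $(\mathpzc{E}_{1},F_{1})\in\mathbb{E}_{i}(A,B)$ and $(\mathpzc{E}_{2},F_{2})\in\mathbb{E}_{j}(B,C)$, so that $\mathpzc{E}_{1}$ is a graded $C^{*}$-module over $B\minotimes\C_{i}$ carrying a left $A$-action, and $\mathpzc{E}_{2}$ a graded $C^{*}$-module over $C\minotimes\C_{j}$ carrying a left $B$-action. Form the internal tensor product $\mathpzc{E}:=\mathpzc{E}_{1}\tildeotimes_{B}\mathpzc{E}_{2}$, a graded $C^{*}$-module over $C\minotimes\C_{i+j}$ (the surviving right $\C_{i}$-action on $\mathpzc{E}_{1}$ supplying the extra Clifford factor) carrying a left $A$-action. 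On $\mathpzc{E}$ the operator $F_{1}\otimes 1$ is odd and adjointable, but $1\otimes F_{2}$ is meaningless since $F_{2}$ is not $B$-linear. Following Connes--Skandalis \cite{CS} I would replace it by an \emph{$F_{2}$-connection}: an odd $G\in\Endst_{C}(\mathpzc{E})$ such that for every $e\in\mathpzc{E}_{1}$ the maps $T_{e}\colon f\mapsto e\otimes f$ satisfy $GT_{e}-(-1)^{\partial e}T_{e}F_{2}\in\K_{C}(\mathpzc{E}_{2},\mathpzc{E})$ and similarly for the adjoints. Such connections exist --- build one from a quasi-central approximate unit of $\K_{B}(\mathpzc{E}_{1})$, using Theorem \ref{stab} to reduce to the standard module --- and the set of all of them is affine over the operators that are locally compact on $\mathpzc{E}$.

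The candidate for the product is $F:=M^{\frac12}(F_{1}\otimes 1)+N^{\frac12}G$, where $M,N$ are produced by Kasparov's technical theorem. I would first record that theorem in the form: given a $\sigma$-unital $C^{*}$-algebra $J$, $\sigma$-unital $C^{*}$-subalgebras $A_{1},A_{2}$ of the multiplier algebra with $A_{1}A_{2}\subset J$, and a separable self-adjoint subspace $\Delta$ of the multiplier algebra with $[\Delta,A_{1}]\subset A_{1}$, there are positive contractions $M,N$ with $M+N=1$, $MA_{1}\subset J$, $NA_{2}\subset J$ and $[\Delta,N]\subset J$. I apply it with $J=\K_{C}(\mathpzc{E})$, with $A_{1}$ the (separable) subalgebra generated by the operators $a\big((F_{1}\otimes1)^{2}-1\big)$, $a\big(F_{1}\otimes1-(F_{1}\otimes1)^{*}\big)$ and $[a,F_{1}\otimes1]$ for $a\in A$ --- which are $B$-compact but only bounded on $\mathpzc{E}$ --- together with $A_{2}$ the unitisation of $A$ and $\Delta$ a separable subspace containing $A$, $F_{1}\otimes1$ and $G$; one checks $A_{1}A_{2}\subset J$. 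A direct computation then shows that the graded commutator of $F$ with $A$ is absorbed into $J$ by the derivation property of $\Delta$, that $a(F^{2}-1)$ splits as $M$ times a defect of $F_{1}\otimes1$, plus $N$ times a defect of $G$, plus a locally compact cross term killed again by the absorption properties of $M,N$, and that $a(F-F^{*})\in J$. Hence $(\mathpzc{E},F)$ satisfies the three conditions of \ref{Kasparov}, and one defines $x\otimes_{B}y:=[(\mathpzc{E},F)]\in KK_{i+j}(A,C)$.

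It then remains to check well-definedness and the elementary properties. Independence of the choice of $G$ and of the pair $(M,N)$ follows from the affine structure: the straight-line homotopies give Kasparov $(A,C[0,1]\minotimes C)$-modules, so the class is unchanged; independence of the representatives within their $\sim$-classes is obtained by running the entire construction over $C[0,1]$. Bilinearity with respect to the direct-sum group structure, and naturality in the outer variables $A$ and $C$, are then immediate. For the units I take $1_{A}:=[(A,0)]\in KK_{0}(A,A)$, the identity correspondence with zero operator --- a legitimate Kasparov module since $a(0^{2}-1)=-a\in\K_{A}(A)\cong A$ (pass to the doubling \eqref{odd} if one insists on the graded normalisation); the canonical isomorphisms $A\tildeotimes_{A}\mathpzc{E}\cong\mathpzc{E}\cong\mathpzc{E}\tildeotimes_{B}B$ identify the product with the original module and collapse the connection onto $F$, giving $1_{A}\otimes_{A}x=x=x\otimes_{B}1_{B}$.

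The main obstacle is \textbf{associativity}: for $z\in KK_{k}(C,D)$ one must prove $(x\otimes_{B}y)\otimes_{C}z=x\otimes_{B}(y\otimes_{C}z)$ in $KK_{i+j+k}(A,D)$. The strategy is to define one symmetric \emph{triple product} on $\mathpzc{E}_{1}\tildeotimes_{B}\mathpzc{E}_{2}\tildeotimes_{C}\mathpzc{E}_{3}$, built from an $F_{2}$-connection $G_{2}$ and a compatible $F_{3}$-connection $G_{3}$ together with a three-target application of the technical theorem yielding positive contractions with $M_{1}+M_{2}+M_{3}=1$ and $F=M_{1}^{1/2}(F_{1}\otimes1\otimes1)+M_{2}^{1/2}(\text{lift of }G_{2})+M_{3}^{1/2}G_{3}$; one then shows, by deforming connections and applying the homotopy invariance already established, that each of the two iterated products is operator-homotopic to this triple product. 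This is where essentially all the analytic difficulty concentrates --- the flexibility in the choice of connections and the stability of the Kasparov conditions under such deformations are precisely what makes the argument close. Once associativity, bilinearity and the unit laws are in hand, the assertion that the groups $KK_{*}(A,B)$ are the morphism sets of a category with objects the $C^{*}$-algebras is purely formal.
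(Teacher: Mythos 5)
The paper states this theorem as a result of Kasparov and gives no proof, so there is no argument in the text for your proposal to match or diverge from; what you have written is an independent outline. It follows the canonical Connes--Skandalis / Kasparov construction of the product --- internal tensor product of modules, $F_{2}$-connections, Kasparov's Technical Theorem producing a partition of unity $M+N=1$, the formula $F=M^{1/2}(F_{1}\otimes1)+N^{1/2}G$, then homotopy arguments for well-definedness, units, and associativity --- which is the correct strategy.

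Two of the details do not close, however. Your instantiation of the Technical Theorem is inconsistent: you take $J=\K_{C}(\mathpzc{E})$, $A_{1}$ generated by $a((F_{1}\otimes1)^{2}-1)$, $a(F_{1}\otimes1-(F_{1}\otimes1)^{*})$, $[a,F_{1}\otimes1]$ for $a\in A$, and $A_{2}=\tilde A$, and you assert $A_{1}A_{2}\subset J$. But each of those generators lies in $\K_{B}(\mathpzc{E}_{1})\otimes1$, and $(\K_{B}(\mathpzc{E}_{1})\otimes1)\cdot A\subset\K_{B}(\mathpzc{E}_{1})\otimes1$ is \emph{not} contained in $\K_{C}(\mathpzc{E})$ unless $B$ happens to act on $\mathpzc{E}_{2}$ by $C$-compacts; your $A_{1}$ also contains no strictly positive element of $J$, which the Technical Theorem requires. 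The standard input is $A_{1}$ generated by $\K_{B}(\mathpzc{E}_{1})\otimes1$ together with $J$ (this is also what forces $M^{1/2}T_{e}$ to be $C$-compact and hence makes $F$ itself an $F_{2}$-connection), and $A_{2}$ the $C^{*}$-algebra generated by the $G$-defects $a(G^{2}-1)$, $a(G-G^{*})$, $[G,a]$ and the cross term $[G,F_{1}\otimes1]$; the connection property of $G$ is precisely what places $(\K_{B}(\mathpzc{E}_{1})\otimes1)\cdot A_{2}$ inside $J$. Secondly, your associativity paragraph is a plan, not an argument: producing the three-way partition $M_{1}+M_{2}+M_{3}=1$ compatibly with both iterated applications, and exhibiting the operator homotopies from each iterated product to the triple product, is where essentially all the work in Kasparov's or Connes--Skandalis' proof lies, and those steps are asserted rather than carried out. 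Since the paper supplies no proof of its own, none of this reflects on the paper; it is an assessment of your blind sketch.
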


There also is a notion of external product in $KK$-theory.
\begin{theorem}[\cite{Kas}]For any $C^{*}$-algebras $A,B,C,D$ there exists an associative bilinear pairing
\[KK_{i}(A,C)\otimes_{\Z} KK_{j}(B,D)\xrightarrow{\minotimes} KK_{i+j}(A\minotimes B,C\minotimes D).\]
The external product makes $KK$ into a symmetric monoidal category
\end{theorem}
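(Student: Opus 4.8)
The plan is to derive the external product from the internal Kasparov product, which is already in hand. The mechanism is \emph{dilation}. For a fixed $C^{*}$-algebra $D$, send a Kasparov cycle $(\mathpzc{E},F)$ over $C$ carrying an $A$-action $\pi_{A}$ to the cycle $(\mathpzc{E}\minotimes D,\,F\otimes 1)$ over $C\minotimes D$, with $A\minotimes D$ acting through $\pi_{A}\minotimes\lambda$, where $\lambda\colon D\to\K_{D}(D)$ is left multiplication (recall $\K_{D}(D)=D$ acting on itself by left multiplication). Then $\pi_{A}(a)(F^{2}-1)\otimes\lambda(d)$, $[F,\pi_{A}(a)]\otimes\lambda(d)$ and $\pi_{A}(a)(F-F^{*})\otimes\lambda(d)$ all lie in $\K_{C}(\mathpzc{E})\minotimes\K_{D}(D)=\K_{C\minotimes D}(\mathpzc{E}\minotimes D)$, so the result is again a Kasparov module; tensoring a homotopy with $D$ and using that evaluation maps commute with $-\minotimes D$ shows that the construction descends to an additive map $\tau_{D}\colon KK_{i}(A,C)\to KK_{i}(A\minotimes D,C\minotimes D)$. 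Symmetrically there is a left dilation $\tau^{C}\colon KK_{j}(B,D)\to KK_{j}(C\minotimes B,C\minotimes D)$. I would then define, for $x\in KK_{i}(A,C)$ and $y\in KK_{j}(B,D)$,
\[
x\minotimes y\ :=\ \tau_{B}(x)\ \otimes_{C\minotimes B}\ \tau^{C}(y)\ \in\ KK_{i+j}(A\minotimes B,\,C\minotimes D),
\]
the internal product being taken over $C\minotimes B$ (the Clifford degrees adding up). Bilinearity is then immediate from additivity of the dilations and bilinearity of the internal product, and $1\in KK_{0}(\C,\C)$ is a two-sided unit after the identifications $\C\minotimes A\cong A\cong A\minotimes\C$.

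For associativity I would first establish the required compatibilities of the dilation maps: $\tau_{D'}\circ\tau_{D}=\tau_{D\minotimes D'}$ up to the associator of the exterior tensor product, the analogous statement for left dilations, the commutation of left and right dilations carried on disjoint tensor factors, and — crucially — that dilation commutes with the internal product, $\tau_{D}(p\otimes_{M}q)=\tau_{D}(p)\otimes_{M\minotimes D}\tau_{D}(q)$. Granting these, for $x\in KK_{i}(A,C)$, $y\in KK_{j}(B,D)$ and $z\in KK_{k}(E,G)$ both $(x\minotimes y)\minotimes z$ and $x\minotimes(y\minotimes z)$ unfold to the \emph{same} iterated internal product of suitably dilated copies of $x$, $y$, $z$, and hence coincide by associativity of the internal Kasparov product. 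Consequently the groups $KK_{i+j}(A\minotimes B,C\minotimes D)$, with $\minotimes$ on objects and the external product on morphisms, form a monoidal category with unit object $\C$. The associativity and unit constraints are the $KK$-classes of the $C^{*}$-algebra isomorphisms $(A\minotimes B)\minotimes C\cong A\minotimes(B\minotimes C)$ and $\C\minotimes A\cong A\cong A\minotimes\C$ — invertible in $KK$ because invertible already at the $C^{*}$-level — and the pentagon and triangle coherences hold in $KK$ because they hold as identities of $*$-homomorphisms. Symmetry is witnessed by the flip isomorphism $A\minotimes B\xrightarrow{\sim}B\minotimes A$, whose $KK$-class squares to the identity and satisfies the hexagon for the same reason; its naturality against the external product, $\sigma_{*}(x\minotimes y)=(-1)^{ij}\,y\minotimes x$, is obtained by carrying the graded flip unitary $e\otimes f\mapsto(-1)^{\partial e\,\partial f}f\otimes e$ of the underlying modules through the two dilated presentations, which is where the Koszul sign is produced.

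The hard part is the single assertion that $\tau_{D}$ commutes with the internal Kasparov product. This rests on the technical machinery behind that product — Kasparov's technical theorem, or, equivalently, the characterization of the product by a connection condition in the manner of Connes--Skandalis, which is precisely the circle of ideas developed in Sections~5 and~6 of this paper. Once that input is granted, every remaining step is formal bookkeeping of gradings and canonical isomorphisms. It is worth stressing that the issue cannot be sidestepped by the obvious operator $F:=F_{1}\otimes 1+\sqrt{1-F_{1}^{2}}\otimes F_{2}$ on $\mathpzc{E}_{1}\minotimes\mathpzc{E}_{2}$ (after arranging $F_{1}=F_{1}^{*}$, $\|F_{1}\|\le 1$ and likewise for $F_{2}$): although then $F^{2}-1=-(1-F_{1}^{2})\otimes(1-F_{2}^{2})$ is well behaved, the commutator $[F_{1}\otimes 1,\pi_{A}(a)\otimes\pi_{B}(b)]=[F_{1},\pi_{A}(a)]\otimes\pi_{B}(b)$ need not be $(C\minotimes D)$-compact, so a connection-type correction — the same machinery once more — is unavoidable.
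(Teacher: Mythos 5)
The paper states this result without proof, citing Kasparov directly, so there is no in-paper argument to compare against; I therefore assess your proposal on its own merits. Your route is precisely Kasparov's: reduce the external product to the internal one by dilation (Kasparov's $\sigma_{D}$-maps), define $x\minotimes y$ as a single internal product of suitably dilated cycles over the middle algebra $C\minotimes B$, and let bilinearity, the unit axiom, and the associativity, pentagon and hexagon coherences all fall out of properties of the internal product together with the fact that the relevant constraints are already isomorphisms of $C^{*}$-algebras. Your aside on why the naive operator $F_{1}\otimes 1+\sqrt{1-F_{1}^{2}}\otimes F_{2}$ does not immediately give a Kasparov module is correct and worth making explicit: the point is that $\K_{C}(\mathpzc{E}_{1})\minotimes\Endst_{D}(\mathpzc{E}_{2})$ is not contained in $\K_{C\minotimes D}(\mathpzc{E}_{1}\minotimes\mathpzc{E}_{2})=\K_{C}(\mathpzc{E}_{1})\minotimes\K_{D}(\mathpzc{E}_{2})$, so $[F_{1},\pi_{A}(a)]\otimes\pi_{B}(b)$ need not be compact.

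The single step you explicitly defer — that dilation commutes with the internal Kasparov product — is where all the analysis resides, and as a proof your text is incomplete without it. But you have localized the difficulty correctly, and it is fair to invoke Kasparov's technical theorem or the Connes--Skandalis connection characterization as established prerequisites. You could also note that, in the framework of this paper, there is a more economical alternative: by Baaj--Julg every bounded $KK$-class lifts to an unbounded cycle, and the paper's Theorem \ref{ext} shows $D_{1}\otimes 1+1\otimes D_{2}$ is selfadjoint, regular and has compact resolvent on $\mathpzc{E}_{1}\minotimes\mathpzc{E}_{2}$, so that its bounded transform is already a Kasparov module for $(A\minotimes B, C\minotimes D)$. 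That construction produces the external product directly, and the compactness of the resolvent is handled by the elementary integral formula of Lemma \ref{resolventintegral} rather than by the full technical theorem; it is exactly the simplification the unbounded picture was designed to provide. Either route is valid; yours follows Kasparov's original.
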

The category $KK$ has more remarkable properties. Although we will not use them in this paper, we do believe they deserve a brief mention. It was shown by Cuntz and Higson (\cite{Cuntz},\cite{Hig}) that the category $KK$ is universal in the sense that any split exact stable functor from the category of $C^{*}$-algebras to, say, that of
abelian groups, factors through the category $KK$. Altough it fails to be abelian, $KK$ is a triangulated category. This allows for the development of homological algebra in it, which has special interest in relation to the
Baum-Connes conjecture, an approach pursued by Nest and Meyer \cite{Nest}.
\subsection{The unbounded picture}
One can define $KK$-theory using unbounded operators on $C^{*}$-modules. As the
bounded definition corresponds to abstract order zero elliptic
pseudodifferential operators, the unbounded version corresponds to order one
operators.
\begin{definition}[\cite{BJ}]\label{unbounded} Let $A\rightarrow\mathpzc{E}\leftrightharpoons B$ be a graded bimodule and $D:\Dom D\rightarrow\mathpzc{E}$ an odd selfadjoint regular operator. The pair $(\mathpzc{E},D)$ is an \emph{$KK$-cycle} for  $(A,B)$ if, 
for all $a\in \mathcal{A}$, a dense subalgebra of $A$ 
\begin{itemize}\item $a\Dom D\subset\Dom D$ and $ [D,a]$ extends to an operator in $\Endst_{B}(\mathpzc{E})$\item $a\mathfrak{r}(D)\in\K_{B}(\mathpzc{E})$.\end{itemize}
\end{definition}
Denote the set of KK-cycles for $(A,B\tildeotimes\C_{i})$ modulo unitary equivalence by $\Psi_{i}(A,B)$. As in the bounded case, we will refer to elements of $\Psi_{0}$
as \emph{even} unbounded bimodules. In \cite{BJ} it is shown that $(\mathpzc{E},\mathfrak{b}(D))$ is a Kasparov bimodule, and that every element in $KK_{*}(A,B)$ can be represented by an unbounded bimodule. The motivation for introducing unbounded modules is
the following result.
\begin{theorem}[\cite{BJ}]\label{ext} Let $(\mathpzc{E}_{i},D_{i})$ be unbounded bimodules for $(A_{i},B_{i})$, $i=1,2$. The operator
\[D_{1}\otimes 1+1\otimes D_{2}:\Dom D_{1}\otimes \Dom D_{2}\rightarrow \mathpzc{E}\otimes\mathpzc{F},\]
extends to a selfadjoint regular operator with compact resolvent. Moreover,
the diagram
\begin{diagram}\Psi_{i}(A_{1},B_{1})\times\Psi_{j}(A_{2},B_{2})&\rTo &\Psi_{i+j}(A_{1}\minotimes A_{2},B_{1}\minotimes B_{2})\\
\dTo^{\mathfrak{b}} & & \dTo^{\mathfrak{b}}\\
KK_{i}(A_{1},B_{1})\times KK_{j}(A_{2},B_{2})&\rTo^{\minotimes} & KK_{i+j}(A_{1}\minotimes A_{2},B_{1}\minotimes B_{2})\end{diagram}
commutes.\end{theorem}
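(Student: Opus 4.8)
The statement contains two separate assertions: that $D:=D_1\otimes 1+1\otimes D_2$ extends to a selfadjoint regular operator with compact resolvent, so that $(\mathpzc{E}_1\minotimes\mathpzc{E}_2,D)$ is an unbounded bimodule, and that $\mathfrak{b}(D)$ represents the Kasparov external product. The Clifford degrees are inert for the analysis: the external product of a $\Psi_i$--class and a $\Psi_j$--class lands in $\Psi_{i+j}$ because $\C_i\minotimes\C_j\cong\C_{i+j}$ and graded tensor products rearrange, so one may carry the Clifford factors passively along and discuss only the analytic content. Writing $\tilde D_1:=D_1\otimes 1$ and $\tilde D_2:=1\otimes D_2$, the sign convention $(\phi\otimes\psi)(e\otimes f)=(-1)^{\partial e\,\partial\psi}\phi(e)\otimes\psi(f)$ and oddness of $D_2$ give $\tilde D_1\tilde D_2=-\tilde D_2\tilde D_1$ on $\Dom D_1\otimes_{\mathrm{alg}}\Dom D_2$, hence $D^2=\tilde D_1^2+\tilde D_2^2=D_1^2\otimes 1+1\otimes D_2^2=:\Delta$ on the core $\Dom D_1^2\otimes_{\mathrm{alg}}\Dom D_2^2$, on which $D$ is symmetric.

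For self-adjointness and regularity I would produce $\mathfrak{r}(D)^2$ by hand. The operators $\tilde D_1^2,\tilde D_2^2$ are commuting positive selfadjoint regular operators, with $\mathfrak{r}(\tilde D_1)^2=\mathfrak{r}(D_1)^2\otimes 1$ and $\mathfrak{r}(\tilde D_2)^2=1\otimes\mathfrak{r}(D_2)^2$, so by the (strongly commuting) joint functional calculus of Theorem \ref{funcalc} one has $e^{-x\Delta}=e^{-x D_1^2}\otimes e^{-x D_2^2}$, a product of two contractions, and therefore, exactly as in Corollary \ref{resolventintegral},
\[\rho:=\int_0^\infty e^{-x}\big(e^{-x D_1^2}\otimes e^{-x D_2^2}\big)\,dx\]
converges in operator norm (the integrand has norm $\le e^{-x}$). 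A direct check shows $\rho$ is positive and injective with dense range containing the core, that $\rho^{-1}-1$ is selfadjoint regular and equals $\Delta$ on the core, and, by differentiating the heat semigroup, that $D$ carries the range of $\rho$ into $\Dom D$ with $D^2=\rho^{-1}-1$ there; since $D$ is symmetric and that range is a core for it, $D$ is forced to be selfadjoint regular with $\mathfrak{r}(D)^2=\rho$. (Alternatively one may cite from \cite{BJ},\cite{Lan} that the sum of two strongly anticommuting regular selfadjoint operators is regular selfadjoint, or invoke the Woronowicz graph criterion \cite{Wor}.)

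The compact-resolvent statement is the point genuinely special to $C^{*}$-modules: the Hilbert-space reflex ``compact $\minotimes$ bounded is compact'' fails, since $1\notin\K_{B_2}(\mathpzc{E}_2)$ in general. Instead, for $a=a_1\otimes a_2$ with $a_i$ in the dense subalgebras $\mathcal A_i\subset A_i$ of Definition \ref{unbounded},
\[a\,\mathfrak{r}(D)^2=\int_0^\infty e^{-x}\big(a_1e^{-x D_1^2}\otimes a_2e^{-x D_2^2}\big)\,dx,\]
and for each $x>0$ the factor $a_ie^{-x D_i^2}=(a_i\mathfrak{r}(D_i))\cdot\big((1+D_i^2)^{1/2}e^{-x D_i^2}\big)$ is the product of a $B_i$-compact operator with an adjointable one, hence is $B_i$-compact; by the isomorphism $\K_{B_1}(\mathpzc{E}_1)\minotimes\K_{B_2}(\mathpzc{E}_2)\cong\K_{B_1\minotimes B_2}(\mathpzc{E}_1\minotimes\mathpzc{E}_2)$ the whole integrand is compact, and since the integral converges in norm, $a\,\mathfrak{r}(D)^2\in\K$; a routine functional-calculus argument (the $f\in C_0(\R)$ with $af(D)\in\K$ form a closed ideal containing the nowhere-vanishing $(1+t^2)^{-1}$) then gives $a\,\mathfrak{r}(D)\in\K$. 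Boundedness of $[D,a_1\otimes a_2]$ on the core follows from the graded Leibniz rule and boundedness of $[D_i,a_i]$, and $\mathcal A_1\odot\mathcal A_2$ is dense in $A_1\minotimes A_2$; so $(\mathpzc{E}_1\minotimes\mathpzc{E}_2,D)\in\Psi_{i+j}(A_1\minotimes A_2,B_1\minotimes B_2)$, and $\mathfrak{b}(D)$ is a Kasparov module by the Baaj--Julg bounded-transform result.

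It remains to identify $[(\mathpzc{E}_1\minotimes\mathpzc{E}_2,\mathfrak{b}(D))]$ with the external product, that is, with the internal product over $B_1\minotimes A_2$ of the amplifications of $[(\mathpzc{E}_1,\mathfrak{b}(D_1))]$ and $[(\mathpzc{E}_2,\mathfrak{b}(D_2))]$. I would verify the Connes--Skandalis criterion, in the unbounded form of Kucerovsky (equivalently, run Baaj--Julg's original homotopy argument); all its hypotheses hold for trivial reasons, since the summands act on separate legs and anticommute: the \emph{connection} condition, for the creation operator $T_{\xi_1}$ with $\xi_1\in\Dom D_1$, reduces to the one-line identity $[D,T_{\xi_1}]_{\mathrm{gr}}=T_{D_1\xi_1}$, which is bounded; the \emph{domain} condition $\Dom D\subseteq\Dom\tilde D_1$ follows since $1+\tilde D_1^2\le 1+\Delta$ are commuting positive operators, making $(1+\tilde D_1^2)^{1/2}(1+\Delta)^{-1/2}$ a contraction; and the \emph{positivity} condition is exact here, as $\tilde D_1 D+D\tilde D_1=2\tilde D_1^2\ge 0$ on the core. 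Passing these through the bounded transform exhibits $\mathfrak{b}(D)$ as a representative of the product, so the diagram commutes. The main obstacle is concentrated entirely in the first half — controlling the unbounded sum well enough to get self-adjointness and regularity, and the module-theoretic subtlety of proving $a\,\mathfrak{r}(D)$ compact where the naive tensor argument breaks down — both resolved by replacing $D$ with the commuting pair $(\tilde D_1^2,\tilde D_2^2)$ and the heat/resolvent integral of Corollary \ref{resolventintegral}; the $KK$-identification is by comparison mere bookkeeping, the external product being the uncoupled case in which the connection, domain, and positivity conditions are all satisfied automatically.
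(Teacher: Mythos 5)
Your proposal is correct, and for the one part of the argument the paper actually writes out --- the compactness of the resolvent --- it matches the paper's approach exactly: write $s=D_1\tildeotimes 1$, $t=1\tildeotimes D_2$, note $s,t$ anticommute so $1+(s+t)^2=1+s^2+t^2$ with $[s^2,t^2]=0$, and then apply the heat/resolvent integral of Corollary~\ref{resolventintegral} to get $(2+s^2+t^2)^{-1}=\int_0^\infty e^{-x(1+D_1^2)}\otimes e^{-x(1+D_2^2)}\,dx$, a norm-convergent integral of compact operators. For the remainder of the theorem the paper simply cites Baaj--Julg, so everything else you do is your own filling-in rather than a divergence, and it is sound.

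Where you go beyond the paper, there are three genuine differences worth recording. First, you build $\mathfrak{r}(D)^2=\rho$ directly from the same integral and then reconstruct $D$ as a selfadjoint regular operator with that resolvent; the paper takes regularity and selfadjointness as given from \cite{BJ} (and your parenthetical fallback to the Woronowicz graph criterion is also what Proposition~\ref{regular} later does for the internal case). Second, you carry $a=a_1\otimes a_2$ through the integrand and use the factorization $a_ie^{-xD_i^2}=(a_i\mathfrak{r}(D_i))\cdot((1+D_i^2)^{1/2}e^{-xD_i^2})$; this is more careful than the paper's sentence ``$e^{-x(1+D_i^2)}$ is compact,'' which tacitly assumes $1\in\mathcal A_i$ --- the paper only fixes this up in its separate nonunital discussion in subsection~6.4. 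Third, you identify the class with the external Kasparov product by checking Kucerovsky's three conditions on the amplified factors (connection, domain, semiboundedness), whereas Baaj--Julg's original proof runs a direct homotopy; both are standard, but your route has the advantage of being the same mechanism the paper itself uses for the internal product in Theorem~\ref{KKproduct}, making the external case visibly the ``uncoupled'' specialization.
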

Consequently, we can \emph{define} the external product in this way, using
unbounded modules. In \cite{Kuc}, Kucerovsky gives sufficient conditions for an unbounded module
$(\mathpzc{E}\tildeotimes_{A}\mathpzc{F},D)$ to be the internal product of $(\mathpzc{E},S)$ and $(\mathpzc{F},T)$. For each $e\in\mathpzc{E}$, we have an operator
\[\begin{split} T_{e}:\mathpzc{F}&\rightarrow \mathpzc{E}\tildeotimes_{B}\mathpzc{F}\\
f &\mapsto e\otimes f.\end{split}\]
Its adjoint is given by $T^{*}_{e}(e'\otimes f)=\langle e,e'\rangle f$. Kucerovsky's result now reads as follows.
\begin{theorem}[\cite{Kuc}]\label{Kuc} Let
$(\mathpzc{E}\tildeotimes_{B}\mathpzc{F},D)\in\Psi_{0}(A,C)$. Supppose that
$(\mathpzc{E},S)\in\Psi_{0}(A,B)$ and $(\mathpzc{F},T)\in\Psi_{0}(B,C)$ are such
that
\begin{itemize}\item For $e$ in some dense subset of $A\mathpzc{E}$, the operator
\[\left[\begin{pmatrix}D & 0\\ 0& T\end{pmatrix},\begin{pmatrix} 0 &
T_{e}\\T^{*}_{e} & 0\end{pmatrix}\right],\]
is defined on $\Dom(D\oplus T)$ and extends to an operator in $\Endst_{C}(\mathpzc{E}\tildeotimes_{B}\mathpzc{F}\oplus\mathpzc{F});$
\item $\Dom D\subset \Dom S\tildeotimes 1$ ;
\item For some $\kappa\in\R$, $\langle Sx,Dx\rangle + \langle Dx,Sx\rangle \geq\kappa\langle x,x\rangle$
for all $x$ in the domain.\end{itemize} Then
$(\mathpzc{E}\tildeotimes_{B}\mathpzc{F},D)\in\Psi_{0}(A,C)$ represents the
internal Kasparov product of $(\mathpzc{E},S)\in\Psi_{0}(A,B)$ and $(\mathpzc{F},T)\in\Psi_{0}(B,C)$.\end{theorem}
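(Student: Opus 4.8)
The plan is to verify that $\mathfrak{b}(D)$ satisfies the Connes--Skandalis characterization of the Kasparov product $[(\mathpzc{E},\mathfrak{b}(S))]\otimes_{B}[(\mathpzc{F},\mathfrak{b}(T))]$. By \cite{BJ} the triple $(\mathpzc{E}\tildeotimes_{A}\mathpzc{F},\mathfrak{b}(D))$ is already a genuine Kasparov $(A,C)$-module, so it remains to check two things: (i) that $\mathfrak{b}(T)$ is a $\mathfrak{b}(S)$-connection, i.e.\ that the graded commutator $\bigl[\begin{pmatrix}\mathfrak{b}(D) & 0\\ 0 & \mathfrak{b}(T)\end{pmatrix},\begin{pmatrix}0 & T_{e}\\ T_{e}^{*} & 0\end{pmatrix}\bigr]$ lies in $\K_{C}$ for $e$ in a dense subset of $A\mathpzc{E}$; and (ii) the positivity condition $a[\mathfrak{b}(S)\otimes 1,\mathfrak{b}(D)]a^{*}\geq 0$ modulo $\K_{C}(\mathpzc{E}\tildeotimes_{A}\mathpzc{F})$ for all $a\in A$, the bracket being the graded commutator. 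The three hypotheses of the theorem are precisely the unbounded avatars of these two conditions, together with the domain compatibility needed to give $\mathfrak{b}(S)\otimes 1$ a meaning against $D$.

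First I would use the domain hypothesis $\Dom D\subset\Dom S\tildeotimes 1$ to fix a common dense submodule on which $D$, $S\tildeotimes 1$ and the connection operator $1\tildeotimes T$ all act, so that the form $\langle Sx,Dx\rangle+\langle Dx,Sx\rangle$ of hypothesis three and the densely defined operator $[\mathfrak{b}(S)\otimes 1,\mathfrak{b}(D)]$ of (ii) are legitimate objects. The technical core is then the passage from the unbounded statements to the bounded ones, for which the resolvent integral of Corollary \ref{resolventintegral} and its square-root variant $\mathfrak{r}(D)=\tfrac{1}{\pi}\int_{0}^{\infty}(\lambda+1+D^{2})^{-1}\lambda^{-1/2}\,d\lambda$ are the workhorses. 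For (i), writing $\mathfrak{b}(T)=T\mathfrak{r}(T)$ and inserting the integral formula lets one trade the \emph{bounded} commutator of hypothesis one, which controls the relevant $[D,T_{e}]$-type expressions on $\Dom(D\oplus T)$, for \emph{compactness} of the commutator of bounded transforms: the resolvents $(\lambda+1+T^{2})^{-1}$ and $\mathfrak{r}(D)$ absorb the unboundedness, and the compact resolvent assumptions on $(\mathpzc{E},S)$, $(\mathpzc{F},T)$ and $(\mathpzc{E}\tildeotimes_{A}\mathpzc{F},D)$ upgrade ``bounded times $a\mathfrak{r}(D)$'' to ``$C$-compact''. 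For (ii), conjugating hypothesis three by $\mathfrak{r}(D)$ shows that semiboundedness of $\langle Sx,Dx\rangle+\langle Dx,Sx\rangle$ below by $\kappa\langle x,x\rangle$ is exactly form-positivity of $\mathfrak{r}(D)\bigl((S\tildeotimes 1)D+D(S\tildeotimes 1)\bigr)\mathfrak{r}(D)+\kappa\,\mathfrak{r}(D)^{2}$, and the integral expansion of $[\mathfrak{b}(S)\otimes 1,\mathfrak{b}(D)]$ identifies it with this expression up to terms that become compact after multiplication by $a$.

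The step I expect to be the main obstacle is precisely this analytic passage: $S\tildeotimes 1$ and $1\tildeotimes T$ have no a priori good mutual domain behaviour, and $D$ is only assumed to dominate $S\tildeotimes 1$, so one must argue with some care that the various integrands are strongly continuous in $\lambda$, that the integrals converge in operator norm (again via Corollary \ref{resolventintegral} and its variant, controlling the endpoints $\lambda\to 0$ and $\lambda\to\infty$), and that membership in $\K_{C}$ and form-positivity modulo $\K_{C}$ survive these limits. Once (i) and (ii) are established, the uniqueness part of the Connes--Skandalis theorem identifies $[(\mathpzc{E}\tildeotimes_{A}\mathpzc{F},\mathfrak{b}(D))]$ with $[(\mathpzc{E},\mathfrak{b}(S))]\otimes_{B}[(\mathpzc{F},\mathfrak{b}(T))]$, which is the assertion.
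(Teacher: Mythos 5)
The paper itself offers no proof of this theorem: it is quoted from Kucerovsky \cite{Kuc} and used as a black box in the proof of Theorem \ref{KKproduct}. Your plan does match Kucerovsky's actual strategy---verify the Connes--Skandalis criteria (connection condition plus positivity modulo compacts) for the bounded transforms, using resolvent integral representations of $\mathfrak{b}(D)$ and $\mathfrak{r}(D)$ to pass from the unbounded hypotheses to the bounded conclusions.

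Two remarks. A small slip of language: you say one must show ``$\mathfrak{b}(T)$ is a $\mathfrak{b}(S)$-connection'', but the Connes--Skandalis condition is that $\mathfrak{b}(D)$ is an $F_{2}$-connection with $F_{2}=\mathfrak{b}(T)$; the commutator you display is the right one, so only the sentence needs fixing. More importantly, the passage you yourself flag at the end---going from boundedness of the unbounded commutator of the first hypothesis to compactness of the commutator of bounded transforms, and from the semiboundedness of the third hypothesis to positivity modulo $\K_{C}$---is exactly where all the content of Kucerovsky's argument lives, and your sketch does not close it. The integral representation produces commutator terms between resolvents and $T_{e}$ (and between $\mathfrak{r}(D)$ and $S\tildeotimes 1$) that are not directly controlled by the stated hypotheses; making them compact requires the compact-resolvent assumptions in a quantitative way, and the domain issues you note for $S\tildeotimes 1$ and $1\tildeotimes T$ are genuinely delicate. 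As written, this is a correct outline of the proof one would want to give, but the analytic content you identify as the ``main obstacle'' is, in fact, most of the proof.
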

This theorem only gives sufficient conditions, and gives an indication about the actual form of the product of two given cycles. By equipping unbounded bimodules with some extra differential structure, we will obtain an algebraic description of the product cycle. To this end, we need to extend our scope from $C^{*}$-modules to a class of similar objects, defined over a larger class of topological algebras.
 \section{Operator modules}
When dealing with unbounded operators, it becomes necessary to deal with dense subalgebras of $C^{*}$-algebras and modules over these. The theory of $C^{*}$-modules, which is the basis of Kasparov's approach to
bivariant $K$-theory for $C^{*}$-algebras, needs to be extended in an appropriate way. The framework of operator spaces and the Haagerup tensor product provides with a class of modules and algebras which is
sufficiently rich to accomodate for the phenomena occurring in the Baaj-Julg picture of $KK$-theory.
\subsection{Operator spaces}
We will frequently deal with algebras and modules that are not $C^{*}$. In this section we discuss the basic notions of the theory of operator spaces, in which all of our
examples will fit. There is an intrinsic approach presented in \cite{Helem}. The link between the theory we describe here and the aforementioned intrinsic approach can be found in \cite{Ruan}. 
\begin{definition}\label{opsp} An \emph{operator space} $X$ is a closed linear subspace of some $C^{*}$-algebra. As such there are canonical norms on the matrix spaces $M_{n}(X)$ and the space $\K\minotimes X$. A linear map $\phi:X\rightarrow Y$ between operator spaces is called \emph{completely bounded}, resp. \emph{completely contractive}, resp. \emph{completely isometric} if the induced map 
\[1\otimes\phi:\K\otimes X\rightarrow \K\otimes Y,\] is bounded,
resp. contractive, resp isometric for the minimal tensorproduct norm. The norm of $1\otimes\phi$ is denoted $\|\phi\|_{cb}$ and equals $\sup_{n}\|1_{n}\otimes\phi\|$, where 
\[1_{n}\otimes\phi:M_{n}(\C)\otimes X\rightarrow M_{n}(\C)\otimes Y.\]
\end{definition}
Any $C^{*}$-module $\mathpzc{E}$ over a $C^{*}$-algebra $B$ is an operator space, as it is isometric to $\K(B,\mathpzc{E})$, which is a closed subspace of $\K(B\oplus\mathpzc{E})$, the \emph{linking algebra} of $\mathpzc{E}$.\newline\newline
Let $\mathpzc{E}$ be an $(A,B)$ bimodule and $D$ an odd regular operator in $\mathpzc{E}$. Define
\[\mathcal{A}_{1}:=\{a\in A:[D,a]\in\Endst_{B}(\mathpzc{E}). \}\]
Let $\delta:\mathcal{A}_{1}\rightarrow \Endst_{B}(\mathpzc{E})$ the closed derivation $a\mapsto [D,a]$. 
Then $\mathcal{A}_{1}$ can be made into an operator space via
\begin{eqnarray}\label{Der}\pi_{1}:\mathcal{A}_{1}&\rightarrow & M_{2}(\Endst_{B}(\mathpzc{E}))\\
a &\mapsto & \begin{pmatrix}a & 0 \\ \delta(a) & \gamma a\gamma \end{pmatrix}.\end{eqnarray}
Here $\gamma$ is the grading on $\mathpzc{E}$ this construction in particular applies to $KK$-cycles for $(A,B)$ $(\mathpzc{E},D)$, in which case $\mathcal{A}_{1}$ is dense in $A$.Equipped with this operator space structure, $\mathcal{A}$ admits a completely contractive algebra homomorphism $\mathcal{A}\rightarrow \Endst_{B}(\mathfrak{G}(D)).$ 
\subsection{The Haagerup tensor product}
For operator spaces $X$ and $Y$, one can define their spatial tensor product $X\minotimes Y$ as the norm closure of the algebraic tensor product in the spatial tensor product of some containing $C^{*}$-algebras. This gives
rise to an exterior tensor product of operator modules.\newline\newline
The internal tensor product of $C^{*}$-modules is an example of the Haagerup tensor product for operator spaces. This tensor product will be extremely important in what follows.
\begin{definition} Let $X,Y$ be operator spaces. The \emph{Haagerup norm} on $\K\otimes X\otimes Y$ is defined by
\[\|u\|_{h}:=\inf\{\sum_{i=0}^{n}\|x_{i}\|\|y_{i}\|:u=m(\sum x_{i}\otimes y_{i}), x_{i}\in\K\otimes X, y_{i}\in\K\otimes Y\}.\]
Here $m:\K\otimes X\otimes\K\otimes Y\rightarrow \K\otimes X\otimes Y$ is the linearization of the map $(a\otimes x,b\otimes y)\mapsto (ab\otimes x\otimes y)$.\end{definition}
\begin{theorem}\label{hnorm}  If $X\subset B(\mathpzc{H})$ and $Y\subset B(\mathpzc{K})$, the norm on $X\otimes Y$ induced by the Haagerup norm is given by
\[\|\sum_{j} x_{j}\otimes y_{j}\|_{h}=\inf\{\|\sum v_{i}v_{i}^{*}\|^{\frac{1}{2}}\|\sum w^{*}_{i}w_{i}\|^{\frac{1}{2}}:\sum_{i} v_{i}\otimes w_{i}=\sum_{j} x_{j}\otimes y_{j}\}.\]
and the completion of $X\otimes Y$ in this norm is an operator space denoted $X\tildeotimes Y$.
\end{theorem}
The completion $X\tildeotimes Y$ and is called the \emph{Haagerup tensor product} of $X$ and $Y$.
From this theorem we deduce the following useful property. Whenever $x_{i}\in X, y_{i}\in Y$ are sequences such that \[ \|\sum y^{*}_{i}y_{i}\|,\|\sum x_{i}x_{i}^{*}\|\leq 1,\]
then $\sum x_{i}\otimes y_{i}$ is convergent for the Haagerup norm and defines an element $w\in X\tildeotimes Y$, with $\|w\|\leq 1$. Another consequence of this is the following.
\begin{proposition} For a closed subalgebra $\mathcal{A}\subset B(\mathpzc{H})$, operator multiplication induces a completely contractive map $m:\mathcal{A}\tildeotimes\mathcal{A}\rightarrow \mathcal{A}$. 
\end{proposition}
\begin{proof} First define $m$ on the algebraic tensor product via $a\otimes b\mapsto ab$. Then estimate
\[\begin{split}\|m(\sum_{i=1}^{n} a_{i}\otimes b_{i})\|&=\|\sum_{i=1}^{n} a_{i}b_{i}\| \\
&=\|\langle (a_{i}^{*})_{i=1}^{n},(b_{i})_{i=1}^{n}\rangle\| \\ 
&\leq \|(a_{i}^{*})\|\|(b_{i})\| \\
&= \|\sum a_{i}a_{i}^{*}\|^{\frac{1}{2}}\|\sum b_{i}^{*}b_{i}\|^{\frac{1}{2}},\end{split} \]
where we viewed the expression $\sum_{i=1}^{n} a_{i}b_{i}$ as an inner product of two vectors in the $C^{*}$-module $\bigoplus_{i=1}^{n}B(\mathpzc{H})$. Since this inequality holds for any representative of $\sum_{i=1}^{n} a_{i}b_{i},$ it follows from theorem \ref{hnorm} that $\|m(\sum_{i=1}^{n} a_{i}\otimes b_{i})\|\leq \|\sum_{i=1}^{n}a_{i}\otimes b_{i}\|_{h}$, so $m$ is continuous.
\end{proof}
\begin{definition}\label{opalg} An \emph{operator algebra} is an operator space $\mathcal{A}$ which is an algebra, such that the multiplication induces a completely bounded map $\mathcal{A}\tildeotimes\mathcal{A}\rightarrow\mathcal{A}$.  A (right) \emph{operator module} is an operator space $M$ which is a right module over an operator algebra $\mathcal{A}$, such that the module multiplication induces a completely bounded map $M\tildeotimes\mathcal{A}\rightarrow M$.
\end{definition} 
Note that the multiplication in the above definition is only abstractly defined and need not coincide with operator multiplication. However, in \cite{blechercb} it is proved that such operator algebras are completely boundedly isomorphic to a subalgebra of $B(\mathpzc{H})$ for some $\mathpzc{H}$.\newline

The module $\mathfrak{G}(D)\subset\mathpzc{E}\oplus\mathpzc{E}$ from example \ref{Der} is a (left)-operator module over the operator algebra $\mathcal{A}$.
The natural choice of morphisms between operator modules are the completely bounded module maps. If $E$ and $F$ are operator modules over an operator algebra $\mathcal{A}$, we denote the set of these maps by
$\Hom^{c}_{\mathcal{A}}(E,F).$\newline

A \emph{countable approximate unit} for an operator algebra $\mathcal{A}$ is a sequence $\{u_{n}\}\subset\mathcal{A}$ such that $\sup_{n}\|u_{n}\|_{cb}< \infty$ and \[\lim_{n\rightarrow\infty}\|au_{n}-a\|=\lim_{n\rightarrow\infty}\|u_{n}a-a\|=0,\]
for all $a\in\mathcal{A}$. 
We use the completely bounded version of operator algebras and modules as the completely contractive picture is too restrictive for our purposes. Surprisingly, the cb-theory is more complicated than the contractive theory, in some aspects, especially when dealing with nonunital algebras. An excellent reference for operator algebra and module theory is \cite{Blechbook}. \newline
\begin{definition}Let $\mathcal{A},\mathcal{B}$ be an operator algebras. A \emph{completely bounded anti-isomorphism} is an antilinear bijection $\phi:\mathcal{A}\rightarrow \mathcal{B}$ such that $\phi(ab)=\phi(b)\phi(a)$, for which the norms of the matrix extensions $\phi(a_{ij}):=(\phi(a_{ji}))$ are uniformly bounded. An \emph{involutive operator algebra} is an operator algebra which carries an involution $a\mapsto a^{*}$, which is a completely bounded anti-isomorphism. If $M$ is a right- and $N$ a left operator module over an involutive operator algebra $\mathcal{A}$, a \emph{completely bounded anti-isomorphism} is an antilinear bijection $\phi:M\rightarrow N$ such that $\phi(ma)=a^{*}\phi(m)$.
\end{definition}
Of course, $C^{*}$-algebras and -modules are examples that fit this definition. The algebra $\mathcal{A}_{1}$ from example \ref{Der} is an involutive operator algebra since $\pi_{1}(a^{*})=v\pi_{1}(a)^{*}v^{*}$, and hence $\|a\|=\|a^{*}\|$. \newline\newline 
Now suppose $M$ is a right operator $\mathcal{A}$-module, and $N$ a left operator $\mathcal{A}$-module. Denote by $I_{\mathcal{A}}\subset M\tildeotimes N$ the closure of the linear span of the expressions $(ma\otimes n-m\otimes
an).$ The \emph{module Haagerup tensor product} of $M$ and $N$ over $\mathcal{A}$ (\cite{BMP}) is
\[M\tildeotimes_{\mathcal{A}}N:=M\tildeotimes N/I_{\mathcal{A}},\]
equipped with the quotient norm, in which it is obviously complete. Moreover, if $M$ also carries a left $\mathcal{B}$ operator module structure, and $N$ a right $\mathcal{C}$ operator module structure, then
$M\tildeotimes_{\mathcal{A}}N$ is an operator $\mathcal{B},\mathcal{C}$-bimodule.  Graded operator algebras and -modules can be defined by the same conventions as in definition \ref{graded} and the discussion
preceeding it. If the modules and operator algebras are graded, so are the Haagerup tensor products, again in the same way as in the $C^{*}$-case, as in the discussion around equation \ref{gradedtensor}. 
The following theorem resolves the ambiguity in the notation for the interior tensor product of $C^{*}$-modules and the Haagerup tensor
product of operator spaces.
\begin{theorem}[\cite{Blech}]\label{int} Let $\mathpzc{E},\mathpzc{F}$ be $C^{*}$-modules over the $C^{*}$-algebras $B$ and $C$ respectively, and $\pi:B\rightarrow \Endst_{C}(\mathpzc{F})$ a nondegenrate *-homomorphism. Then the interior tensor product and the Haagerup
tensor product of $\mathpzc{E}$ and $\mathpzc{F}$ are completely isometrically isomorphic.
\end{theorem}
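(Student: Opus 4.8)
The plan is to prove that the $C^{*}$-module interior tensor product norm and the module Haagerup tensor product norm — both defined a priori on the algebraic balanced tensor product $\mathpzc{E}\otimes_{A}\mathpzc{F}$ and, more to the point, on $\K\otimes(\mathpzc{E}\otimes_{A}\mathpzc{F})$ — coincide. Write $\|\cdot\|_{*}$ for the former (the norm of $\mathpzc{E}\tildeotimes_{A}\mathpzc{F}$) and $\|\cdot\|_{h}$ for the latter. Since $\mathpzc{E}\otimes_{A}\mathpzc{F}$ is dense in both completions, and likewise at every matrix level, once the two operator-space norms are shown to agree the identity map on $\mathpzc{E}\otimes_{A}\mathpzc{F}$ extends to a completely isometric isomorphism of the completions. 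I would establish the equality by proving the two inequalities separately: $\|\cdot\|_{*}\leq\|\cdot\|_{h}$ comes from the universal property of the Haagerup tensor product together with a Gram-matrix estimate, while the reverse uses the approximate projectivity of $C^{*}$-modules from Theorem \ref{Blech}.

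For the first inequality, consider the $A$-balanced bilinear map $(e,f)\mapsto e\otimes f$ from $\mathpzc{E}\times\mathpzc{F}$ into the $C^{*}$-module $\mathpzc{E}\tildeotimes_{A}\mathpzc{F}$. I claim it is completely contractive; granting this, the universal property of the module Haagerup tensor product (it linearizes completely contractive $A$-balanced bilinear maps) yields a completely contractive map $\Theta$ from the Haagerup completion of $\mathpzc{E}\otimes_{A}\mathpzc{F}$ onto $\mathpzc{E}\tildeotimes_{A}\mathpzc{F}$ with dense range, realizing the identity on elementary tensors. To see complete contractivity it suffices to estimate, for $u=\sum_{i=1}^{n}e_{i}\otimes f_{i}$,
\[\langle u,u\rangle=\sum_{i,j}\langle f_{i},\pi(\langle e_{i},e_{j}\rangle)f_{j}\rangle,\]
where $(\langle e_{i},e_{j}\rangle)\in M_{n}(A)$ is positive; writing it as $X^{*}X$ with $X=(x_{ki})$ and putting $g_{k}=\sum_{i}\pi(x_{ki})f_{i}$ turns the right-hand side into $\sum_{k}\langle g_{k},g_{k}\rangle$, so that $\|u\|_{*}^{2}=\|\sum_{k}\langle g_{k},g_{k}\rangle\|\leq\|X^{*}X\|\,\|\sum_{i}\langle f_{i},f_{i}\rangle\|$ because $(g_{k})_{k}=\pi^{(n)}(X)(f_{i})_{i}$ inside the $C^{*}$-module $\mathpzc{F}^{n}$ and $\pi^{(n)}$ is contractive. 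The two factors $\|X^{*}X\|=\|(\langle e_{i},e_{j}\rangle)\|$ and $\|\sum_{i}\langle f_{i},f_{i}\rangle\|$ are exactly the squared norm of the row $(e_{1},\dots,e_{n})$ in $M_{1,n}(\mathpzc{E})$ and the squared norm of the column $(f_{1},\dots,f_{n})$ in $M_{n,1}(\mathpzc{F})$, whose product is minimized in the Haagerup norm; carrying out the same computation at the matricial level over $\K$ gives complete contractivity, hence $\|\cdot\|_{*}\leq\|\cdot\|_{h}$.

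For the reverse inequality I would invoke Theorem \ref{Blech}: there are adjointable contractions $\phi_{\alpha}\colon\mathpzc{E}\to A^{n_{\alpha}}$ and $\psi_{\alpha}=\phi_{\alpha}^{*}\colon A^{n_{\alpha}}\to\mathpzc{E}$ with $\psi_{\alpha}\phi_{\alpha}\to\id_{\mathpzc{E}}$ pointwise, and adjointable contractions of $C^{*}$-modules are automatically completely contractive. For a finitely generated free module $A^{n}$ the two tensor products coincide on the nose: both are $\mathpzc{F}^{n}$ with the same operator-space structure, at every matrix level. Hence for $u=\sum_{i=1}^{m}e_{i}\otimes f_{i}$ in $\mathpzc{E}\otimes_{A}\mathpzc{F}$ one has, using functoriality of both tensor products under $\phi_{\alpha},\psi_{\alpha}$,
\[\|(\psi_{\alpha}\phi_{\alpha}\otimes 1)u\|_{h}\leq\|(\phi_{\alpha}\otimes 1)u\|_{A^{n_{\alpha}}\tildeotimes_{A}\mathpzc{F}}\leq\|u\|_{*},\]
while $(\psi_{\alpha}\phi_{\alpha}\otimes 1)u\to u$ in $\|\cdot\|_{h}$, since $(\psi_{\alpha}\phi_{\alpha}\otimes 1)u-u=\sum_{i}(\psi_{\alpha}\phi_{\alpha}e_{i}-e_{i})\otimes f_{i}$ and the Haagerup norm of this is dominated by the row-norm of the finite tuple $(\psi_{\alpha}\phi_{\alpha}e_{i}-e_{i})_{i}$ in $M_{1,m}(\mathpzc{E})$, which tends to $0$. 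Therefore $\|u\|_{h}\leq\|u\|_{*}$; the identical argument over $\K$ gives $\|\cdot\|_{h}=\|\cdot\|_{*}$ completely, so $\Theta$ is a surjective complete isometry, i.e. the desired complete isometric isomorphism. The hard part will be making the limiting argument of the last step legitimate and, in particular, uniform over all matrix levels — which is exactly what produces the word \emph{completely} in the statement: one needs the row/column norm estimates to see that $\psi_{\alpha}\phi_{\alpha}\otimes 1$ converges to the identity in the Haagerup norm rather than merely pointwise in the $C^{*}$-norm, and one needs the base-case identification $A^{n}\tildeotimes_{A}\mathpzc{F}\cong\mathpzc{F}^{n}$ to hold completely isometrically for both tensor products. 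The completely contractive direction, by contrast, is a clean consequence of the Gram factorization of $(\langle e_{i},e_{j}\rangle)$.
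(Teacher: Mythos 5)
The paper does not prove this theorem; it is cited from Blecher \cite{Blech} without argument, so there is no in-paper proof to compare against. Your argument is, however, essentially the standard proof and (to the best of my knowledge) the same strategy Blecher uses: one inequality via positivity of the Gram matrix $(\langle e_{i},e_{j}\rangle)$ and the universal property of the module Haagerup tensor product, and the reverse via the approximate-projectivity data $\phi_{\alpha},\psi_{\alpha}$ of Theorem \ref{Blech} plus the free base case $A^{n}$. The Gram computation $\langle u,u\rangle=\sum_{k}\langle g_{k},g_{k}\rangle$ with $g=\pi^{(n)}(X)f$ is correct, as is the reduction of $(\psi_{\alpha}\phi_{\alpha}\otimes 1)u\to u$ in the Haagerup norm to the vanishing of the row norm of $(\psi_{\alpha}\phi_{\alpha}e_{i}-e_{i})_{i}$.

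Two points would need to be made explicit in a full write-up, though neither is a gap in the idea. First, the base identification $A^{n}\tildeotimes_{A}\mathpzc{F}\cong\mathpzc{F}^{n}$ on the Haagerup side reduces to $A\tildeotimes_{A}\mathpzc{F}\cong\mathpzc{F}$ completely isometrically; when $A$ is non-unital this is not formal and is exactly where the nondegeneracy of $\pi$ is used (Cohen--Hewitt factorization or an approximate-unit estimate), so it is worth isolating as a lemma rather than asserting it. Second, ``the identical argument over $\K$'' for the matrix levels should be spelled out: in $M_{n}(\mathpzc{E}\otimes_{A}\mathpzc{F})$ the Haagerup norm is the infimum over factorizations through $M_{n,p}(\mathpzc{E})\times M_{p,n}(\mathpzc{F})$ (not $M_{1,p}\times M_{p,1}$), and the interior-tensor-product side is the $C^{*}$-module $M_{n}(\mathpzc{E}\tildeotimes_{A}\mathpzc{F})$ over $M_{n}(B)$, so the Gram factorization has to be carried out with the $M_{n}(B)$-valued inner product. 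It works in the same way, but it is not literally the scalar case and should be written down, since the word \emph{completely} in the statement is precisely what this carries.
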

This result provides us with a convenient description of algebras of compact operators on $C^{*}$-modules. The \emph{dual module} of a $C^{*}$-module $\mathpzc{E}$ is anti-isomorphic to $\mathpzc{E}$ as a linear space, and we equip
it with a left $C^{*}$-$B$-module structure using the involution: \[be:=eb^{*},\quad (e_{1},e_{2})\mapsto\langle e_{1},e_{2}\rangle^{*}.\]
\begin{theorem}[\cite{Blech}] There is a completely isometric isomorphism \[\K_{C}(\mathpzc{E}\tildeotimes\mathpzc{F})\xrightarrow{\sim} \mathpzc{E}\tildeotimes_{B}\K_{C}(\mathpzc{F})\tildeotimes_{B}\mathpzc{E}^{*}.\] In
particular $\K_{B}(\mathpzc{E})\cong \mathpzc{E}\tildeotimes_{B}\mathpzc{E}^{*}$.
\end{theorem}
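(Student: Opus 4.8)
The plan is to reduce the statement to the key special case $\K_{A}(\mathpzc{E})\cong\mathpzc{E}\tildeotimes_{A}\mathpzc{E}^{*}$ and then bootstrap it by the associativity of the module Haagerup tensor product. First I would establish this special case. By definition $\Fin_{A}(\mathpzc{E})$ is the algebraic balanced tensor product $\mathpzc{E}\otimes_{A}\mathpzc{E}^{*}$, sitting inside $\Endst_{A}(\mathpzc{E})$ as a $*$-subalgebra via $e\otimes f\mapsto\theta_{e,f}$, where $\theta_{e,f}(x)=e\langle f,x\rangle$. On this space one then has two norms, the operator norm and the norm coming from the module Haagerup tensor product $\mathpzc{E}\tildeotimes_{A}\mathpzc{E}^{*}$, and the entire content of the special case is that they coincide. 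Granting this, passing to completions turns $\Fin_{A}(\mathpzc{E})$ into $\K_{A}(\mathpzc{E})$ and $\mathpzc{E}\otimes_{A}\mathpzc{E}^{*}$ into $\mathpzc{E}\tildeotimes_{A}\mathpzc{E}^{*}$; running the same comparison on the matrix amplifications upgrades the identification to a complete isometry, and since $e\otimes f\mapsto\theta_{e,f}$ is a $*$-homomorphism it is in fact an isomorphism of $C^{*}$-algebras.

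For the norm comparison I would use the row/column operator-space picture of $\mathpzc{E}$ and $\mathpzc{E}^{*}$ together with the approximate projectivity of Theorem \ref{Blech}. A finite sum $\sum_{i}\theta_{e_{i},f_{i}}$ factors as a composite $E\circ F$ with $E\in\K_{A}(A^{n},\mathpzc{E})\cong\mathpzc{E}^{n}$ a row and $F\in\K_{A}(\mathpzc{E},A^{n})\cong(\mathpzc{E}^{*})^{n}$ a column; since composition of adjointable operators is completely contractive, $\|\sum_{i}\theta_{e_{i},f_{i}}\|\le\|E\|\,\|F\|$, and taking the infimum over such factorizations gives $\|\cdot\|_{\mathrm{op}}\le\|\cdot\|_{h}$, with the same estimate on amplifications. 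For the reverse inequality I would check it first when $\mathpzc{E}=A^{n}$, where $\K_{A}(A^{n})=M_{n}(A)$ and the module Haagerup tensor product over $A$ of the column $A^{n}$ with the row $(A^{n})^{*}$ is again $M_{n}(A)$ with its usual operator-space structure; the case of general $\mathpzc{E}$ then follows because the contractions $\phi_{\alpha}\colon\mathpzc{E}\to A^{n_{\alpha}}$ and $\psi_{\alpha}\colon A^{n_{\alpha}}\to\mathpzc{E}$ of Theorem \ref{Blech} induce compatible contractions $\K_{A}(\mathpzc{E})\to M_{n_{\alpha}}(A)\to\K_{A}(\mathpzc{E})$ and $\mathpzc{E}\tildeotimes_{A}\mathpzc{E}^{*}\to A^{n_{\alpha}}\tildeotimes_{A}(A^{n_{\alpha}})^{*}\to\mathpzc{E}\tildeotimes_{A}\mathpzc{E}^{*}$ whose composites converge pointwise to the identity, so the two norms on $\mathpzc{E}\tildeotimes_{A}\mathpzc{E}^{*}$ cannot differ. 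This norm identification is the crux of the whole argument: everything else is formal manipulation of tensor products, but here one genuinely needs the metric characterization of $C^{*}$-modules.

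It remains to deduce the general statement. I would first record the natural completely isometric identification $(\mathpzc{E}\tildeotimes_{A}\mathpzc{F})^{*}\cong\mathpzc{F}^{*}\tildeotimes_{A}\mathpzc{E}^{*}$, $(e\otimes f)^{*}\mapsto f^{*}\otimes e^{*}$, which is immediate from the inner product formula defining the interior tensor product (here $\mathpzc{F}^{*}$ carries the right $A$-action transpose to $\pi$, so the balanced tensor product over $A$ is meaningful). Applying the special case to the $C^{*}$-$B$-module $\mathpzc{E}\tildeotimes_{A}\mathpzc{F}$ gives $\K_{B}(\mathpzc{E}\tildeotimes_{A}\mathpzc{F})\cong(\mathpzc{E}\tildeotimes_{A}\mathpzc{F})\tildeotimes_{B}(\mathpzc{E}\tildeotimes_{A}\mathpzc{F})^{*}\cong(\mathpzc{E}\tildeotimes_{A}\mathpzc{F})\tildeotimes_{B}(\mathpzc{F}^{*}\tildeotimes_{A}\mathpzc{E}^{*})$, and associativity of the module Haagerup tensor product for operator modules rearranges the right-hand side as $\mathpzc{E}\tildeotimes_{A}(\mathpzc{F}\tildeotimes_{B}\mathpzc{F}^{*})\tildeotimes_{A}\mathpzc{E}^{*}$. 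Applying the special case once more, now to the $C^{*}$-$B$-module $\mathpzc{F}$, replaces $\mathpzc{F}\tildeotimes_{B}\mathpzc{F}^{*}$ by $\K_{B}(\mathpzc{F})$ and produces the claimed isomorphism; tracking the maps through shows it respects the $C^{*}$-algebra structures. The last assertion of the theorem is simply the special case established at the outset (take $B=A$, $\mathpzc{F}=A$, $\pi=\id$). The main obstacle, as noted, is the norm comparison of the second paragraph; by contrast the duality $(\mathpzc{E}\tildeotimes_{A}\mathpzc{F})^{*}\cong\mathpzc{F}^{*}\tildeotimes_{A}\mathpzc{E}^{*}$ and the associativity of $\tildeotimes$ are standard operator-module facts.
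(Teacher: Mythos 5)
The paper does not prove this theorem; it is stated as a citation to Blecher's ``A new approach to Hilbert $C^{*}$-modules'' and used as a black box, so there is no in-paper argument to compare against. Your proposal correctly reconstructs the standard argument from that source: establish $\K_{A}(\mathpzc{E})\cong\mathpzc{E}\tildeotimes_{A}\mathpzc{E}^{*}$ by comparing the operator norm and Haagerup norm on $\Fin_{A}(\mathpzc{E})$ (one inequality via factorization through $A^{n}$, the other via the $A^{n}$ case together with the approximate-projectivity maps of Theorem \ref{Blech}), then bootstrap to the general case using $(\mathpzc{E}\tildeotimes_{A}\mathpzc{F})^{*}\cong\mathpzc{F}^{*}\tildeotimes_{A}\mathpzc{E}^{*}$ and associativity of the module Haagerup tensor product. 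One small remark: when you pass from ``norms agree pointwise under $\psi_{\alpha}\phi_{\alpha}\to\mathrm{id}$'' to the conclusion, the clean way to phrase it is that for a finite sum $u=\sum\theta_{e_{i},f_{i}}$ the elements $\Psi_{\alpha}\Phi_{\alpha}(u)$ converge to $u$ in the Haagerup norm (by joint continuity of the tensor map on fixed finitely many elementary tensors), and $\|\Psi_{\alpha}\Phi_{\alpha}(u)\|_{h}\leq\|\Phi_{\alpha}(u)\|_{M_{n_{\alpha}}(A)}\leq\|u\|_{\mathrm{op}}$; since $\|\cdot\|_{\mathrm{op}}\leq\|\cdot\|_{h}$ alone would only give operator-norm convergence, this extra step is needed to close the loop.
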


\subsection{Stably rigged modules}
The work of Blecher \cite{Blech} provides a metric description of
$C^{*}$-modules which is useful in extending the theory to non $C^{*}$-algebras.
The algebra $\K_{B}(\mathpzc{E})$ associated to a $\Z/2$-graded countably generated $C^{*}$-$B$-module $\mathpzc{E}$, admits an  
approximate unit $\{u_{n} \}_{n\in \N }$ consisting of elements in $\Fin_{B}(\mathpzc{E})$. 
Replacing $u_{n}$ by
$u_{n}^{*}u_{n}$ if necessary, we may assume
\begin{equation}\label{approx}u_{n}=\sum_{1\leq|i|\leq n} x_{i}\otimes x_{i},\end{equation}
by invoking Kasparov's stabilization theorem.
For each $n$ we get operators $\phi_{n}\in\K_{B}(\mathpzc{E},B^{2n})$, defined by
\begin{equation}\label{rig}\phi_{n}:e \mapsto  (\langle x^{\alpha}_{i},e\rangle)_{1\leq|i|\leq n}.  \end{equation} 
We have
\begin{equation}\label{rig*}\phi_{n}^{*}:(b_{i})_{i=-n}^{n}\mapsto  \sum_{1\leq|i|\leq n}x_{i}b_{i}, \end{equation}
and hence $\phi^{*}_{n}\circ \phi_{n}\rightarrow \id_{\mathpzc{E}}$ pointwise. This structure determines the $\mathpzc{E}$ completely as a
$C^{*}$-module.
\begin{theorem}[\cite{Blech}]\label{Blech}Let $B$ be a graded separable $C^{*}$-algebra and $\mathpzc{E}$ be an operator space which is also a graded
right operator module over $B$. Then $\mathpzc{E}$ is completely isometrically isomorphic to a countably generated $C^{*}$-module if and only if there exist completely contractive module maps
\[\phi_{n}:\mathpzc{E}\rightarrow B^{2n},\quad \psi_{n}:B^{2n}\rightarrow \mathpzc{E},\]
of degree $0$, such that $\psi_{n}\circ\phi_{n}$ converges pointwise to the identity on $\mathpzc{E}$. In this case the inner product on $\mathpzc{E}$ is given by
\[\langle e, f\rangle = \lim_{n\rightarrow\infty}\langle \phi_{n}(e),\phi_{n}(f)\rangle.\]
\end{theorem}
For this reason we can think of $C^{*}$-modules as approximately finitely generated projective modules. Also note that the maps $\phi_{n},\psi_{n}$ are by no means unique, and that different maps can thus give rise to the same
inner product on $\mathpzc{E}$. The description of $C^{*}$-modules in theorem \ref{Blech} is metric, and hence generalizes to non-selfadjoint operator algebras with contractive approximate unit.
\begin{definition}[cf. \cite{Blech2}]\label{rigged} Let $\mathcal{B}$ be an operator algebra with completely contractive approximate identity, and $E$ a right $\mathcal{B}$-operator module. $E$ is a countably generated $\mathcal{B}$-\emph{rigged module} if there exist completely contractive $\mathcal{B}$-module maps
\[\phi_{n}:E\rightarrow\mathcal{B}^{2n},\quad \psi_{n}:\mathcal{B}^{2n}\rightarrow E,\]
such that $\psi_{n}\circ\phi_{n}\rightarrow\id_{E}$ strongly on $E$. Subsequently define the \emph{dual module} of $E$ by
\[E^{*}:=\{e^{*}\in\Hom^{c}_{\mathcal{B}}(E,\mathcal{B}):e^{*}\circ\psi_{n}\circ\phi_{n}\rightarrow e^{*}\},\]
and the algebra of $\mathcal{B}$-\emph{compact operators} as $\K_{\mathcal{B}}(E):=E\tildeotimes_{\mathcal{B}} E^{*}.$
\end{definition} 
\begin{remark} In \cite{Blech2}, three more conditions appear in the definition of rigged module. The first one is that the module $E$ be  \emph{essential}, i.e. $E\mathcal{B}$ is dense in $E$. Moreover it was required that $\phi_{n}\psi_{k}\phi_{k}\rightarrow \phi_{n}$ and $\psi_{n}u_{i}\rightarrow \psi_{n}$ in norm. Here $u_{i}$ is a bounded approximate identity for $\mathcal{B}$. All of these conditions were shown to be superfluous in \cite{her}.
\end{remark}
\begin{remark}\label{dual}It is immediate from this definition that $E^{*}=\K_{\mathcal{B}}(E,\mathcal{B})$. This module satisfies the transposed version of \ref{rigged}, i.e. it is a left rigged $\mathcal{B}$-module \cite{Blech2}. The module structure comes from the left module structure on $\mathcal{B}$ itself, $(be^{*})(e)=be^{*}(e)$. For the rigged structure on a $C^{*}$-module, coming from the approximate unit \eqref{approx}, the structural maps $\psi_{n}^{*}:E^{*}\rightarrow(\mathcal{B}^{2n})^{t}$ and $\phi_{n}^{*}:(\mathcal{B}^{2n})^{t}\rightarrow E^{*}$ are given by
\[\psi_{n}^{*}(e^{*}):= (e^{*}(x_{i}))_{1\leq |i|\leq n}^{t},\quad\phi_{n}^{*}(b_{i})_{1\leq |i|\leq n}^{t}:=\sum_{1\leq |i|\leq n} b_{i}x_{i}.\]

\end{remark}
There is an analogue of adjointable operators on rigged modules. Their definition is straightforward. 
\begin{definition}[\cite{Blech2}]\label{rigad} A completely bounded operator $T:E\rightarrow F$ between rigged modules is called \emph{adjointable} if there exists an operator $T^{*}:F^{*}\rightarrow E^{*}$ such that
\[\forall e\in E, f^{*}\in F^{*}:\quad \langle f^{*}, Te \rangle=\langle T^{*}f^{*},e\rangle.\]
Here we used the suggestive notation $\langle f^{*}, Te \rangle$ for $f(Te)$.
The space of adjointable operators from $E$ to $F$ is denoted $\End^{*}_{\mathcal{B}}(E,F)$.
\end{definition}
When $\mathcal{B}$ has a contractive approximate unit it is a rigged module over itself, and $\K_{\mathcal{B}}(\mathcal{B})\cong\mathcal{B}$ completely isometrically. The compact and adjointable operators satisfy the usual relation $\End^{*}_{\mathcal{B}}(E)=\mathscr{M}(\K_{\mathcal{B}}(E))$, where $\mathscr{M}$ denotes the multiplier algebra. We take this as the definition of $\mathscr{M}(\mathcal{B})$. 
Given an operator algebra and a completely contractive algebra homomorphism $\mathcal{A}\rightarrow \Endst_{\mathcal{B}}(E)$, $E$ is an $(\mathcal{A},\mathcal{B})$ rigged bimodule. As can be expected from theorem \ref{int}, the Haagerup tensor product of rigged modules behaves like the interior tensor product of $C^{*}$-modules.

\begin{theorem}[\cite{Blech2}]\label{rigint}Let $E$ be a right $\mathcal{B}$- rigged module and $F$ an $(\mathcal{B},\mathcal{C})$  rigged bimodule. Then $E\tildeotimes_{\mathcal{B}} F$ is a $\mathcal{C}$- rigged module and
$\K_{\mathcal{C}}(E\tildeotimes_{\mathcal{B}} F)\cong E\tildeotimes_{\mathcal{B}}\K_{\mathcal{C}}(F)\tildeotimes_{\mathcal{B}} E^{*}$ completely isometrically.
\end{theorem}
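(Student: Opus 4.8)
The plan is to manufacture the defining data of a $\mathcal{B}$-rigged module on $G:=E\tildeotimes_{\mathcal{A}}F$ out of the data for $E$ and $F$, and then to read off the formula for $\K_{\mathcal{B}}(G)$ by a purely formal rearrangement of Haagerup tensor products. Fix completely contractive module maps $\psi^{E}_{\alpha}\colon E\to\mathcal{A}^{n_{\alpha}}$, $\phi^{E}_{\alpha}\colon\mathcal{A}^{n_{\alpha}}\to E$ and $\psi^{F}_{\gamma}\colon F\to\mathcal{B}^{m_{\gamma}}$, $\phi^{F}_{\gamma}\colon\mathcal{B}^{m_{\gamma}}\to F$ as in Definition \ref{rigged}. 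Using the canonical identification $\mathcal{A}^{n}\tildeotimes_{\mathcal{A}}F\cong F^{n}$ and the functoriality of the module Haagerup tensor product (a completely contractive right $\mathcal{A}$-module map $g\colon X\to X'$ induces a completely contractive $g\tildeotimes\id_{F}$), I would set
\[\psi^{G}_{(\alpha,\gamma)}:=(\psi^{F}_{\gamma})^{\oplus n_{\alpha}}\circ(\psi^{E}_{\alpha}\tildeotimes\id_{F})\colon G\longrightarrow F^{n_{\alpha}}\longrightarrow\mathcal{B}^{n_{\alpha}m_{\gamma}},\]
\[\phi^{G}_{(\alpha,\gamma)}:=(\phi^{E}_{\alpha}\tildeotimes\id_{F})\circ(\phi^{F}_{\gamma})^{\oplus n_{\alpha}}\colon\mathcal{B}^{n_{\alpha}m_{\gamma}}\longrightarrow F^{n_{\alpha}}\longrightarrow G,\]
which are completely contractive $\mathcal{B}$-module maps by construction. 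Unwinding the identifications on an elementary tensor one finds $\phi^{G}_{(\alpha,\gamma)}\psi^{G}_{(\alpha,\gamma)}(e\otimes f)=\sum_{i}\xi^{\alpha}_{i}\otimes(\phi^{F}_{\gamma}\psi^{F}_{\gamma})(a_{i}f)$, where $(a_{i})_{i}=\psi^{E}_{\alpha}(e)$ and the $\xi^{\alpha}_{i}$ are the images under $\phi^{E}_{\alpha}$ of the standard basis of $\mathcal{A}^{n_{\alpha}}$; letting $\gamma\to\infty$ collapses this to $(\phi^{E}_{\alpha}\psi^{E}_{\alpha})(e)\otimes f$, and then $\alpha\to\infty$ returns $e\otimes f$.

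The point requiring genuine attention is that Definition \ref{rigged} asks for a \emph{single} sequence realising $\phi^{G}\psi^{G}\to\id_{G}$ strongly (together with $\psi^{G}$ essential and $\phi^{G}_{\beta}\psi^{G}_{\alpha}\phi^{G}_{\alpha}\to\phi^{G}_{\beta}$ uniformly), whereas the construction only supplies an iterated limit over the two nets $\{\alpha\}$ and $\{\gamma\}$. Since every map in sight is a contraction and $G$ is separable, I would resolve this by a diagonal argument: choose $(\alpha_{j},\gamma_{j})$ so that, writing $\phi^{G}_{j}:=\phi^{G}_{(\alpha_{j},\gamma_{j})}$ etc., one has $\|\phi^{G}_{j}\psi^{G}_{j}(g_{k})-g_{k}\|<1/j$ along a fixed dense sequence $\{g_{k}\}$ for $k\le j$, and arrange simultaneously that the remaining two axioms hold along the same sequence. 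I expect this interchange-of-limits bookkeeping — confirming that all four conditions of Definition \ref{rigged} can be met by one choice of indices, and in particular that the uniform-convergence condition survives diagonalisation — to be the main obstacle; everything else is formal. (An alternative that sidesteps it is to invoke the pairing characterisation of rigged modules stated after Definition \ref{rigged}, with $F^{*}\tildeotimes_{\mathcal{A}}E^{*}$ as the candidate dual and $E\tildeotimes_{\mathcal{A}}\K_{\mathcal{B}}(F)\tildeotimes_{\mathcal{A}}E^{*}$ as the candidate compact algebra; but then one must assemble the required approximate identity from those of $\K_{\mathcal{A}}(E)$ and $\K_{\mathcal{B}}(F)$ and verify its norm bounds, a computation of comparable weight.)

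Once $G$ is known to be a $\mathcal{B}$-rigged module, the formula follows by identifying the dual. The pairing $(f^{*}\otimes e^{*},\,e\otimes f)\mapsto f^{*}(\langle e,e^{*}\rangle f)$ is well defined — one checks it is $\mathcal{A}$-balanced in both slots using $\langle ea,e^{*}\rangle=\langle e,e^{*}\rangle a$ and $(ae^{*})(e)=a\langle e,e^{*}\rangle$ — and completely contractive, and I would verify, using the approximating maps $\psi^{G}_{(\alpha,\gamma)},\phi^{G}_{(\alpha,\gamma)}$ above (or the universal property of rigged modules), that it induces a complete isometry $F^{*}\tildeotimes_{\mathcal{A}}E^{*}\xrightarrow{\ \sim\ }G^{*}$. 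Granting this, Definition \ref{rigged} gives
\[\K_{\mathcal{B}}(G)=G\tildeotimes_{\mathcal{B}}G^{*}\cong(E\tildeotimes_{\mathcal{A}}F)\tildeotimes_{\mathcal{B}}(F^{*}\tildeotimes_{\mathcal{A}}E^{*}),\]
and associativity of the Haagerup tensor product together with $\K_{\mathcal{B}}(F)=F\tildeotimes_{\mathcal{B}}F^{*}$ rearranges the right-hand side as $E\tildeotimes_{\mathcal{A}}\K_{\mathcal{B}}(F)\tildeotimes_{\mathcal{A}}E^{*}$, which is the assertion.
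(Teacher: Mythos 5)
The paper gives no proof of this theorem; it is imported from Blecher's theory of rigged modules in \cite{Blech2}, so there is no in-paper argument for your proposal to track. Assessed on its own merits, your outline is the natural one, and the closing rearrangement is correct: granted that $G:=E\tildeotimes_{\mathcal{A}}F$ is rigged and $G^{*}\cong F^{*}\tildeotimes_{\mathcal{A}}E^{*}$ is a complete isometry, the formula follows from $\K_{\mathcal{B}}(G)=G\tildeotimes_{\mathcal{B}}G^{*}$ and associativity of the module Haagerup tensor product.

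Two substantive remarks. First, Definition \ref{rigged} as printed contains a typo: the fourth condition $\phi_{\beta}\circ\psi_{\alpha}\circ\phi_{\alpha}\rightarrow\phi_{\beta}$ is ill-typed, since $\phi_{\beta}$ wants input from $\mathcal{A}^{n_{\beta}}$ while the composite produces a vector in $\mathcal{A}^{n_{\alpha}}$; the intended reading is $\phi_{\alpha}\circ\psi_{\alpha}\circ\phi_{\beta}\rightarrow\phi_{\beta}$ in cb-norm for each fixed $\beta$. The obstacle you flag is genuine and not merely a diagonalization bookkeeping issue: $\phi^{F}_{\gamma}\psi^{F}_{\gamma}$ is a right $\mathcal{B}$-module map but is \emph{not} left $\mathcal{A}$-linear, so the error $\phi^{G}_{(\alpha,\gamma)}\psi^{G}_{(\alpha,\gamma)}-\id_{G}$ does not split as an $E$-error plus an $F$-error, and the two limits cannot be decoupled entrywise; the uniformity over $\alpha$ needed to diagonalize must be extracted from the fourth axiom for $E$ and $F$ separately, which requires an explicit estimate you have not supplied. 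Second, the alternative you relegate to a parenthetical is in fact the efficient route, and the paper tacitly endorses it: the discussion immediately following this theorem exhibits the approximate identity $\sum_{i,j}e^{\alpha}_{i}\otimes f^{\beta}_{j}\otimes f^{\beta}_{j}\otimes\tilde{e}^{\alpha}_{i}$ of $E\tildeotimes_{\mathcal{A}}\K_{B}(\mathpzc{F})\tildeotimes_{\mathcal{A}}E^{*}$ built from those of $\K_{\mathcal{A}}(E)$ and $\K_{B}(\mathpzc{F})$, and the required contractivity bounds $\|(e^{\alpha}_{i}\otimes f^{\beta}_{j})_{i,j}\|\leq\|(e^{\alpha}_{i})_{i}\|\,\|(f^{\beta}_{j})_{j}\|\leq 1$ come directly from the Haagerup cross-norm. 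Feeding this into the pairing characterization of rigged modules (the theorem stated just after Definition \ref{rigged}), with $F^{*}\tildeotimes_{\mathcal{A}}E^{*}$ as the candidate dual, establishes riggedness of $G$ and identifies $\K_{\mathcal{B}}(G)$ simultaneously, entirely bypassing the axiom-by-axiom verification you anticipate struggling with. I would invert your stated preference and take that route.
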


For our purposes, we are only considered with countably generated $C^{*}$-modules. The particular form of the approximate unit \eqref{approx} implies the maps $\phi_{n},\psi_{n}$ from \eqref{rig} can be assembled into two maps
\[\phi:\mathpzc{E}\rightarrow \mathpzc{H}_{B},\quad\psi:\mathpzc{H}_{B}\rightarrow \mathpzc{E},\]
given by $\phi(e)=(\langle x_{i},e\rangle)_{i\in\Z}$ and $\psi(b_{i})_{i\in\Z}=\sum_{i\in\Z}x_{i}b_{i}$. Then we have $\psi\phi=\id$ and $\phi\psi$ is a projection. In \cite{Blech2}, these modules are called CCGP (countably column generated projective) modules. As noted by Blecher in \cite{Blech2}, rigged modules seem too restrictive for $K$-theoretic considerations, as it is unlikely that every finite projective module over an operator algebra may be rigged. However, if we allow the maps $\phi_{n},\psi_{n}$  from definition \ref{rigged} to be completely bounded, we obtain a theory that is flexible enough. \newline\newline
Let $\mathpzc{H}:=\ell^{2}(\Z\setminus\{0\})\cong \ell^{2}(\N)\oplus\ell^{2}(\N)$ be an infinite dimensional separable graded Hilbert column space and $\mathcal{B}$ a graded operator algebra. Then the $\mathpzc{H}_{\mathcal{B}}:=\mathpzc{H}\tildeotimes\mathcal{B}$ is the \emph{standard rigged module} over
$\mathcal{B}$.
\begin{definition} A right $\mathcal{B}$ operator module $E$ is \emph{stably rigged} if there are completely bounded maps $\phi:E\rightarrow \mathpzc{H}_{\mathcal{B}}$ and $\psi:\mathpzc{H}_{\mathcal{B}}\rightarrow E$ such that $\psi\phi=\id$.
\end{definition}
A stably rigged module need not be rigged itself. This will be the case if $\phi,\psi$ can be chosen completely contractive. For this reason, we will always consider stably rigged modules \emph{up to cb-isomorphism}. In general a stably rigged module is a completely bounded direct summand in $\mathpzc{H}_{\mathcal{B}}$, which is an actual rigged module. The maps $\phi_{n},\psi_{n}$ defined by composing $\phi$ and $\psi$ with the projections $\mathpzc{H}_{\mathcal{B}}\rightarrow \mathcal{B}^{n}$ and inclusions $\mathcal{B}^{n}\rightarrow \mathpzc{H}_{\mathcal{B}}$ will be uniformly completely bounded as opposed to completely contractive. They can be used to define the algebras $\K_{\mathcal{B}}(E)$ and $\Endst_{\mathcal{B}}(E)$ as above. In the presence of a countable approximate unit $\{u_{n}\}\subset \mathcal{B}$, $\mathcal{B}$ is stably rigged over itself and $\K_{\mathcal{B}}(\mathcal{B})\cong\mathcal{B}$ completely boundedly.
\begin{definition}\label{mult} The \emph{multiplier algebra} of an operator algebra $\mathcal{B}$ with a countable approximate unit is $\mathscr{M}(\mathcal{B}):=\Endst_{\mathcal{B}}(\mathcal{B})$.
\end{definition} 
Note that this defines $\mathscr{M}(\mathcal{B})$ up to cb-isomorphism, which suffices for our purposes.
\begin{theorem}\label{stabten} Let $E$ be a stably rigged $\mathcal{B}$-module and $F$ a stably rigged $\mathcal{C}$ module. Given a completely bounded algebra homomorphism $\pi:\mathcal{B}\rightarrow\Endst_{\mathcal{C}}(F)$, the Haagerup tensor product $E\tildeotimes_{\mathcal{B}}F$ is a stably rigged module and $\K_{\mathcal{C}}(E\tildeotimes_{\mathcal{B}} F)\cong E\tildeotimes_{\mathcal{B}}\K_{\mathcal{C}}(F)\tildeotimes_{\mathcal{B}} E^{*}$ completely boundedly. Moreover, if $\mathcal{C}=C$ is a $C^{*}$-algebra, then both $F$ and $E\tildeotimes_{\mathcal{B}}F$ are completely isomorphic to $C^{*}$-modules.
\end{theorem} 
\begin{proof}We only prove the last statement. First note that the module $F$ is completely isomorphic to $p\mathpzc{H}_{C}$, with $p=\phi\psi$ an idempotent in $\Endst_{C}(\mathpzc{H}_{C})$, which is a $C^{*}$-module. Secondly, denote by $\tilde{\mathcal{B}}$ the algebra $\mathcal{B}$ with the completely isomorphic operator space structure given by the representation
\[\id\oplus\pi:\mathcal{B}\rightarrow\mathcal{B}\oplus\Endst_{C}(\mathcal{F}).\] 
Then $\pi:\tilde{\mathcal{B}}\rightarrow\Endst_{\mathcal{C}}(F)$ is completely contractive, and $\mathpzc{H}_{\tilde{\mathcal{B}}}$ remains rigged for this operator space structure, and is completely isomorphic to $\mathpzc{H}_{\mathcal{B}}$. Thus, 
we see that $E\tildeotimes_{\mathcal{B}}\mathpzc{F}$ is completely isomorphic to a $C^{*}$-module, by theorem \ref{rigint}. 
\end{proof}

The next theorem shows that the Haagerup tensor product of stably rigged modules behaves well with respect to adjointable operators.
\begin{theorem}[cf. \cite{Blech2}]\label{tensop} Let $E,E'$ be stably rigged $\mathcal{B}$-modules, $F,F'$ stably rigged $(\mathcal{B},\mathcal{C})$- bimodules. If $S\in\Endst_{\mathcal{B}}(E,E')$, $T\in\Endst_{\mathcal{C}}(F,F')$, and $T$ is also a left $\mathcal{B}$-module map, then $S\otimes T\in\Endst_{\mathcal{C}}(E\tildeotimes_{\mathcal{B}} F,E'\tildeotimes_{\mathcal{B}} F')$. Moreover the map $S\mapsto S\otimes 1$ is a completely bounded algebra homomorphism.
\end{theorem}
The direct sum of a family $\{E_{\alpha}\}$ of rigged modules is canonically defined in \cite{Blech2}. This is done by embedding the algebra $\mathcal{B}$ isometrically in a $C^{*}$-algebra $B$. The modules $E_{\alpha}$ are completely isometrically isomorphic to closed submodules of the $C^{*}$-modules $\mathpzc{E}_{\alpha}:=E_{\alpha}\tildeotimes_{\mathcal{B}}B$, and $\bigoplus E_{\alpha}$ is constructed as the natural closed submodule of the $C^{*}$-direct sum $\bigoplus_{\alpha}\mathpzc{E}_{\alpha}$. For stably rigged modules the situation is slightly more complicated.
\begin{definition}\label{directsum} Let $\{E_{\alpha}\}_{\alpha\in\N}$ be a countable family of stably rigged modules, with structural maps $\phi_{\alpha}:E_{\alpha}\rightarrow \mathpzc{H}_{\mathcal{B}}$ and $\psi_{\alpha}:\mathpzc{H}_{\mathcal{B}}\rightarrow E_{\alpha}$. Suppose \[\sup_{\alpha}\{ \|\psi_{\alpha}\|_{cb},\|\phi_{\alpha}\|_{cb}\}<\infty.\] The direct sum $E:=\bigoplus_{\alpha} E_{\alpha}$ is defined up to cb-ismorphism by identifying it with the submodule $\bigoplus_{\alpha}\phi_{\alpha}(E_{\alpha})\subset\bigoplus_{\alpha}\mathpzc{H}_{\mathcal{B}}$. The maps \[\phi:=\bigoplus\phi_{\alpha}:\bigoplus_{\alpha} E_{\alpha}\rightarrow\mathpzc{H}_{\mathcal{B}},\]
\[\psi:=\bigoplus\psi_{\alpha}:\mathpzc{H}_{B}\rightarrow \bigoplus_{\alpha}E_{\alpha},\]
give a completely bounded factorization of the identity, making $E$ into a stably rigged module.
\end{definition}
Note that we used the isomorphism $\bigoplus_{\alpha\in\N}\mathpzc{H}_{\mathcal{B}}\cong\mathpzc{H}_{\mathcal{B}}$ in the definition of the maps $\phi,\psi$. The choice of operator space structure on the direct sum is natural, but that it is more natural to think about the direct sum as being defined only up to complete isomorphism. For our purposes this suffices.
\section{Smoothness}
We adopt the philosophy that spectral triples should be a source of smooth structures $C^{*}$-algebras. The most important feature of a smooth subalgebra is stability under holomorphic functional calculus, implying $K$-equivalence. We will show our smooth
algebras satisfy this property. Moreover, we show that regular spectral triples \cite{Conspec} are smooth in our sense, providing us with numerous examples. Subsequently, we turn to the notion of a smooth $C^{*}$-module over a $C^{*}$-algebra equipped
with a smooth structure. All operator algebras are assumed to have a completely bounded countable approximate unit.

\subsection{Sobolev algebras}
We construct now a nested sequence of algebras
\[\dots\subset\mathcal{A}_{i+1}\subset\mathcal{A}_{i}\subset\mathcal{A}_{i-1}\subset\cdots\subset\mathcal{A}_{1}\subset A,\]
for any graded $(A,B)$-bimodule $\mathpzc{E}$ equipped with an odd selfadjoint regular operator $D$. Each $\mathcal{A}_{i}$ will admit a completely contractive representation on the $i$-th Sobolev module of $D$.

The representation $\pi_{1}:\mathcal{A}_{1}\rightarrow M_{2}(\Endst_{B}(\mathpzc{E}))$ (equation \ref{Der}), associated to an  $(A,B)$-bimodule $\mathpzc{E}$ equipped with an odd regular operator $D$, induces a representation \begin{eqnarray}\nonumber\mathcal{A}_{1}&\rightarrow&\Endst_{B}(\mathfrak{G}(D))\\
\nonumber a&\mapsto &p\pi(a)p,\end{eqnarray} 
with $p=p^{D}$ the Woronowicz projection. This is an algebra homomorphism due to the identity $p\pi_{1}(a)p=\pi_{1}(a)p$. From this it follows that 
\[\begin{split}\mathcal{A}_{1}&\rightarrow\Endst_{B}(v\mathfrak{G}(D))\\
a &\mapsto p^{\perp}\pi(a)p^{\perp},\end{split}\] 
where $p^{\perp}:=1-p$, is a homomorphism as well. Thus we can define a map
\begin{eqnarray}\nonumber\theta_{1}:\mathcal{A}_{1}&\rightarrow &M_{2}(\Endst_{B}(\mathpzc{E}))\\
a&\mapsto &p\pi_{1}(a)p+p^{\perp}\pi_{1}(a)p^{\perp}.\nonumber\end{eqnarray}
Recall from the discussion preceding proposition \ref{D2}, that the natural grading to consider on  $\bigoplus_{j=1}^{2^{i+1}}\mathpzc{E}$ is defined inductively by 
\[\gamma_{i+1}:=\begin{pmatrix}\gamma_{i} &0\\0&-\gamma_{i}\end{pmatrix}.\]
\begin{definition} Let $\mathcal{A}_{1}$, $\pi_{1}$ and $\theta_{1}$ be as above. 
For $i>0$, abusively denote by $D$ the odd selfadjoint regular operator on $\bigoplus_{j=1}^{2^{i}}\mathpzc{E}$ given by the diagonal action of $D$, and by $p_{i}$ its Woronowicz projection. For $i<k$, $p_{i}$ will denote the corresponding diagonal matrix in $\bigoplus_{j=1}^{2^{k}}\mathpzc{E}$. 
Inductively define
\begin{equation}\label{alg}\mathcal{A}_{i+1}:=\{a\in\mathcal{A}_{i}:[D,\theta_{i}(a)]\in\Endst_{B}(\bigoplus_{j=1}^{2^{i}}\mathpzc{E})\},\end{equation}
\begin{eqnarray}\label{prep}\pi_{i+1}:\mathcal{A}_{i+1}&\rightarrow & M_{2^{i+1}}(\Endst_{B}(\mathpzc{E}))\\
\nonumber a &\mapsto &\begin{pmatrix}\theta_{i}(a) & 0\\
[D,\theta_{i}(a)] & \gamma_{i}\theta_{i}(a)\gamma_{i}\end{pmatrix},\end{eqnarray}
\begin{eqnarray}\label{trep}\theta_{i+1}:\mathcal{A}_{i+1}&\rightarrow & M_{2^{i+1}}(\Endst_{B}(\mathpzc{E}))\\
\nonumber a &\mapsto &p_{i+1}p_{i}\pi_{i+1}(a)p_{i}p_{i+1}+p_{i+1}^{\perp}p_{i}^{\perp}\pi_{i+1}(a)p_{i}^{\perp}p_{i+1}^{\perp}\end{eqnarray}
\end{definition}
The notion of smoothness introduced in the next section will entail that the $\mathcal{A}_{i}$'s are dense in $A$. In the current section, no such assumption is present. We will refer to $\mathcal{A}_{i}$ as the $i$-th \emph{Sobolev subalgebra} of $A$. In case $A=\Endst_{B}(\mathpzc{E})$, we denote the $i$-th \emph{full Sobolev algebra of $D$} by $\Sob_{i}(D)$. Clearly, $\mathcal{A}_{i}=A\cap\Sob_{i}(D)$.
\begin{remark}Note that we have defined $\pi_{i}$ and $\theta_{i}$ on the same domain $\mathcal{A}_{i}$. 

However, a priori, we have \[\mathfrak{Dom} \pi_{i},\mathfrak{Dom} \pi_{i+1}\subset\mathfrak{Dom} \theta_{i}\subset\Endst_{B}(\mathpzc{E}).\]

It is important to think of these representations in this way when one considers density of the domains. 
\end{remark}

Taking $\pi_{0}$ to be the original representation of $A$ on $\mathpzc{E}$, the direct sums \begin{equation}\label{directpi}\pi_{[i]}=\bigoplus_{j=0}^{i}\pi_{j}:\mathcal{A}_{i}\rightarrow \bigoplus_{j=0}^{i}\Endst_{B}(\bigoplus_{k=1}^{2^{i}}\mathpzc{E}) \end{equation}give each $\mathcal{A}_{i}$ the structure of an operator space, and this
yields an inverse system of operator algebras
\[\cdots \rightarrow\mathcal{A}_{i+1}\rightarrow \mathcal{A}_{i}\rightarrow\mathcal{A}_{i-1}\rightarrow\cdots\rightarrow\mathcal{A}_{1}\rightarrow A,\]
in which all maps are completely contractive.\newline

Consider the unitaries \[v_{n+1}:=\begin{pmatrix} 0 & -I_{2^{n}}\\I_{2^{n}} & 0\end{pmatrix}\in\Endst_{B}(\bigoplus_{j=1}^{2^{n+1}}\mathpzc{E}),\] where $I_{2^{n}}$ is the $2^{n}\times 2^{n}$-identity matrix. For $j<i$ we identify $v_{i}$ with $v_{i}I_{2^{j}}$ and as such consider it as an element of $\Endst_{B}(\bigoplus_{i=1}^{2^{j}})$. As such, $v_{i}$ and $v_{k}$ commute for all $k,i\leq j$. 
For $i\in\N$, denote by $[i]$ the set $\{1,\cdots, i\}$ and by $\mathscr{P}([i])$ the powerset of $[i]$.
Define
\[v_{F}:=\prod_{j\in F}v_{j}\in M_{2^{i}}(\Endst_{B}(\mathpzc{E})),\]
which is well defined since the $v_{j}$ commute. Note that $v_{[0]}=v_{\emptyset}=1$.
\begin{proposition}\label{involutive} The $\mathcal{A}_{i}$ are involutive operator algebras.
\end{proposition}
\begin{proof}
 To prove that the involution $a\mapsto a^{*}$ is a complete anti isometry for the norm $\|\cdot\|_{i}$ (cf. definition \ref{opalg}) we show that 
\begin{equation}\label{inveven}\pi_{i}(a^{*})=v_{[i]}\pi_{i}(a)^{*}v_{[i]}^{*},\quad\theta_{i}(a^{*})=v_{[i]}\theta_{i}(a)^{*}v^{*}_{[i]},\quad i \textnormal{ even;}\end{equation}
\begin{equation}\label{invodd}\pi_{i}(a^{*})=v_{[i]}\gamma_{i}\pi_{i}(a)^{*}\gamma_{i}v_{[i]}^{*},\quad\theta_{i}(a^{*})=v_{[i]}\gamma_{i}\theta_{i}(a)^{*}\gamma_{i}v^{*}_{[i]},\quad i \textnormal{ odd}.\end{equation} 
In order to achieve this, recall that the grading on $\Endst_{B}(\bigoplus_{j=0}^{2^{i}}\mathpzc{E})$ is given by $T\mapsto \gamma_{i}T\gamma_{i}$, and hence that $[D,T]=DT-\gamma_{i}T\gamma_{i}D$. From this, it is immediate that 
\[(\gamma_{i}T\gamma_{i})^{*}=\gamma_{i}T^{*}\gamma_{i},\quad [D,T]^{*}=\gamma_{i}[D,T^{*}]\gamma_{i}=-[D,\gamma_{i}T^{*}\gamma] ,\]
which will be used in the computation below.\newline\newline
We have $v_{[0]}=1$ and the $v_{i}$ commute with $D$. For $\pi_{0}=\theta_{0}$, \ref{inveven} is trivial. Suppose \ref{inveven} holds for some even number $i$. Then,
\[\begin{split}\pi_{i+1}(a^{*})&=\begin{pmatrix}v_{[i]}\theta_{i}(a)^{*}v_{[i]}^{*}& 0 \\ [D, v_{[i]}\theta_{i}(a)^{*}v_{[i]}^{*}]&  v_{[i]}\gamma_{i}\theta_{i}(a)^{*}\gamma_{i}v_{[i]}^{*}\end{pmatrix}\\
&=\begin{pmatrix} 0 &-v_{[i]} \\ v_{[i]} &0 \end{pmatrix}\begin{pmatrix}\gamma_{i}\theta_{i}(a)^{*}\gamma_{i}& -[D, \theta_{i}(a)^{*}] \\ 0 &\theta_{i}(a)^{*}\end{pmatrix}\begin{pmatrix} 0 &v_{[i]}^{*} \\ -v_{[i]}^{*} &0 \end{pmatrix} \\
&=\begin{pmatrix} 0 &-v_{[i]} \\ v_{[i]} &0 \end{pmatrix}\begin{pmatrix}\gamma_{i}\theta_{i}(a)\gamma_{i}& 0  \\ -\gamma_{i}[D, \theta_{i}(a)]\gamma_{i} &\theta_{i}(a)\end{pmatrix}^{*}  \begin{pmatrix} 0 &v_{[i]}^{*} \\ -v_{[i]}^{*} &0 \end{pmatrix} \\
&=v_{[i+1]}\gamma_{i+1}\pi_{i+1}(a)^{*}\gamma_{i+1}v_{[i+1]}^{*}.\end{split}\]
Since $v_{[i]}$ commutes with $D$, we have $v_{[i+1]}p_{i+1}p_{i}v_{[i+1]}^{*}=p_{i+1}^{\perp}p_{i}^{\perp}$, and the projections $p_{i}, p_{i+1}$ are even. Thus, \ref{invodd} holds for $i+1$. \newline

Now suppose \ref{invodd} holds for some odd $i$. Note that for all $i$, $\gamma_{i}v_{[i]}=(-1)^{i}\gamma_{i}v_{[i]}$, i.e. $v_{[i]}$ is homogeneous of degree $i\mod 2$. Then, 
\[\begin{split}\pi_{i+1}(a^{*})&=\begin{pmatrix}v_{[i]}\gamma_{i}\theta_{i}(a)^{*}\gamma_{i}v_{[i]}^{*}& 0 \\ [D, v_{[i]}\gamma_{i}\theta_{i}(a)^{*}\gamma_{i}v_{[i]}^{*}]&  v_{[i]}\theta_{i}(a)^{*}v_{[i]}^{*}\end{pmatrix}\\
&=\begin{pmatrix} 0 &-v_{[i]} \\ v_{[i]} &0 \end{pmatrix}\begin{pmatrix}\theta_{i}(a)^{*}& \gamma_{i}[D, \theta_{i}(a)^{*}]\gamma_{i} \\ 0 &\gamma_{i}\theta_{i}(a)^{*}\gamma_{i}\end{pmatrix}\begin{pmatrix} 0 &v_{[i]}^{*} \\ -v_{[i]}^{*} &0 \end{pmatrix}\\
&=\begin{pmatrix} 0 &-v_{[i]} \\ v_{[i]} &0 \end{pmatrix}\begin{pmatrix}\theta_{i}(a)& 0  \\ [D, \theta_{i}(a)] &\gamma_{i}\theta_{i}(a)\gamma_{i}\end{pmatrix}^{*}\begin{pmatrix} 0 &v_{[i]}^{*} \\ -v_{[i]}^{*} &0 \end{pmatrix}\\
&=v_{[i+1]}\pi_{i+1}(a)^{*}v_{[i+1]}^{*}.\end{split}\]
Since $v_{[i]}$ commutes with $D$, we have $v_{[i+1]}p_{i+1}p_{i}v_{[i+1]}^{*}=p_{i+1}^{\perp}p_{i}^{\perp}$, and hence it follows that \ref{inveven} holds for $i+1$.
\end{proof}

\begin{proposition} For each $n\in\N$, there is a decomposition
\begin{equation}\label{decomp}\bigoplus_{i=1}^{2^{n}}\mathpzc{E}\cong\bigoplus_{F\in\mathscr{P}([n])}v_{F}
\mathfrak{G}(D_{n}),\end{equation}
and for $a\in\mathcal{A}_{n}$, $\theta_{n}(a)$ respects this decomposition. In fact it is nonzero only on $\mathfrak{G}(D_{n})$ and $v_{[n]}\mathfrak{G}(D_{n})$.
\end{proposition}
\begin{proof} The decomposition is proved by induction. Clearly it holds for $n=1$ (this is Woronowicz's theorem \ref{Wor}). Suppose we have the decomposition for $n=k$. Then
\[\bigoplus_{i=1}^{2^{k+1}}\mathpzc{E}\cong\bigoplus_{F\in\mathscr{P}([k])}v_{F}\mathfrak{G}(D_{k})\oplus\bigoplus_{F\in\mathscr{P}([k])}v_{F}\mathfrak{G}(D_{k}),\]
and since 
\[v_{F}(\mathfrak{G}(D_{k})\oplus\mathfrak{G}(D_{k}))\cong v_{F}(\mathfrak{G}(D_{k+1})\oplus v_{k+1}\mathfrak{G}(D_{k+1})),\]
we get the desired decomposition for $n=k+1$. To prove the $\mathcal{A}_{n}$-invariance, observe that for $n=1$, this holds by construction. Suppose the statement has been proven for $n=i$. The graph of $D$ as a diagonal operator in $\bigoplus_{i=1}^{2^{i}}\mathpzc{E}$ is a submodule of $ \bigoplus_{i=1}^{2^{i+1}}\mathpzc{E}$ and under the isomorphism \ref{decomp} it gets mapped to $\bigoplus_{F\in\mathscr{P}([i+1])}v_{F}\mathfrak{G}(D_{i+1})$. Thus, preservation of the decomposition \ref{decomp} is equivalent to preservation of the graph of $D$ and its complement. This is immediate from the definition of $\theta_{i+1}$.
\end{proof} 
\begin{corollary}\label{graphrep}
 Each $\mathcal{A}_{n}$ admits a completely contractive representation $\chi_{n}:\mathcal{A}_{n}\rightarrow\Endst_{B}(\mathfrak{G}(D_{n}))$.
\end{corollary}
\begin{proof}Denote by $p_{[n]}=\prod_{i=1}^{n}p_{i}\in\Endst_{B}(\bigoplus_{i=1}^{2^{n}}\mathpzc{E})$ the projection onto $\mathfrak{G}(D_{n})$. From the previous proposition it follows that 
\[\chi_{n}(a):=p_{[n]}\theta_{n}(a)p_{[n]}=\theta_{n}(a)p_{[n]},\]
and hence is a completely contractive algebra homomorphism.
\end{proof}
Note that in fact we have $\theta_{n}(a)=p_{[n]}\theta_{n}(a)p_{[n]}+v_{[n]}p_{[n]}v_{[n]}^{*}\theta_{n}(a)v_{[n]}p_{[n]}v_{[n]}^{*}$ for even $n$, and $\theta_{n}(a)=p_{[n]}\theta_{n}(a)p_{[n]}+v_{[n]}p_{[n]}v_{[n]}^{*}\gamma_{n}\theta_{n}(a)\gamma_{n}v_{[n]}p_{[n]}v_{[n]}^{*}$ for odd $n$.
\begin{corollary}\label{astar} $a\in\mathcal{A}_{n+1}$ if and only if $a\in\mathcal{A}_{n}$ and $[D,\chi_{n}(a)],[D,\chi_{n}(a^{*})]\in\Endst_{B}(\bigoplus_{i=1}^{2^{n}}\mathpzc{E})$.
\end{corollary}
\begin{proof} We have \[\chi_{n}(a^{*})=p_{[n]}v_{[n]}\theta_{n}(a)^{*}v_{[n]}^{*}p_{[n]},\] for even $n$ and \[\chi_{n}(a^{*})=p_{[n]}v_{[n]}\gamma_{n}\theta_{n}(a)^{*}\gamma_{n}v_{[n]}^{*}p_{[n]},\] for odd $n$. Therefore, for odd $n$
\[ \begin{split} \| [D,\theta_{n}(a)] \| &=\max \{\| [D,\chi_{n}(a)] \|,\|v_{[n]} [D,p_{[n]} v_{[n]}^{*}\gamma_{n}\theta_{n}(a)\gamma_{n} v_{[n]} p_{[n]}] v_{[n]}^{*}\| \}\\ &=\max \{\| [D,\chi_{n}(a)] \|,\|[D,\chi_{n}(a^{*})] \| \}.\end{split}\]
The same works for even $n$.
\end{proof}
Lastly, we note that the constructions associated with Sobolev algebras can be done for almost selfadjoint operators, using the nonselfadjoint idempotents from corollary \ref{idempotent}. The price for doing this is that the involution will not be completely isometric, but still a complete anti isomorphism. This is good enough for our purposes, and fits the idea of working with nonselfadjoint algebras and homomorphisms.

\subsection{Holomorphic stability}
Now we turn to spectral invariance of the $\mathcal{A}_{i}$. The following definition is a modification of \cite{BC}, definition 3.11:
\begin{definition} Let $\mathscr{A}$ be an algebra with Banach norm $\|\cdot\|$, and $\mathcal{A}$ its closure in this norm. A norm $\|\cdot\|_{\alpha}$ on $\mathscr{A}$ is said to be \emph{analytic} with respect to $\|\cdot\|$ if for each $x\in\mathcal{A}$, with $\|x\|<1$ we have
\[\limsup_{n\rightarrow\infty}\frac{\ln \|x^{n}\|_{\alpha}}{n}\leq 0.\]
\end{definition}
The reason for introducing the concept of analyticity is that analytic inclusions are spectral invariant.
\begin{proposition}[\cite{BC}] Let $\mathcal{A}_{\beta}\rightarrow \mathcal{A}_{\alpha}$ be a continuous dense inclusion of unital Banach algebras. If $\|\cdot\|_{\beta}$ is analytic with respect to $\|\cdot\|_{\alpha}$, then for
all $a\in\mathcal{A}_{\beta}$ we have $\Sp_{\beta}(a)=\Sp_{\alpha}(a)$.
\end{proposition}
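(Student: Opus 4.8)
The plan is to prove the two inclusions $\Sp_{\alpha}(a)\subseteq\Sp_{\beta}(a)$ and $\Sp_{\beta}(a)\subseteq\Sp_{\alpha}(a)$ separately. The first holds for any unital dense subalgebra and uses nothing about analyticity: if $\lambda\notin\Sp_{\beta}(a)$ then $(a-\lambda)^{-1}\in\mathcal{A}_{\beta}\subseteq\mathcal{A}_{\alpha}$, so $\lambda\notin\Sp_{\alpha}(a)$. All the work goes into the reverse inclusion, which amounts to the assertion that if $a\in\mathcal{A}_{\beta}$ is invertible in $\mathcal{A}_{\alpha}$ then $a^{-1}\in\mathcal{A}_{\beta}$. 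Translating (replacing $a$ by $a-\lambda$, still an element of $\mathcal{A}_{\beta}$) I may assume $\lambda=0$, so $a$ is invertible in $\mathcal{A}_{\alpha}$ and I must show $a^{-1}\in\mathcal{A}_{\beta}$.

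First I would exploit density: choose $b\in\mathcal{A}_{\beta}$ with $\|b-a^{-1}\|_{\alpha}$ small enough that
\[\|1-ab\|_{\alpha}=\|a(a^{-1}-b)\|_{\alpha}\leq\|a\|_{\alpha}\,\|a^{-1}-b\|_{\alpha}<1.\]
Put $x:=1-ab$. Since $\mathcal{A}_{\beta}$ is an algebra containing $a$ and $b$, every power $x^{n}$ lies in $\mathcal{A}_{\beta}$, and in $\mathcal{A}_{\alpha}$ the Neumann series $\sum_{n\geq 0}x^{n}$ converges to $(1-x)^{-1}=(ab)^{-1}$. The goal is to show this series in fact converges in the finer norm $\|\cdot\|_{\beta}$; granting that, completeness of $\mathcal{A}_{\beta}$ and continuity of the inclusion into $\mathcal{A}_{\alpha}$ force its $\beta$-sum to be $(ab)^{-1}$, so $(ab)^{-1}\in\mathcal{A}_{\beta}$, and then $a^{-1}=a^{-1}(ab)(ab)^{-1}=b(ab)^{-1}\in\mathcal{A}_{\beta}$, whence $0\notin\Sp_{\beta}(a)$.

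The hard part --- really the only substantive step --- is extracting $\beta$-convergence of $\sum x^{n}$ from the analyticity hypothesis, whose conclusion is merely the non-strict bound $\limsup_{n}\frac{\ln\|y^{n}\|_{\beta}}{n}\leq 0$ for $\|y\|_{\alpha}<1$. The device is to rescale: pick $s$ with $\|x\|_{\alpha}<s<1$, apply analyticity to $y=s^{-1}x$ (which has $\|y\|_{\alpha}<1$), and use $\|x^{n}\|_{\beta}=s^{n}\|y^{n}\|_{\beta}$ to obtain
\[\limsup_{n\rightarrow\infty}\frac{\ln\|x^{n}\|_{\beta}}{n}=\ln s+\limsup_{n\rightarrow\infty}\frac{\ln\|y^{n}\|_{\beta}}{n}\leq\ln s<0.\]
Choosing $t$ with $s<t<1$, this yields $\|x^{n}\|_{\beta}\leq t^{n}$ for all large $n$, hence $\sum_{n\geq 0}\|x^{n}\|_{\beta}<\infty$, so $\sum_{n\geq 0}x^{n}$ converges in the Banach algebra $\mathcal{A}_{\beta}$. (Equivalently, the rescaling is just the statement that the spectral radius $\rho_{\beta}(x)$ computed in $\mathcal{A}_{\beta}$ satisfies $\rho_{\beta}(x)\leq s<1$.) Assembling this with the previous paragraph gives $0\notin\Sp_{\beta}(a)$, hence $\Sp_{\beta}(a)\subseteq\Sp_{\alpha}(a)$, and combined with the easy inclusion, $\Sp_{\beta}(a)=\Sp_{\alpha}(a)$ for every $a\in\mathcal{A}_{\beta}$.
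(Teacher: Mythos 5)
Your proof is correct and follows essentially the same approach as the paper: reduce to showing that an element of $\mathcal{A}_{\beta}$ invertible in $\mathcal{A}_{\alpha}$ has its inverse in $\mathcal{A}_{\beta}$, approximate the $\alpha$-inverse by some $b\in\mathcal{A}_{\beta}$ so that $x=1-ab$ has small $\alpha$-norm, and then use analyticity to show $x$ is quasinilpotent enough in $\mathcal{A}_{\beta}$. The only cosmetic difference is the device for turning the non-strict bound $\limsup\frac{\ln\|y^{n}\|_{\beta}}{n}\leq0$ into a usable strict bound: you divide by $s<1$ to get $\rho_{\beta}(x)\leq s<1$ and invoke Neumann series convergence in $\|\cdot\|_{\beta}$, whereas the paper multiplies $1-xy$ by $2$ (keeping the $\alpha$-norm below $1$) and argues $2\notin\Sp_{\beta}(2-2xy)$ from $\rho_{\beta}(2-2xy)\leq1<2$. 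Your rescaling is, if anything, a little cleaner and more explicit about exactly what follows from the $\limsup$ condition, but it is the same idea.
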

\begin{proof} It suffices to show that if $x\in\mathcal{A}_{\beta}$ is invertible in $\mathcal{A}_{\alpha}$, then $x^{-1}\in\mathcal{A}_{\beta}$. To this end choose $y\in\mathcal{A}_{\beta}$ with $\|x^{-1}-y\|<
\frac{1}{2\|x\|_{\alpha}}$. Then $\|2-2xy\|_{\alpha}<1$. By analyticity, there exists $n$ such that $\|(2-2xy)^{n}\|_{\beta}<1$, and hence $2\notin\Sp_{\beta}(2-2xy)$. But then $0\notin\Sp_{\beta}(2xy)$, hence $2xy$ has an
inverse $u\in\mathcal{A}_{\beta}$. Therefore $x^{-1}=2yu$.\end{proof} 
In order to prove spectral invariance of the inclusions $\mathcal{A}_{i+1}\rightarrow\mathcal{A}_{i}$ we need the following straightforward result, whose proof we include for the sake of completeness.
\begin{lemma}\label{an}Let $\mathcal{A}$ be a graded Banach algebra and $\delta:\mathcal{A}_{\alpha}\rightarrow M$  a densely defined closed graded derivation into a Banach $\mathcal{A}$-bimodule $M$. Then $\|a\|_{\alpha}:=\|a\|+\|\delta(a)\|$ is analytic with
respect to $\|\cdot\|$.
\end{lemma}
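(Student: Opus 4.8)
The plan is to estimate the graph norm $\|x^{n}\|_{\alpha}=\|x^{n}\|+\|\delta(x^{n})\|$ by iterating the Leibniz rule, and then to observe that the resulting bound exceeds $\|x\|^{n}$ only by a polynomial factor, which disappears after one applies $\tfrac{1}{n}\ln(\cdot)$ and lets $n\to\infty$.

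First I would record that $\mathcal{A}_{\alpha}=\Dom\delta$ is a subalgebra of $\mathcal{A}$: if $x,y\in\Dom\delta$ then $xy\in\Dom\delta$ with $\delta(xy)=\delta(x)y+x\delta(y)$, so in particular every power $x^{n}$ of an element $x\in\mathcal{A}_{\alpha}$ again lies in $\mathcal{A}_{\alpha}$ and $\|x^{n}\|_{\alpha}$ is finite. (This step uses only that $\delta$ is a derivation on its domain, not its closedness.) Next, an easy induction from the Leibniz rule gives
\[\delta(x^{n})=\sum_{k=0}^{n-1}x^{k}\,\delta(x)\,x^{n-1-k}\]
in $M$. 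Since $M$ is a Banach $\mathcal{A}$-bimodule, $\|x^{k}\delta(x)x^{n-1-k}\|\leq\|x\|^{k}\|\delta(x)\|\,\|x\|^{n-1-k}=\|x\|^{n-1}\|\delta(x)\|$, and summing over the $n$ terms yields $\|\delta(x^{n})\|\leq n\|x\|^{n-1}\|\delta(x)\|$, hence
\[\|x^{n}\|_{\alpha}\leq\|x\|^{n}+n\,\|x\|^{n-1}\|\delta(x)\|.\]

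Finally, fix $x\in\mathcal{A}_{\alpha}$ with $r:=\|x\|<1$; if $r=0$ then $x=0$ and there is nothing to prove, so assume $0<r<1$. The estimate reads $\|x^{n}\|_{\alpha}\leq r^{n-1}\bigl(r+n\|\delta(x)\|\bigr)$, so for large $n$
\[\frac{\ln\|x^{n}\|_{\alpha}}{n}\leq\frac{n-1}{n}\ln r+\frac{\ln\bigl(r+n\|\delta(x)\|\bigr)}{n}.\]
As $n\to\infty$ the first summand tends to $\ln r<0$ while the second tends to $0$, since $\ln(r+n\|\delta(x)\|)$ grows only logarithmically in $n$; hence $\limsup_{n\to\infty}\tfrac{1}{n}\ln\|x^{n}\|_{\alpha}\leq\ln r<0\leq 0$, which is precisely analyticity of $\|\cdot\|_{\alpha}$ with respect to $\|\cdot\|$. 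There is no genuine obstacle here; the only points needing a word of care are that the powers of $x$ stay in $\Dom\delta$ and that the bimodule norm on $M$ is submultiplicative against the algebra norm, both immediate from the hypotheses.
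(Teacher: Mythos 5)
Your proof is correct and follows essentially the same route as the paper's: iterate the Leibniz rule to get $\delta(x^n)=\sum_{k=0}^{n-1}x^k\delta(x)x^{n-1-k}$, bound this to show $\|x^n\|_\alpha$ grows at most polynomially in $n$ when $\|x\|<1$, and conclude that $\frac{1}{n}\ln\|x^n\|_\alpha\to 0$ (or better). The only difference is cosmetic: you retain the factor $\|x\|^{n-1}$ in the estimate $\|\delta(x^n)\|\leq n\|x\|^{n-1}\|\delta(x)\|$, which gives $\limsup\leq\ln\|x\|<0$, whereas the paper discards it early (using $\|x\|<1$) and obtains the weaker but sufficient bound $\|\delta(x^n)\|\leq n\|\delta(x)\|$ and $\limsup\leq 0$.
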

\begin{proof} Let $\|x\|<1$. We have $\|\delta(x^{n})\|\leq n\|\delta(x)\|,$ by an obvious induction. Then
\[\begin{split}\limsup_{n\rightarrow\infty}\frac{\ln\|x^{n}\|_{\alpha}}{n}&=\limsup_{n\rightarrow\infty}\frac{\ln(\|x^{n}\|+\|\delta(x^{n})\|)}{n}\\
&\leq\limsup_{n\rightarrow\infty}\frac{\ln (1+n\|\delta(x)\|)}{n}\\
&\leq \limsup_{n\rightarrow\infty}\frac{\ln n}{n}+\frac{\ln( 1+\|\delta(x)\|)}{n}\\ &=0.\end{split}\]
\end{proof}
\begin{theorem}\label{hol}Let $(\mathpzc{E},D)$ be an unbounded $(A,B)$ bimodule. Then all inclusions $\mathcal{A}_{i+1}\rightarrow\mathcal{A}_{i}$ are spectral invariant, and hence the $\mathcal{A}_{i}$
are stable under holomorphic functional calculus in $A$.
\end{theorem}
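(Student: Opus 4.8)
The plan is to reduce the statement to the two ingredients already assembled: the spectral-invariance criterion for analytic inclusions (Proposition from \cite{BC}) and Lemma \ref{an} identifying derivation-type norms as analytic. First I would exhibit, for each $i$, a closed densely defined derivation that implements the norm $\|\cdot\|_{i+1}$ relative to $\|\cdot\|_i$. The natural candidate is $\delta_i:\mathcal{A}_{i+1}\rightarrow M_i$, $a\mapsto[D_{i+1},a]$, viewed as taking values in a suitable Banach $\mathcal{A}_i$-bimodule $M_i$; one checks from Definition \ref{smod} (smoothness) that $\mathscr{A}$ lies in the domain and is a core, so $\delta_i$ is closable, and I would take $M_i:=\Endst_B(\mathpzc{E}_{i+1})$ regarded as an $\mathcal{A}_i$-bimodule via $\pi_i$. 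The only subtlety is bookkeeping of which Sobolev level the commutator and the module action live on; here one uses that $\mathpzc{E}_{i+1}\hookrightarrow\mathpzc{E}_i$ continuously (Corollary \ref{Sobolev}) so that the $\mathcal{A}_i$-action restricts compatibly, and that the norm on $\mathcal{A}_{i+1}$ induced by $\pi_{i+1}$ is equivalent to $\|a\|_i+\|\delta_i(a)\|$ — essentially the content of the block-triangular representation defining $\mathcal{A}_{i+1}$.

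Second, given that identification, Lemma \ref{an} applies verbatim: $\|\cdot\|_{i+1}$ is analytic with respect to $\|\cdot\|_i$. The inclusion $\mathcal{A}_{i+1}\rightarrow\mathcal{A}_i$ is a continuous dense inclusion of unital Banach algebras (continuity is the previous Proposition; density holds because $\mathscr{A}$ is dense in every $\mathcal{A}_j$ by construction). Hence the Proposition of \cite{BC} yields $\Sp_{i+1}(a)=\Sp_i(a)$ for all $a\in\mathcal{A}_{i+1}$, which is spectral invariance of $\mathcal{A}_{i+1}\hookrightarrow\mathcal{A}_i$.

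Third, to pass from consecutive spectral invariance to spectral invariance of each $\mathcal{A}_i$, and of $\mathscr{A}$, in the $C^\ast$-algebra $A$: spectral invariance is transitive along finite chains, so $\mathcal{A}_i\hookrightarrow\mathcal{A}_0$ is spectral invariant, and $\mathcal{A}_0\hookrightarrow A$ is spectral invariant because $\mathcal{A}_0$ is the closure of $\mathscr{A}$ in the norm of $\pi_0$, which dominates the $C^\ast$-norm and differs from it only by the bounded derivation $a\mapsto[D,a]$ — again Lemma \ref{an}. For $\mathscr{A}=\varprojlim\mathcal{A}_i$ one uses the standard fact that an inverse limit of spectrally invariant Fr\'echet subalgebras is spectrally invariant: if $a\in\mathscr{A}$ is invertible in $A$, then $a^{-1}\in\mathcal{A}_i$ for every $i$, hence $a^{-1}\in\mathscr{A}$. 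Finally, stability under holomorphic functional calculus is the classical consequence of spectral invariance for Fr\'echet algebras with continuous multiplication: for $a\in\mathscr{A}$ (or $\mathcal{A}_i$) and $f$ holomorphic on a neighbourhood of $\Sp_A(a)=\Sp_{\mathscr{A}}(a)$, the Cauchy integral $\frac{1}{2\pi i}\oint f(z)(z-a)^{-1}\,dz$ converges in the Fr\'echet topology since $(z-a)^{-1}$ lies in $\mathscr{A}$ for $z$ on the contour and depends continuously on $z$, so $f(a)\in\mathscr{A}$.

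The main obstacle I anticipate is the first step: cleanly setting up $\delta_i$ so that Lemma \ref{an} genuinely applies, i.e. verifying that the representation norm defining $\mathcal{A}_{i+1}$ is (uniformly) comparable to $\|\cdot\|_i+\|[D_{i+1},\cdot]\|$ on the right Banach module, and that this derivation is closed. Everything after that is assembly of quoted results; the level-shifting between Sobolev chains is where one must be careful not to conflate the norms $\|\cdot\|_i$ coming from $\pi_i$ on $\mathpzc{E}_i$ with those coming from restriction of $\pi_{i+1}$.
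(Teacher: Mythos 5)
Your proposal takes essentially the same route as the paper: identify $\|\cdot\|_{i+1}$ (up to equivalence) with a derivation norm $\|\cdot\|_i+\|[D_i,\cdot]\|$, invoke Lemma \ref{an} to conclude analyticity of $\|\cdot\|_{i+1}$ with respect to $\|\cdot\|_i$, and apply the Blackadar--Cuntz spectral-invariance criterion. The norm comparability you flag as your main anticipated obstacle is settled in the paper by the short elementary chain $\|\pi_{i+1}(a)\|\leq\|\pi_i(a)\|+\|[D_i,a]\|\leq 2\|\pi_i(a)\|+\|\pi_{i+1}(a)\|\leq 3\|\pi_{i+1}(a)\|$, and your added remarks on transitivity of spectral invariance and the inverse-limit argument for $\mathscr{A}$ are correct details that the paper leaves implicit in the ``hence'' clause.
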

\begin{proof} Observe that
\[\|a\|_{i+1}\leq \|a\|_{i}+\|[D,\theta_{i}(a)]\|,\]
thus, by lemma \ref{an}$ ,\|\cdot\|_{i+1}$ is majorized by a norm analytic with respect to $\|\cdot\|_{i}$, and hence is itself analytic with respect to $\|\cdot\|_{i}$. Now $\mathcal{A}_{1}$ is dense in its $C^{*}$-closure which is a $C^{*}$-subalgebra of $A$, so $\mathcal{A}_{1}$ is spectral invariant in $A$. Suppose now $\mathcal{A}_{i}$ is spectral invariant in $A$. By the above argument, $\mathcal{A}_{i+1}$ is spectral invariant in its $i$-closure, which is spectral invariant in $A$.
\end{proof}
\begin{corollary}\label{rangeproj} Let $q\in \mathcal{A}_{k}$ be an idempotent and
\[p:=qq^{*}(1+(q-q^{*})(q^{*}-q))^{-1}.\] Then $p\in\mathcal{A}_{k}$ and $p=p^{2}=p^{*}$ is a projection such that $pq=q$ and $qp=p$. In particular $q\mathcal{A}_{k}=p\mathcal{A}_{k}$.
\end{corollary}
\begin{proof} The element $(q-q^{*})(q^{*}-q)=(q-q^{*})(q-q^{*})^{*}$ is positive and hence $x=1+(q-q^{*})(q^{*}-q)$ is invertible. By theorem \ref{hol} $x^{-1}\in\mathcal{A}_{k}$ and thus $p\in\mathcal{A}_{k}$. We have
\[qq^{*}x=qq^{*}(1+(q-q^{*})(q^{*}-q))=(qq^{*})^{2}=(1+(q-q^{*})(q^{*}-q))qq^{*}=xqq^{*},\]
so $qq^{*}x^{-1}=x^{-1}qq^{*}$, which shows that $p^{*}=p$ and also
\[p^{2}=(qq^{*})^{2}x^{-2}=qq^{*}xx^{-2}=qq^{*}x^{-1}=p.\]
The identity $qp=p$ is immediate, and 
\[pq=(1+qq^{*}+q^{*}q-q^{*}-q)^{-1}qq^{*}q=(1-(1+qq^{*}+q^{*}q-q^{*}-q)^{-1})(1+q^{*}q-q^{*}-q)q=q.\]
\end{proof}

In the sequel, by a \emph{$C^{k}$-structure} on a $C^{*}$-algebra $A$ we shall mean an inverse system of operator algebras
\[\mathcal{A}_{k}\rightarrow\mathcal{A}_{k-1}\rightarrow\cdots\rightarrow A\]
where the maps are spectral invariant completely bounded *-homomorphisms with dense range. 
\begin{definition}\label{Ckalg} Let $A$ and $B$ be $C^{*}$-algebras, $\mathpzc{E}$ be an $(A,B)$ bimodule, $D$ a selfadjoint regular operator in $\mathpzc{E}$ and $k>0$. The pair $(\mathpzc{E},D)$ is said to be \emph{$C^{k}$} if the subalgebra $\mathcal{A}_{k}$ (\ref{alg}) is dense in $A$ and there is a countable positive increasing approximate unit $u_{n}$ such that $\sup_{n}\|u_{n}\|_{k}<\infty$. The bimodule $(\mathpzc{E},D)$ is \emph{smooth} if it is $C^{k}$ for all $k$. A \emph{$C^{k}$-algebra} shall be a $C^{*}$-algebra equipped with a fixed $C^{k}$-spectral triple. As such it has a natural $C^{k}$-structure.
\end{definition}
Note that if a module is $C^{k}$ for some $k$, then it is $C^{i}$ for all $i\leq k$. In particular $KK$-cycles are $C^{1}$ by definition. The above notion of smoothness is weaker than the one defined \cite{Conspec}. We refer to the appendix for a proof of this. In what follows (especially in section 6) it is crucial that we work relative to a fixed spectral triple. Notice the parallel with the definition of a manifold as a topological space with extra structure defined on it.

\subsection{Smooth $C^{*}$-modules}
We will define $C^{k}$-structures on $C^{*}$-modules over a $C^{k}$-algebra by requiring the existence of an appropriate approximate unit. We use this to construct a chain of stably rigged submodules
\[ E^{k}\subset E^{k-1}\subset \cdots \subset E^{1}\subset \mathpzc{E},\]
up to the smoothness degree of the module. Then we show that the smooth structure is compatible with tensor products, and we address the case of nonunital algebras.
\begin{definition}\label{smoothmodule} Let $B$ be a smooth $C^{*}$-algebra, with smooth structure $\{\mathcal{B}_{i}\}.$ A $C^{*}$-$B$-module $\mathpzc{E}$ is a $C^{k}$-$B$-module, if there is an approximate unit 
\[u_{n}:=\sum_{1\leq |i| \leq n}x_{i}\otimes x_{i} \in\Fin_{B}(\mathpzc{E}),\]
with $x_{i}$ homogeneous elements such that the matrices $(\langle x_{i},x_{j}\rangle )\in M_{n}(\mathcal{B}_{k})$, and 
\[\|(\langle x_{i},x_{j}\rangle)\|_{k}\leq C_{k}.\] It is a \emph{smooth}
$C^{*}$-module if there is such an approximate unit that makes it a $C^{k}$-module for all $k$.\end{definition}
From this definition, the definition of a nonunital smooth $C^{*}$-algebra is forced. In order that $B$ be smooth over itself, the existence of a positive, contractive approximate unit that restricts to a bounded one in each $\mathcal{B}_{k}$ is required. This is in line with definition \ref{Ckalg}. 
\begin{proposition}\label{useful} Let $B$ be a  $C^{k}$-algebra and $\mathpzc{E}$ a smooth $C^{*}$-$B$-module, with corresponding approximate unit $u_{n}:=\sum_{1\leq |i| \leq n}x_{i}\otimes x_{i}.$ Then
\[E^{k}:=\{e\in\mathpzc{E}:\langle x_{i},e\rangle\in\mathcal{B}_{k},\quad\|(\langle x_{i},e\rangle)_{i\in\Z}\|_{k}<\infty\},\]
is a stably rigged $\mathcal{B}_{k}$-module. When $C\leq 1$, it is an actual rigged module. Moreover, the inclusions $E^{k+1}\rightarrow E^{k}$ are completely contractive with dense range, and $E^{k+1}\tildeotimes_{\mathcal{B}_{k+1}}\mathcal{B}_{k}\cong E^{k}$, completely boundedly. When $C\leq 1$, this isomorphism is completely isometric.
\end{proposition}
\begin{proof} The maps 
\[\begin{split}\phi :E^{k} &\rightarrow \mathpzc{H}_{\mathcal{B}_{k}}\\
e &\mapsto  (\langle x_{i},e\rangle)_{i\in\Z\setminus\{0\}},\end{split}\]
and
\[\begin{split} \psi:\mathpzc{H}_{\mathcal{B}_{k}} &\rightarrow  E^{k} \\
(b_{i})_{i\in\Z}&\mapsto  \sum_{i\in\Z\setminus\{0\}}x_{i}b_{i},\end{split}\]
will give the desired factorization of the identity. These maps are completely bounded of norm $\leq C$ for the matrix norms on $E^{k}$ given by
\[\|(e_{ij})\|_{k}:=\|(\phi^{k}(e_{ij}))\|_{k},\]
and $E^{k}$ is (by definition) complete in these matrix norms. To check that $E^{k}$ is a stably rigged-$\mathcal{B}_{k}$-module, we have to show that 
\[\sum_{n\leq |i|\leq m}x_{i}\langle x_{i},e\rangle \rightarrow 0,\]
in $k$-norm, as $n\rightarrow\infty$. 
\[\begin{split}\|\sum_{n\leq |i|\leq m}x_{i}\langle x_{i},e\rangle\|_{k}&=\|(\sum_{n\leq |i|\leq m}\langle x_{j},x_{i}\rangle\langle x_{i},e\rangle)_{j\in\Z}\|_{k}\\
&=\|(\langle x_{j},x_{\ell}\rangle)_{j,\ell\in\Z}(\langle x_{i},e\rangle)_{n\leq |i|\leq m}\|_{k} \\
&\leq C\|(\langle x_{i},e\rangle)_{n\leq |i|\leq m}\|_{k}\rightarrow 0,\end{split}\]
because $\| (\langle x_{i},e \rangle )_{i\in\Z} \|_{k}<\infty$.
To see that $E^{k}$ is dense in $\mathpzc{E}$, it suffices to show that all the $x_{j}$ are in $E^{k}$, because they form a generating set for $\mathpzc{E}$. Thus we have to show that $\|x_{j}\|_{k}<\infty$. To this end we may assume that $\mathcal{B}_{k}$ is unital, and we denote by $\{e_{j}\}_{j\in\Z}$ the standard orthonormal basis of $\mathpzc{H}_{\mathcal{B}_{k}}$.
\[\begin{split}\|x_{j}\|_{k}&=\|\phi(x_{j})\|_{k}\\
&=\|(\langle x_{i},x_{j}\rangle)_{i\in\Z}\|_{k}\\
&=\|(\langle x_{i},x_{\ell}\rangle)_{i,\ell\in\Z}\cdot e_{j}\|_{k}\\
&\leq\|(\langle x_{i},x_{\ell}\rangle)_{i,\ell\in\Z}\|_{k} \\
&\leq C.
\end{split}\]
For the last statement,  the isomorphism will be implemented by the
multiplication map
\[\begin{split} m:E^{k+1}\tildeotimes_{\mathcal{B}_{k+1}}\mathcal{B}_{k}&\rightarrow E^{k}\\
e\otimes b &\mapsto eb.\end{split}\]
Write $p_{k}=\phi_{k}\psi_{k}$. The map $m:E^{k+1}\tildeotimes_{\mathcal{B}_{k+1}}\mathcal{B}_{k}\rightarrow E^{k}$ fits into a commutative diagram 
\begin{diagram}E^{k+1}\tildeotimes_{\mathcal{B}_{k+1}}\mathcal{B}_{k} &\rTo & p_{k+1}\mathpzc{H}_{\mathcal{B}_{k+1}}\tildeotimes_{\mathcal{B}_{k+1}}\mathcal{B}_{k} \\
\dTo & & \dTo\\
E^{k} &\rTo & p_{k}\mathpzc{H}_{\mathcal{B}_{k}},\end{diagram}
in which all other arrows are complete isomorphisms.
\end{proof}
\begin{remark}\label{different} There may very well be other approximate units satisfying definition \ref{smoothmodule}. They need not define the same $C^{k}$ -submodules. Two $C^{k}$ -approximate units $u_{n}=\sum_{1\leq|i|\leq n}x_{i}\otimes x_{i}$ and $v_{n}=\sum_{1\leq|i|\leq n}y_{i}\otimes y_{i}$ are equivalent if the matrix $(\langle x_{i},y_{j}\rangle)$ has finite $k$-norm. In this case, $u_{n}$ and $v_{n}$ define the same $C^{i}$-submodules, $i\leq k$, and the operator space topologies from proposition \ref{useful} are cb-isomorphic. Therefore we think of the  $C^{k}$-submodules up to cb-isomorphism.
\end{remark}
Now that we have constructed $C^{k}$-submodules as stably rigged modules, they come with canonical endomorphism algebras. This allows for a definition of $C^{k}$-bimodule.
\begin{definition}\label{bismooth} Let $A,B$ be $C^{k}$-algebras, $\mathpzc{E}\leftrightharpoons B$ a  $C^{k}$-module and $A\rightarrow\Endst_{B}(\mathpzc{E})$ a *-homomorphism. $\mathpzc{E}$ is a \emph{ $C^{k}$-$(A,B)$-bimodule} if the $A$-module structure restricts to a completely bounded homomorphism $\mathcal{A}_{k}\rightarrow\Endst_{\mathcal{B}_{k}}(E^{k})$.
\end{definition}
Note that a $C^{k}$-bimodule is automatically $C^{i}$ for $i\leq k$.       
\subsection{Inner products, stabilization and tensor products}
For a smooth $C^{*}$-algebra $B$ with smooth structure $\{\mathcal{B}_{i}\}$, any right rigged $\mathcal{B}_{i}$-module has a canonically associated left rigged $\mathcal{B}_{i}$-module $\tilde{E}$.  As a set, this is
\[\tilde{E}:=\{\overline{e}:e\in E\},\] equipped with the canonical conjugate linear structure and the left module structure $a\overline{e}:=\overline{ea^{*}}$. The left-stably rigged structure comes from the completely isometric anti isomorphism between row- and column modules
 \[\begin{split}\mathpzc{H}_{\mathcal{B}_{k}}&\rightarrow\mathpzc{H}_{\mathcal{B}_{k}}^{t}\\
(a_{j})&\mapsto(a_{j}^{*})^{t},\end{split}\] induced by the involution on $\mathcal{B}_{k}$. The structural maps are given by
\[\tilde{\phi}(\overline{e}):=(\phi(e)_{j}^{*})^{t}=(\langle e,x_{i}\rangle)_{i\in\Z},\quad \tilde{\psi}((b_{j})^{t}):=\overline{\psi((b_{j}^{*}))}=\overline{\sum_{\in\Z}x_{i}b_{i}^{*}}=\sum_{i\in\Z}b_{i}\overline{x}_{i},\] 
and are left-module maps having the desired properties.
\begin{lemma}\label{Riesz} Let $\mathpzc{E}$ be a smooth $C^{*}$-module over a smooth $C^{*}$-algebra $B$ with smooth structure $\{\mathcal{B}_{i}\}$. There is a cb-isomorphism of rigged modules $E^{i*}\cong \tilde{E}^{i}$ given by
restriction of the inner product pairing on $\mathpzc{E}$.
\end{lemma}
\begin{proof}The inner product on $\mathpzc{E}$ induces an injection $\tilde{E}^{k}\rightarrow E^{k*}$, which we denote $\overline{e}\mapsto e^{*}$.  For such elements we have \begin{equation}\label{gelijk}\phi^{*}(e^{*})=(\langle e,x_{i})\rangle_{i\in\Z}=\tilde{\phi}(\overline{e}).\end{equation} This follows from the definition of $\tilde{\phi}$ and remark \ref{dual}. An element $f\in E^{k*}$ is by definition a norm limit 
\[f=\lim_{n\rightarrow\infty}\sum_{i=-n}^{n}f(x_{i})x_{i}^{*},\]
and by \eqref{gelijk} the sequence $\sum_{i=-n}^{n}f(x_{i})\overline{x}_{i}$ is convergent in $\tilde{E}^{k}$. Therefore the map $\overline{e}\mapsto e^{*}$ is an isomorphism.
\end{proof}
As a consequence, $C^{k}$-modules over a $C^{k}$-algebra $\{\mathcal{B}_{k}\}$ are pre- $C^{*}$-modules, i.e. they come with a nondegenerate $\mathcal{B}_{k}$-valued innerproduct pairing satisfying all the properties of definition \ref{C*mod}. It should be noted that this inner product does not generate the operator space topology on $E^{k}$. Completing both $\mathcal{B}_{k}$ and $E^{k}$ yield the $C^{*}$-module $\mathpzc{E}\leftrightharpoons B$.

The type of self-duality expressed in lemma \ref{Riesz} allows us to remove the requirement of complete boundedness in the definition of adjointable operator (\ref{rigad}).
\begin{theorem}\label{automatic} Let $B$ be a $C^{k}$-algebra and $\mathpzc{E}\leftrightharpoons B$ a  $C^{k}$-module. If $T,T^{*}:E^{k}\rightarrow E^{k}$ are mappings satisfying $\langle Te,f\rangle=\langle e, T^{*}f\rangle$ for all $e,f\in E^{i}$, then $T,T^{*}$ are completely bounded  and $\mathcal{B}_{k}$-linear, i.e. $T,T^{*}\in \Endst_{\mathcal{B}_{k}}(E^{k})$. Moreover, the cb-norm and the operator norm are equivalent to one another and  $T\mapsto T^{*}$ is a well defined complete anti isomorphism of $\Endst_{\mathcal{B}_{k}}(E^{k})$.
\end{theorem}
\begin{proof} We first prove the statement for the case where $E^{k}$ is actually rigged. Uniqueness of the adjoint and $\mathcal{B}_{k}$-linearity are straightforward to show. To show $T,T^{*}$ are bounded, first note that lemma \ref{Riesz} implies that $\K_{\mathcal{B}_{k}}(E^{k},\mathcal{B}_{k})$ is anti isometric to $E^{k}$ via $e\mapsto e^{*}$. Now let $T,T^{*}$ be as stated in the theorem, and take $e\in E^{k}$ with $\|e\|_{k}=1$. Then $T_{e}:=(Te)^{*}\in \K_{\mathcal{B}_{k}}(E^{k},\mathcal{B}_{k})$ and
\begin{equation}\label{T}\|T_{e}(f)\|_{k}=\|\langle Te,f\rangle\|_{k}=\|\langle e, T^{*}f\rangle\|_{k}\leq\|T^{*}f\|_{k}.\end{equation}
From the Banach-Steinhaus theorem we conclude that the set
\[\{\|T_{e}\|_{k}:\|e\|_{k}=1\},\]
is bounded, which implies that $\|T\|_{k}<\infty$. By reversing $T$ and $T^{*}$, we find $\|T^{*}\|_{k}< \infty$ as well. Morever, now \ref{T} implies that $\|T\|\leq\|T^{*}\|$, and again, reversing gives $\|T\|=\|T^{*}\|$. Complete boundedness follows by estimating (cf. \cite{Blech2}, theorem 3.5)
\[\begin{split}\| (Te_{ij})\|_{k}&=\lim_{n}\|\psi_{n}\phi_{n}T\psi_{n}\phi_{n}(e_{ij})\|_{k}\\
&\leq (\sup_{n}\|\phi_{n}T\psi_{n}\|_{cb})\sup_{n}\|\psi_{n}(e_{ij})\|_{k}\\
&=(\sup_{n}\|\phi_{n}T\psi_{n}\|)\|(e_{ij})\|_{k}\\
&\leq \|T\|\|(e_{ij})\|_{k}.\end{split}\]
Here we used that $\phi_{n}T\psi_{n}:\mathcal{B}_{k}^{2n}\rightarrow \mathcal{B}_{k}^{2n}$ is completely bounded, which follows from the fact that it comes from left multiplication by a matrix, since $\mathcal{B}_{k}$ has a bounded approximate unit. Note that this estimate also shows $\|T\|_{cb}=\|T\|$.\newline\newline
For the general stably rigged case, embed $E^{k}$ in the rigged module $\mathpzc{H}_{\mathcal{B}_{k}}$, i.e. choose an isomorphism $E^{k}\cong p_{k}\mathpzc{H}_{\mathcal{B}_{k}}$, with $p_{k}\in\Endst_{\mathcal{B}_{k}}(\mathpzc{H}_{\mathcal{B}_{k}})$ a projection. The equalities $\|T\|=\|T\|_{cb}=\|T^{*}\|$, valid for $T\in \Endst_{\mathcal{B}_{k}}(\mathpzc{H}_{\mathcal{B}_{k}})$ then yield equivalences of these three norms for $\Endst_{\mathcal{B}_{k}}(E^{k})$.
\end{proof}
Note that this result implies that unitary operators (in the usual inner product sense), need not be isometric, but they will be cb-isomorphisms. Passing to an equivalent approximate unit (cf. remark \ref{different}) yields a unitary isomorphism of $C^{k}$-submodules.
\begin{theorem}\label{smoothstab} Let $B$ be a smooth graded $C^{*}$-algebra, and $\mathpzc{E}$ a countably generated smooth graded $C^{*}$-module. Then $\mathpzc{E}\oplus\mathpzc{H}_{B}$ is $C^{k}$ unitarily isomorphic to $\mathpzc{H}_{B}$. That is, there is a unitary isomorphism of graded inverse systems
\begin{diagram}\cdots &\rTo & E^{i+1}\oplus \mathpzc{H}_{\mathcal{B}_{i+1}} &\rTo & E^{i}\oplus \mathpzc{H}_{\mathcal{B}_{i}} &\rTo &\cdots &\rTo & \mathpzc{E}\oplus\mathpzc{H}_{B}\\
 & & \dTo & &\dTo & & & &\dTo \\
\cdots  &\rTo &\mathpzc{H}_{\mathcal{B}_{i+1}} &\rTo & \mathpzc{H}_{\mathcal{B}_{i}} &\rTo &\cdots &\rTo & \mathpzc{H}_{B}\end{diagram}
\end{theorem}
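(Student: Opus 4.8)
The plan is to run the Kasparov stabilization trick one Sobolev level at a time, checking that every operator that occurs already lives in the smooth category, and to use spectral invariance (Theorem \ref{hol}) to place the inverses and inverse square roots that appear back inside the smooth endomorphism algebras. I would first fix a convenient set of \emph{smooth generators}. By Proposition \ref{useful} the smooth $C^{*}$-module $\mathpzc{E}$ carries an approximate unit $u_{\alpha}=\sum_{i}x^{\alpha}_{i}\otimes x^{\alpha}_{i}\in\Fin_{B}(\mathpzc{E})$ with $\langle x^{\alpha}_{i},x^{\alpha}_{j}\rangle\in\mathcal{B}_{k}$ and $\|\langle x^{\alpha}_{i},x^{\alpha}_{j}\rangle\|_{k}\le 1$ for every $k$, giving rise to the rigged $\mathcal{B}_{k}$-modules $E_{k}\subset\mathpzc{E}$. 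After replacing each $x^{\alpha}_{i}$ by its even and odd parts (which only changes constants), I would enumerate all of the $x^{\alpha}_{i}$ as a single sequence $\{\zeta_{n}\}_{n\in\N}$ of homogeneous elements, arranged so that each entry, and also $0$, is repeated infinitely often, and then replace each $\zeta_{n}$ by $2^{-n}\zeta_{n}$. The result is a generating sequence for $\mathpzc{E}$ whose members lie in every $E_{k}$, with $\langle\zeta_{m},\zeta_{n}\rangle\in\mathcal{B}_{k}$ and $\sup_{m,n}\|\langle\zeta_{m},\zeta_{n}\rangle\|_{k}\le 1$, and with enough decay to make the series in Kasparov's construction converge at every level $k$.

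Using this sequence I would form the Kasparov operator $T\in\Homst_{B}(\mathpzc{H}_{B},\mathpzc{E}\oplus\mathpzc{H}_{B})$, $Te_{n}=(\zeta_{n},2^{-n}e_{n})$, which is the adjointable, injective, dense-range operator whose modulus and polar part realize the graded unitary isomorphism of Theorem \ref{stab} (graded, since the $\zeta_{n}$ are homogeneous, so $T$ has degree $0$). The next step is to check that $T$ is \emph{smooth}, i.e. that it restricts to a completely bounded adjointable map $T_{k}\colon\mathpzc{H}_{\mathcal{B}_{k}}\to E_{k}\oplus\mathpzc{H}_{\mathcal{B}_{k}}$ compatibly with the transition maps of the two inverse systems. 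This is a bookkeeping computation on matrix coefficients: those of $T$ and $T^{*}$ are the $\zeta_{n}$ and the scalars $2^{-n}$, those of the relevant positive operator $P$ built from $T$ (e.g. $P=1+T^{*}T$) involve the $\langle\zeta_{m},\zeta_{n}\rangle\in\mathcal{B}_{k}$, and the geometric decay forces $\|\cdot\|_{k}$-convergence; compatibility with the transition maps is automatic, since $T_{k}$ is literally $T$ restricted.

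The only non-formal point is then to invert $P$ and extract $P^{-1/2}$ inside the smooth category. From (the proof of) Theorem \ref{stab}, $P$ is a positive invertible element of $\Endst_{B}(\mathpzc{H}_{B})$, and by the previous step $P$ lies in $\Endst_{\mathcal{B}_{k}}(\mathpzc{H}_{\mathcal{B}_{k}})=\mathscr{M}(\K_{\mathcal{B}_{k}}(\mathpzc{H}_{\mathcal{B}_{k}}))$. Since $\mathcal{B}_{k}\subset B$ is spectrally invariant (Theorem \ref{hol}), so is $\K_{\mathcal{B}_{k}}(\mathpzc{H}_{\mathcal{B}_{k}})\cong\mathpzc{H}_{\mathcal{B}_{k}}\tildeotimes_{\mathcal{B}_{k}}\mathpzc{H}_{\mathcal{B}_{k}}^{*}$ inside $\K_{B}(\mathpzc{H}_{B})$ (propagate Theorem \ref{hol} through the formation of finite matrices and then of compacts, using the smooth approximate units), and hence so is the pair of multiplier algebras; therefore $P^{-1}$ and, by holomorphic functional calculus (the square root being holomorphic on a neighbourhood of $\Sp(P)\subset(0,\infty)$), $P^{\pm 1/2}$, all lie in $\Endst_{\mathcal{B}_{k}}(\mathpzc{H}_{\mathcal{B}_{k}})$, compatibly in $k$. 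For readers wary of the multiplier-algebra argument, one may instead rescale the $\zeta_{n}$ at the outset so that $\|P-1\|<1$, whereupon $P^{-1}$ and $P^{\pm 1/2}$ are given by norm-convergent Neumann and binomial series inside the complete operator algebra $\Endst_{\mathcal{B}_{k}}(\mathpzc{H}_{\mathcal{B}_{k}})$, which yields the same conclusion by hand.

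To conclude, the unitary $U\colon\mathpzc{H}_{B}\to\mathpzc{E}\oplus\mathpzc{H}_{B}$ of Theorem \ref{stab}, which is obtained from $T$ and $P^{\pm 1/2}$, now visibly restricts to a graded unitary $U_{k}\colon\mathpzc{H}_{\mathcal{B}_{k}}\to E_{k}\oplus\mathpzc{H}_{\mathcal{B}_{k}}$ for every $k$, commuting with the transition maps; the adjoints $U_{k}^{*}$, together with $U^{*}$ at the bottom, assemble into the asserted isomorphism of graded inverse systems. I expect the main obstacle to be exactly the propagation of spectral invariance to the standard module used in the third paragraph — equivalently, that $\Endst_{\mathcal{B}_{k}}(\mathpzc{H}_{\mathcal{B}_{k}})$ is spectrally invariant in $\Endst_{B}(\mathpzc{H}_{B})$: since general rigged modules over operator algebras admit no stabilization theorem (as recalled above, following Blecher), the smoothness hypothesis on $\mathpzc{E}$ must be used here, and the rescaling device above is the way to make this step self-contained. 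A secondary, purely combinatorial, obstacle is producing the smooth generators of the first paragraph (homogeneity, infinite repetition, decay) without spoiling the uniform bounds $\|\langle\zeta_{m},\zeta_{n}\rangle\|_{k}\le 1$.
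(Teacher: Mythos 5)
The paper's own proof proceeds very differently from yours: it runs a Gram--Schmidt (``almost orthogonalization'' in the sense of Dupr\'e--Fillmore) construction, producing an orthonormal sequence $\{h_n\}$ generating $\mathpzc{E}\oplus\mathpzc{H}_B$, with each $h_n$ built from strictly earlier data and normalised by $\langle x'',x''\rangle^{-1/2}$, where crucially $\langle x'',x''\rangle = \langle x',x'\rangle + \tfrac{1}{(n+1)^2}$ is \emph{bounded below} by $\tfrac{1}{(n+1)^2}>0$. Because every inverse square root taken in that proof is of a strictly positive, hence invertible, element of $B$, spectral invariance (Theorem \ref{hol}) applies directly to keep each $h_n$ inside the smooth submodules, and the resulting map $e_n\mapsto h_n$ is manifestly a smooth unitary. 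You instead try to adapt the ``single operator'' version of Kasparov stabilization (polar decomposition of $T$), and this is where I think your argument has a genuine gap.

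The operator $T$ you construct, $Te_n=(\zeta_n,2^{-n}e_n)$, is injective with dense range, and the Kasparov unitary $U$ is the polar part of $T$. But the modulus $|T|=(T^*T)^{1/2}$ is \emph{not} invertible in $\Endst_B(\mathpzc{H}_B)$: it is only injective with dense range, and its inverse is an unbounded, densely defined operator. Consequently $U$ is \emph{not} an algebraic or holomorphic function of $T$ and $T^*T$; it is a closure of the densely defined map $T|T|^{-1}$. Your third paragraph inverts $P=1+T^{*}T$ and its square root inside the smooth algebra, but $T(1+T^{*}T)^{-1/2}$ is the bounded transform $\mathfrak{b}(T)$ --- a strict contraction, not a unitary --- so even granting all the smooth propagation, you have not produced the isomorphism. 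The rescaling device ($\|P-1\|<1$, Neumann/binomial series) addresses the wrong operator: the problematic inverse is $|T|^{-1}$, and no rescaling of the $\zeta_n$ makes $T^{*}T$ bounded below (the second coordinate already forces eigenvalues $4^{-n}\to 0$). To rescue your route you would need an independent argument that the polar part of a ``smooth'' operator with smooth modulus is again smooth, which is not a consequence of spectral invariance alone; the paper's Gram--Schmidt construction is designed precisely to avoid this non-invertibility and keep all functional calculus applied only to strictly positive, invertible elements.

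A secondary, smaller point: you should also be careful that the smooth unitary must be \emph{graded}. The paper notes that the almost-orthogonalization procedure can be run separately on even and odd parts; your enumeration of homogeneous $\zeta_n$ is the right idea, but you still need to interleave the two standard bases of $\mathpzc{H}_B^0$ and $\mathpzc{H}_B^1$ consistently with the degrees of the $\zeta_n$ so that $T$ has degree $0$; this is doable but is an additional bookkeeping step not addressed in the proposal.
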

\begin{proof} The proof of lemma \ref{Riesz} shows that the map $\psi:E^{k}\rightarrow \mathpzc{H}_{\mathcal{B}_{k}}$ preserves the inner product. Let $p=\phi\psi$, so $\mathpzc{H}_{\mathcal{B}_{k}}\cong(1-p)\mathpzc{H}_{\mathcal{B}_{k}}\oplus p\mathpzc{H}_{\mathcal{B}_{k}}$ is an inner product preserving cb-isomorphism. Now use the Eilenberg swindle
\[E^{k}\oplus\mathpzc{H}_{\mathcal{B}_{k}}\cong E^{k}\oplus (1-p)\mathpzc{H}_{\mathcal{B}_{k}}\oplus (p\mathpzc{H}_{\mathcal{B}_{k}}\oplus (1-p)\mathpzc{H}_{\mathcal{B}_{k}}\oplus\cdots)\cong \mathpzc{H}_{\mathcal{B}_{k}},\]
to obtain an innerproduct preserving isomorphism. Note that the infinite direct sum is well defined cf.\ref{directsum}, since only two different modules appear in it.
\end{proof}
\begin{lemma}\label{standardsob} Let $B$ be a $C^{k}$-algebra with spectral triple $(\mathcal{H}, D)$. The algebras $\K_{\mathcal{B}_{k}}(\mathpzc{H}_{\mathcal{B}_{k}})$ and $\Endst_{\mathcal{B}_{k}}(\mathpzc{H}_{\mathcal{B}_{k}})$ are completely *-isomorphic to closed subalgebras of $\Sob_{k}(1\otimes D)$ of the selfadjoint operator $1\otimes D$ in $\mathpzc{H}\tildeotimes\mathpzc{H}$. In particular they are spectral invariant in their $C^{*}$-closures.
\end{lemma}\begin{proof} It is immediate that  $\K_{\mathcal{B}_{k}}(\mathpzc{H}_{\mathcal{B}_{k}})\cong\K\minotimes\mathcal{B}_{k}$ is a closed subalgebra of $\Sob_{k}(1\otimes D)$, since the Woronowics projections satisfy $p^{1\otimes D}=1\otimes p^{D}$. By theorem \ref{multipliersob} this extends to an involutive respresentation of $\mathscr{M}(\K_{\mathcal{B}_{k}}(\mathpzc{H}_{\mathcal{B}_{k}}))\cong\Endst_{\mathcal{B}_{k}}(\mathpzc{H}_{\mathcal{B}_{k}})$. \end{proof}

\begin{lemma}\label{approxim} Let $B$ be a $C^{k}$-algebra and $\mathpzc{E}\lrh B$ a $C^{*}$-module. Then $\mathpzc{E}$ is a $C^{k}$-module if and only if there is an approximate unit
\begin{equation}\label{approx2} u_{n}=\sum_{1\leq |i|\leq n} x_{i}\otimes x'_{i}\in\Fin_{B}(\mathpzc{E}),\end{equation}
such that $\|\langle x_{i}',x_{j}\rangle \|_{k}\leq C$.
\end{lemma}
\begin{proof} The implication $\Rightarrow$ is trivial, as $x_{i}=x'_{i}$ in definition \ref{smoothmodule}. For the other direction, note that $q=(\langle x_{i}',x_{j}\rangle)$ is an idempotent in $\Endst_{\mathcal{B}_{k}}(\mathpzc{H}_{\mathcal{B}_{k}})$, and by lemma \ref{standardsob} the range projection $p$ (corollary \ref{rangeproj}) is an element of $\Endst_{\mathcal{B}_{k}}(\mathpzc{H}_{\mathcal{B}_{k}})$ and $p\mathpzc{H}_{\mathcal{B}_{k}}=q\mathpzc{H}_{\mathcal{B}_{k}}$. Therefore $\mathpzc{E}$ is a $C^{k}$-module with approximate unit $u_{n}=\sum_{1\leq |i|\leq n} pe_{i}\otimes pe_{i}$, with $\{e_{i}\}$ the standard basis.
\end{proof}
\begin{proposition} Let $\mathpzc{E}\lrh B$ and $\mathpzc{F}\lrh C$ be $C^{k}$-modules with approximate units
\[u_{n}=\sum_{1\leq |i|\leq n}x_{i}\otimes x_{i}, \quad v_{n}=\sum_{1\leq |i|\leq n}y_{i}\otimes y_{i},\]
respectively. If $\pi:\mathcal{B}_{k}\rightarrow \Endst_{\mathcal{B}_{k}}(F^{k})$ is a completely bounded homomorphism, then $E^{k}\tildeotimes_{\mathcal{B}_{k}}\mathpzc{F}\lrh C$ is completely isomorphic to a $C^{k}$-module with approximate unit 
\[u_{n,m}=\sum_{1\leq |i|\leq n} \sum_{1\leq |j|\leq m}(x_{i}\otimes y_{j})\otimes (y_{j}\otimes x_{i}),\]
inner product
\begin{eqnarray}\label{innerprod}\langle e\otimes f,e'\otimes f'\rangle :&=&\lim_{n}\sum_{1\leq |i| \leq n}\langle\langle x_{i},e\rangle f,\langle x_{i},e'\rangle f'\rangle,\end{eqnarray}
and $C^{i}$-submodules cb-isomorphic to $E^{k}\tildeotimes_{\mathcal{B}_{k}}F^{i}$.
\end{proposition}
\begin{proof} Note that $y_{j}\otimes x_{i}$ denotes the functional $e\otimes f\mapsto \langle y_{j},\pi(\langle x_{i},e\rangle ) f\rangle$. The approximate unit $u_{n,m}$ defines a stably rigged structure on $E^{k}\tildeotimes_{\mathcal{B}_{k}}F^{k}$. (Note that strictly speaking, $u_{n,m}$ should be reindexed to bring it in the form \eqref{approx2}, and that this can be done without problems). The homomorphism $\mathcal{B}_{k}\rightarrow\Endst_{\mathcal{C}_{k}}(F^{k})$ in particular gives maps $\mathcal{B}_{k}\rightarrow \Endst_{\mathcal{B}_{i}}(F^{i})$ for all $i\leq k$, and hence stably rigged structures on each $E^{k}\tildeotimes_{\mathcal{B}_{k}}F^{i}$. From theorem \ref{stabten}, it follows that $E^{k}\tildeotimes_{\mathcal{B}_{k}}\mathpzc{F}$ is completely isomorphic to a $C^{*}$-module.  The inner product corresponding to $u_{n.m}$ is
\begin{eqnarray}\nonumber\langle e\otimes f,e'\otimes f'\rangle :&=&\lim_{n,m}\sum_{1\leq |i| \leq n}\sum_{1\leq |j| \leq m}\langle\langle x_{i},e\rangle f, y_{j}\rangle\langle
y_{j},\langle x_{i},e'\rangle f'\rangle \\ \nonumber &=&\lim_{n}\sum_{1\leq |i| \leq n}\langle\langle x_{i},e\rangle f,\langle x_{i},e'\rangle f'\rangle.\end{eqnarray}
This shows that  the functional $y_{j}\otimes x_{i}$ does not coincide with the functional defined by $x_{i}\otimes y_{j}$ via this innerproduct when $\pi$ is not a *-homomrphism. However, the approximate unit $u_{n,m}$ satisfies lemma \ref{approxim} , and the $C^{i}$-submodules are clearly cb-isomorphic to $E^{k}\tildeotimes_{\mathcal{B}_{k}}F^{i}$. 
\end{proof}

\subsection{Regular operators on $C^{k}$-modules}
To develop the theory of regular operators in $C^{k}$-modules, we need to strengthen definition \ref{reg} a little bit, due to the finer topology on such modules. Moreover, due to the absence of square roots we have to develop the theory for the selfadjoint case first.
\begin{definition}\label{Ckreg}Let $B$ be a $C^{k}$-algebra and $\mathpzc{E}\lrh B$ a $C^{k}$-module. \begin{enumerate}\item A closed densely defined selfadjoint operator $D:\mathfrak{Dom} D\rightarrow E^{k}$ is \emph{regular} if the operators $(D\pm i)^{-1}$ are densely defined and have finite $k$-norm.
\item A closed densely defined operator $D:\mathfrak{Dom} D\rightarrow E^{k}$ is \emph{regular} if $D^{*}$ is densely defined and the selfadjoint operator $\begin{pmatrix} 0& D^{*}\\ D& 0\end{pmatrix}$ is regular.
\end{enumerate}
\end{definition} 
Note that this definition in particular implies that $D\pm i$ have dense range. We wish to prove the analogue of the Woronowicz theorem \ref{Worthm} for such operators. Along the way we will find cleaner, equivalent characterizations of regularity, especially for selfadjoint operators. However, these seem to be harder to verify in practice.

\begin{proposition}\label{di}Let $B$ be a $C^{k}$-algebra and $\mathpzc{E}\leftrightharpoons B$ a  $C^{k}$-module. Suppose $D$ is a selfadjoint regular operator in $E^{k}$. Then $D^{2}$ is densely defined, $\mathfrak{Dom} D^{2}$ is a core for $D$, and the operators \[1+D^{2}:\Dom D^{2}\rightarrow E^{k},\]
\[D\pm i: \Dom D\rightarrow E^{k}\] are bijective.
\end{proposition}
\begin{proof} The operators $D\pm i$ have dense range, and by assumption their inverses extend to mutually adjoint elements $r_{+},r_{-}$ of $\Endst_{\mathcal{B}_{k}}(E^{k})$. Similarly, one gets adjointable extensions of $Dr_{+}$ and $r_{-}D$, which are adjoint to one another. We have that $\mathfrak{Im}r_{+}\subset\mathfrak{Dom} D$, by taking a sequence $e_{n}\in\im (D+i)$, such that $e_{n}\rightarrow e$. Then $r_{+}e_{n}\rightarrow r_{+}e$ and $Dr_{+}e_{n}\rightarrow Dr_{+}e$ and since $D$ is closed, $r_{+}e\in \Dom D$. Also $r_{+}(D+i)e=e$ and hence $\mathfrak{Im} r_{+}=\mathfrak{Dom}D$. The same holds for $r_{-}$. \newline
Now observe that for $e\in\Dom D$ and $f\in E^{k}$ 
\[\begin{split}\langle e,f\rangle &= \langle r_{\pm}(D\pm i)e,f\rangle \\
&=\langle e, (D\mp i)r_{\mp}f\rangle,\end{split}\]
so $f=(D\pm i)r_{\pm}f$ and $D\pm i$ are surjective. This in particular implies that $1+D^{2}$ is surjective.

Let $f\in\mathfrak{Dom} D$, then $f=r_{+}e$ for some $e\in E^{k}$. Choose a sequence $e_{n}$ in $\mathfrak{Im}(1+D^{2})=E^{k}$ with $r_{-}e_{n}\rightarrow e$. Then $r_{+}r_{-}e_{n}\rightarrow f$ and $Dr_{+}r_{-}e_{n}\rightarrow Df$. But $r_{+}r_{-}e_{n}\in\mathfrak{Dom} D^{2}$ since $e_{n}\in\mathfrak{Im}(1+D^{2})$, so $\mathfrak{Dom}D^{2}$ is a core for $D$ and in particular is dense.
\end{proof}
\begin{corollary}\label{smoothreg} Let $D$ be a closed densely defined operator in $E^{k}$ with densely defined adjoint. Then $D$ is regular if and only if $1+D^{*}D, 1+DD^{*}$ are surjective. If $D$ is regular then $D^{*}$ is regular and $\mathfrak{Dom} D^{*}D$ is a core for $D$.
\end{corollary}
\begin{proof} Consider the selfadjoint operator
\[\tilde{D}=\begin{pmatrix}0 & D^{*}\\D &0\end{pmatrix},\]
which is regular in $E^{k}\oplus E^{k}$ if and only if $D$ is regular in $E^{k}$.\newline\newline 
$\Rightarrow$ This is the statement that $(1+\tilde{D}^{2})$ is surjective.\newline\newline
$\Leftarrow$
Since
\[(\tilde{D}+i)(\tilde{D}-i)=\begin{pmatrix}1+D^{*}D & 0\\ 0 & 1+DD^{*}\end{pmatrix},\]
this implies that $(\tilde{D}\pm i)$ are bijectieve, and hence by Banach-Steinhaus their inverses are bounded and adjointable, hence $D$ is regular. The other statements are immediate from proposition \ref{di}
\end{proof}

\begin{theorem}\label{diffWor}Let $B$ be a smooth $C^{*}$-algebra and $\mathpzc{E}\leftrightharpoons B$ a smooth $C^{*}$-module. Suppose $D$ is a densely defined closed operator in $E^{k}$, with densely defined adjoint. Then $D$ is regular if and only if $\mathfrak{G}(D)\oplus v\mathfrak{G}(D^{*})\cong E^{k}\oplus E^{k}$ unitarily.
\end{theorem}
\begin{proof}We may assume that $D$ is selfadjoint, using the same trick as in corollary \ref{smoothreg}. The preceeding lemmas show that the operators $(1+D^{2})^{-1}, D(1+D^{2})^{-1}$ and $D^{2}(1+D^{2})^{-1}$ are selfadjoint elements of $\Endst_{\mathcal{B}_{k}}(E^{k})$. Therefore we can write down a Woronowicz projection
\[p_{D}:=\begin{pmatrix}(1+D^{2})^{-1} & D(1+D^{2})^{-1} \\ D(1+D^{2})^{-1} & D^{2}(1+D^{2})^{-1}\end{pmatrix}.\]
It maps $E^{k}\oplus E^{k}$ into $\mathfrak{G}(D)$. From the relation $(1+D^{2})^{-1}+D^{2}(1+D^{2})^{-1}=1$ it follows that $1-p_{D}$ maps $E^{k}\oplus E^{k}$ into $v\mathfrak{G}(D)$. Since these submodules are orthogonal, their sum must be all of $E^{k}\oplus E^{k}$.\newline

The converse follows by a standard argument as in \cite{Lan}. Let $p$ be the projection onto $\mathfrak{G}(D)$,
\[p=\begin{pmatrix} a& b^{*} \\ b & d\end{pmatrix}.\]
Then $\mathfrak{Im}a\subset\mathfrak{Dom} D, b=Da$ and $\mathfrak{Im}b\subset\mathfrak{Dom} D, 1-a=Db$. Thus, $\mathfrak{Im}a\subset\mathfrak{Dom} D^{2}$ and $1-a=D^{2}a$. Then $(1+D^{2})a=1$, so $(1+D^{2})$ is surjective and $D$ is regular.
\end{proof}
\begin{corollary}\label{selfreg} A densely defined closed symmetric operator in $E^{k}$ is selfadjoint and regular if and only if $\mathfrak{G}(D)\oplus v\mathfrak{G}(D) \cong E^{k}\oplus E^{k}$.
\end{corollary}
Combining this with proposition \ref{useful}, we see that regular operators in $E^{k}$ extend to $E^{i}\cong E^{k}\tildeotimes_{\mathcal{B}_{k}}\mathcal{B}_{i}$ for all $i\leq k$ as $D\otimes 1$, and this extension preserves selfadjointness.
\begin{corollary}[cf.\cite{Kuc2}, lemma 2.3] \label{invregular} If $D$ is a regular operator in $E^{k}$ such that $D$ and $D^{*}$ have dense range, then $D^{-1}$ is regular and $D^{-1*}=D^{*-1}$. In particular, if $S,T\in\Endst_{\mathcal{B}_{k}}(E^{k})$ have dense range, and adjoints with dense range, then $S^{-1}T^{-1}$ is regular with adjoint $T^{*-1}S^{*-1}$.
\end{corollary}
\begin{proof} This follows by observing that the unitary $v$ maps the graph of $D$ to that of $-D^{-1}$.
\end{proof}
\begin{theorem} \label{dplusmini} Let $D$ be a densely defined  closed symmetric operator in $E^{k}$. The following are equivalent:
\begin{enumerate} \item $D$ is selfadjoint and regular; 
\item The operators $D\pm i:\mathfrak{Dom} D\rightarrow E^{k}$ are bijective; 
\item $\mathfrak{Im}(D+i)\cap\mathfrak{Im}(D-i)$ is dense and the operators $(D\pm i)^{-1}$ have bounded $k$-norm.
\end{enumerate}
\end{theorem}
\begin{proof} $1.)\Rightarrow 2.)$ We already saw in proposition \ref{di} that for selfadjoint regular operators, $D\pm i$ are bijective. \newline
$2.)\Rightarrow 3.)$ Follows from theorem \ref{automatic}.\newline
$3.)\Rightarrow 1.)$
The extensions $r_{\pm}\in\Endst_{\mathcal{B}_{k}}(E^{k})$ of $(D\pm i)^{-1}$ are mutually adjoint because \[r_{+}^{*}e=(D+i)^{-1*}e=(D-i)^{-1}e=r_{-}e,\] 
for $e\in\mathfrak{Im}(D+i)\cap\mathfrak{Im}(D-i)$ and this subset is dense. One then shows as in proposition \ref{di} that in fact $(D\pm i)$ are bijective and thus $r_{\pm}=(D\pm i)^{-1}$. From corollary \ref{invregular} we then get that $D\pm i$ are regular and mutually adjoint, hence $D$ is selfadjoint and regular.
\end{proof}
A regular operator in $E^{k}$ is \emph {almost selfadjoint} if it satisfies the analogue of definition \ref{almostdef}, and the proves of proposition \ref{almost} and its corollary \ref{idempotent} go through verbatim. That is, for an almost selfadjoint operator and $\lambda\in \R$ sufficiently large, $D\pm\lambda i$ and $D^{*}\pm\lambda i$ are bijections $\Dom D\rightarrow E^{k}$ and the formula 
\[p=\begin{pmatrix}(1+\frac{D^{2}}{\lambda^{2}})^{-1} & \frac{D}{\lambda^{2}}(1+\frac{D^{2}}{\lambda^{2}})^{-1}\\
D(1+\frac{D^{2}}{\lambda^{2}})^{-1} &  \frac{D^{2}}{\lambda^{2}}(1+\frac{D^{2}}{\lambda^{2}})^{-1}\end{pmatrix},\]
defines an idempotent in $\Endst_{\mathcal{B}_{k}}(E^{k})$ with range $\mathfrak{Im} p=\mathfrak{G}(D)$, satisfying $vpv^{*}=1-p$.

\subsection{Transverse smoothness}
Regular operators in a $C^{k}$-module behave similarly to those in $C^{*}$-modules. In particular, their graphs are complemented submodules given by Woronowicz projections. This means that for a subalgebra $\mathcal{A}\subset\Endst_{\mathcal{B}_{k}}(E^{k})$, the representations $\pi_{n}$ (\ref{prep}), $\theta_{n}$(\ref{trep}) and $\chi_{n}$ (corollary \ref{graphrep}) can be defined, relative to a regular operator $D$ in $E^{k}$. They have the same properties as those in a $C^{*}$-module. Strictly speaking we should denote them by $\pi_{n}^{k}$, $\theta_{n}^{k}$ and $\chi_{n}^{k}$, but we suppress this in the notation, unless it causes confusion. In particular, Sobelev algebras $\Sob^{k}_{n}(D)$ and $\mathcal{A}_{n}:=\mathcal{A}\cap\Sob^{k}_{n}(D)$  are defined for a *-subalgebra $\mathcal{A}\subset \Endst_{\mathcal{B}_{k}}(E^{k})$.

We can construct a Sobolev chain $E^{k}_{j}$ for an almost selfadjoint $D$, and view it as a morphism of inverse systems, just as in corollary \ref{Sobolev} and the proposition preceeding it.
\begin{proposition} Let $\mathpzc{E}\leftrightharpoons B$ be a $C^{k}$-module and $D$ an almost selfadjoint regular operator in $E^{k}$.  Then the Sobolev modules $\mathpzc{E}_{n}$ of $D$ are $C^{k}$ over $B$.
\end{proposition}
\begin{proof}Let $u_{n}=\sum_{1\leq |i| \leq n}x_{i}\otimes x_{i}$ be a $C^{k}$- approximate unit for $\mathpzc{E}$. One checks that the map
\[\begin{split} E^{k}&\rightarrow \mathfrak{G}(D)\subset E^{k}\oplus E^{k}\\
e&\mapsto p\begin{pmatrix} ie\\e\end{pmatrix},\end{split}\]
preserves the inner product. Therefore
\[u_{n}^{1}:=\sum_{1\leq |j| \leq n}p\begin{pmatrix} ix_{j}\\x_{j}\end{pmatrix}\otimes p\begin{pmatrix} i x_{j}\\x_{j}\end{pmatrix},\]
satisfies the requirement of definition \ref{smoothmodule}, and hence smoothens (up to degree $k$) the first Sobolev module $\mathpzc{E}_{1}$. $D_{2}$, viewed as an operator in $\mathfrak{G}(D)=\mathpzc{E}_{1}$ is $C^{k}$ for this approximate unit: Since 
\begin{equation}\label{trick}\langle p\begin{pmatrix} i x_{j}\\x_{j}\end{pmatrix},\begin{pmatrix} e\\De\end{pmatrix}\rangle=\langle \begin{pmatrix} i x_{j}\\x_{j}\end{pmatrix},\begin{pmatrix} e\\De\end{pmatrix}\rangle,\end{equation}
the modules $E^{i}_{1}\subset E^{i}\oplus E^{i}$ and $\mathfrak{G}(D)^{i}$ coincide as submodules of $\mathpzc{E}\oplus \mathpzc{E}$, for $i\leq k$, and the notation $E^{i}_{1}$ is unambiguous. Therefore $D_{2}$ restricts to a selfadjoint regular operator in each $E^{i}_{1}$, $i\leq k$. Next we proceed by induction. Given that $\mathpzc{E}_{m}$ is smooth, we use its appoximate unit $u^{m}_{n}$ and the Woronowicz projection of $D_{n+1}$ to construct a $C^{k}$ approximate unit $u^{m+1}_{n}$ for $\mathpzc{E}_{n+1}=\mathfrak{G}(D_{n+1})$. By \ref{trick}, the modules $E^{i}_{n+1}$ and $\mathfrak{G}(D_{n+1})^{i}$ coincide, and since $D_{n+1}$ is selfadjoint regular in each $E^{i}_{n}$, so is $D_{n+2}$ in $E^{i}_{n+1}$.
\end{proof}
The situation of the previous proposition can be visualized in a diagram of completely contractive injections:
\begin{diagram}&     &\vdots &        &\vdots        &     & \vdots        &     &       &     & \vdots \\  
   &     &\dTo &        &\dTo        &     & \dTo         &     &       &     & \dTo \\
\cdots &\rTo & E^{j+1}_{i+1}&\rTo &E^{j+1}_{i} &\rTo & E_{i-1}^{j+1}&\rTo &\cdots &\rTo & E^{j+1}\\
                      &     &\dTo &        &\dTo        &     & \dTo         &     &       &     & \dTo \\
        \cdots &\rTo & E^{j}_{i+1}&\rTo &E^{j}_{i} &\rTo & E_{i-1}^{j}&\rTo &\cdots &\rTo & E^{j}\\    
               &     &\dTo &      &\dTo        &     & \dTo         &     &       &     & \dTo \\
        &     &\vdots &        &\vdots        &     & \vdots        &     &       &     & \vdots \\  
   &     &\dTo &        &\dTo        &     & \dTo         &     &       &     & \dTo \\
   \cdots &\rTo & \mathpzc{E}_{i+1}&\rTo &\mathpzc{E}_{i} &\rTo & \mathpzc{E}_{i-1}&\rTo &\cdots &\rTo & \mathpzc{E}.\end{diagram} 
We can now define the notion of transversely smooth $KK$-cycle.
\begin{definition} Let $A,B$ be  $C^{k}$-algebras, $\mathpzc{E}\leftrightharpoons B$ a  $C^{k}$-bimodule and $D$ a regular operator in $E^{k}$. The pair $(\mathpzc{E},D)$ is \emph{transverse $C^{k}$} if the subalgebras $\mathcal{A}_{i}$, are mapped completely boundedly into $\Sob^{i}_{i}(D)$ for all $i\leq k$. It is a $C^{k}$-cycle when $a(D\pm i)^{-1}\in\K_{\mathcal{B}_{i}}(E^{i})$ for $a\in\mathcal{A}_{i}, i\leq k$.
\end{definition}
Note that there are completely bounded injections $\Sob_{i}^{i}(D)\rightarrow\Sob^{i-1}_{i}(D)$, and $\Sob_{i}^{i}(D)\rightarrow\Sob^{i}_{i-1}(D)$ thus that transverse smoothness implies $\mathcal{A}_{k}$ gets mapped completely boundedly into $\Sob^{i}_{j}(D)$ for all $i,j\leq k$.
\subsection{Bounded perturbations}
 The following characterization of the the domain of the representations $\pi_{n}$ is interesting in itself. It is a relative boundedness condition. A slightly weaker form of this condition turns out to be sufficient for an operator $R\in\Endst_{\mathcal{B}_{k}}(E^{k})$ to belong to the domain of $\theta_{n}$. We use the notation $\textnormal{ad} D$ for the derivation $a\mapsto [D,a]$.
\begin{proposition} \label{smoothorder} Let $\mathpzc{E}$ be a $C^{k}$-module over the $C^{k}$-algebra $B$, $D$ a regular operator in $E^{k}$, and  $a\in \Endst_{\mathcal{B}_{k}}(E^{k})$. 
\begin{enumerate}\item  $a\in\Sob^{k}_{n}(D)$ if and only if $\forall m\leq n : (\textnormal{ad} (D)^{m}a)(D\pm i)^{-m+1}\in\Endst_{\mathcal{B}_{k}}(E^{k})$ and  $(D\pm i)^{-m+1}(\textnormal{ad} (D)^{m}a)\in\Endst_{\mathcal{B}_{k}}(E^{k})$. 
\item If $\forall m\leq n : (\textnormal{ad} (D)^{m}a)(D\pm i)^{-m}\in\Endst_{\mathcal{B}_{k}}(E^{k})$ and  $(D\pm i)^{-m}(\textnormal{ad} (D)^{m}a)\in\Endst_{\mathcal{B}_{k}}(E^{k})$, then $[D,\theta_{n-1}(a)](D\pm i)^{-1}, (D\pm i)^{-1}[D,\theta_{n-1}(a)]$ are adjointable and hence $\theta_{n}(a)\in\Endst_{\mathcal{B}_{k}}(\mathfrak{G}(D_{n})\oplus v_{[n]}\mathfrak{G}(D_{n}))$. That is, $a\in\mathfrak{Dom}\theta_{n}$.
\end{enumerate}

\end{proposition}
\begin{proof}
We only prove $(1)$, as $(2)$ can be done by the same method.\newline
$\Rightarrow$ For $n=1$ the statement reduces to the boundedness of the commutators $[D,a]$. Proceeding by induction, we assume the statement proven for $m\leq n.$ Let $a\in\mathcal{A}_{n+1}$, i.e. $a\in\mathcal{A}_{n}$ and $[D,\theta_{n}(a)]\in\Endst_{\mathcal{B}_{k}}(\bigoplus_{j=1}^{2^{n}}E^{k})$. We prove that $\textnormal{ad} (D)^{n+1}a(D\pm i)^{-n}\in\Endst_{\mathcal{B}_{k}}(E^{k})$. Since $\theta_{n}(a)$ is an orthogonal sum
\[\theta_{n}(a)=p_{n}\theta_{n}(a)p_{n}+p_{n}^{\perp}\theta_{n}(a)p_{n}^{\perp}=\theta_{n}(a)p_{n}+p_{n}^{\perp}\theta_{n}(a),\]
$[D,\theta_{n}(a)p_{n}]$ and $[D,p_{n}^{\perp}\theta_{n}(a)]$ are both bounded. Now since
\[\theta_{n}(a)p_{n}=\pi_{n}(a)p_{n-1}p_{n}=\begin{pmatrix} \theta_{n-1}(a) & 0 \\ [D,\theta_{n-1}(a)] & \theta_{n-1}(a)\end{pmatrix}p_{n-1}p_{n},\]
$[D,\theta_{n}(a)p_{n}]$ is bounded if and only if 
\[[D,[D,\theta_{n-1}p_{n-1}]](1+D^{2})^{-1},\quad\textnormal{and}\quad [D,[D,\theta_{n-1}p_{n-1}]]D(1+D^{2})^{-1},\]
are adjointable. This in turn is true if and only if $[D,[D,\theta_{n-1}(a)p_{n-1}]](D\pm i)^{-1}$ is adjointable. The same argument can now be applied another $n-1$-times to yield that
\[(\textnormal{ad}D)^{n+1}(a)(D\pm i)^{-n}\in\Endst_{\mathcal{B}_{k}}(E^{k}).\]
One proves that 
\[(D\pm i)^{-k+1}\textnormal{ad} (D)^{k}a\in\Endst_{\mathcal{B}_{i}}(E^{i}),\]
in the same way by using the summand $p_{n}^{\perp}\theta_{n}(a)p_{n}^{\perp}$.\newline\newline
$\Leftarrow$
Suposse that $a\in\mathcal{A}_{n}$. Then $\theta_{n}(a)$ is adjointable. The above method shows that $[D,\theta_{n}(a)]$ is adjointable whenever $[D,\theta_{n-1}(a)]$ and $[D,[D,\theta_{n-1}(a)]](D\pm i)^{-1}$ are adjointable. As above, this argument can be repeated to find that $\forall k\leq n : \textnormal{ad} (D)^{k}a(D\pm i)^{-k+1}\in\Endst_{\mathcal{B}_{k}}(E^{k})$ and  $(D\pm i)^{-k+1}\textnormal{ad} (D)^{k}a\in\Endst_{\mathcal{B}_{k}}(E^{k})$. \newline

\end{proof}
Bounded perturbations by elements in $\Dom \theta_{n}$ are well behaved with respect to taking Sobolev chains up to degree $n+1$. Equivalently, such perturbations preserve the domain op the powers of $D$ up to degree $n+1$.
\begin{definition} Let $\mathpzc{E},\mathpzc{F}$ be $C^{k}$-modules over a $C^{k}$-algebra $B$. A \emph{topological isomorphism of $C^{k}$-modules} is an invertible element $g\in\Homst_{\mathcal{B}_{k}}(E^{k},F^{k})$.
\end{definition}
Obviously, a topological isomorphism of $C^{k}$-modules induces topological isomorphisms of $C^{i}$-modules for $i\leq k$. 
\begin{lemma}\label{perturb} Let $D$ be a selfadjoint regular operator in the $C^{k}$-module $\mathpzc{E}$ and let $R\in \Endst_{\mathcal{B}_{k}}(E^{k})$. The map
\[\begin{split}g:\mathfrak{G}(D)&\rightarrow\mathfrak{G}(D+R)\\
 (e,(D+R)e)&\mapsto (e,De) ,\end{split}\]
is a topological isomorphism of $C^{k}$-modules.
\end{lemma}
\begin{proof}On $E^{k}\oplus E^{k}$, the map $g$ can be written as
\[g=p^{D}\begin{pmatrix} 1& 0\\ -R& 1\end{pmatrix}p^{D+R},\]
and hence it is an adjointable operator. Its inverse is
\[g^{-1}=p^{D+R}\begin{pmatrix} 1& 0\\ R& 1\end{pmatrix}p^{D}.\]
\end{proof}
When $R$ preserves the domain of $D^{m}$ for all $m\leq n$, we can inductively define maps \[g_{m}:\mathfrak{G}((D+R)_{m})\rightarrow\mathfrak{G}(D_{m})\] for $m\leq n+1$, by setting
\[g_{m+1}(e,(D+R)e):=(g_{m}(e),Dg_{m}(e)).\]

\begin{theorem}\label{ckperturb} If $R\in \mathfrak{Dom}\theta_{m}$, for all $m\leq n$, then the canonical maps
\[g_{k}:\mathfrak{G}((D+R)_{m})\rightarrow\mathfrak{G}(D_{m}),\]
are topological isomorphisms of $C^{k}$-modules for all $m\leq n+1$. 
\end{theorem}
\begin{proof} For $n=0$ the above lemma applies. Proceeding by induction, we suppose the theorem proven for $n-1$. The hypothesis imply that $\theta_{n-1}(R)\in\Endst_{\mathcal{B}_{i}}(\mathfrak{G}(D_{n-1})\oplus v_{[n-1]}\mathfrak{G}(D_{n-1}))$, and hence $\chi_{n-1}(R) \in\Endst_{\mathcal{B}_{k}}(\mathfrak{G}(D_{n-1}))$. The induction hypothesis gives isomorphisms
\[g_{n}\oplus g_{n}:\mathfrak{G}((D+R)_{n})\oplus \mathfrak{G}((D+R)_{n})\rightarrow\mathfrak{G}(D_{n})\oplus \mathfrak{G}(D_{n}),\]
under which the graph \[\mathfrak{G}((D+R)_{n+1})\rightarrow\mathfrak{G}(D_{n+1}+\chi_{n}(R)),\]  bijectively. This can be seen by induction: For $m=1$, the claim is obvious.
Suppose that $g_{m}\oplus g_{m}$ maps $\mathfrak{G}((D+R)_{m+1})$ bijectively to $\mathfrak{G}(D_{m+1}+\chi_{m}(R))$. This is equivalent to saying that  
\[g_{m}(D+R)e=Dg_{m}(e)+\chi_{m}(R)g_{m}(e).\] Then
\[g_{m}((D+R) e, (D+R)^{2}e)=(Dg_{m}e+\chi_{m}(R)g_{m}(e), D^{2}g_{m}e+D\chi_{m}(R)g_{m}e),\]
and since $(\chi_{m}(R)g_{m}e,D\chi_{m}(R)g_{m}e)=\chi_{m+1}(R)(g_{m}e,Dg_{m}e)$, $g_{m+1}\oplus g_{m+1}$ is a bijection \[\mathfrak{G}((D+R)_{m+2})\rightarrow \mathfrak{G}(D_{m+2}+\chi_{m+1}(R)),\] and the claim follows. 
Since we have
\[\chi_{n}(R) \in\Endst_{\mathcal{B}_{k}}(\mathfrak{G}(D_{n})),\] by lemma \ref{perturb}, the map
\[\begin{split}\mathfrak{G}(D_{n+1}+\chi_{n}(R))&\rightarrow\mathfrak{G}(D_{n+1})\\
(e,D_{n}e+\chi_{n}(R)e) &\mapsto (e,D_{n+1}e),\end{split}\]
is a topological isomorphism, and the composition of these two maps restricted to $\mathfrak{G}((D+R)_{n+1})$ is the canonical map $g_{n+1}$.
\end{proof}
\begin{corollary} If $R\in\Sob^{k}_{n}(D)$, or if it satisfies (2) of proposition \ref{smoothorder}, the canonical maps
\[g_{m}:\mathfrak{G}((D+R)_{m})\rightarrow\mathfrak{G}(D_{m}),\]
are topological isomorphisms of $C^{k}$-modules for all $m\leq n+1$.
\end{corollary}
\begin{proof} Both conditions imply $R\in\Dom \theta_{n}$, so the theorem applies.
\end{proof}
\begin{corollary}\label{invSob}If $R\in\Sob^{k}_{n}(D)$, then $\Sob^{k}_{m}(D)=\Sob^{k}_{m}(D+R)$ for all $m\leq n+1$.
\end{corollary}
\begin{proof} The statement clearly holds for $n=0$. Suppose it holds for $n-1$ and let $R=R^{*}\in\Sob^{k}_{n}(D)$. By the induction hypothesis, $\Sob_{n}^{k}(D)=\Sob_{n}^{k}(D+R)$. The isomorphism $g_{n}:\mathfrak{G}((D+R)_{n})\rightarrow\mathfrak\mathfrak{G}(D_{n})$ intertwines the representations $\chi_{n}^{D+R}$ and $\chi_{n}^{D}$, and for $a\in\Sob_{n}^{k}(D)=\Sob_{n}^{k}(D+R)$ we have
\[g_{n}[D+R,\chi_{n}^{D+R}(a)]g_{n}^{-1}=[D+\chi_{n}^{D}(R),\chi_{n}^{D}(a)].\]
Thus, cf. corollary \ref{astar} $a\in\Sob_{n+1}^{k}(D)$ if and only if $a\in \Sob_{n+1}^{k}(D+R)$.
\end{proof}

\section{Connections}
Connections on Riemannian manifolds are a vital tool for differentiating functions and vector fields over the manifold. Cuntz and Quillen \cite{CQ} developed a purely algebraic theory of
connections on modules, which gives a beautiful characterization of projective modules. They are exactly those modules that admit a universal connection. We review their results, but will recast everything in the setting
of operator modules. This is only straightforward, because the Haagerup tensor product linearizes the multiplication in an operator algebra in a continuous way. We then proceed to construct a category of modules with
connection, and finally pass to inverse systems of modules.
\subsection{Universal forms} The notion of universal differential form is widely used in noncommutative geometry, especially in connection with cyclic homology \cite{Con}. For topological algebras, their exact definition depends on a
choice of topological tensor product. The default choice is the Grothendieck projective tensor product, because it linearizes the multiplication in a topological algebra continuously. However, when dealing with operator
algebras, the natural choice is the Haagerup tensor product.
Meyer \cite{Meyer} has shown any operator algebra $\mathcal{B}$ admits a canonical unitization $\mathcal{B}^{+}$, simply by taking the unital algebra generated by it in any completely isomorphic representation $\pi:\mathcal{B}\rightarrow B(\mathpzc{H})$, on some Hilbert space $\mathpzc{H}$. Note that, when $\mathcal{B}$ has a unit, then $q=\pi(1)$ is an idempotent, so $\pi$ restricts to a unital representation on $B(q\mathpzc{H})$ and the unitization coincides with $\mathcal{B}$ in this case. In this section we will always replace $\mathcal{B}$ by $\mathcal{B}^{+}$.
\begin{definition} Let $\mathcal{B}$ be an operator algebra. The module of \emph{universal $1$-forms} over $\mathcal{B}$ is defined as
\[\Omega^{1}(\mathcal{B}):=\ker(m:\mathcal{B}\tildeotimes\mathcal{B}\rightarrow \mathcal{B}).\]
\end{definition}
Here $m$ is the graded multiplication map $m(a\otimes b)=(-1)^{\partial b}ab$. 
By definition, there is an exact sequence of operator bimodules
\[0\rightarrow\Omega^{1}(\mathcal{B})\rightarrow\mathcal{B}\tildeotimes\mathcal{B}\xrightarrow{m}\mathcal{B}\rightarrow 0.\]
The module $\Omega^{1}(\mathcal{B})$ inherits a grading from $\mathcal{B}\tildeotimes\mathcal{B}$.
The map 
\begin{eqnarray}\nonumber d:\mathcal{B}&\rightarrow &\Omega^{1}(\mathcal{B})\\
\nonumber a &\mapsto & 1\otimes a- (-1)^{\partial a}a\otimes 1\end{eqnarray}
is an even graded bimodule derivation. 
\begin{lemma} The derivation $d$ is universal. For any completely bounded graded derivation $\delta:\mathcal{B}\rightarrow M$ into an $\mathcal{B}$ operator bimodule, there is a unique completely bounded bimodule
homomorphism $j_{\delta}:\Omega^{1}(\mathcal{B})\rightarrow M$ such that the diagram
\begin{diagram}\mathcal{B} & &\rTo^{\delta} & & M\\
&\rdTo_{d}&  &\ruTo_{j_{\delta}}  & \\ & & \Omega^{1}(\mathcal{B})& &\end{diagram}
commutes. If $\delta$ is homogeneous, then so $j_{\delta}$ and $\partial\delta=\partial j_{\delta}$.\end{lemma}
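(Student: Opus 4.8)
The plan is to transplant the Cuntz--Quillen construction of the universal derivation into the operator-module setting; as the introduction to this section indicates, the only genuinely new ingredient is that the maps involved must be seen to be completely bounded, so that they factor through $\tildeotimes$ rather than the Grothendieck projective tensor product. Two preliminary remarks set things up. Since $\mathcal{A}$ is unital and its unit is even, the Leibniz rule forces $\delta(1)=\delta(1\cdot 1)=\delta(1)+\delta(1)$, hence $\delta(1)=0$, and likewise $d(1)=0$. Moreover $\Omega^{1}(\mathcal{A})$ is the closed linear span of the elements $a\,db$: for a finite sum $\omega=\sum_{i}a_{i}\otimes b_{i}\in\ker m$ one has $\sum_{i}a_{i}\,db_{i}=\omega$ modulo a term lying in $\mathcal{A}\otimes 1$ which vanishes because $m(\omega)=0$, and algebraic tensors are dense in $\mathcal{A}\tildeotimes\mathcal{A}$. (Throughout, the $\Z/2$-graded signs are carried along; they do not affect the structure of the argument.)

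The map $j_{\delta}$ is then built as follows. The bilinear map $\mu\colon\mathcal{A}\times\mathcal{A}\to M$, $(a,b)\mapsto a\,\delta(b)$, is completely bounded in the sense defined above, being the composite of $\id\times\delta$ (completely bounded because $\delta$ is) with the left module action $\mathcal{A}\times M\to M$ (completely contractive because $M$ is an operator $\mathcal{A}$-bimodule). By the universal property of the Haagerup tensor product, $\mu$ linearizes to a completely bounded map $\tilde{\mu}\colon\mathcal{A}\tildeotimes\mathcal{A}\to M$; I then set $j_{\delta}:=\tilde{\mu}|_{\Omega^{1}(\mathcal{A})}$, which is completely bounded as a restriction and satisfies $j_{\delta}(a\,db)=a\,\delta(b)$ on generators.

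The remaining verifications are routine. Using $\delta(1)=0$ one gets $j_{\delta}(db)=\delta(b)$, so $j_{\delta}\circ d=\delta$; left $\mathcal{A}$-linearity is immediate from first-variable linearity of $\mu$; and for right $\mathcal{A}$-linearity, writing $\omega=\sum_{i}a_{i}\otimes b_{i}\in\Omega^{1}(\mathcal{A})$ and applying the Leibniz rule gives $j_{\delta}(\omega c)=\sum_{i}a_{i}\,\delta(b_{i}c)=\sum_{i}a_{i}\,\delta(b_{i})\,c\ \pm\ \sum_{i}a_{i}b_{i}\,\delta(c)=j_{\delta}(\omega)\,c$, since $\sum_{i}a_{i}b_{i}=m(\omega)=0$ --- this is the one place where it matters that $j_{\delta}$ is required only on $\ker m$ rather than on all of $\mathcal{A}\tildeotimes\mathcal{A}$. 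Uniqueness is forced: any completely bounded bimodule homomorphism $g$ with $g\circ d=\delta$ satisfies $g(a\,db)=a\,g(db)=a\,\delta(b)$, hence agrees with $j_{\delta}$ on the dense subspace spanned by the $a\,db$, and therefore everywhere. The only points that reward a little care are the joint complete boundedness of $\mu$, which is what lets the Haagerup linearization apply, and the sign bookkeeping in the graded case; neither is deep, and I would expect the statement of the Haagerup universal property for completely bounded bilinear maps --- used here implicitly --- to be the item a fastidious reader might want made explicit.
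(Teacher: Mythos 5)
Your proposal is correct and is essentially the paper's proof fleshed out: the paper's argument simply sets $j_{\delta}(da)=\delta(a)$ and notes that the $da$ generate $\Omega^{1}(\mathcal{A})$ as a bimodule, whereas you additionally carry out the existence and complete-boundedness verification by linearizing $(a,b)\mapsto a\,\delta(b)$ through the Haagerup tensor product and restricting to $\ker m$, and you make explicit where $m(\omega)=0$ enters for right $\mathcal{A}$-linearity. The underlying construction and density observation are the same, so this is the same approach with the details the paper leaves implicit filled in.
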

\begin{proof} Set $j_{\delta}(da)=\delta(a)$. This determines $j_{\delta}$ because $da$ generates $\Omega^{1}(\mathcal{B})$ as a bimodule.\end{proof}
Any derivation $\delta:\mathcal{B}\rightarrow M$ has its associated module of forms \[\Omega^{1}_{\delta}:=j_{\delta}(\Omega^{1}(\mathcal{B}))\subset M.\]

\begin{definition} Let $\delta:\mathcal{B}\rightarrow M$ be a graded derivation as above, and $E$ a right operator $\mathcal{B}$-module. A $\delta$-\emph{connection} on $E$ is a completely bounded even linear map
\[\nabla_{\delta}:E\rightarrow E\tildeotimes_{\mathcal{B}}\Omega^{1}_{\delta},\]
satifying the Leibniz rule
\[\nabla_{\delta}(eb)=\nabla_{\delta}(e)b+e\otimes\delta(b).\]
If $\delta=d$, the connection will be denoted $\nabla$, and referred to as a \emph{universal connection}. 
\end{definition}
Note that a universal connection $\nabla$ on a module $E$ gives rise to $\delta$-connections for any completely bounded derivation $\delta$, simply by setting $\nabla_{\delta}:=1\otimes j_{\delta}\circ\nabla$. If $\delta$ 
is of the form $\delta(a)=[S,a]$, for $S\in\Hom_{\C}^{c}(X,Y)$, where $X$ and $Y$ are left $\mathcal{A}$-operator modules, we write simply $\nabla_{S}$ for $\nabla_{\delta}.$\newline

Not all modules admit a universal connection. Cuntz and Quillen showed that universal connections characterize
algebraic projectivity. Their proof shows that stably rigged modules admit universal connections, but the class of modules admitting a connection might be larger.
\begin{proposition}[\cite{CQ}]\label{CQ} A right $\mathcal{B}$ operator module $E$ admits
a universal connection if and only if the multiplication map $m:E\tildeotimes\mathcal{B}\rightarrow E$ is $\mathcal{B}$-split.
\end{proposition}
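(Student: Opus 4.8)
The plan is to prove both implications by unwinding the definitions and, in the forward direction, using a universal connection to build an $\mathcal{A}$-linear splitting of $m$, while in the reverse direction, using a splitting of $m$ to construct a universal connection directly. The derivation $d:\mathcal{A}\rightarrow\Omega^{1}(\mathcal{A})$ will be the organizing tool throughout, since $\Omega^{1}(\mathcal{A})=\ker(m:\mathcal{A}\tildeotimes\mathcal{A}\rightarrow\mathcal{A})$ fits into the exact sequence $0\rightarrow\Omega^{1}(\mathcal{A})\rightarrow\mathcal{A}\tildeotimes\mathcal{A}\xrightarrow{m}\mathcal{A}\rightarrow 0$, and $ada=1\otimes a-(-1)^{\partial a}a\otimes 1$ encodes the failure of $m$ to be multiplicative.

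First I would observe that $E\tildeotimes\mathcal{A}\cong E\tildeotimes_{\C}\mathcal{A}$ sits in an exact sequence of right $\mathcal{A}$-operator modules
\[
0\rightarrow E\tildeotimes_{\mathcal{A}}\Omega^{1}(\mathcal{A})\rightarrow E\tildeotimes\mathcal{A}\xrightarrow{m} E\rightarrow 0,
\]
obtained by applying the (completely bounded, hence exact enough) functor $E\tildeotimes_{\mathcal{A}}(-)$ to the universal forms sequence — here the relevant point is that the Haagerup module tensor product linearizes multiplication continuously, so the kernel of $m:E\tildeotimes\mathcal{A}\rightarrow E$ is exactly $E\tildeotimes_{\mathcal{A}}\Omega^{1}(\mathcal{A})$. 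Now if $\nabla:E\rightarrow E\tildeotimes_{\mathcal{A}}\Omega^{1}(\mathcal{A})$ is a universal connection, define $s:E\rightarrow E\tildeotimes\mathcal{A}$ by $s(e):=e\otimes 1-\nabla(e)$. Then $m\circ s(e)=e-0=e$ since $\nabla(e)$ lands in the kernel of $m$, so $s$ is a completely bounded linear splitting; and the Leibniz rule $\nabla(ea)=\nabla(e)a+e\otimes da$ together with $e\otimes da = e\otimes(1\otimes a-(-1)^{\partial a}a\otimes 1)$, read inside $E\tildeotimes\mathcal{A}$ as $e\otimes a - ea\otimes 1$, forces $s(ea)=ea\otimes 1-\nabla(e)a-(e\otimes a-ea\otimes 1)=s(e)a$, so $s$ is $\mathcal{A}$-linear. (One must keep careful track of signs in the graded case, but this is routine.) Hence $m:E\tildeotimes\mathcal{A}\rightarrow E$ is $\mathcal{A}$-split.

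Conversely, suppose $s:E\rightarrow E\tildeotimes\mathcal{A}$ is a completely bounded $\mathcal{A}$-module splitting of $m$. Then $1_E\otimes - s$, suitably interpreted, maps $E$ into $\ker m=E\tildeotimes_{\mathcal{A}}\Omega^{1}(\mathcal{A})$: precisely, define $\nabla(e):=e\otimes 1-s(e)$, which lies in $E\tildeotimes_{\mathcal{A}}\Omega^{1}(\mathcal{A})$ because $m(e\otimes 1-s(e))=e-e=0$. It is completely bounded since $s$ is. The Leibniz rule is checked by the same computation run in reverse: $\nabla(ea)=ea\otimes 1-s(ea)=ea\otimes 1-s(e)a$, while $\nabla(e)a+e\otimes da=(e\otimes 1-s(e))a+(e\otimes a-ea\otimes 1)=ea\otimes 1-s(e)a$ after cancellation, using the identification of $e\otimes da$ inside $E\tildeotimes_{\mathcal{A}}\Omega^{1}(\mathcal{A})$. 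So $\nabla$ is a universal connection on $E$.

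The main obstacle is purely the bookkeeping around the Haagerup tensor product: one has to justify that $E\tildeotimes_{\mathcal{A}}(-)$ carries the short exact sequence defining $\Omega^{1}(\mathcal{A})$ to the short exact sequence computing $\ker(m:E\tildeotimes\mathcal{A}\rightarrow E)$, i.e. that $E\tildeotimes_{\mathcal{A}}\Omega^{1}(\mathcal{A})$ really is this kernel as operator modules, and that the maps $e\mapsto e\otimes 1$ and the module actions interact with the quotient defining $\tildeotimes_{\mathcal{A}}$ as expected. This is where the continuity of the linearization of multiplication by the Haagerup norm is used — it is exactly the point Cuntz and Quillen's algebraic argument needs in order to pass to the operator-module setting — and once that identification is in hand, the equivalence is the formal manipulation above. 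I would cite \cite{CQ} for the algebraic skeleton and remark that the operator-module refinement requires only the continuity properties recorded in Section 3.
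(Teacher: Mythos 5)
Your proposal is correct and follows essentially the same route as the paper: both set up the exact sequence $0\to E\tildeotimes_{\mathcal{A}}\Omega^{1}(\mathcal{A})\xrightarrow{j} E\tildeotimes\mathcal{A}\xrightarrow{m} E\to 0$ and use the correspondence $s(e)=e\otimes 1-j(\nabla(e))$ to trade splittings for connections, with $\mathcal{A}$-linearity of $s$ matching the Leibniz rule for $\nabla$. The only cosmetic difference is that the paper packages both directions into the single identity $s(ea)-s(e)a=j(\nabla(e)a+e\otimes da-\nabla(ea))$, whereas you run the two implications separately; and where you gesture at justifying exactness of the sequence, the paper simply exhibits the explicit map $j$, which is slightly more direct.
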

\begin{proof} Consider the exact sequence
\begin{diagram}0 &\rTo & E\tildeotimes_{\mathcal{B}}\Omega^{1}(\mathcal{B})
&\rTo^{j} & E\tildeotimes \mathcal{B}&\rTo^{m} &E &\rTo &
0,\end{diagram} where $m$ is the multiplication map and $j(e\otimes
da)=eb\otimes 1 -e\otimes b$. A linear map \[s:E\rightarrow
E\tildeotimes\mathcal{B}\] determines a linear map
\[\nabla:E\rightarrow
E\tildeotimes_{\mathcal{B}}\Omega^{1}(\mathcal{B})\] by the formula
$s(e)=e\otimes 1-j(\nabla(e))$, since $j$ is injective. Now
\[s(eb)-s(e)b=j(\nabla(e)b+e\otimes db-\nabla(eb)),\]
whence $s$ being an $\mathcal{B}$-module map is equivalent to $\nabla$ being a
connection.\end{proof}
\begin{corollary} Any stably rigged module $E$ over $\mathcal{B}$ admits a connection.
\end{corollary}
\begin{proof} $E$ is a direct summand in $\mathpzc{H}_{\mathcal{B}}$, i.e. $\mathpzc{E}=p\mathpzc{H}_{\mathcal{B}}$, with
$p^{2}=p\in\End^{*}_{\mathcal{B}}(\mathpzc{H}_{\mathcal{B}})$. Observe that
$\mathpzc{H}_{\mathcal{B}}\tildeotimes_{\mathcal{B}}\Omega^{1}{\mathcal{B}}\cong\mathpzc{H}\tildeotimes\Omega^{1}(\mathcal{B})$. Consider the \emph{Grassmann connection} 
\[\begin{split}d:\mathpzc{H}_{\mathcal{B}}&\rightarrow \mathpzc{H}\tildeotimes\Omega^{1}(\mathcal{B})\\
h\otimes a &\mapsto h\otimes da,\end{split}\]
and define $p\nabla p:E\rightarrow E\tildeotimes_{\mathcal{B}}\Omega^{1}(\mathcal{B})$.\end{proof}

\subsection{Product connections}
We now proceed to connections on tensor products of stably rigged modules. Anticipating the use of connections on unbounded bimodules, a category of modules with connection is constructed.
\begin{proposition}\label{conn1} Let $E$ be a stably rigged $\mathcal{B}$-module with a
universal connection $\nabla$, $F$ a stably rigged
$(\mathcal{B},\mathcal{C})$-bimodule with universal connection $\nabla '$. Then
$\nabla$ and $\nabla '$ determine a universal
$\mathcal{C}$-connection \[1\otimes_{\nabla}\nabla ':E\tildeotimes_{\mathcal{B}}F\rightarrow E\tildeotimes_{\mathcal{B}}F\tildeotimes_{\mathcal{C}}\Omega^{1}(\mathcal{C}). \] 
\end{proposition}
\begin{proof}
Consider the derivation 
\[\begin{split}\delta:\mathcal{B}&\rightarrow \End_{\mathcal{C}}(F,F\tildeotimes_{\mathcal{C}}\Omega^{1}(\mathcal{C}))\\b &\mapsto[\nabla ',b].\end{split}\] By
universality there is a unique map \[j_{\delta}:\Omega^{1}(\mathcal{B})\rightarrow \Omega^{1}_{\delta},\] intertwining $d$ and $\delta$. Thus, $\nabla$
induces a connection
\[\nabla_{\delta}:E\rightarrow E\tildeotimes_{\mathcal{B}}\Omega^{1}_{\delta},\]
by composing with $j_{\delta}$. Subsequently define
 \[\begin{split}1\tildeotimes_{\nabla}\nabla':E\tildeotimes_{\mathcal{B}}F &\rightarrow E\tildeotimes_{\mathcal{B}}F\tildeotimes_{\mathcal{C}}\Omega^{1}(\mathcal{C})\\e\otimes f &\mapsto e\otimes\nabla '(f)+\nabla_{\delta}(e)f,\end{split}\]
which is a connection. % It is a straightforward calculation to check that this connection is Hermitian if $\nabla$ and $\nabla'$ are.
\end{proof} We will refer to the connection of proposition \ref{conn1} as the
\emph{product connection}.
Taking product connections is in fact a special case of the following construction. Let $(E,\nabla)$ be a graded stably rigged right $\mathcal{B}$ module with connection and $D\in\Hom^{c}(X,Y)$ a homogeneous operator between graded left $\mathcal{B}$ operator modules $X,Y$. Denote by $1\tildeotimes_{\nabla} D$ the operator
\begin{equation}\label{indop}1\otimes_{\nabla} D (e\otimes x):=(-1)^{\partial D\partial e}(e\otimes D(x) + \nabla_{D}(e)x),\end{equation}
which is a well defined operator $E\tildeotimes_{\mathcal{B}} X\rightarrow E\tildeotimes_{\mathcal{B}}Y$. This construction is associative up to isomorphism.
\begin{theorem}\label{op}\label{conn2}  Let $E$ be a stably rigged $\mathcal{B}$-module, $F$ a stably rigged $(\mathcal{B},\mathcal{C})$-bimodule and 
$\nabla,\nabla '$ universal connections on $E$ and $F$ respectively. Furthermore let $X,Y$ be left operator $\mathcal{C}$-modules,
and $D\in\Hom^{c}(X,Y)$. Then
\[1\tildeotimes_{\nabla}1\tildeotimes_{\nabla '}D=
1\tildeotimes_{1 \tildeotimes_{\nabla }\nabla '}D, \] under the intertwining isomorphism.
\end{theorem}
\begin{proof} Since $\partial D=\partial 1\otimes_{\nabla} D$ and  \[(-1)^{\partial(e\otimes f)\partial  D}=(-1)^{(\partial e+\partial f)\partial D}=(-1)^{\partial e\partial 1\otimes_{\nabla} D}(-1)^{\partial f\partial D},\]
 we assume that $D$ is even, the odd case differing only by this factor. Recall the formula for the product connection
\[1\otimes_{\nabla}\nabla '(e\otimes f):=e\otimes \nabla '(f)+\nabla_{\delta}(e)f.\] Moreover, write
$\nabla_{D}$ for $\nabla_{\nabla '_{D}}$. It is straightforward to check that
\[(1\tildeotimes_{\nabla}\nabla ')_{D}(e\otimes f)=e\otimes \nabla '_{D}(f)+\nabla_{D}(e)f.\]
Therefore we have
\[\begin{split}1\otimes_{1\otimes_{\nabla}\nabla '}D(e\otimes f\otimes x)&=e\otimes f\otimes Dx+1\otimes_{\nabla}\nabla' (e\otimes f)x \\
 &=e\otimes f\otimes D x+ e\otimes \nabla'_{D}(f)x+\nabla_{D}(e)(f\otimes x).\end{split}\]
On the other hand
\[\begin{split}  1\tildeotimes_{\nabla} 1\tildeotimes_{\nabla'}D(e\otimes f\otimes x)&= 
e\otimes (1\tildeotimes_{\nabla '}D)(f\otimes x)+\nabla_{1\tildeotimes_{\nabla '}D}(e)(f\otimes x)\\
&=e\otimes f\otimes Dx+e\otimes\nabla '_{D}(f)x+\nabla_{1\tildeotimes_{\nabla '}D}(e)(f\otimes x),\end{split}\]
thus, it suffices to show that $\nabla_{D}=\nabla_{1\tildeotimes_{\nabla '}D}.$ To this end, observe that
\[[1\tildeotimes_{\nabla '}D,b]=[\nabla '_{D},b]:E\otimes_{\mathcal{B}}F\rightarrow E\otimes_{\mathcal{B}}F,\]
which gives a natural isomorphism
$\Omega^{1}_{\nabla '_{D}}\xrightarrow{\sim}\Omega^{1}_{1\tildeotimes_{\nabla '}D}$ intertwining the derivations. By universality
this gives a commutative diagram
\begin{diagram} & & \Omega^{1}(\mathcal{B}) & & \\
&\ldTo & & \rdTo & \\
\Omega^{1}_{1\tildeotimes_{\nabla '}D}& &\rTo^{\sim} & &\Omega^{1}_{\nabla '_{D}},\end{diagram} 
 which shows that $\nabla_{D}=\nabla_{1\tildeotimes_{\nabla '}D}.$
\end{proof}
\begin{corollary}\label{conn3} Let $E,F,G$ be stably rigged $\mathcal{A}$-, $(\mathcal{A},\mathcal{B})$-,$(\mathcal{B},\mathcal{C})$ -(bi)modules,  with
universal connections $\nabla,\nabla ',\nabla ''$ respectively. The natural isomorphism
\[E\tildeotimes_{\mathcal{A}}(F\tildeotimes_{\mathcal{B}}G)\xrightarrow{\sim}(E\tildeotimes_{\mathcal{A}}F)\tildeotimes_{\mathcal{B}}G\] 
intertwines the product connections
$1\otimes_{1\otimes_{\nabla}\nabla '}\nabla ''$ and $1\otimes_{\nabla}(1\otimes_{\nabla '}\nabla '')$.
\end{corollary}

The upshot of theorem \ref{conn2} and its corollary \ref{conn3} is that there is a category whose objects are operator algebras, and whose morphisms 
$\Mor(\mathcal{A},\mathcal{B})$ are given by pairs $(E,\nabla)$ consisting of  a stably rigged right  $(\mathcal{A},\mathcal{B})$-bimodule $E$ with a 
universal $\mathcal{B}$ connection. The identity morphisms are the pairs 
$(\mathcal{A},d)$ consisiting of the trivial bimodule $\mathcal{A}$ and the universal derivation 
$d:\mathcal{A}\rightarrow \Omega^{1}(\mathcal{A})$. Of course this category is described equivalently as the category of pairs $(E,s)$ of 
bimodules together with a splitting $s$ of the universal exact sequence.
One can proceed to enrich the category described above by considering triples $(E,T,\nabla)$ consisting of stably rigged bimodules with connection and a
distinguished endomorphism $T\in \Endst_{\mathcal{B}}(E).$ % Denote by $1\tildeotimes_{\nabla} T$ the operator
%\[1\otimes_{\nabla} T (e\otimes f):=(-1)^{\partial T\partial e}(e\otimes T(f) + \nabla_{T}(e)f),\]
%which is well defined on $E\tildeotimes_{\mathcal{B}} F$. 
The composition law then becomes
\[(E,S,\nabla)\circ(F,T,\nabla '):=(E\tildeotimes_{\mathcal{B}}F,S\tildeotimes 1+1\tildeotimes_{\nabla}T,1\tildeotimes_{\mathcal{B}}\nabla').\]

\subsection{Smooth connections}
We now consider $C^{k}$-modules over a $C^{k}$-algebra $B$.   If $\mathcal{B}$ is an involutive operator algebra, $\Omega^{1}(\mathcal{B})$ carries a natural involution, defined by
\begin{equation}\label{frominv} (adb)^{*}:=-(-1)^{\partial b}(db^{*})a^{*}.\end{equation}
The inner product on $E^{k}$ induces a pairing
\[\begin{split}E^{k}\times E^{k}\tildeotimes_{\mathcal{B}_{k}}\Omega^{1}(\mathcal{B}_{k})&\rightarrow \Omega^{1}(\mathcal{B}_{k})\\
(e_{1},e_{2}\otimes\omega) &\mapsto \langle e_{1},e_{2}\rangle\omega.\end{split}\]
By abuse of notation we write $\langle e_{1},e_{2}\otimes\omega\rangle$ for this pairing.
A pairing \[E^{k}\tildeotimes_{\mathcal{B}_{k}}\Omega^{1}(\mathcal{B}_{k})\times E^{k}\rightarrow\Omega^{1}(\mathcal{B}_{k}),\] is obtained by setting
$\langle e_{1}\otimes\omega,e_{2}\rangle:=\langle e_{2},e_{1}\otimes\omega\rangle^{*}.$
A connection \[\nabla:E^{k}\rightarrow E^{k}\tildeotimes_{\mathcal{B}_{k}}\Omega^{1}(\mathcal{B}_{k}),\] is a *-\emph{connection} if there is a connection $\nabla^{*}$ on $E^{k}$ for which
\begin{equation}\label{*}\langle e_{1},\nabla(e_{2})\rangle-\langle\nabla^{*} (e_{1}),e_{2}\rangle=d\langle e_{1},e_{2}\rangle.\end{equation}
The connection is \emph{Hermitian} if we can choose $\nabla^{*}=\nabla$ in the above equation.

\begin{lemma} Let $\nabla$ be a *-connection on a  $C^{k}$-module $E^{k}$. Then $\nabla^{*}$ is unique, and $\nabla^{**}=\nabla$.
\end{lemma} 
\begin{proof}Let $\tilde{\nabla}$ be a connection satisfying \ref{*}. By stabilizing and replacing $\nabla,\nabla^{*}$ and $\tilde{\nabla}$ by $\nabla\oplus d$, $\nabla^{*}\oplus d$ and $\tilde{\nabla}\oplus d$, we may assume $E^{k}=\mathpzc{H}_{\mathcal{B}_{k}}$. For any connection we have
\[\nabla(e)=\sum_{i\in \Z\setminus\{0\}} e_{i}\langle e_{i},\nabla(e)\rangle,\]
where $\{e_{i}\}_{i\in\Z\setminus\{0\}}$ is the standard basis of $\mathpzc{H}_{\mathcal{B}_{k}}$. Therefore it suffices to show that $\langle e_{i},\nabla^{*}(e)\rangle=\langle e_{i},\tilde{\nabla}(e)\rangle$. This follows immediately from (the adjoint of) \ref{*}.
\end{proof}
We now address smoothness and transversality of connections on smooth $C^{*}$-modules.
\begin{lemma} Let $\mathpzc{E}\leftrightharpoons B$ be a $C^{k}$-module over a $C^{k}$-algebra $B$, and $\nabla:E^{k}\rightarrow E^{k}\tildeotimes_{\mathcal{B}_{k}}\Omega^{1}(\mathcal{B}_{k})$ a *-connection. Then $\nabla$ uniquely extends to a *-connection on $E^{i}$ for all $i\leq k$.
\end{lemma}
\begin{proof} Recall the identification $E^{i}=E^{k}\tildeotimes_{\mathcal{B}_{k}}\mathcal{B}_{i}$ from proposition \ref{useful}, and observe that there is a canonical isomorphism
\[\begin{split}\mathcal{B}_{i}\tildeotimes_{\mathcal{B}_{k}}\Omega^{1}(\mathcal{B}_{k})\tildeotimes_{\mathcal{B}_{k}}\mathcal{B}_{i}&\rightarrow\Omega^{1}(\mathcal{B}_{i})\\
a\otimes db\otimes c &\mapsto a(db)c,\end{split}\]
compatible with $d$.
This allows us to define
\[\nabla(e\otimes b):=\nabla(e)\otimes b+e\otimes db,\]
which is easily checked to be a *-connection. Uniqueness follows from the fact that $E^{k}$ is dense in $E^{i}$ for $k\geq i$.
 \end{proof}
%As we have seen, an unbounded operator can be viewed as a morphism of inverse systems of $C^{k}$-modules,  its Sobolev chain. Let $\{E_{i},\iota_{i}\}$ be an inverse system of $\mathcal{B}$ rigged modules. A \emph{connection} on $\{E_{i},\iota_{i}\}$ is a family of connections $\nabla_{i}:E_{i}\rightarrow
%E_{i}\tildeotimes_{\mathcal{B}}\Omega^{1}(\mathcal{B})$ such that $\iota_{i+1}\otimes 1\circ \nabla_{i+1}=\nabla_{i}\circ\iota_{i+1}$.

The operator spaces $\Sob_{n}^{D}(E^{k},E^{k}\tildeotimes_{\mathcal{B}_{k}}\Omega^{1}(\mathcal{B}_{k}))$ are defined via representations $\pi_{n}$ (\ref{prep}) and $\theta_{n}$ (\ref{trep}) on $\Hom^{c}_{\C}(E^{k},E^{k}\tildeotimes_{\mathcal{B}_{k}}\Omega^{1}(\mathcal{B}_{k}))$, relative to a regular operator in $E^{k}$, using the unbounded operator $D\otimes 1$ on $E^{k}\tildeotimes\Omega^{1}(\mathcal{B}_{k})$, to make sense of the commutators $[D,\cdot]$. This allows us to speak of transverse $C^{k}$-connections.
\begin{definition} Let $\mathpzc{E}\leftrightharpoons B$ be a  $C^{k}$-module over a $C^{k}$-algebra $B$, $D$ an almost selfadjoint regular operator in $E^{k}$ and $\nabla$ a *-connection in $E^{k}$. Then $\nabla$ is said to be \emph{transverse} $C^{n}$ if $\nabla\in\Sob_{n}^{D}(E^{k},E^{k}\tildeotimes_{\mathcal{B}_{k}}\Omega^{1}(\mathcal{B}_{k}))$. \end{definition}
Note that in this definition, $n$ and $k$ are independent of one another. This definition can be phrased equivalently by saying that $\nabla\in\mathfrak{Dom}\pi_{n-1}$ and $[D,\theta_{n}(\nabla)]$  extends to a
completely bounded operator $E^{k}\rightarrow E^{k}\tildeotimes_{\mathcal{B}_{k}}\Omega^{1}(\mathcal{B}_{k})$. Yet another equivalent way of phrasing this (cf. proposition \ref{smoothorder}) is to say that the operators
\[\textnormal{ad}(D)^{n}(\nabla)(D\pm i)^{-n+1},\quad\textnormal{and}\quad (D\pm i)^{-n+1}\textnormal{ad}(D)^{n}(\nabla),\]
extend to completely bounded operators $E^{k}\rightarrow E^{k}\tildeotimes_{\mathcal{B}_{k}}\Omega^{1}(\mathcal{B}_{k})$.
It is clear from this definition, that a transverse $C^{n}$-connection induces a connection on the Sobolev chain of $D$ up to degree $n$. 
\subsection{Induced operators and their graphs}\label{graph}

As we have seen, a *-connection
$\nabla:E^{k}\rightarrow E^{k}\tildeotimes_{\mathcal{B}_{k}}\Omega^{1}(\mathcal{B}_{k})$ can be used
to transfer operators on $F^{k}$ to $E^{k}\otimes_{\mathcal{B}_{k}} F^{k}$.
We now show that this algebraic procedure is well behaved for selfadjoint
regular operators $T$ in $F^{k}$, and describe the Sobolev chain, i.e. the graphs $\mathfrak{G}(1\tildeotimes_{\nabla}T)_{j}\subset E^{k}\tildeotimes_{\mathcal{B}_{k}}F^{k}\oplus E^{k}\tildeotimes_{\mathcal{B}_{k}}F^{k}$ as  topological
$C^{k}$-modules, in terms of the graph of $T_{j}$, as well as the graph representations $\chi_{j}^{k}$ (corollary \ref{graphrep}). Note that %if $\mathpzc{F}$ is a transversely smooth $(B,C)$-bimodule, over smooth $C^{*}$-algebras $B$ and $C$, and if $\mathpzc{E}$ is smooth over $B$, then $\mathpzc{E}\tildeotimes_{B}\mathpzc{F}$ is a smooth $C^{*}$-module, with $(\mathpzc{E}\tildeotimes_{B}\mathpzc{F})^{i}=E^{i}\tildeotimes_{\mathcal{B}_{i}}F^{i}$. Also, %
by proposition \ref{useful} $E^{i}\tildeotimes_{\mathcal{B}_{i}}F^{j}\cong E^{j}\tildeotimes_{\mathcal{B}_{j}}F^{j}$, whenever $i\geq j$. If $\mathpzc{E}$ carries a $C^{k}$- left module structure from another smooth $C^{*}$-algebra $A$, then $\mathpzc{E}\tildeotimes_{B}\mathpzc{F}$ carries a canonical $C^{k}$ left $A$-module structure.

\begin{theorem}\label{indgraph}  Let $k\geq 1$, $A,B,C,$ $C^{k}$-algebras, $\mathpzc{E},\mathpzc{F}$ a $C^{k}$-$(A,B)$, $(B,C)$-bimodules, respectively. Let $T:\mathfrak{Dom}(T)\rightarrow F^{k}$ be selfadjoint and  regular and transverse $C^{k}$ in $F^{k}$. If $\nabla:E^{k}\rightarrow E^{k}\tildeotimes_{\mathcal{B}_{k}}\Omega^{1}(\mathcal{B}_{k})$ is a *- connection, then the operator $t:=1\tildeotimes_{\nabla}T$ is almost selfadjoint and regular in $E^{k}\tildeotimes_{\mathcal{B}_{k}}F^{k}$. If $\nabla$ is Hermitian, then $1\tildeotimes_{\nabla}T$ is selfadjoint. With $g_{0}^{\chi}=\id_{\mathpzc{E}\tildeotimes_{B}\mathpzc{F}}$, for $j\leq i\leq k$, the inductively defined map 
\[\begin{split}g_{j}^{\chi}:E^{i}\tildeotimes_{\mathcal{B}_{i}}\mathfrak{G}(T_{j})^{i}&\rightarrow \mathfrak{G}(t_{j})^{i}\\
e\otimes(f,Tf) &\mapsto (g_{j-1}^{\chi}(e\otimes f) , 1\tildeotimes_{\nabla} T_{j}(g_{j-1}^{\chi} (e\otimes f))) \end{split}\]
is a topological isomorphism of $C^{i}$-modules. Moreover, we have $g_{j}^{\chi}\circ(a\otimes 1)=\chi^{t}_{j}(a)g^{\chi}_{i}$, thus $\mathcal{A}_{j}\rightarrow\Sob^{i}_{j}(1\otimes_{\nabla}T)$ completely boundedly, for $j\leq i\leq k$. So in particular $(\mathpzc{E}\tildeotimes_{B}\mathpzc{F}, 1\otimes_{\nabla}T)$ is transverse $C^{k}$.
\end{theorem}
\begin{proof}  To see that $t_{j}:=1\otimes_{\nabla} T_{j}$ is selfadjoint regular, stabilize $\mathpzc{E}$, and denote by $d$ the Grassmannian connection on $\mathpzc{H}_{B}$. Then, via the
stabilization isomorphism $\nabla ':=\nabla\oplus d$ defines a  $C^{k}$-*-connection on $\mathpzc{H}_{B}\cong \mathpzc{E}\oplus \mathpzc{H}_{B}.$ Since the difference $R:=\nabla'_{T}-d_{T}$ is an element of $\Endst_{\mathcal{C}_{k}}(E^{k}\tildeotimes_{\mathcal{B}_{k}}F^{k})$, it suffices to prove regularity of $t$ when $\nabla$ is the Grassmannian connection $d$ on $\mathpzc{H}_{B}$. In that case, define $t_{j}$ on the domain $\im (1\otimes_{d}(T_{j}\pm i)^{-1})$, which is dense. Then, for $e=\sum_{m\in\Z\setminus 0}e_{m}\otimes b_{m}$,
\[\begin{split}t_{j}:\mathpzc{H}_{\mathcal{B}_{i}}\tildeotimes_{\mathcal{B}_{i}} \mathfrak{G}(T_{j-1})^{i}&\rightarrow\mathpzc{H}_{\mathcal{B}_{i}}\tildeotimes_{\mathcal{B}_{i}}\mathfrak{G}(T_{j-1})^{i}\\
e\otimes f &\mapsto \sum_{m\in\Z\setminus 0}e_{m}\otimes T(b_{m}f).\end{split}\]
For $j=1$ this is symmetric by a standard argument. For $j>1$, $\mathcal{B}_{j}$ is represented on $\mathfrak{G}(T_{j-1})$ by a non $*$-homomorphism. But then, cf. \ref{innerprod}
\[\begin{split}\langle t_{j}(e \otimes f),e'\otimes f'\rangle &=\sum_{n\in\Z\setminus\{0\}}\langle e_{n}\otimes T_{j}b_{n}f, e'\otimes f'\rangle \\
&= \sum_{n\in\Z\setminus\{0\}}\sum_{m\in\Z\setminus\{0\}}\langle \langle e_{m},e_{n} \rangle T_{j}b_{n}f, \langle e_{m},e'\rangle f'\rangle \\
&=\sum_{n\in\Z\setminus\{0\}}\langle T_{j}b_{n}f, b'_{n}f'\rangle\\
&=\sum_{n\in\Z\setminus\{0\}}\langle b_{n}f, T_{j}b'_{n}f'\rangle \\
&=  \sum_{n\in\Z\setminus\{0\}}\sum_{m\in\Z\setminus\{0\}}\langle \langle e_{m},e\rangle f,\langle e_{m},e_{n} \rangle T_{j}b'_{n}f'\rangle \\
&=\langle e\otimes f, t_{j}(e'\otimes f')\rangle . \end{split}\]
Furthermore it is closed, selfadjoint and regular, because $t_{j}\pm i$ are surjective by construction:  $(t_{j} \pm i)(1\otimes_{d}(T_{j}\pm i)^{-1})=1$ for the connection $d$.\newline 

For the statement on the topological type of $\mathfrak{G}(t_{j}),$ it again suffices to consider the Grassmannian connection on $\mathpzc{H}_{\mathcal{B}_{k}}$: according to theorem \ref{ckperturb}, we have 
\[\begin{split}g_{j}:\mathfrak{G}((t+R)_{j})&\xrightarrow{\sim}\mathfrak{G}(t_{j})\\
(x,(t+R)x)&\mapsto (g_{j-1}x,tg_{j-1}x)\end{split}\] $C^{k}$-topologically, once we show that \[R=1\otimes_{d}T-1\otimes_{\nabla}T=d_{T}-\nabla_{T}\in\Dom \theta_{j}^{k}.\] To this end we compute
\[\begin{split}(\textnormal{ad}(1\otimes_{d} T))^{j}(R)(1\otimes_{d}T\pm i)^{-j}e_{m}\otimes f 
&=(\textnormal{ad}(1\otimes_{d} T))^{j}(\nabla_{T}-d_{T})e_{m}\otimes(T\pm i)^{-j}f\\
&=\sum_{n\in\Z\setminus\{0\}}(\textnormal{ad}(1\otimes_{d} T))^{j}e_{n}\otimes \omega^{m}_{n}(T\pm i)^{-j}f\\
&=\sum_{n\in\Z\setminus\{0\}}e_{n}\otimes (\textnormal{ad}T)^{j}(\omega^{m}_{n})(T\pm i)^{-j}f\end{split}\]
which is an element of  $\Endst_{\mathcal{C}_{k}}(E^{k}\otimes_{\mathcal{B}_{k}}F^{k}),$ since the connection is $C^{k}$. Here the $\omega^{m}_{n}\in\Omega^{1}_{T}$ are such that
\[\nabla_{T}(e_{m})=\sum_{n\in\Z\setminus\{0\}}e_{n}\otimes\omega^{m}_{n}.\]

%\newline

%Note that the standard orthonormal basis $\{e_{j}\}_{j\in\Z\setminus 0}$ of 
%$\mathpzc{H}_{B}$ defines a $C^{k}$-approximate unit for $\K_{B}(\mathpzc{H}_{B})$.
Viewing $\mathfrak{G}(T_{i})$ as a submodule of $\mathfrak{G}(T_{i-1})\oplus\mathfrak{G}(T_{i-1})$, the
representations
\[\chi_{j}^{i}:\mathcal{B}_{j}\rightarrow \mathfrak{G}(T_{j})^{i},\]
from corollary \ref{graphrep} have the form 
\[\chi_{j}^{i}(b)(f,Tf)=(\chi_{j-1}^{i}(b)f,T\chi_{j-1}^{i}(b)f),\] by transversality.% $\chi_{j}$ preserves $F^{i}_{j}$, the $C^{i}$-submodules of $\mathpzc{F}_{j}=\mathfrak{G}(T_{j})$, for $0\leq j\leq i$ and $0\leq i\leq k$.
For convenience, we suppress the $\chi_{j}^{i}$ in the notation.
By \eqref{innerprod}, the inner product (inducing an equivalent operator space structure)
on $\mathpzc{H}_{\mathcal{B}_{i}}\tildeotimes_{\mathcal{B}_{i}}\mathfrak{G}(T_{i})$ is thus given by 
\[\begin{split}\langle e\otimes (f,Tf), e'\otimes (f',Tf')\rangle:&=\sum_{n\in\Z\setminus\{0\}}\langle\langle e_{n},e\rangle (f,Tf), \langle e_{n},e'\rangle (f',Tf)\rangle\\
&=\sum_{n\in\Z\setminus\{0\}}\langle (b_{n} f, T b_{n}f), (b'_{n}f',Tb'_{n}f')\rangle\\
&=\sum_{n\in\Z\setminus\{0\}}\langle b_{n} f, b'_{n} f'\rangle + \langle Tb_{n} f, Tb'_{n} f'\rangle.\end{split}\]
Therefore the map
\[\begin{split}\mathpzc{H}_{\mathcal{B}_{i}}\tildeotimes_{\mathcal{B}_{i}}\mathfrak{G}(T_{j})^{i}&\rightarrow \mathfrak{G}(t_{j})^{i}\\
e\otimes (f,Tf) &\mapsto (e\otimes f, t(e\otimes f)),\end{split}\]
is unitary. \newline 

From this it follows that the $\mathcal{A}_{i}\rightarrow\Sob^{i}_{j}(1\otimes_{\nabla} T)$. For $j=1$ this holds because $[1\otimes_{\nabla}T,a]=[\nabla_{T},a]\otimes 1,$ which is a completely bounded derivation from $\mathcal{A}_{1}$ into $\Endst_{C}(E^{1}\tildeotimes_{\mathcal{B}_{1}}F^{1})$. Therefore, 
\[\pi_{1}^{1\otimes_{\nabla}T}:\mathcal{A}_{1}\rightarrow M_{2}(\Endst_{C}(E^{1}\tildeotimes_{\mathcal{B}_{1}}F^{1})),\] is completely bounded.
Suppose we have proven $\mathcal{A}_{i}\rightarrow\Sob^{i}_{j}(1\otimes_{\nabla}T)$ completely boundedly. The isomorphism $g_{j}$ intertwines the $\mathcal{A}_{i}$ representations, i.e., $g_{j}$ is a bimodule map. For $a\in\mathcal{A}_{i+1}$,
\[ [t_{j+1},\chi_{j}^{i+1}(a)]=g_{j}([ \nabla_{T_{j+1}},a]\otimes 1),\]
which is adjointable, and the same holds for $a^{*}$. Thus by corollary \ref{astar} $[t_{j+1},\theta_{j}^{i+1}(a)]$ is adjointable. It follows that $a\mapsto [t_{j+1},\theta_{j}^{i+1}(a)]$ is a completely bounded derivation $\mathcal{A}_{i+1}\rightarrow M_{2}(\mathfrak{G}(t_{j})$. Thusfor $j\leq i$,  $\mathcal{A}_{i+1}\rightarrow\Sob^{i+1}_{j+1}(1\otimes_{\nabla}T)$ completely boundedly.
\end{proof}

\begin{corollary}\label{resolvent} Let $k\geq 1$, $A,B,C$ be $C^{k}$-algebras, $(\mathpzc{E},S)$ and $(\mathpzc{F},T)$ transverse $C^{k}$-bimodules for $(A,B)$ and $(B,C)$ respectively, and $\nabla$ a Hermitian $C^{k}$-connection on $\mathpzc{E}$. If $\nabla$ is transverse $C^{i}$, the operators  $1\tildeotimes_{\nabla_{i}}T$ are almost selfadjoint, regular and transverse $C^{k}$ in $\mathfrak{G}(S_{i})\tildeotimes_{B}\mathpzc{F}$, with Sobolev modules $\mathfrak{G}((1\otimes_{\nabla_{i}}T)_{j})$ topologically isomorphic to $\mathfrak{G}(S_{i})^{j}\tildeotimes_{\mathcal{B}_{j}}\mathfrak{G}(T_{j})$.
\end{corollary}
\begin{proof} The connections $\nabla_{i}:E^{i}_{i}\rightarrow E^{i}_{i}\tildeotimes_{\mathcal{B}_{i}}\Omega^{1}(\mathcal{B}_{i})$ are $C^{k}$*-connections, so the statement follows from the previous theorem. 
\end{proof}

\subsection{Endomorphism algebras}
We are now able to show that the notions of left- and right-smoothness of a $C^{*}$-module can be treated on equal footing. The notion of connection links the the two concepts in an elegant way. 

Recall the representations
\[\theta_{n}:\mathcal{A}_{n}\rightarrow \Endst(\mathfrak{G}(D_{n}))\oplus\Endst(v_{[n]}\mathfrak{G}(D_{n})),\]
\[\pi_{n+1}:\mathcal{A}_{n+1}\rightarrow \Endst(\mathfrak{G}(D_{n})\oplus\mathfrak{G}(D_{n})).\]
The $\theta_{n}$ are endomorphisms of $X_{n}^{D}:=\mathfrak{G}(D_{n})\oplus v_{[n]}\mathfrak{G}(D_{n})$, respecting the direct sum decomposition. The $\pi_{n}$ act on $X_{n}^{D}\oplus X_{n}^{D}$, but do not respect the direct sum decomposition. 
\begin{theorem}\label{pithe} Let $\mathpzc{E}\lrh B$ be a $C^{k}$-module, $\nabla:E^{k}\rightarrow E^{k}\tildeotimes_{\mathcal{B}_{k}}\Omega^{1}(\mathcal{B}_{k})$ a *-connection and $\mathpzc{F}$ a transverse $C^{k}$ $(B,C)$-bimodule. There are canonical topological isomorphisms
\[\begin{split}g^{\pi}_{k}:E^{k}\tildeotimes_{\pi_{k}}(X_{k-1}\oplus X_{k-1})&\rightarrow E^{k}\tildeotimes_{\theta_{k-1}}X_{k-1}\oplus E^{k}\tildeotimes_{\theta_{k-1}}X_{k-1},\\
e\otimes\begin{pmatrix} x \\ y\end{pmatrix} &\mapsto\begin{pmatrix} e\otimes x \\ e\otimes y +\nabla_{T}(e)x\end{pmatrix},\end{split}\]
\[\begin{split} g^{\theta}_{k}:E^{k}\tildeotimes_{\theta_{k}}X_{k}&\rightarrow \mathfrak{G}(t_{k})\oplus v_{[k]}\mathfrak{G}(t_{k}),\\
e\otimes\begin{pmatrix} x\\ Tx\end{pmatrix}\oplus \begin{pmatrix} -Ty \\ y\end{pmatrix} &\mapsto \begin{pmatrix}  g^{\theta}_{k-1}(e\otimes x)\\ t_{k}g^{\theta}_{k-1}(e\otimes x)
\end{pmatrix}+v_{[k]}p^{t_{k}}v_{[k]}^{*} \begin{pmatrix} -g^{\theta}_{k-1}(e\otimes Ty )\\ g^{\theta}_{k-1}(e\otimes y +\nabla_{T}(e)Ty)\end{pmatrix},\end{split}\]
where $t_{i}=(1\otimes_{\nabla}T)_{i}$. Moreover, if $\mathpzc{E}$ is a transverse $C^{k}$ $(A,B)$ -bimodule, then we have \[g_{k}^{\pi}\circ (a\otimes 1)=\pi_{k}^{t}(a)g_{k}^{\pi},\quad g_{k}^{\theta}\circ (a\otimes 1)=\theta_{k}^{t}(a)g_{k}^{\theta},\] that is they are bimodule maps for the respective $\mathcal{A}_{k}$ module structures. 
\end{theorem}
\begin{proof} Well-definedness of $g^{\pi}_{k}$ is straightforward to check.
\[\begin{split}g_{k}^{\theta}(eb\otimes\begin{pmatrix} x \\ y\end{pmatrix} )&=\begin{pmatrix} eb\otimes x \\ eb\otimes y +\nabla_{T}(eb)x\end{pmatrix}\\
&=\begin{pmatrix} e\otimes x \\ e\otimes y +\nabla_{T}(e)\theta_{k}^{T}(b)x+e\otimes[T,\theta_{k}^{T}(b)]x\end{pmatrix}\\
&=g_{k}^{\pi}(e\otimes \pi_{k}^{T}(b)\begin{pmatrix}x \\ y\end{pmatrix}).\end{split}.\]
Its inverse is the map 
\[e\otimes\begin{pmatrix} x\\ y\end{pmatrix}\mapsto e\otimes \begin{pmatrix} x\\ y\end{pmatrix}-\nabla_{T}(e)\begin{pmatrix} 0 \\ x\end{pmatrix},\]
as is checked by computation.

 For $g_{k}^{\theta}$, well definedness is more of a surprise. First observe that $g_{k}^{\chi}$ is the first component of $g_{k}^{\theta}$, so we know this is a well defined topological isomorphism.  In case $\begin{pmatrix} -Ty \\ y\end{pmatrix}\in\mathfrak{Dom} T_{k+1}$, we can write
\[v_{[k]}p^{t_{k}}v_{[k]}^{*} \begin{pmatrix} -g^{\theta}_{k-1}(e\otimes Ty )\\ g^{\theta}_{k-1}(e\otimes y +\nabla_{T}(e)Ty)\end{pmatrix}=\begin{pmatrix} -t(1+t^{2})^{-1}e\otimes (1+T^{2})y\\ (1+t^{2})^{-1}e\otimes (1+T^{2})y\end{pmatrix}.\]
For such elements, we also have the expression
\[\theta_{k}^{T}(b)\begin{pmatrix} -Ty \\ y\end{pmatrix}=\begin{pmatrix} -T(1+T^{2})^{-1}\theta_{k-1}(b)(1+T^{2})y \\ (1+T^{2})^{-1}\theta_{k-1}(b)(1+T^{2})y\end{pmatrix},\]
from which well definedness follows directly. Thus, $g_{k}^{\theta}$ is well defined and completely bounded on a dense subset of $E^{k}\tildeotimes_{\theta_{k}}(\mathfrak{G}(T_{k}\oplus v_{[k]}\mathfrak{G}(T_{k})$, and hence extends to a well defined map on the entire tensor product module.
The fact that $g^{\theta}_{k}$ is a topological isomorphism is proved in a similar fashion as for $g^{\chi}_{k}$, by first stabilizing and considering the Grassmannian connection. The same computation as in the proof of \ref{indgraph} then shows that for this connection the map is unitary.
\end{proof}

\begin{corollary}\label{Kopalg} Let $B$ be a $C^{k}$-algebra, with defining $C^{k}$ spectral triple $(\mathpzc{H},D)$, $p$ the projection onto $\overline{B\mathpzc{H}}$ and $\mathpzc{E}\lrh B$ a $C^{k}$-module with connection. There are completely bounded isomorphisms
\[\K_{\mathcal{B}_{i}}(E^{i})\cong\Sob_{i}(1\otimes_{\nabla}D)\cap\K_{B}(\mathpzc{E})\otimes p,\] of involutive operator algebras, for all $i\leq k$, and similarly for $\Endst_{\mathcal{B}_{i}}(E^{i})$. In particular  $\K_{B}(\mathpzc{E})$ is completely boundedly isomorphic to a $C^{k}$-algebra and
\[\K_{\mathcal{B}_{i}}(E^{i})=\K_{B}(\mathpzc{E})\cap\Endst_{\mathcal{B}_{i}}(E^{i}).\]
\end{corollary} 
\begin{proof} The algebra $\mathcal{B}_{k}$ is completely isometrically isomorphic to a closed subalgebra of $\bigoplus _{i=0}^{k}M_{2^{i}}(B(\mathpzc{H}))$, via the defining representation 
$\pi_{[k]}$. Then by theorem \ref{projprop}, $p\in\Sob_{k}(D)$, and by theorem \ref{essrep}, $\K_{\mathcal{B}_{k}}(E^{k})$ is completely boundedly isomorphic to \[\K_{\mathcal{B}_{k}}(E^{k})\otimes \pi_{[k]}(p)\subset \bigoplus_{i=0}^{k}B(E^{i}\tildeotimes_{\mathcal{B}_{i}}(\bigoplus_{j=1}^{2^{i}}\mathpzc{H})).\] Choose a connection $\nabla:E^{k}\rightarrow E^{k}\tildeotimes_{\mathcal{B}_{k}}\Omega^{1}(\mathcal{B}_{k})$. We have $\K_{\mathcal{B}_{k}}(E^{k})\otimes \pi_{[k]}(p)\subset\Sob_{k}(1\otimes_{\nabla}D)$, by theorem \ref{indgraph}, and we have to show that it is closed.
Consider the map $g_{1}^{\pi}$ from theorem \ref{pithe}. It intertwines the representations $a\otimes 1$ and $\pi_{1}^{t}$, and since $\K_{\mathcal{B}_{1}}(E^{1})\otimes\pi_{[1]}(p)$ is a closed subalgebra on the left side, it is so on the right side. The same argument works for $\Endst_{\mathcal{B}_{1}}(E^{1})$. Next, assume that we have proven the result for $\Endst_{\mathcal{B}_{i-1}}(E^{i-1})$ and $\K_{\mathcal{B}_{i-1}}(E^{i-1})$. The operator space structure on $\mathcal{B}_{i}$  is given by the representation $\pi_{[i]}$. Applying theorem \ref{pithe} to $\pi_{i}$, and using the induction hypothesis on $\bigoplus_{j=0}^{i-1}\pi_{j}$, we see that $\K_{\mathcal{B}_{i}}(E^{i})\otimes p$ is a closed subset of $\Sob_{i}(1\otimes_{\nabla}T)$. The same reasoning applies to $\Endst_{\mathcal{B}_{i}}(E^{i})$. Since $\K_{\mathcal{B}_{i}}(E^{i})$ is dense in $\K_{B}(\mathpzc{E})$, it is a $C^{k}$-algebra.
\end{proof}
A $C^{k}$-module with connection $(\mathpzc{E},\nabla)$ can thus be viewed as a $C^{k}$- Morita equivalence between $\K_{\mathcal{B}_{k}}(E^{k})$, and the ideal $\langle E^{k},E^{k}\rangle\subset \mathcal{B}_{k}$.
\begin{corollary}\label{Endstab} Let $\mathpzc{E}\lrh B$ be a $C^{k}$-module over a $C^{k}$-algebra $B$. Then $\Endst_{\mathcal{B}_{k}}(E^{k})$ is spectral invariant in $\Endst_{B}(\mathpzc{E})$ and $\K_{B}(\mathpzc{E})$ is holomorphically dense in $\K_{\mathcal{B}_{k}}(E^{k})$.
\end{corollary}
\begin{proof} The argument from theorem \ref{hol}, does not directly apply since $\Endst_{\mathcal{B}_{k}}(E^{k})$ need not be dense in $\Endst_{B}(\mathpzc{E})$. When we replace $\Endst_{B}(\mathpzc{E})$ by the unital $C^{*}$-subalgebra $A=\overline{\Endst_{\mathcal{B}_{k}}(E^{k})},$ we find that $\Endst_{\mathcal{B}_{k}}(E^{k})$ is spectral invariant in $A$, which is spectral invariant in $\Endst_{B}(\mathpzc{E})$.
\end{proof}

\section{Correspondences}
We have seen how to employ connections as a tool in constructing products of unbounded selfadjoint operators. This observation leads to the construction of a category of spectral triples. They give a notion of morphism of noncommutative geometries, in such a way that the bounded transform induces a functor from correspondences to $KK$-groups. By considering several levels of
differentiability and smoothness on correspondences, one gets subcategories of correspondences of $C^{k}$- and smooth $C^{*}$-algebras.
\subsection{Almost anticommuting operators}
In this section we describe conditions on two almost selfadjoint regular operators, implying that  they induce almost selfadjoint regular operators in each others graphs. In the case of selfadjoint operators, this can be used to show that their sum is selfadjoint on the intersection of their domains. In the next section, where we introduce connections, pairs of such operators will be constructed in a natural way.
\begin{definition}\label{anti} Let $\mathpzc{E}\lrh B$ be a $C^{k}$-module and $s$ and $t$ almost selfadjoint regular operators in $E^{k}$. The operators $s$ and $t$ \emph{almost anticommute} if \begin{enumerate}\item There exists $\lambda > 0$ such that the operators \[(s\pm \lambda i)^{-1}(t\pm\lambda i)^{-1}\quad\textnormal{and}\quad (s\pm \lambda i)^{-1}(t\mp\lambda i)^{-1}\] and their adjoints all have the same range.
\item The operator $st+ts$, defined on $\im(s\pm \lambda i)^{-1}(t\mp\lambda i)^{-1}$,  extends to an operator in $\Endst_{\mathcal{B}_{k}}(E^{k})$; 
\end{enumerate}
\end{definition}
We need the following lemma.
\begin{lemma}\label{tildes} Let $s:\Dom s\rightarrow E^{k}$ be a closed densely defined operator, such that: 
\begin{enumerate} \item There exists $\lambda\in\R\setminus \{0\}$ such that $s\pm\lambda i$ is surjective and $(s\pm\lambda i)^{-1}\in \Endst_{\mathcal{B}_{k}}(E^{k})$;
\item $\Dom s\subset \Dom s^{*}$ and $s-s^{*}$ extends to an operator in $\Endst_{\mathcal{B}_{k}}(E^{k})$.
\end{enumerate}
Then $s$ is almost selfadjoint and regular in $E^{k}$.
\end{lemma}
\begin{proof} Write $R$ for the extension of $s-s^{*}$ to all of $E^{k}$. The operator $s+\frac{1}{2} R$ is symmetric on $\Dom s$, and
\[x=(s+\frac{1}{2}R\pm\lambda i)(s\pm\lambda i)^{-1}=1+\frac{1}{2}R(s+\lambda i)^{-1},\]
so, increasing $\lambda$ if necessary, $0\notin\Sp(x)$, so by corollary \ref{Endstab}, $x$ is an  invertible operator in $\Endst_{\mathcal{B}_{k}}(E^{k})$. Therefore $s+\frac{1}{2}R\pm\lambda i$ is surjective. Thus $s+\frac{1}{2}R$ is selfadjoint regular on $\Dom s$ by theorem \ref{dplusmini}.
\end{proof}
For a pair $(s,t)$ of almost anticommuting operators, we can define
\[\mathfrak{Dom} \chi_{1}^{t}(s):=\{(e,te)\in\mathfrak{G}(t): e\in\im(s\pm \lambda i)^{-1}(t\pm\lambda i)^{-1}\subset \mathfrak{Dom} s\cap\mathfrak{Dom t}\},\]
and $\chi_{1}^{t}(s)(e,te):=(se,tse)$, for $e\in\mathfrak{Dom}\chi_{1}^{t}(s)$. The notation $\chi^{t}_{1}(s)$ indicates the analogy with the bounded case.
$\chi_{1}^{s}(t):\mathfrak{Dom}\chi_{1}^{s}(t)\rightarrow\mathfrak{G}(s)$ is defined similarly, by switching $s$ and $t$.

\begin{proposition}\label{antigraph}Let $s$ and $t$ be almost anticommuting operators. Then $\chi_{1}^{t}(s)$ and $\chi^{s}_{1}(t)$ are almost selfadjoint and regular. Moreover, the map
\[\begin{split}\mathfrak{G}(\chi_{1}^{t}(s))&\rightarrow\mathfrak{G}(\chi_{1}^{s}(t))\\
(e,te,se, ste)&\mapsto (e,se,te,tse),\end{split}\]
is a topological isomorphism of $C^{k}$-modules.
\end{proposition}
\begin{proof}First we prove $\chi_{1}^{t}(s)$ is almost selfadjoint. By definition, $\chi_{1}^{t}(s)+\lambda i:\Dom\chi_{1}^{t}(s)\rightarrow \mathfrak{G}(t)$  is surjective. Moreover since
\[ [t,(s+\lambda i)^{-1}]=(s+\lambda i)^{-1}[s,t](s-\lambda i)^{-1}\in\Endst_{\mathcal{B}_{k}}(E^{k}),\]
we can write
\[(\chi_{1}^{t}(s)+\lambda i)^{-1}=\chi_{1}^{t}((s+\lambda i)^{-1})\in\Endst_{\mathcal{B}_{k}}(\mathfrak{G}(t)).\]
Thus, by lemma \ref{tildes}, $\chi_{1}^{t}(s)$ is almost selfadjoint and regular in $\mathfrak{G}(t)$.
The statement on the topological type follows by observing that the map $\mathfrak{G}(\chi_{1}^{t}(s))\rightarrow\mathfrak{G}(\chi_{1}^{s}(t))$ defined above can be written as
\[p^{\chi_{1}^{s}(t)}\begin{pmatrix} 1& 0& 0& 0\\0& 0&1 & 0\\0&1&0&0\\ [s,t] &0 &0 &-1\end{pmatrix} p^{\chi_{1}^{t}(s)}\in\Hom^{*}_{\mathcal{B}_{k}}(\mathfrak{G}(\chi_{1}^{t}(s)),\mathfrak{G}(\chi_{1}^{s}(t))),\]
and its inverse is obtained by interchanging $p^{\chi_{1}^{s}(t)}$ and $p^{\chi_{1}^{t}(s)}$.
\end{proof}
\begin{corollary} For almost anticommtuing operators $s$ and $t$ and $\lambda,\mu$ sufficiently large
\[\im(s\pm\lambda i)^{-1}(t\pm\mu i)^{-1}=\im(t\pm\lambda i)^{-1}(s\pm\mu i)^{-1},\]
\[\im(s\mp\lambda i)^{-1}(t\pm\mu i)^{-1}=\im(t\pm\lambda i)^{-1}(s\mp\mu i)^{-1}.\]
\end{corollary}
\begin{proof}It is immediate that $\im(s+\lambda i)^{-1}(t+\lambda i)^{-1}=\im(s+\lambda i)^{-1}(t+\mu i)^{-1}$. The equality $\im(t+\lambda i)^{-1}(s+\lambda i)^{-1}=\im(t+\mu i)^{-1}(s+\lambda i)^{-1}$ follows from almost selfadjointness of $\chi^{t}_{1}(s)$.
\end{proof}

One may define almost selfadjoint operators $\chi^{t}_{j}(s)$ in $\mathfrak{G}(t_{j})$ inductively whenever 
$\chi_{j-1}^{t}(s)$ and $t$ almost anticommute in $\mathfrak{G}(t_{j-1})$.\newline\newline

We now turn to the subject of the sum of almost anticommuting selfadjoint operators in $C^{*}$-modules.To this end we will use the following positivity result: Whenever $h,k\in\Endst_{B}(\mathpzc{E})$ are positive, $h$ has dense range, and $h\leq k$, then $k$ has dense range. The reader can consult \cite{Lan} for a proof of this statement, which plays a crucial r\^{o}le in the subsequent discussion.
\begin{lemma}\label{denseran} Let $(s,t)$ be a pair of almost anticommuting selfadjoint operators in a $C^{*}$-module $\mathpzc{E}$. For $\lambda,\mu$ positive and sufficiently large, the operators 
\[x=(t-\mu i)^{-1}-(s+\lambda i)^{-1} \quad\textnormal{and}\quad y=(t+\mu i)^{-1}-(s-\lambda i)^{-1}\] have dense range.
\end{lemma}
\begin{proof} We show that $xx^{*}$ has dense range, which implies $x$ has dense range. Write $a=(s+\lambda i)^{-1}$ and $b=(t-\mu i)^{-1}$ and compute
\[ xx^{*} =(a-b)(a^{*}-b^{*})=aa^{*}+bb^{*}-ab^{*}-ba^{*}.\]
We know that both $aa^{*}$ and $bb^{*}$, and hence also $aa^{*}+bb^{*}$ have dense range. Now observe that
\[-ab^{*}-ba^{*}=ab^{*}(\lambda\mu-[s,t])ba^{*}\geq 0,\]
and therefore $xx^{*}\geq aa^{*}+bb^{*}$, so $xx^{*}$ has dense range.
\end{proof}

\begin{lemma}\label{closed}  Let $(s,t)$ be a pair of almost anticommuting operators in a $C^{*}$-module $\mathpzc{E}$. Then the sum $s+t$ is closed and symmetric on $\Dom s \cap \Dom t$, and $\im(s+\lambda i)^{-1}(t+\mu i)^{-1}$ is a core for $s+t$.
\end{lemma}
\begin{proof}The sum $s+t$ is symmetric, and it is closed on $\Dom s\cap\Dom t$, which can be seen as follows. Let $x_{n}$ be a sequence in $\mathfrak{Im}(t\pm\lambda i)^{-1}(s\pm\lambda i)^{-1}\subset \Dom s\cap\Dom t$ converging to $x\in \mathpzc{E}$, and such that $(s+t)x_{n}$ is Cauchy in $E^{k}$. Then, for $y=x_{n}-x_{m}$, 
\[\begin{split}\langle (s+t)y,(s+t)y\rangle & = \langle sy,sy\rangle +\langle ty,ty\rangle +\langle sy, ty \rangle +\langle ty,sy\rangle \\ &=  \langle sy,sy\rangle +\langle ty,ty\rangle +\langle [s,t]y, y \rangle,\end{split}\]
and since $[s,t]$ is bounded on $\mathfrak{Im} (t\pm\lambda i)^{-1}(s\pm\lambda i)^{-1}$, we have \[\langle [s,t](x_{n}-x_{m}),(x_{n}-x_{m})\rangle\rightarrow 0\] for $n\geq m\rightarrow\infty$. Since the other two terms are positive, they must converge to zero as well (since the left hand side does so). Thus,  both $sx_{n}$ and $tx_{n}$ are convergent, and since both $s$ and $t$ are closed, we have $x\in \mathfrak{Dom} s\cap\Dom t$ and $(s+t)x_{n}$ must converge to $(s+t)x$. So $s+t$ is closed on $\Dom s\cap\Dom t$, and $\mathfrak{Im}(t\pm\lambda i)^{-1}(s\pm\lambda i)^{-1}$ is a core for $s+t$.
\end{proof}
\begin{lemma}\label{selfperturb} Let $D$ be a closed symmetric operator operator in $C^{*}$-module $\mathpzc{E}$. Suppose there exist $R_{\pm}\in\Endst_{B}(\mathpzc{E})$ and $\lambda > \max\{(\|R_{\pm}\|+1)^{2}\}$  such that the operators \[D+R_{\pm}\pm \lambda i:\Dom D\rightarrow \mathpzc{E},\] have dense range in $\mathpzc{E}$. Then $D\pm \lambda i$ are surjective and $D$ is selfadjoint and regular in $\mathpzc{E}$.
\end{lemma}
\begin{proof} Write $\tilde{D}$ for the closed operator $D+R_{+}$ and $b=R_{+}-R_{+}^{*}$. For $e\in\Dom D$ we can estimate
\[\langle (\tilde{D}+\lambda i) e, (\tilde{D}+\lambda i) e\rangle=\langle \tilde{D}e,\tilde{D}e\rangle+\lambda^{2}\langle e,e\rangle+\lambda\langle ibe,e\rangle\geq \lambda(\lambda-\|b\|)\langle e,e\rangle. \]
This shows that $(\tilde{D}+\lambda i)^{-1}$ is injective and extends to an operator $r\in\Endst_{B}(\mathpzc{E})$, with \begin{equation}
\label{normr}
\|r\|\leq\frac{1}{\sqrt{\lambda(\lambda-\|b\|)}}<\frac{1}{\sqrt{\lambda}}<\frac{1}{\|R_{+}\|},\end{equation}
because $\lambda > \max\{(\|R_{\pm}\|+1)^{2}\}\geq\|R_{+}\|^{2}+ \|b\|+1$. Let $e\in\mathpzc{E}$ be arbitrary and $e_{n}\in\im(\tilde{D}+\lambda i)$ be a sequence converging to $e$. Then $(\tilde{D}+\lambda i)^{-1}e_{n}\rightarrow re$ and \[\tilde{D}(\tilde{D}+\lambda i)^{-1}e=(1-\lambda i(\tilde{D}+\lambda i)^{-1})e\rightarrow e-\lambda ire.\] Since $D$ is closed we have $re\in\Dom D$ and $(\tilde{D}+\lambda i)re=e$. That is, $r=(\tilde{D}+\lambda i)^{-1}$ and $\tilde{D}+ \lambda i:\Dom D\rightarrow \mathpzc{E}$ is bijective. Now $\|R_{+}(D+R_{+}+\lambda i)^{-1})\|<1$ by \eqref{normr}, so we see that
\[(D+\lambda i)(D+R_{+}+\lambda i)^{-1}=1-R_{+}(D+R_{+}+\lambda i)^{-1},\]
 is invertible. Hence $D+\lambda i:\Dom D\rightarrow \mathpzc{E}$ is bijective. Using $D+R_{-}-\lambda i$, one shows in the same way that $D-\lambda i$ is bijective too, so $D$ is selfadjoint and regular.

\end{proof}
\begin{theorem}\label{selfsum}  Let $(s,t)$ be a pair of almost anticommuting operators in a $C^{*}$-module $\mathpzc{E}$. Then the sum $s+t$ is selfadjoint and regular on $\Dom s\cap \Dom t$.
\end{theorem}
\begin{proof} The operator $s+t$ is closed and symmetric by lemma \ref{closed}.
Consider the operators $x$ and $y$ from lemma \ref{denseran}. We can factor these operators as
\[x=(s+t+(\mu-\lambda)i-(s+\lambda i)^{-1}([s,t]-2\lambda\mu))(s-\lambda i)^{-1}(t-\mu i)^{-1},\]
\[y=(s+t+(\lambda-\mu)i-(s-\lambda i)^{-1}([s,t]-2\lambda\mu))(s-\mu i)^{-1}(t-\lambda i)^{-1},\]
by a standard algebraic computation. By lemma \ref{denseran}, $x$ and $y$ have dense range, and therefore the operators \[s+t\pm (\mu-\lambda)i - (s\pm\lambda i)^{-1}([s,t]-2\lambda\mu):\Dom s\cap\Dom t\rightarrow\mathpzc{E}\] have dense range. Choosing $\lambda,\mu$ positive, and such that \[\lambda>(2+2\mu)^{2}>\|[s,t]\|,\] we find that 
\[R_{\pm}=-(s\pm\lambda i)^{-1}([s,t]-2\lambda\mu)\in\Endst_{B}(\mathpzc{E}), \]  have norm \[\|R_{\pm}\|<\frac{\|[s,t]\|}{\lambda}+2\mu\leq 1+2\mu,\]
and $\lambda-\mu>(2+2\mu)^{2}\geq (\|R_{\pm}\|+1)^{2}$. Thus we can apply lemma \ref{selfperturb} to the closed symmetric operator $s+t$, the operators $R_{\pm}$ and the constant $\lambda-\mu$, to find that $s+t$ is selfadjoint and regular. 

\end{proof}
Note that for almost selfadjoint almost anticommuting operators, almost selfadjointness of the sum is not guaranteed by the above considerations. \newline

Next, we consider triples of almost anticommuting operators in a $C^{*}$-module. A triple of operators $(s,t,\partial)$ is said to \emph{almost anticommute} if each pair of them almost anticommutes and if
\[\mathfrak{Im}(s+\lambda i)^{-1}(t+\lambda i)^{-1}(\partial +\lambda i)^{-1}=\mathfrak{Im}(\partial+\lambda i)^{-1}(s+\lambda i)^{-1}(t +\lambda i)^{-1}.\]
Note that this implies that any order of resolvent products will have the same range, using that pairs almost anticommute.
\begin{proposition}\label{triple} Let $(s,t, \partial)$ is an almost anticommuting triple of almost selfadjoint regular operators in a $C^{*}$-module $\mathpzc{E}$. Suppose that $s+t$ is almost selfadjoint regular with core $\mathfrak{Im} (s+\lambda i)^{-1}(t+\lambda i)^{-1}$.Then $s+t$ and $\partial$ almost anticommute. 
\end{proposition}
\begin{proof} For notational convenenience we write
\[a=(\partial+\lambda i)^{-1},\quad b=(s+\lambda i)^{-1},\quad c=(t+\lambda i)^{-1},\quad d=(s+t+\lambda i)^{-1}.\]
We have to show that
\begin{equation}\label{imeq}\mathfrak{Im}(s+t+\lambda i)^{-1}(\partial +\lambda i)^{-1}=\mathfrak{Im}(\partial +\lambda i)^{-1}(s+t+\lambda i)^{-1},\end{equation}
and that the commutator $[s+t,\partial],$ is bounded on this set. Note that
\[\begin{split}\mathfrak{Im}(\partial +\lambda i)^{-1}(s+t+\lambda i)^{-1}&=\mathfrak{Im}(\partial +\lambda i)^{-1}(s+\lambda i)^{-1}\cap \mathfrak{Im}(\partial +\lambda i)^{-1}(t+\lambda i)^{-1}\\
&=\mathfrak{Im}(s+\lambda i)^{-1}(\partial +\lambda i)^{-1}\cap \mathfrak{Im}(t+\lambda i)^{-1}(\partial +\lambda i)^{-1}\\
 &\subset \mathfrak{Im}(s+t+\lambda i)^{-1},\end{split}\]
and that $[\partial,s+t]$ is bounded on this subset. Consider $s+t$ as an operator in the graph $\mathfrak{G}(\partial)$, defined on the above domain, and denote it by $\chi_{1}^{\partial}(s+t)$. Then $\chi_{1}^{\partial}(s+t)$ is a closed operator: Suppose
\begin{enumerate}
\item $adx_{n}\xrightarrow{\partial} ax,$
\item $ (s+t)adx_{n}\xrightarrow{\partial} ay,$
\end{enumerate}
where $\xrightarrow{\partial}$ means convergence in $\mathfrak{G}(\partial)$. (1) implies that $dx_{n}$ is a convergent sequence in $\mathpzc{E}$. Moreover we have 
\[(s+t)adx_{n}=-a^{*}(s+t)dx_{n}+[s,a]dx_{n}+[t,a]dx_{n},\]
from which it follows that $-a^{*}(s+t)dx_{n}$ is convergent (in $\mathpzc{E}$). Moreover, since
\[ [s,a]=sa+a^{*}s\quad\textnormal{on }\Dom s,\quad [t,a]=ta+a^{*}t\quad\textnormal{on }\Dom t,\]
in particular on $\im d=\Dom s\cap\Dom t$ we get
\[ [s,a]=a^{*}[s,\partial]a,\quad  [t,a]=a^{*}[t,\partial]a.\]
Therefore the term
\[ [s,a]dx_{n}+[t,a]dx_{n},\]
is convergent in $\mathfrak{G}(\partial)$, and hence $a^{*}(s+t)dx_{n}$ is so too.
Then from
\[a^{*}(s+t)dx_{n}=a^{*}x_{n}-ia^{*}dx_{n},\]
it follows that $a^{*}x_{n}$ is convergent in $\mathfrak{G}(\partial)$, which means that $x_{n}$ is a convergent sequence in $\mathpzc{E}$. From this it follows readily that $\chi^{\partial}_{1}(s+t)$ is closed in $\mathfrak{G}(\partial)$. It is almost symmetric on its domain since $\chi^{\partial}_{1}(s)$ and $\chi^{\partial}_{1}(t)$ are almost selfadjoint. Thus it remains to show that $\chi^{\partial}_{1}(s+t)+\lambda i$ has dense range for some $\lambda$. To this end we use that
\[\mathfrak{Im} abc\subset\Dom \chi^{\partial}_{1}(s+t).\]
Then since
\[\begin{split}(s+t+\lambda i)abc &= a^{*}(-s-t +\lambda i)bc + [s,a]bc+[t,a]bc\\
&=a^{*}(-s-t+\lambda i +[s,\partial]a+[t,\partial]a)bc,\end{split}\]
and $[s,\partial]a+[t,\partial]a$ is bounded, for $\lambda$ large enough we have that
\[\mathfrak{Im} (-s-t+\lambda i +[s,\partial]a+[t,\partial]a)bc,\]
is dense in $\mathpzc{E}$. Hence
\[\mathfrak{Im} a^{*}(-s-t+\lambda i +[s,\partial]a+[t,\partial]a)bc,\]
is dense in $\mathfrak{G}(\partial)$. Thus $\chi^{\partial}_{1}(s+t)$ is almost selfadjoint $\mathfrak{G}(\partial)$. This implies \eqref{imeq}, and the commutator properties are immediate, so $s+t$ and $\partial$ almost anticommute.
\end{proof}
By the same methods, we can prove the following proposition.
\begin{proposition}\label{st} Let $s,t$ be almost anticommuting almost selfadjoint operators in a $C^{k}$-module $E^{k}$. Suppose $s+t$ is almost selfadjoint on $\Dom s\cap \Dom t$, with core $\im(s+\lambda i)^{-1}(t+\lambda i)^{-1}$. Then 
\begin{enumerate}

\item $\chi^{s}(s+t)$ is almost selfadjoint in $\mathfrak{G}(s)$ and $\chi^{s}(s+t)=\chi^{s}(s)+\chi^{s}(t)$, i.e.
\[\Dom \chi^{s}(s+t)=\Dom \chi^{s}(s)\cap\Dom \chi^{s}(t).\] 
\item $\chi^{t}(s+t)$ is almost selfadjoint in $\mathfrak{G}(t)$ and $\chi^{t}(s+t)=\chi^{t}(s)+\chi^{t}(t)$, i.e.
\[\Dom \chi^{t}(s+t)=\Dom \chi^{t}(s)\cap \Dom\chi^{t}(t). \]
\item $\mathfrak{Dom}(s+t)^{2}=\Dom s^{2}\cap\im(s+\lambda i)^{-1}(t+\lambda i)^{-1}\cap\Dom t^{2}$.
\end{enumerate}
\end{proposition}
\begin{proof} Statements (1) and (2) are proved as in the previous proposition. For (3), note that
\[\begin{split}\Dom (s+t)^{2}&=(s+t+\lambda i)^{-1}\im (s+\lambda i)^{-1}\cap\im(t+\lambda i)^{-1}\\
&=\im (s+\lambda i)^{-1}(s+t+\lambda i)^{-1}\cap \im(t+\lambda i)^{-1}(s+t+\lambda i)^{-1}\\
&=\im (\lambda^{2}+s^{2})^{-1}\cap\im (s+\lambda i)^{-1}(t+\lambda i)^{-1}\cap\im (\lambda^{2}+t^{2})^{-1},\end{split}\]
where the second equality follows by using (1) and (2).

\end{proof}
Suppose we have a pair $(s,t)$ of almost selfadjoint operators in $E^{k}$, whose sum $s+t$ is almost selfadjoint on $\mathfrak{Dom}s\cap\mathfrak{Dom} t$. The graphs of $s$ and $t$ both map completely boundedly to $E^{k}$, by projection onto the first factor. Hence the pullback $\mathfrak{G}(s)*\mathfrak{G}(t)$ is defined, as the universal solution to the diagram
\begin{diagram} \mathfrak{G}(s)*\mathfrak{G}(t)&\rTo & \mathfrak{G}(s) \\
\dTo & & \dTo \\
\mathfrak{G}(t)&\rTo & E^{k}.
\end{diagram}
It can be identified (as a topological $C^{k}$-module) with the submodule of $\mathfrak{G}(s)\oplus\mathfrak{G}(t)$ given by
\[\mathfrak{G}(s)*\mathfrak{G}(t):=\{(e,se,e,te):e\in\Dom s\cap\Dom t\}.\]

\begin{proposition}\label{sumpull} If $s$ and $t$ are almost anticommuting almost selfadjoint regular operators in $E^{k}$ such that $s+t$ is almost selfadjoint regular on $\mathfrak{Dom}s\cap\mathfrak{Dom} t \subset E^{k},$ with core $\im(s+\lambda i)^{-1}(t+\lambda i)^{-1}$ then there is a topological isomorphism of $C^{k}$-modules
\[\begin{split}g:\mathfrak{G}((s+t))&\xrightarrow{\sim}\mathfrak{G}(s)*\mathfrak{G}(t)\\
(e,(s+t)e)&\mapsto (e,se,e,te).\end{split}\]
\end{proposition}
\begin{proof} By (3) of proposition \ref{st}, \[\Dom (s+t)^{2}=\Dom s^{2}\cap\im(s+\lambda i)^{-1}(t+\lambda i)^{-1}\cap\Dom t^{2},\] and since $s,t$ almost anticommute, $[s,t]$ is bounded on $\Dom(s+t)^{2}$ and so $s^{2}+t^{2}$ is a bounded perturbation of $(s+t)^{2}$. Hence it is almost selfadjoint regular on $\Dom(s+t)^{2}$, and the operator $\lambda^{2}+s^{2}+t^{2}$ is a bijection for $\lambda$ sufficiently large. In the following we take $\lambda=1$, which can always be achieved by rescaling. The module $\mathfrak{G}(s)*\mathfrak{G}(t)\subset\bigoplus_{j=1}^{4}E^{k}$ is the range of the (non-selfadjoint) idempotent
\[q:=\begin{pmatrix} a & as & a & at \\
sa & sas & sa & sat \\
a & as & a & at \\
ta & tas & ta & tat\end{pmatrix},\]
where $a=(1+s^{2}+t^{2})^{-1}$. Thus, by corollaries \ref{Kopalg} and \ref{rangeproj} there is a projection $p$ with \[p   \bigoplus_{j=1}^{4}E^{k}=\mathfrak{G}(s)*\mathfrak{G}(t),\]
and the map $g$ can be written as 
\[g=p\begin{pmatrix}b & (s+t)b\\sb & s(s+t)b\\b & (s+t)b\\tb & t(s+t)b\end{pmatrix} p^{s+t},\]
where $b=(1+(s+t)^{2})^{-1}$ and $p^{s+t}$ the Woronowicz projection. Moreover, we have
\[g^{-1}=p^{s+t}\begin{pmatrix}\frac{1}{2} & 0 & \frac{1}{2} & 0\\ 0& 1 & 0 & 1\end{pmatrix}p,\]
showing that $g$ is a topological isomorphism.
\end{proof}

\subsection{The product of transverse modules}
We now show that the operators $S\otimes 1$ and $1\otimes_{\nabla} T$ almost anticommute in the Sobolev modules of $1\otimes_{\nabla}T$.
From now on, write $s=S\tildeotimes 1$ and $t=1\tildeotimes_{\nabla} T$. The resolvents of $s$ and $t$ satisfy the following crucial compatibility. 
\begin{lemma}\label{resprod}Let $(\mathpzc{E},S)$ and $(\mathpzc{F},T)$ be transverse $C^{k}$ $(A,B)$ and $(B,C)$ bimodules respectively, and $\nabla:E^{k}\rightarrow E^{k}\tildeotimes_{\mathcal{B}_{k}}\Omega^{1}(\mathcal{B}_{k})$ a transverse $C^{n}$-connection. The $C^{k}$-endomorphisms
\[(t\mp\lambda i)^{-1}(s\pm\lambda i)^{-1}, (t\pm\lambda i)^{-1}(s\pm\lambda i)^{-1},(s\mp\lambda i)^{-1}(t\pm\lambda i)^{-1},(s\pm\lambda i)^{-1}(t\pm\lambda i)^{-1},\]  all have the same range in $\mathfrak{G}(S_{i})\tildeotimes_{\mathcal{B}_{j}}\mathfrak{G}(T_{j})$, for $i\leq n-1$,$ j\leq k-1$.
\end{lemma}
\begin{proof}Denote by $\pr^{S}:\mathfrak{G}(S_{i+1})\rightarrow \mathfrak{G}(S_{i})$ and $\pr^{T}:\mathfrak{G}(T_{j+1})\rightarrow \mathfrak{G}(T_{j})$ the adjointable operators given by projection on the first coordinate of the graph. $\pr^{T}$ is a $\mathcal{B}_{j}$-module map, and hence by theorem \ref{tensop}, 
\[\pr^{S}\otimes\pr^{T}:\mathfrak{G}(S_{i+1})\tildeotimes_{\mathcal{B}_{j+1}}\mathfrak{G}(T_{j+1})\rightarrow \mathfrak{G}(S_{i})\tildeotimes_{\mathcal{B}{j}}\mathfrak{G}(T_{j}),\] is an adjointable operator. We will show that all operators have range $\mathfrak{Im}\pr^{S}\otimes\pr^{T}$.\newline

For any $\lambda>0$, $(s\pm\lambda i)^{-1}$ maps  $\mathfrak{G}(S_{i})\tildeotimes_{\mathcal{B}_{j}}\mathfrak{G}(T_{j})$ bijectively onto $\mathfrak{Dom} S_{i+1}\otimes 1$, which is in bijection with $\mathfrak{G}(S_{i+1})\tildeotimes_{\mathcal{B}_{j}} \mathfrak{G}(T_{j})\cong \mathfrak{G}(1\otimes_{\nabla_{i+1}}T)_{j}$. Since $(1\otimes_{\nabla_{i+1}}T)_{j}$ is almost selfadjoint in $\mathfrak{G}(1\otimes_{\nabla_{i+1}}T)_{j-1}$, $(t\pm\lambda i)^{-1}$ maps this module bijectively onto \[\mathfrak{Dom} t\subset\mathfrak{G}(1\otimes_{\nabla_{i+1}}T)_{j-1}\] for $\lambda$ sufficiently large. This domain in turn is in bijection with $\mathfrak{G}(S_{i+1})\tildeotimes_{\mathcal{B}_{j}}\mathfrak{G}(T_{j})$, by theorem \ref{indgraph}. The diagram
\begin{diagram} \mathfrak{Dom} s &\rTo^{(t\pm\lambda i)^{-1}} & \mathfrak{G}(S_{i})\tildeotimes_{\mathcal{B}_{j}}\mathfrak{G}(T_{j})\\
\dTo & & \uTo^{\pr^{S}\otimes\pr^{T}}\\
\mathfrak{G}(s)&\rTo &\mathfrak{G}(S_{i+1})\tildeotimes_{\mathcal{B}_{j+1}}\mathfrak{G}(T_{j+1}),\end{diagram}
commutes, which means we have shown
\[\mathfrak{Im} (t\pm\lambda i)^{-1}(s\pm\lambda i)^{-1}=\mathfrak{Im}\pr^{S}\otimes\pr^{T}.\]
The map
\[\begin{split} r:\mathfrak{G}(S_{i})&\rightarrow\mathfrak{G}(S_{i+1})\\
e &\mapsto ((S+\lambda i)^{-1}e, S(S+\lambda i)^{-1}e)\end{split}\]
is a topological isomorphism, and hence, by theorem \ref{tensop}, 
\[r\otimes 1:\mathfrak{G}(S_{i})\tildeotimes_{\mathcal{B}{j}}\mathfrak{G}(T_{j})\rightarrow \mathfrak{G}(S_{i+1})\tildeotimes_{\mathcal{B}_{j}}\mathfrak{G}(T_{j}),\] is a topological isomorphism. Moreover the diagram
\begin{diagram}\mathfrak{Dom} t &\rTo^{(s+\lambda i)^{-1}} & \mathfrak{G}(S_{i})\tildeotimes_{\mathcal{B}_{j}} \mathfrak{G}(T_{j})\\
\dTo & &\uTo^{\pr^{S}\otimes\pr^{T}}\\
\mathfrak{G}(S_{i})\tildeotimes_{\mathcal{B}_{j+1}}\mathfrak{G}(T_{j+1}) &\rTo^{r\otimes 1} & \mathfrak{G}(S_{i+1})\tildeotimes_{\mathcal{B}_{j+1}}\mathfrak{G}(T_{j+1}),\end{diagram}
commutes, where the downward arrow is the bijection $\mathfrak{Dom} t\rightarrow \mathfrak{G}(t)^{i}$ composed with the map from theorem \ref{indgraph}. This proves that
\[\mathfrak{Im} \pr^{S}\otimes\pr^{T}=\mathfrak{Im}(s\pm\lambda i)^{-1}(t\pm\lambda i)^{-1}.\]

\end{proof}
Given a selfadjoint regular $C^{k}$-operator $S$ in $E^{k}$, we get naturally induced operators $S\otimes 1$ in all the modules $E^{i}\tildeotimes_{\mathcal{B}_{i}}\mathfrak{G}(T_{i})$, for $i\leq k$. Although these operators need not be almost selfadjoint they are still regular. 
\begin{lemma}\label{asatop}  Let $\mathpzc{E},\mathpzc{F}$ be $C^{k}$-modules over $B$, $g:E^{k}\rightarrow F^{k}$ a topological isomorphism, and $D$ an almost selfadjoint regular operator in $E^{k}$. Then $gDg^{-1}$ is regular in $F^{k}$.
\end{lemma}
\begin{proof} Since $D$ is almost selfadjoint, $D\pm\lambda i :\mathfrak{D}\rightarrow E^{k}$ are bijections, and $(D\pm\lambda  i)^{-1}\in\Endst_{\mathcal{B}_{k}}(E^{k})$. Therefore both $g(D+\lambda i)^{-1}g^{-1}$ and $g^{-1 *}(D^{*}-\lambda i)^{-1}g^{*}$ have dense range in $F^{k}$ and by corollary \ref{invregular}, their inverses are regular. Thus
\[gDg^{-1}=g(D+\lambda i)g^{-1} -\lambda i,\]
is a bounded perturbation of a regular operator, hence regular.
\end{proof}
\begin{proposition}\label{sreg} Let $(\mathpzc{E},S)$ and $(\mathpzc{F},T)$ be transverse $C^{k}$ $(A,B)$ and $(B,C)$ bimodules respectively, and $\nabla:E^{k}\rightarrow E^{k}\tildeotimes_{\mathcal{B}_{k}}\Omega^{1}(\mathcal{B}_{k})$ a transverse $C^{k}$-connection. For $i,j\leq k-1$, write $t=1\otimes_{\nabla_{i}}T$. The operators $\chi_{j}^{t}(s)$ and $t_{j}$ almost anticommute in $\mathfrak{G}(t_{j})$. Consequently, the operators $S_{i+1}\otimes 1$ are regular in each $\mathfrak{G}(S_{i})^{k}\tildeotimes_{\mathcal{B}_{k}}\mathfrak{G}(T_{j})^{k}$, with graph $\mathfrak{G}(S_{i+1})^{k}\tildeotimes_{\mathcal{B}_{k}}\mathfrak{G}(T_{j})^{k}$.
\end{proposition}
\begin{proof} By theorem \ref{indgraph}, the modules $\mathfrak{G}(S_{i})^{j}\tildeotimes_{\mathcal{B}_{j}}\mathfrak{G}(T_{j})$ are topologically isomorphic to the Sobolev modules $\mathfrak{G}((1\otimes_{\nabla_{i}}T)_{j})$ of $1\otimes_{\nabla_{i}}T$, an almost selfadjoint operator in $\mathfrak{G}(S_{i})^{k}\tildeotimes_{\mathcal{B}_{k}}F^{k}\cong\mathfrak{G}(s_{i})$. By lemma \ref{resprod} \[\mathfrak{Im}(s\pm\lambda i)^{-1}(t\pm \lambda i)^{-1}=\mathfrak{Im}(t\pm\lambda i)^{-1}(s\pm \lambda i)^{-1},\]
in $\mathfrak{G}(S_{i})\tildeotimes_{\mathcal{B}_{k}}F^{k}$, and $[s,t]=[\nabla_{i},S]\otimes 1$, so $s$ and $t$ almost anticommute cf. definition \ref{anti}, so $\chi_{1}^{t}s$ is almost selfadjoint in $\mathfrak{G}(1\otimes_{\nabla_{i}}T)$ by \ref{antigraph}. The topological isomorphism \[g^{\chi}: \mathfrak{G}(S_{i})\tildeotimes_{\mathcal{B}_{1}}\mathfrak{G}(T)\rightarrow \mathfrak{G}(1\otimes_{\nabla_{i}}T),\]
satisfies $g^{\chi}\chi_{1}^{t}(s)(g^{\chi})^{-1}=S\otimes 1$. So by lemma \ref{asatop}, $S\otimes 1$ is regular in $\mathfrak{G}(S_{i})\tildeotimes_{\mathcal{B}_{1}}\mathfrak{G}(T)$, and its graph is $\mathfrak{G}(S_{i+1})\tildeotimes_{\mathcal{B}_{1}}\mathfrak{G}(T)$. Proceeding by induction, suppose we have shown that $\chi^{t}_{j}(s)$ is almost selfadjoint in $\mathfrak{G}((1\otimes_{\nabla_{i}}T)_{j})$ and hence $S\otimes 1$ is regular in $\mathfrak{G}(S_{i})\otimes_{\mathcal{B}_{j}}\mathfrak{G}(T_{j})$. By lemma \ref{resprod} we have
\[\mathfrak{Im}(s\pm\lambda i)^{-1}(t\pm \lambda i)^{-1}=\mathfrak{Im}(t\pm\lambda i)^{-1}(s\pm \lambda i)^{-1},\]
in $\mathfrak{G}(S_{i})\otimes_{\mathcal{B}_{j}}\mathfrak{G}(T_{j})$ and hence also in $\mathfrak{G}((1\otimes_{\nabla_{i}}T)_{j})$, as $g^{\chi}_{j}$ intertwines these operators. Moreover
\[[\chi_{j}^{t}(s),t]=g_{j}^{\chi}([\nabla_{i},S]\otimes 1)(g_{j}^{\chi})^{-1},\]
which is bounded on $\mathfrak{Im}(s\pm\lambda i)^{-1}(t\pm \lambda i)^{-1}$, so $\chi^{t}_{j}(s)$ and $t$ almost anticommute in $\mathfrak{G}((1\otimes_{\nabla_{i}}T)_{j})$, and $\chi^{t}_{j+1}(s)$ is almost selfadjoint in $\mathfrak{G}((1\otimes_{\nabla_{i}}T)_{j+1})$, by \ref{antigraph}. The topological isomorphism $g^{\chi}_{j+1}$ intertwines $\chi^{t}_{j+1}$ and $S\otimes 1$, so the latter operator is regular in $\mathfrak{G}(S_{i})\tildeotimes_{\mathcal{B}_{j+1}}\mathfrak{G}(T_{j+1})$ by lemma \ref{asatop}.
\end{proof}
\begin{lemma}\label{connectionlemma} Let $(\mathpzc{E},S,\nabla)$ and $(\mathpzc{F},T,\nabla ')$ be $C^{k}$ bimodules with transverse $C^{k}$-connection. We have
\[ [S\otimes 1,1\otimes_{\nabla_{i}}\nabla_{j} ']=[S,\nabla_{i}]\otimes 1,\]
\[ [1\otimes_{\nabla_{i}}T, 1\otimes_{\nabla_{i}}\nabla_{j} ']=1\otimes_{\nabla_{i}}[\nabla_{j},T]+[\nabla_{i\nabla_{j}'} ,\nabla_{iT}],\]
where the left hand sides are defined on \[\mathfrak{Dom} S\otimes1\subset \mathfrak{G}(S_{i})^{k}\tildeotimes_{\mathcal{B}_{k}}\mathfrak{G}(T_{j})^{k}\textnormal{ and  } \mathfrak{Dom}1\otimes_{\nabla_{i}}T\subset \mathfrak{G}(S_{i})^{k}\tildeotimes_{\mathcal{B}_{k}}\mathfrak{G}(T_{j})^{k},\] respectively. Thus, these commutators extend to completely bounded maps 
\[\mathfrak{G}(S_{i})^{k}\tildeotimes_{\mathcal{B}_{k}}\mathfrak{G}(T_{j})^{k}\rightarrow\mathfrak{G}(S_{i})^{k}\tildeotimes_{\mathcal{B}_{k}}\mathfrak{G}(T_{j})^{k}\tildeotimes_{\mathcal{C}_{k}}\Omega^{1}(\mathcal{C}_{k}).\]
\end{lemma}
\begin{proof}The conditions imply we have transverse $C^{k+1-i}$ connections
\[\nabla_{i}:\mathfrak{G}(S_{i})^{k}\rightarrow \mathfrak{G}(S_{i})^{k}\tildeotimes_{\mathcal{B}_{k}}\Omega^{1}(\mathcal{B}_{k}),\quad \nabla'_{i}:\mathfrak{G}(T_{j})^{k}\rightarrow \mathfrak{G}(T_{j})^{k}\tildeotimes_{\mathcal{C}_{k}}\Omega^{1}(\mathcal{C}_{k}),\]
with the property that $[\nabla_{i},S]$, $[\nabla'_{j},T]$ are bounded endomorphisms of the respective modules.
These as well define product connections $1\otimes_{\nabla_{i}}\nabla_{j}'$ on $\mathfrak{G}(S_{i})^{k}\tildeotimes_{\mathcal{B}_{k}}\mathfrak{G}(T_{j})^{k},$ $i,j\leq k$. We show such connections boundedly commute with $S\otimes 1$ and $1\otimes_{\nabla}T$.
Since
\[[1\tildeotimes_{\nabla}\nabla',S\tildeotimes 1+1\tildeotimes_{\nabla}T]=[1\tildeotimes_{\nabla}\nabla',S\tildeotimes 1]+[1\tildeotimes_{\nabla}\nabla',1\tildeotimes_{\nabla}T],\]
and $[1\tildeotimes_{\nabla}\nabla',S\tildeotimes 1]=[\nabla,S]\tildeotimes 1$, which is completely bounded, we compute 
\[(-1)^{\partial e}[1\tildeotimes_{\nabla}\nabla',1\tildeotimes_{\nabla}T](e\otimes f)\] to find
\[e\otimes[\nabla',T]f+\nabla_{\nabla'}(e)Tf+1\tildeotimes_{\nabla}\nabla '(\nabla_{T}(e)f)-\nabla_{T}(e)\nabla'(f)-1\tildeotimes_{\nabla}T(\nabla_{\nabla'}(e)f).\]
The first term is completely bounded, and in working out the last four terms write $\nabla(e)=\sum e_{i}\otimes d b_{i}$. Then
\begin{eqnarray}\label{eerste}\nabla_{\nabla'}(e)Tf&=&\sum e_{i}\otimes [\nabla',b_{i}] Tf,\\ \label{tweede}\nabla_{T}(e)\nabla'(f)&=&\sum e_{i}\otimes [T,b_{i}]\nabla '(f),\\
\label{derde}1\tildeotimes_{\nabla}\nabla '(\nabla_{T}(e)f)&=&\sum e_{i}\otimes\nabla'[T,b_{i}]f +\nabla_{\nabla'}(e_{i})[T,b_{i}]f,\\
\label{vierde}1\tildeotimes_{\nabla}T (\nabla_{\nabla'}(e)f)&=&\sum_{i}e_{i}\otimes T[\nabla',b_{i}]f+\nabla_{T}(e_{i})[\nabla',b_{i}]f.\end{eqnarray}
Combining \ref{eerste},\ref{tweede} and the first terms on the right hand sides of \ref{derde} and \ref{vierde} give a term 
\[\sum_{i}e_{i}\otimes[[\nabla',T],b_{i}]f=\nabla_{[\nabla',T]}(e)f,\]
and the terms remaining from \ref{derde} and \ref{vierde} give a term
\[(\nabla_{\nabla'}\nabla_{T}-\nabla_{T}\nabla_{\nabla'})(e\otimes f).\]
Thus, we have shown that
\[[1\tildeotimes_{\nabla}\nabla',1\tildeotimes_{\nabla}T]=1\tildeotimes_{\nabla}[\nabla',T]+[\nabla_{\nabla'},\nabla_{T}],\]
which is a completely bounded map $\mathfrak{G}(S_{i})^{k}\tildeotimes_{\mathcal{B}_{k}}\mathfrak{G}(T_{j})^{k}\rightarrow \mathfrak{G}(S_{i})^{k}\tildeotimes_{\mathcal{B}_{k}}\mathfrak{G}(T_{j})^{k}\tildeotimes_{\mathcal{C}_{k}}\Omega^{1}(\mathcal{C}_{k})$. 

\end{proof}

\begin{theorem}\label{regular} Let $k\geq 1$, and $A,B,C$ be $C^{k}$-algebras, $(\mathpzc{E},S,\nabla)$ and $(\mathpzc{F},T,\nabla')$ transverse $C^{k}$-bimodules with connection, for $(A,B)$ and $(B,C)$ respectively. If $\nabla$ is transverse $C^{1}$, then the operator 
\[S\otimes 1+1\otimes_{\nabla} T\]
is selfadjoint and regular in $E^{k}\tildeotimes_{\mathcal{B}_{k}}F^{k}$. 
\end{theorem}
\begin{proof} Since $\nabla$ is transverse $C^{1}$ the operators $s=S\otimes 1$ and $t=1\otimes_{\nabla}T$ almost anticommute in $\mathpzc{E}\tildeotimes_{B}\mathpzc{F}$, and hence by theorem \ref{selfsum} $S\otimes 1 +1\otimes_{\nabla} T$ is selfadjoint and regular on $\Dom s\cap \Dom t$. Since both $s$ and $t$ are $C^{1}$, $s+t$ maps the $C^{1}$-domain $\mathfrak{Dom} s\cap \mathfrak{Dom} t\subset E^{1}\tildeotimes_{\mathcal{B}_{1}}F^{1}$ into  $E^{1}\tildeotimes_{\mathcal{B}_{1}}F^{1}$. To show that $s+t$ is selfadjoint in $E^{k}\tildeotimes_{\mathcal{B}_{k}}F^{k}$, it suffices to show that $(s+t+i)^{-1}$  is an element of $\Endst_{\mathcal{C}_{k}}(E^{k}\tildeotimes_{\mathcal{B}_{k}}F^{k})$. \newline\newline
Let $k=1$, and $(C,\mathpzc{H},D)$ be the defining $C^{1}$-spectral triple for $C$. Consider the map
\[g^{\pi}_{1}:E^{1}\tildeotimes_{\mathcal{B}_{1}}F^{1}\tildeotimes_{\pi}(\mathpzc{H}\oplus\mathpzc{H})\rightarrow \mathpzc{E}\tildeotimes_{B}\mathpzc{F}\tildeotimes_{C}(\mathpzc{H}\oplus\mathpzc{H}),\]
from theorem \ref{pithe}. Write $\partial:= 1\otimes_{1\otimes_{\nabla}\nabla '}D$. We have 
\[g \Endst_{\mathcal{B}_{1}}(E^{1}\tildeotimes_{\mathcal{B}_{1}}F^{1})g^{-1}\subset \Sob_{1}(\partial),\]
as a closed subalgebra. Thus it suffices to show that under this isomorphism \[(s+t+i)^{-1}\in \Sob_{1}(\partial).\]
This means we have to show that $(s+t+i)^{-1}$ preserves the domain of $\partial$, and that $[(s+t+i)^{-1},\partial]$ is bounded on the domain. By construction, the operators $(s,t,\partial)$ form an almost anticommuting triple in the Hilbert space $\mathpzc{E}\tildeotimes_{B}\mathpzc{F}\tildeotimes_{C}(\mathpzc{H}$. Moreover $s+t$ is selfadjoint with core $\mathfrak{Im}(s+\lambda i)^{-1}(t+\lambda i)^{-1}$. Thus, by proposition \ref{triple}, we find that $s+t$ almost anticommutes with $\partial$, and in particular that $(s+t+i)^{-1}\in\Sob_{1}(\partial)$. Proceeding by induction on $k$, $(\mathpzc{H},D)$ is the defining $C^{k}$ spectral triple for $C$, and suppose we have shown that $(i+s+t)^{-1}\in\Sob_{k-1}(\partial)$, and that 
\[\mathfrak{Im}\chi_{k-1}^{\partial}((s+\lambda i)^{-1}(t+\lambda i)^{-1}),\]
is a core for $\chi_{k-1}^{\partial}(s+t)$. Using the isomorphism
\[g^{\pi}_{k}:E^{k}\tildeotimes_{\mathcal{B}_{k}}F^{k}\tildeotimes_{\pi_{k}}(\mathpzc{H}_{k}\oplus\mathpzc{H}_{k})\rightarrow E^{k}\tildeotimes_{\mathcal{B}_{k}}F^{k}\tildeotimes_{\theta_{k-1}}(\mathpzc{H}_{k-1}\oplus\mathpzc{H}_{k-1}),\]
where $\mathpzc{H}_{k}$ is the $k$-th Sobolev space of $D$, we see by theorem \ref{indgraph} that this yields two copies of the $k-1$-th Sobelev space of $\partial$. Now applying proposition \ref{triple} again to $\chi_{k-1}^{\partial}(s),\chi_{k-1}^{\partial}(t)$ and $\partial_{k-1}$, we see that $(s+t\pm i)^{-1}\in\Sob_{k}(\partial)$. Hence the operator $S\otimes 1 + 1\otimes_{\nabla} T$ is selfadjoint and regular in $E^{k}\tildeotimes_{\mathcal{B}_{k}}F^{k}$.
\end{proof}
Since the product operator is $C^{k}$, its Sobolev chain can be canonically smoothened. Also, it is obviously transverse $C^{1}$. We now proceed to show higher order transverse smoothness.
\begin{lemma}\label{gsgt}Let $k\geq 1$, and $A,B,C$ be $C^{k}$-algebras, $(\mathpzc{E},S,\nabla)$ and $(\mathpzc{F},T,\nabla')$ transverse $C^{k}$-bimodules with connection, for $(A,B)$ and $(B,C)$ respectively. If $\nabla$ is transverse $C^{i}$, with $i\geq 1$, then for all $j\leq k$, the operator 
\[S_{i}\otimes 1+1\otimes_{\nabla_{i-1}} T_{j}\]
is regular in $\mathfrak{G}(S_{i-1})^{k}\tildeotimes_{\mathcal{B}_{k}}\mathfrak{G}(T_{j-1})^{k}$. Moreover its graph is topologically isomorphic to
\[\mathfrak{G}(S_{i})^{k}\tildeotimes_{\mathcal{B}_{k}}\mathfrak{G}(T_{j-1})^{k}*\mathfrak{G}(S_{i-1})^{k}\tildeotimes_{\mathcal{B}_{k}}\mathfrak{G}(T_{j})^{k}.\]
\end{lemma}
\begin{proof} For $i=1$, $j=1$, the operator  $S\otimes 1+1\otimes_{\nabla}T$ is selfadjoint, regular and $C^{k}$ by theorem \ref{regular}. Now proposition \ref{sumpull} gives the graph isomorphism 
\[\mathfrak{G}(S\otimes 1+1\otimes_{\nabla}T)^{k}\rightarrow \mathfrak{G}(S)^{k}\otimes_{\mathcal{B}_{k}}F^{k}*E^{k}\tildeotimes_{\mathcal{B}_{k}}\mathfrak{G}(T)^{k}.\]
Moreover, $s=S\otimes 1$, $t=1\otimes_{\nabla}T$ and $s+t$ are almost selfadjoint in $\mathfrak{G}(s)$ and $\mathfrak{G}(t)$ by proposition \ref{st}. Next we proceed by induction on $j\leq k$. Suppose we have shown that $S\otimes 1,1\otimes_{\nabla}T_{j-1}$ and $S\otimes 1+1\otimes_{\nabla}T_{j-1}$ are almost selfadjoint regular in $\mathfrak{G}(1\otimes_{\nabla}T)_{j-2}^{k}$, which is topologically isomorphic to $ E^{k}\tildeotimes_{\mathcal{B}_{k}}\mathfrak{G}(T_{j-2})^{k}$. Since the connection is transverse $C^{1}$, $[S,1\otimes_{\nabla}T_{j-1}]$ is bounded in these modules, and proposition \ref{st} now gives that $S\otimes 1, 1\otimes_{\nabla}T_{j}$ and their sum are almost selfadjoint regular in $\mathfrak{G}(1\otimes_{\nabla}T)_{j-1}^{k}$. Using the  proposition \ref{sumpull} and theorems \ref{indgraph} and \ref{sreg} gives
\[\mathfrak{G}(S\otimes 1 +1\otimes_{\nabla}T_{j})^{k}\xrightarrow{\sim}\mathfrak{G}(S)^{k}\otimes_{\mathcal{B}_{k}}\mathfrak{G}(T_{j-1})^{k}*E^{k}\tildeotimes_{\mathcal{B}_{k}}\mathfrak{G}(T_{j})^{k}.\]
We proceed by induction on $i$, so suppose we have proven, that for a transverse $C^{i}$-connection, $S_{i}\otimes 1,$ $1\otimes_{\nabla_{i-1}}T_{j}$ and $S_{i}\otimes 1 +1\otimes_{\nabla_{i-1}}T_{j}$ are almost selfadjoint in the Sobolev module $\mathfrak{G}(1\otimes_{\nabla_{i-1}}T)_{j-1}^{k}$ which is topologically isomorphic to $ \mathfrak{G}(S_{i-1})^{k}\tildeotimes_{\mathcal{B}_{k}}\mathfrak{G}(T_{j-1})^{k}.$ Since the connection is transverse $C^{i+1}$ the commutator $[S_{i+1}\otimes 1,1\otimes_{\nabla_{i}}T]=[S_{i+1},\nabla_{i}]$ is bounded, and by proposition \ref{st} $S_{i+1}\otimes 1, 1\otimes_{\nabla_{i}}T_{j}$ and $S_{i+1}\otimes 1+1\otimes_{\nabla_{i}}T_{j}$ give rise to almost selfadjoint operators in the graph of $S_{i+1}\otimes 1$, viewed as an operator in $\mathfrak{G}(1\otimes_{\nabla_{i}}T)_{j}$. By theorems \ref{indgraph} and \ref{sreg}, this graph is isomorphic to $\mathfrak{G}(S_{i+1})^{k}\tildeotimes_{\mathcal{B}_{k}}\mathfrak{G}(T_{j})^{k}$. Combining this with proposition \ref{sumpull} gives the graph isomorphism
\[\mathfrak{G}(S_{i+1}\otimes 1+1\otimes_{\nabla_{i}}T_{j})^{k}\xrightarrow{\sim} \mathfrak{G}(S_{i+1})^{k}\tildeotimes_{\mathcal{B}_{k}}\mathfrak{G}(T_{j-1})^{k}*\mathfrak{G}(S_{i})^{k}\tildeotimes_{\mathcal{B}_{k}}\mathfrak{G}(T_{j})^{k}.\]
\end{proof}
The next result can be regarded as a type of K\"{u}nneth formula for smooth products.
\begin{theorem}\label{smregular} Let $k\geq 1$, and $A,B,C$ be $C^{k}$-algebras, $(\mathpzc{E},S,\nabla)$ and $(\mathpzc{F},T,\nabla ')$ transverse $C^{k}$-bimodules for $(A,B)$ and $(B,C)$ respectively. For all $i\leq k$, there are natural topological isomorphisms
\[\begin{split}g_{i}:\mathfrak{G}((S\otimes 1+1\otimes_{\nabla}T)_{i})^{i}&\xrightarrow{\sim}\begin{array}{cc} ^{i} \\ \ast \\  _{j=0}\end{array}\mathfrak{G}(S_{j})^{i}\tildeotimes_{\mathcal{B}_{i}}\mathfrak{G}(T_{i-j})^{i},\end{split}\]
where the successive pullbacks are over the maps $\pr_{1}^{S}\otimes\pr_{1}^{T}$. Moreover \[g_{i}\circ\chi^{s+t}_{i}(a)=(a\otimes 1)\circ g_{i},\] for all $i\leq k$, and consequently $\mathcal{A}_{i}\rightarrow\Sob^{i}_{i}(S\otimes 1+1\otimes_{\nabla}T)$ completely boundedly, and the connection $1\otimes_{\nabla}\nabla '$ is transverse $C^{k}$. That is \[(\mathpzc{E}\tildeotimes_{B}\mathpzc{F}, S\otimes 1+1\otimes_{\nabla}T, 1\otimes_{\nabla}\nabla')\] is a transverse $C^{k}$-bimodule with connection.
\end{theorem}
\begin{proof}  Write $s=S\otimes 1$, $t=1\otimes_{\nabla}T$. From proposition \ref{sumpull} we get that $\mathfrak{G}(s+t)\cong \mathfrak{G}(s)*\mathfrak{G}(t)$. 
This is the theorem for $k=1$. Suppose the theorem has been proven for $k$. The map
\[g_{k}\oplus g_{k}: \mathfrak{G}(s+t)_{k}^{k}\oplus \mathfrak{G}(s+t)_{k}^{k}\rightarrow  \bigoplus_{i=1}^{2}\begin{array}{cc} ^{k}  \\ \ast  \\  _{j=0}\end{array}\mathfrak{G}(S_{j})^{k}\tildeotimes_{\mathcal{B}_{k}}\mathfrak{G}(T_{k-j})^{k},\]
maps $\mathfrak{G}(s+t)_{k+1}^{k+1}$ onto the graphs $\mathfrak{G}(s+t)\subset\mathfrak{G}(S_{j})^{k+1}\tildeotimes_{\mathcal{B}_{k+1}}\mathfrak{G}(T_{k-j})^{k+1}$. By lemma \ref{gsgt}, these graphs are isomorphic to \[\mathfrak{G}(S_{j+1})^{k+1}\tildeotimes_{\mathcal{B}_{k+1}}\mathfrak{G}(T_{k-j})^{k+1}*\mathfrak{G}(S_{j})^{k+1}\tildeotimes_{\mathcal{B}_{k+1}}\mathfrak{G}(T_{k+1-j})^{k+1}.\]
Eliminating the double terms from the pullback, this gives the theorem for $k+1$. Now let $a\in\mathcal{A}_{k+1}$, and assume by induction that $g_{k}\chi^{s+t}_{k}(a)=(a\otimes 1) g_{k}$ (this is obvious for $k=1$). Then 
\[g_{k}[s+t,\chi^{s+t}_{k}(a)]g_{k}^{-1}=[S\otimes 1+1\otimes_{\nabla}T,a\otimes 1]=[S,a]\otimes 1+[\nabla,a]\otimes 1,\]
in each $\mathfrak{G}(S_{j})\tildeotimes_{\mathcal{B}_{i}}\mathfrak{G}(T_{k-j})$. This is a bounded operator in $\mathfrak{G}(S_{j})\tildeotimes_{\mathcal{B}_{k+1}}\mathfrak{G}(T_{k-j})^{k+1}$, similarly for $a^{*}$, so by lemma \ref{astar} $a\in\Sob^{k+1}_{k+1}(s+t)$. It is immediate that then $g_{k+1}\chi^{s+t}_{k+1}(a)=a\otimes 1 g_{k+1}$, and the map $\mathcal{A}_{k+1}\rightarrow \Sob^{k+1}_{k+1}(s+t)$ is completely bounded. The statement on the connection $1\otimes_{\nabla}\nabla'$ follows by a similar argument applying \ref{connectionlemma}.
\end{proof}

As a consequence, we see that for $k\geq 1$, transverse $C^{k}$-triples $(\mathpzc{E},S,\nabla)$, with $C^{k}$-connection can be composed according to the rule
\[(\mathpzc{E},S,\nabla)\circ(\mathpzc{F},T,\nabla '):=(\mathpzc{E}\tildeotimes_{B}\mathpzc{F},S\otimes 1+1\otimes_{\nabla} T,1\otimes_{\nabla}\nabla '),\]
and that this composition is associative up to unitary equivalence inducing topological isomorphisms on the graphs and smooth structures.

\subsection{The $KK$-product} Now we establish that compact resolvents are preserved under taking products. Then we will see that the product operator satisfies Kucerovsky's conditions for an
unbounded Kasparov product. Thus, for smooth modules the $KK$-product is given by an explicit algebraic formula. Let us
put the pieces together.
\begin{lemma}\label{cptperturb} Let $s,t$ be selfadjoint regular operators on a $C^{k}$-module $\mathpzc{E}$, and $R,a\in\End^{*}_{\mathcal{B}_{k}}(E^{k})$ with $R\in\Endst_{\mathcal{B}_{k}}(E^{k})$ a selfadjoint element. If $a(s+i)^{-1}(t+i)^{-1}\in\K_{\mathcal{B}_{k}}(E^{k}),$ then
$a(s+i)^{-1}(t+R+i)^{-1}\in\K_{\mathcal{B}_{k}}(E^{k}).$
\end{lemma} 
\begin{proof} One has the identity
\[a(s+i)^{-1}(i+t+R)^{-1}=a(s+i)^{-1}(i+t)^{-1}(1-R(t+i)^{-1}),\]
which is a compact operator.
\end{proof}
 We now show that the product of cycles is a cycle. Note that this result is a generalization of the stability property of spectral triples proved in
 \cite{Mathai}. There it was shown that tensoring a given spectral triple by a finitely generated projective module yields again a spectral triple.
\begin{lemma}\label{sumprodcpt} Let $s,t$ be a pair of almost anticommuting selfadjoint regular operators in a $C^{k}$-module $E^{k}$, such that $s+t$ is selfadjoint and regular in $E^{k}$. If \[a(s+i)^{-1}(t+i)^{-1},a(s-i)^{-1}(t-i)^{-1}\in \K_{\mathcal{B}_{k}}(E^{k}),\] for some $a\in\Endst_{\mathcal{B}_{k}}(E^{k})$, then $a(s+t\pm i)^{-1}\in\K_{\mathcal{B}_{k}}(E^{k})$.
\end{lemma}
\begin{proof} By assumption, $a(s+t\pm i)^{-1}\in\Endst_{\mathcal{B}_{k}}(E^{k})$, so in view of corollary \ref{Kopalg}, it suffices to show that $a(s+t\pm i)^{-1}\in\K_{B}(\mathpzc{E})$. This in turn is the case if and only if $a(1+(s+t)^{2})^{-1}a^*\in\K_{B}(\mathpzc{E})$, since for a closed ideal $I$ in a $C^{*}$-algebra $C$ it is the case that $c^{*}c\in I\Leftrightarrow c\in I$ for all $c\in C$.\newline\newline
We have the identities
\[\begin{split}-ia(s+t+i)^{-1}&=a(s+i)^{-1}(t+i)^{-1}+a(s+i)^{-1}(t+i)^{-1}st(s+t+i)^{-1}\\
ia(s+t-i)^{-1}&=a(t-i)^{-1}(s-i)^{-1}-a(s-i)^{-1}(t-i)^{-1}st(s+t-i)^{-1},\end{split}\]
and combining these yields
\[\begin{split}-2a(1+(s+t)^{2})^{-1}&=a((s+i)^{-1}(t+i)^{-1}+(s+i)^{-1}(t+i)^{-1}st(s+t+i)^{-1}\\ &\quad+(t-i)^{-1}(s-i)^{-1}-(t-i)^{-1}(s-i)^{-1}ts(s+t-i)^{-1}).\end{split}\]
Now use that $(s+i)^{-1}(t+i)^{-1}+(t-i)^{-1}(s-i)^{-1}$ equals 
\[(t-i)^{-1}(s-i)^{-1}(2-[s,t])(t+i)^{-1}(s+i)^{-1},\]
and 
\[(s+t-i)^{-1}=(s+t+i)^{-1}+2i(1+(s+t)^{2})^{-1},\]
and write $(s+i)^{-1}=x, (t+i)^{-1}=y,$ to find
\[\begin{split}-2a(1+(s+t)^{2})^{-1}&=a(xy+y^{*}x^{*}+xy[s,t](s+t+i)^{-1} +2iy^{*}x^{*}ts(1+(s+t)^{2})^{-1}\\ &\quad -x^{*}y^{*}(2+[s,t])yxts(s+t+i)^{-1}).\end{split}\]
Since all terms on the right hand side are compact, the left hand side is compact. Therefore $a(1+(s+t)^{2})^{-1}$ and also $ a(1+(s+t)^{2})^{-1}a^{*}\in \K_{B}(\mathpzc{E})$ as desired.

\end{proof}
\begin{theorem}\label{compactresolvent}Let $k\geq 1$, $A,B,C$ be $C^{k}$-algebras, $(\mathpzc{E},S)$ a transverse $C^{k}$ $KK$-cycle for  $(A,B)$ and $(\mathpzc{F},S)$ a transverse $C^{k}$ $KK$-cycle for $(B,C)$. Let
$\nabla:E^{k}\rightarrow E^{k}\tildeotimes_{\mathcal{B}_{k}}\Omega^{1}(\mathcal{B}_{k})$ be a transverse $C^{1}$-connection on $\mathpzc{E}$. Then the operator
\[S\otimes 1+1\otimes_{\nabla} T\]
has $\mathcal{A}_{k}$-locally compact resolvent. That is, for $a\in\mathcal{A}_{k}$ we have \[a(S\otimes1 + 1\otimes_{\nabla}T\pm i)^{-1}\in \K_{\mathcal{B}_{k}}(E^{k}).\]
\end{theorem}

\begin{proof}
By lemma \ref{sumprodcpt} it suffices to show that $a(s+i)^{-1}
(i+t)^{-1}$ and $a(s-i)^{-1}(t-i)^{-1}$ are compact for $a\in\mathcal{A}_{k}$. Since the operator $s+t:=S\otimes 1+1\otimes_{\nabla}T$ is $C^{k}$, we have that
\[a(s+t\pm i)^{-1}\in\Endst_{\mathcal{B}_{k}}(E^{k}).\]
By corollary \ref{Kopalg} we have
\[\K_{\mathcal{B}_{k}}(E^{k})=\Endst_{\mathcal{B}_{k}}(E^{k})\cap\K_{B}(\mathpzc{E}),\]
so it suffices to show that $a(s+t\pm i)^{-1}\in\K_{B}(\mathpzc{E})$. By lemma \ref{cptperturb}, we only have to check this in case $\nabla$ is the Grassmann connection on $\mathpzc{H}_{B}$. Denote by $\{e_{j}\}_{j\in\Z\setminus\{0\}}$ the standard orthonormal basis of $\mathpzc{H}_{B}$. Note that
\[(s\pm i)^{-1}(e_{j}\otimes f)=(S\pm i)^{-1}e_{j}\otimes f,\quad (t\pm i)^{-1}(e_{j}\otimes f)=e_{j}\otimes(T\pm i)^{-1}f.\] Choose a countable, increasing, contractive approximate unit for $B$, such that for all $1\leq |j| \leq n\leq m$ we have $\|e_{j}(u_{n}-u_{m})\|\leq\frac{1}{n}$. The sequence
\[x_{n}=\sum_{1\leq |j|\leq n}a(S+i)^{-1}e_{j}\otimes u_{n}(i+T)^{-1}\otimes e_{j}\in\K_{C}(\mathpzc{H}_{B}\tildeotimes\mathpzc{F})\cong\mathpzc{H}_{B}\tildeotimes\K_{C}(\mathpzc{F})\tildeotimes\mathpzc{H}_{B}^{*},\]
converges pointwise to $a(s+i)^{-1}
(i+t)^{-1}$. We show it converges in norm. %To this end we use the following crucial property of the Haagerup tensor product: A series $\sum x_{j}\otimes y_{j}$ in $X\tildeotimes Y$ is convergent and has norm $\leq 1$, whenever $\|\sum x_{j}x_{j}^{*}\|\leq 1$ and $\|\sum y_{j}^{*}y_{j}\|\leq 1$. 
We have
\[\begin{split}x_{m}-x_{n}=\sum_{1\leq |j|\leq n} & a(S+i)^{-1}e_{j}(u_{m}-u_{n})\otimes (i+T)^{-1}\otimes e_{j}\\ &+\sum_{n+1\leq |j|\leq m}a(S+i)^{-1}e_{j}\otimes u_{m} (i+T)^{-1}\otimes e_{j}.\end{split}\]
A computation in the linking algebra yields
\[ \sum_{1\leq |j|\leq n}a(S+i)^{-1}e_{j}(u_{m}-u_{n})\tildeotimes (a(S+i)^{-1}e_{j}(u_{m}-u_{n}))^{*}\leq \|a\|^{2} \frac{2n}{n^{2}}=\|a\|^{2} \frac{2}{n},\]
and therefore
\[\|\sum_{1\leq |j|\leq n}a(S+i)^{-1}e_{j}(u_{m}-u_{n})\otimes (i+T)^{-1}\otimes e_{j}\|^{2}\leq \|a\|^{2}\frac{2}{n}\rightarrow 0.\]
For the tail
\[\sum_{n+1\leq |j|\leq m}a(S+i)^{-1}e_{j}\otimes u_{m}(i+T)^{-1}\otimes e_{j},\]
it is enough to observe that $\|u_{m}(i+T)^{-1}\|\leq 1$ and
\[\|\sum_{n+1\leq |j|\leq m}a(S+i)^{-1}e_{j}\|\rightarrow 0,\]
because $a(S+i)^{-1}$ is compact.
\end{proof}
Recall that $\Psi_{0}(A,B)$ denotes the set of unbounded $KK$-cycles up to unitary equivalence. For $k\geq 1$, we denote by $\Psi_{0}^{k}(A,B)$ the set of $C^{k}$ $KK$-cycles with transverse $C^{k
}$-connection on them, up to transverse $C^{k}$ unitary equivalence. Note that this requires fixing $C^{k}$-spectral triples for $A$ and $B$.
\begin{theorem}\label{KKproduct}For $k\geq 1$, the diagram
\begin{diagram}\Psi_{0}^{k}(A,B)\times\Psi_{0}^{k}(B,C) &\rTo^{(S,T)\mapsto S\otimes 1+1\otimes_{\nabla}T} &\Psi_{0}^{k}(A,C) \\
\dTo^{\mathfrak{b}} & & \dTo^{\mathfrak{b}}\\
KK_{0}(A,B)\otimes KK_{0}(B,C) & \rTo^{\otimes_{B}} & KK_{0}(A,C)\end{diagram}
commutes.
\end{theorem}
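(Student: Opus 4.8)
The plan is to identify the class $[\mathpzc{E}\tildeotimes_{B}\mathpzc{F},\mathfrak{b}(D)]$, where $D:=S\tildeotimes 1+1\tildeotimes_{\nabla}T$, with the internal Kasparov product $[\mathpzc{E},\mathfrak{b}(S)]\otimes_{B}[\mathpzc{F},\mathfrak{b}(T)]$ by verifying Kucerovsky's criterion (Theorem \ref{Kuc}). Since the top-row composition is assumed defined, Propositions \ref{regular} and \ref{compactresolvent} already give that $D$ is selfadjoint and regular with compact resolvent, and that $[D,a\tildeotimes 1]$ is adjointable for $a$ in the relevant dense subalgebra of $A$; thus $(\mathpzc{E}\tildeotimes_{B}\mathpzc{F},D)\in\Psi_{0}(A,C)$, and $\mathfrak{b}$ of the top-row image is by definition the class $[\mathpzc{E}\tildeotimes_{B}\mathpzc{F},\mathfrak{b}(D)]$. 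Hence commutativity of the diagram is exactly the statement that this class is the Kasparov product, and by Theorem \ref{Kuc} this reduces to three checks, for which I write $s:=S\tildeotimes 1$ and $t:=1\tildeotimes_{\nabla}T$.

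The connection condition is the core of the argument, and the step I expect to be the main obstacle. Let $e$ range over a dense subset of $A\mathpzc{E}$ on which $S$ is defined, and recall $T_{e}(f)=e\otimes f$, $T_{e}^{*}(e'\otimes f)=\langle e,e'\rangle f$. From the defining formula $t(e\otimes f)=(-1)^{\partial e}(e\otimes Tf+\nabla_{T}(e)f)$ one computes
\[DT_{e}-(-1)^{\partial e}T_{e}T=T_{Se}+(-1)^{\partial e}\nabla_{T}(e)(\,\cdot\,),\]
and a dual identity for the adjoint entry; consequently the Kucerovsky commutator of the diagonal operator $D\oplus T$ with the off-diagonal operator built from $T_{e},T_{e}^{*}$ has entries that are sums of a $T_{Se}$-type term, bounded because $Se\in\mathpzc{E}$, and a ``connection term'' $\nabla_{T}(e)(\,\cdot\,):\mathpzc{F}\to\mathpzc{E}\tildeotimes_{B}\mathpzc{F}$. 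Writing $\nabla_{T}(e)$ schematically as $\sum e_{i}\otimes[T,b_{i}]$ with $b_{i}\in\mathcal{B}_{1}$, the action $f\mapsto\sum e_{i}\otimes[T,b_{i}]f$ is bounded because $\nabla$ is a $C^{1}$-connection and $[T,b]$ is adjointable for $b\in\mathcal{B}_{1}$, the size of the sum being controlled by the completely bounded norm of $[\nabla,T]$ and by $\|e\|$. The delicate point here is exactly that the Haagerup tensor product linearises the connection continuously, so that this formal expression really defines a bounded operator on $\Dom(D\oplus T)$.

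The remaining two conditions are comparatively soft. The domain inclusion $\Dom D\subset\Dom s$ is read off from Theorem \ref{pullback} (with Lemma \ref{indgraph}): there $\mathfrak{G}(D)$ is realised as the pull-back of $\mathfrak{G}(S)\tildeotimes_{B}\mathpzc{F}$ and $E^{1}\tildeotimes_{\mathcal{B}_{1}}\mathfrak{G}(T)$ over $\mathpzc{E}\tildeotimes_{B}\mathpzc{F}$, and projection onto the first factor identifies $\Dom D$ with a submodule of $\mathfrak{G}(S)\tildeotimes_{B}\mathpzc{F}\cong\Dom s$. For the semiboundedness condition I expand, on a core for $D$,
\[\langle sx,Dx\rangle+\langle Dx,sx\rangle=2\langle sx,sx\rangle+\langle x,[s,t]x\rangle,\]
where $[s,t]$ is the graded commutator, which is an adjointable selfadjoint operator $R$ by Proposition \ref{regular}; dropping the nonnegative first term gives the bound $\geq-\|R\|\langle x,x\rangle$, i.e.\ form-semiboundedness with $\kappa=-\|R\|$. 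With all three hypotheses in place, Theorem \ref{Kuc} yields $[\mathpzc{E}\tildeotimes_{B}\mathpzc{F},\mathfrak{b}(D)]=[\mathpzc{E},\mathfrak{b}(S)]\otimes_{B}[\mathpzc{F},\mathfrak{b}(T)]$ in $KK_{0}(A,C)$, and since $\mathfrak{b}$ sends the class of an unbounded bimodule to that of its bounded transform (\cite{BJ}), this is precisely the asserted commutativity.
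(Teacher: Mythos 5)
Your proposal takes essentially the same route as the paper: verify Kucerovsky's three hypotheses (Theorem \ref{Kuc}), with the commutator bound coming from the explicit formula $t(e\otimes f)=(-1)^{\partial e}(e\otimes Tf+\nabla_{T}(e)f)$ and the $C^{1}$-connection hypothesis, the domain condition from the description of $\mathfrak{G}(D)$, and semiboundedness from adjointability of the graded commutator $[s,t]$. In fact your semiboundedness computation is a touch more careful than the paper's: the paper asserts $\langle sx,Dx\rangle+\langle Dx,sx\rangle=\langle [s,t]x,x\rangle$, which omits the nonnegative $2\langle sx,sx\rangle$ term (since the graded commutator $[D,s]$ is $2s^{2}+[s,t]$, not $[s,t]$), whereas your identity keeps it explicitly before discarding it; the final bound $\kappa=-\|[s,t]\|$ is the same either way.
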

\begin{proof} We just need to check that the KK-cycles $(\mathpzc{E},S)$, $(\mathpzc{F},T)$ and $(\mathpzc{E}\tildeotimes_{B}\mathpzc{F}, S\tildeotimes 1+1\otimes_{\nabla}T)$ satisfy the conditions of theorem \ref{Kuc}.
If we write $D$ for $S\otimes 1+1\otimes_{\nabla}T=s+t$, we have to check that
\[J:=\left[\begin{pmatrix}D & 0\\ 0& T\end{pmatrix},\begin{pmatrix} 0 &
T_{e}\\T^{*}_{e} & 0\end{pmatrix}\right]\]
is bounded on $\Dom(D\oplus T)$. This is a straightforward calculation:
\[\begin{split}J\begin{pmatrix}e'\otimes f' \\ f\end{pmatrix} 
&=\begin{pmatrix}Se\otimes f +(-1)^{\partial e}\nabla_{T}(e)f\\ \langle e,Se'\rangle f + [T,\langle e,e'\rangle]f +(-1)^{-\partial e'}\langle e,\nabla_{T}(e')\rangle f\end{pmatrix}\\
&=\begin{pmatrix}Se\otimes f +(-1)^{\partial e}\nabla_{T}(e)f\\ \langle Se,e'\rangle f + \langle \nabla_{T}(e),e'\rangle f\end{pmatrix}.\end{split}\]
This is valid whenever $e\in \Dom S\cap E^{1}$, which is dense in $\mathpzc{E}$.\newline\newline
The second condition $\mathfrak{Dom}(D)\subset\mathfrak{Dom}(S\tildeotimes 1)$ is obvious, so we turn the semiboundedness condition 
\begin{equation}\label{com}\langle S\tildeotimes 1 x,Dx\rangle + \langle Dx,S\tildeotimes 1 x\rangle \geq\kappa\langle x,x\rangle,\end{equation} 
must hold for all $x$ in the domain. On $\mathfrak{Im}(s+\lambda i)^{-1}(t+\lambda i)^{-1}$, which is a common core for $s$ and $D$, the expression $\ref{com}$ is equal to 
\[\langle [D,S\tildeotimes 1]x,x\rangle=\langle [s+t,s]x,x\rangle=\langle sx,sx\rangle + \langle [s,t]x,x\rangle\geq -\|[s,t]\|\langle x,x\rangle,\]
and the last estimate is valid since $[s,t]$ is in $\Endst_{C}(\mathpzc{E}\tildeotimes_{B}\mathpzc{F})$. Thus, it holds for all $x$ in the domain.
\end{proof}
The functor $KK$ forgets all the smoothness assumptions imposed on the cycles in $\Psi^{k}_{0}(A,B)$. The problem of smoothening given cycles and equipping them with a connection shall be dealt with elsewhere. Also note that for forming unbounded Kasparov products, it is enough to have a $C^{1}$-cycle with transverse $C^{1}$-connection.  It is relevant for the categorical considerations of the next section.

\subsection{A category of spectral triples}

Let $A$ and $B$ be smooth $C^{*}$-algebras. We saw that triples $(\mathpzc{E},D,\nabla)$ consisting of a smooth $(A,B)$-bimodule equipped with a smooth regular
operator $D$ and a smooth connection $\nabla$ form a category, in which the composition law is
\[(E^{k},S,\nabla)\circ(F^{k}, T, \nabla'):=(E^{k}\tildeotimes_{\mathcal{B}_{k}}F^{k},S\otimes 1+1\otimes_{\nabla} T,1\otimes_{\nabla}\nabla').\]
This can be naturally interpreted as a category of spectral triples.

\begin{definition} Let $A$ and $B$ be $C^{*}$-algebras, and $(\mathpzc{H},D)$ and $(\mathpzc{H}',D')$ be $C^{k}$ spectral triples for $A$ and $B$ respectively, with $k\geq 1$. A $C^{k}$-\emph{correspondence} $(\mathpzc{E},S,\nabla)$ between $(\mathpzc{H},D)$ and
$(\mathpzc{H}',D')$ is a class $[(E^{k}, S,\nabla)]\in\Psi^{k}_{0}(A,B)$ of a  $C^{k}$-$(A,B)$-bimodule with transverse $C^{k}$-connection, such that there is a unitary isomorphism of spectral triples $(\mathpzc{H},D)\cong(\mathpzc{E}\tildeotimes_{B}\mathpzc{H}',S\otimes 1+1\otimes_{\nabla}D')$ 
and $\mathfrak{G}(D_{i})\cong\mathfrak{G}(S\tildeotimes 1+1\tildeotimes_{\nabla}D')_{i}$ for $i=0,...,k$ under this isomorphism. The correspondence is \emph{smooth} if it is $C^{k}$ for all $k$. Two correspondences are said to be equivalent
if they are $C^{k}$- or smoothly unitarily isomorphic such that the unitary intertwines the operators and connections and induces isomorphisms on the graphs and the smooth structure up to degree $k$. The set of isomorphism classes of such correspondences is denoted by $\mathfrak{Cor}_{k}(D,D')$ or
$\mathfrak{Cor}(D,D')$ in the smooth case.
\end{definition}
We can reformulate the previous results as a categorical statement.
\begin{theorem}\label{functor} For $k\geq 1$, there is a category whose objects are $C^{k}$-spectral triples and whose morphisms are the sets $\Cor_{k}(D,D')$. The bounded transform $\mathfrak{b}(\mathpzc{E},D,\nabla)=(\mathpzc{E},\mathfrak{b}(D))$
defines a functor $\mathfrak{Cor}_{k}\rightarrow KK$.
\end{theorem}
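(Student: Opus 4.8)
The plan is to prove the statement in two steps: first that the triples $(\mathpzc{E},S,\nabla)$ organise into a category $\mathfrak{Cor}_k$, and then that the bounded transform respects its composition, the second step being essentially a repackaging of Theorem \ref{KKproduct}. Throughout, composition is the pairing
\[ \mathfrak{Cor}_k(D,D')\times\mathfrak{Cor}_k(D',D'')\longrightarrow\mathfrak{Cor}_k(D,D''),\qquad (\mathpzc{E},S,\nabla)\circ(\mathpzc{F},T,\nabla'):=\big(\mathpzc{E}\tildeotimes_B\mathpzc{F},\,S\tildeotimes 1+1\tildeotimes_\nabla T,\,\nabla\tildeotimes_B\nabla'\big), \]
and the identity at a spectral triple $(\mathpzc{H},D)$ with algebra $A$ is the diagonal correspondence $(A,0,d)$, with $A$ regarded as an $(A,A)$-bimodule, $d\colon A\to\Omega^1(A)$ the universal derivation and $0$ the zero operator (the analogue of the units $(1_{\mathcal{A}},d)$ of the category of modules with connection from Theorems \ref{conn1} and \ref{conn2}); one checks directly, as in that case, that this is a $C^k$-correspondence. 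The first and main task is to verify that the composite lands in $\mathfrak{Cor}_k(D,D'')$. Given $(\mathpzc{E},S,\nabla)\in\mathfrak{Cor}_k(D,D')$ and $(\mathpzc{F},T,\nabla')\in\mathfrak{Cor}_k(D',D'')$: the module $\mathpzc{E}\tildeotimes_B\mathpzc{F}$ is a $C^k$-$(A,C)$-bimodule by the smoothness constructions of Section 4 together with Theorem \ref{rigint}; the operator $S\tildeotimes 1+1\tildeotimes_\nabla T$ is selfadjoint and regular by Proposition \ref{regular} and has compact resolvent by Proposition \ref{compactresolvent}; the product connection $\nabla\tildeotimes_B\nabla'$ is a $C^k$-connection, and is an $(S\tildeotimes 1+1\tildeotimes_\nabla T)$-connection by Proposition \ref{conn1} together with the Proposition establishing $[\nabla\tildeotimes_B\nabla',\,S\tildeotimes 1+1\tildeotimes_\nabla T]=[\nabla,S]\tildeotimes 1+1\tildeotimes_\nabla[\nabla',T]+[\nabla_{\nabla'},\nabla_T]$; and the transversality condition $[S\tildeotimes 1+1\tildeotimes_\nabla T,\mathcal{A}_{i+1}]\subset\Endst_{\mathcal{C}_i}((\mathpzc{E}\tildeotimes_B\mathpzc{F})^i)$ follows by expanding the commutator and invoking the corresponding conditions for the two factors. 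Finally, associativity of the Haagerup tensor product gives $\mathpzc{H}\cong\mathpzc{E}\tildeotimes_B\mathpzc{H}'\cong\mathpzc{E}\tildeotimes_B(\mathpzc{F}\tildeotimes_C\mathpzc{H}'')\cong(\mathpzc{E}\tildeotimes_B\mathpzc{F})\tildeotimes_C\mathpzc{H}''$, and under this identification
\[ D=S\tildeotimes 1+1\tildeotimes_\nabla D'=S\tildeotimes 1+1\tildeotimes_\nabla\big(T\tildeotimes 1+1\tildeotimes_{\nabla'}D''\big)=\big(S\tildeotimes 1+1\tildeotimes_\nabla T\big)\tildeotimes 1+1\tildeotimes_{\nabla\tildeotimes_B\nabla'}D'' \]
by Proposition \ref{op}, while the matching of the Sobolev chains up to order $k$ is the content of Theorem \ref{pullback} and the diagram following it in Section \ref{graph}. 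Hence the composite lies in $\mathfrak{Cor}_k(D,D'')$, and it is well defined on unitary isomorphism classes since tensoring intertwining unitaries with the identity intertwines the composites.

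Associativity of composition then follows from Theorem \ref{conn2} (the product connections associate under the canonical intertwining isomorphism) and Proposition \ref{op} (the iterated twisted sums agree under it), together with associativity of $\tildeotimes$; by Frank's theorem \ref{frank} the canonical topological isomorphisms involved are automatically unitary, so associativity holds on morphisms. The identity laws reduce to the canonical isomorphisms $A\tildeotimes_A\mathpzc{E}\cong\mathpzc{E}$ and $\mathpzc{F}\tildeotimes_C C\cong\mathpzc{F}$, under which $0\tildeotimes 1+1\tildeotimes_d S=S$ and $S\tildeotimes 1+1\tildeotimes_\nabla 0=S$ (using $d1=0$ and triviality of the $0$-connection) and the product connections reduce to $\nabla$, resp. $\nabla'$. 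This establishes that $\mathfrak{Cor}_k$ is a category; the same construction carried through the $\C_j$-twisting of Section 6.3 accommodates correspondences of nonzero Clifford degree so that composition respects degrees, and the smooth case is the case $k=\infty$.

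For the functor, send a $C^k$-spectral triple $(\mathpzc{H},D)$ with algebra $A$ to the $C^*$-algebra $A$, an object of $KK$, and a correspondence $(\mathpzc{E},S,\nabla)\in\mathfrak{Cor}_k(D,D')$ to $[(\mathpzc{E},\mathfrak{b}(S))]\in KK(A,B)$. This is well defined because $(\mathpzc{E},S)$ is an unbounded $(A,B)$-bimodule (by the transversality and compact-resolvent conditions), its bounded transform depends only on the unitary isomorphism class, and the resulting $KK$-class is insensitive to $\nabla$. Preservation of identities is the observation $\mathfrak{b}(A,0,d)=[(A,0)]=1_A\in KK_0(A,A)$, the unit of the Kasparov product. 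Preservation of composition is exactly Theorem \ref{KKproduct} in its all-degrees form:
\[ \mathfrak{b}\big((\mathpzc{E},S,\nabla)\circ(\mathpzc{F},T,\nabla')\big)=\big[\big(\mathpzc{E}\tildeotimes_B\mathpzc{F},\,\mathfrak{b}(S\tildeotimes 1+1\tildeotimes_\nabla T)\big)\big]=[(\mathpzc{E},\mathfrak{b}(S))]\otimes_B[(\mathpzc{F},\mathfrak{b}(T))]=\mathfrak{b}(\mathpzc{E},S,\nabla)\otimes_B\mathfrak{b}(\mathpzc{F},T,\nabla'). \]
Hence $\mathfrak{b}\colon\mathfrak{Cor}_k\to KK$ is a functor.

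I expect the main obstacle to be precisely the well-definedness of composition: checking that forming $\mathpzc{E}\tildeotimes_B\mathpzc{F}$ yields a triple whose operator genuinely is the iterated twisted sum and whose $C^k$-structure and Sobolev-chain identities $D_i=(\,\cdot\,)_i$ survive the tensor product. Most of the analytic substance of this has already been isolated in Section 5 (Propositions \ref{regular}, \ref{compactresolvent}, \ref{op}, Theorem \ref{pullback} and the pull-back diagrams), so what remains is careful bookkeeping rather than new estimates; the one genuinely extra ingredient is the Clifford-degree accounting coming from formal Bott periodicity, needed to make composition of correspondences of arbitrary degree land in the correct morphism set.
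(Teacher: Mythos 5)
Your proof is correct and follows essentially the same route as the paper's: composition is defined identically, associativity is handled via Theorem \ref{conn2} and Proposition \ref{op}, well-definedness of the composite correspondence via Propositions \ref{regular} and \ref{compactresolvent}, the $C^k$ property via Theorem \ref{pullback} and the transversality condition, and functoriality of $\mathfrak{b}$ via Theorem \ref{KKproduct}. You supply several details the paper's very terse proof leaves implicit --- notably the identity morphism $(A,0,d)$, well-definedness on unitary isomorphism classes, the Clifford-degree bookkeeping, and the explicit functoriality verification --- but these are natural completions of the same argument rather than a different approach.
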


A category with unbounded $C^{k}$-cycles as objects can be constructed in a similar way. A morphism of unbounded cycles $A\rightarrow(\mathpzc{E},D)\leftrightharpoons B$ and
$A'\rightarrow(\mathpzc{E}',D')\leftrightharpoons B'$ is given by a correspondence $A\rightarrow(\mathpzc{F},S,\nabla)\leftrightharpoons A'$ and a bimodule $B\rightarrow\mathpzc{F}'\leftrightharpoons B'$, where $B$ is
represented by compact operators. The bounded transform functor then takes values in the \emph{morphism category} $KK^{2}$.\newline

Furthermore, we would like to note that the category of spectral triples constructed is a 2-category. A morphism of morphisms $f:(\mathpzc{E},D,\nabla)\rightarrow (\mathpzc{E}',D',\nabla ')$ is given by an element
$F\in\Hom^{*}_{\mathcal{B}_{k}}(E^{k},F^{k})$, inducing morphisms
\[\mathfrak{G}(D_{i})^{i}\rightarrow \mathfrak{G}(D'_{i})^{i},\]
commuting with the left $\mathcal{A}_{i}$-module structure, intertwining the connections and the operators.\newline

The external product of correspondences is defined in the expected way:
\[(\mathpzc{E},D,\nabla)\otimes(\mathpzc{E}',D'\nabla '):=(\mathpzc{E}\minotimes\mathpzc{E}',D\minotimes 1+1\minotimes D',\nabla\minotimes 1+1\minotimes\nabla).\]
In this way, $\mathfrak{Cor}$ becomes a symmetric monoidal category.

\appendix
\setcounter{equation}{0}
\renewcommand{\thedefinition}{\Alph{section}.\arabic{definition}}
\renewcommand{\thetheorem}{\Alph{section}.\arabic{theorem}}
\renewcommand{\thecorollary}{\Alph{section}.\arabic{corollary}}
\renewcommand{\theexample}{\Alph{section}.\arabic{example}}
\renewcommand{\thelemma}{\Alph{section}.\arabic{lemma}}
\renewcommand{\theproposition}{\Alph{section}.\arabic{proposition}}
\renewcommand{\theequation}{\Alph{section}.\arabic{equation}}
\renewcommand{\theremark}{\Alph{section}.\arabic{remark}}
\section{Smoothness and regularity}

 Recall that  a spectral triple $(A,\mathcal{H},D)$ is \emph{regular} \cite{Conspec} if there is a dense subalgebra $\mathscr{A}\subset A$ such that
$\mathscr{A}$ and $[D,\mathscr{A}]$ are in $\mathfrak{Dom}^{\infty}\textnormal{ad}|D|$. We now proceed to show that the notion of smoothness introduced above is weaker than regularity.\newline\newline
For a regular bimodule we introduce representations $\pi_{i}':\mathscr{A}\rightarrow M_{2^{i}}(\Endst_{B}(\mathpzc{E}))$ inductively by setting $\pi_{0}'(a):=a$ and
\[\pi'_{i+1}(a):=\begin{pmatrix}\pi_{i}'(a) & 0\\
[|D|,\pi_{i}'(a)] & \pi_{i}'(a)\end{pmatrix}.\]
Subsequently, define representations $\theta_{i}':\mathscr{A}\rightarrow M_{2^{i}}(\Endst_{B}(\mathpzc{E}))$ by
\[\theta_{i}'(a):=p_{[i]}^{|D|}\pi_{i}'(a)p_{[i]}^{|D|}+v_{[i]} p_{[i]}^{|D|}v_{[i]}^{*}\gamma^{i}\pi'_{i}(a)\gamma^{i} v_{[i]}p_{[i]}^{|D|}v_{[i]}^{*}.\]
Here we use $\gamma$ to denote the usual diagonal grading on $\bigoplus_{j=0}^{2^{i}}\mathpzc{E}$. Notice that for $i$ even, the $\gamma$'s disappear from the formula. Both the $\pi'_{i}$ and $\theta'_{i}$ are graded representations for the diagonal grading, i.e. $\pi'_{i}(\hat{\gamma}(a))=\gamma\pi'_{i}(a)\gamma$, because $|D|$ is even.
\begin{lemma}\label{dom} Let $\mathpzc{E}$ be an $(A,B)$-bimodule, $D$ a selfadjoint regular operator in $\mathpzc{E}$. For all $i$ there exist unitaries $u_{i}$ such that $u_{i}\theta^{D}_{i}u_{i}^{*}=\theta_{i}'$. In particular \[\mathfrak{Dom}\theta_{i}^{D}=\mathfrak\Dom\theta'_{i}.\]
\end{lemma}
\begin{proof}The operator
\[U_{i}:=\begin{pmatrix}(1+|D|D)\mathfrak{r}(D)^{2}&(D-|D|) \mathfrak{r}(D)^{2} \\
(|D|-D)\mathfrak{r}(D)^{2}&(1+|D|D)\mathfrak{r}(D)^{2}\end{pmatrix}\in M_{2^{i+1}}(\Endst_{B}(\mathpzc{E})),\]
is unitary and maps $\mathfrak{G}(D)$ to $\mathfrak{G}(|D|)$. Moreover it commutes with both $D$ and $v_{i}$ and intertwines the Woronowicz projections:
\begin{equation}U_{i}p^{D}_{i}U^{*}_{i}=p_{i}^{|D|}.\end{equation}\label{pro}
Set $u_{1}:=U_{1}$, and inductively define
\[u_{i+1}:=\begin{pmatrix} u_{i} & 0\\0& u_{i}\end{pmatrix} U_{i},\]
so that $u_{i+1}p^{D}_{[i+1]}u_{i+1}^{*}=p^{|D|}_{[i+1]}$.
The $u_{i}$ intertwine the $\theta_{i}$'s:
\begin{equation}\label{theta} u_{i}\theta_{i}^{D}(a)u_{i}^{*}=\theta'_{i}(a).\end{equation}
To see this, note that $\theta_{i}=p_{[i]}\pi_{i} p_{[i]} +v_{[i]} p_{[i]}v_{[i]}^{*}\pi_{i}v_{[i]}p_{[i]}v_{[i]}^{*}$, and that it is clear that \[u p^{D}\pi^{D}_{1}p^{D}u^{*}=p^{|D|}\pi^{|D|}_{1}p^{|D|}.\]
Then
\[\begin{split}up^{D\perp}\pi_{1}^{D}(a)p^{D\perp}u^{*}&=uvp^{D}v^{*}\pi_{1}^{D}(a)vp^{D}v^{*}u^{*}\\
&=uvp^{D}\gamma_{1}\pi_{1}^{D}(a^{*})^{*}\gamma_{1}p^{D}v^{*}u^{*}\\
&=uvp^{D}\pi_{1}^{D}(\hat{\gamma}(a^{*}))^{*}p^{D}v^{*}u^{*}\\
&=vp^{|D|}\pi_{1}^{|D|}(\hat{\gamma}(a^{*}))^{*}p^{|D|}v^{*}\\
&=p^{|D|\perp}\gamma\pi_{1}^{|D|}(a)\gamma p^{|D|\perp}.\end{split}\]
So \ref{theta} holds for $i=1$. Suppose that \ref{theta} holds for $i$. Then since
\[U_{i+1}p^{D}_{i+1}p^{D}_{[i]}\pi^{D}_{i+1}(a)p^{D}_{i+1}p^{D}_{i}U_{i+1}^{*}=p^{|D|}_{i+1}p^{D}_{[i]}\begin{pmatrix}\theta_{i}(a) & 0\\ [|D|,\theta_{i}(a)] &\theta_{i}(a)\end{pmatrix}p_{[i]}^{D} p^{|D|}_{i+1},\]
and $p^{|D|}_{i+1}$ commutes with $\begin{pmatrix} u_{i} & 0 \\ 0 & u_{i}\end{pmatrix}$, it follows that
\[\begin{split}u_{i+1}p^{D}_{[i+1]}\pi^{D}_{i+1}(a)p^{D}_{[i+1]}u_{i+1}^{*} & =p^{|D|}_{i+1}\begin{pmatrix}u_{i}p^{D}_{[i]}\theta_{i}(a)p^{D}_{[i]}u_{i}^{*} & 0\\ [|D|,u_{i}p^{D}_{[i]}\theta_{i}(a)p^{D}_{[i]}u_{i}^{*} ] & u_{i}p^{D}_{[i]}\theta_{i}p^{D}_{[i]}(a)u_{i}^{*}\end{pmatrix} p^{|D|}_{i+1}\\
&=p^{|D|}_{i+1}p^{|D|}_{[i]}\begin{pmatrix}\theta'_{i}(a) & 0\\ [|D|,\theta'_{i}] & \theta'_{i}(a)\end{pmatrix}p^{|D|}_{[i]}p^{|D|}_{i+1}\\
&=p^{|D|}_{[i+1]}\pi'_{i+1}(a)p^{|D|}_{[i+1]}.\end{split}\]
Using either \ref{inveven} or \ref{invodd} and the fact that $u_{i}$ and $v_{[i]}$ commute, one obtains that
\[\begin{split}u_{i+1}v_{[i+1]}p^{D}_{[i+1]}v^{*}_{[i+1]}\pi_{i+1}^{D}(a)v_{[i+1]}p^{D}_{[i+1]} & v_{[i+1]}^{*}u_{i+1}^{*}\\ &=v_{[i+1]}p^{|D|}_{i+1}v_{[i+1]}^{*}\gamma^{i}\pi'_{i}(a)\gamma^{i} v_{[i+1]}p^{|D|}_{i+1}v^{*}_{[i+1]},\end{split}\]
in the same way as for $i=1$. Thus, $u_{i+1}\theta_{i+1}^{D}u_{i+1}^{*}=\theta'_{i+1}$.
\end{proof}

\begin{theorem} Let $(\mathpzc{E},D)$ be a regular unbounded $(A,B)$-bimodule. Then $(\mathpzc{E},D)$ is smooth.
\end{theorem}
\begin{proof} We will show that $\mathscr{A}\subset\mathcal{A}_{n}$ for all $n$. By definition, $\mathscr{A}\subset\mathcal{A}_{1}$, so suppose $\mathscr{A}\subset \mathcal{A}_{n}$. Then $\theta_{n}(a)$ is well defined, and we have to show that $[D,\theta_{n}(a)]$ extends to an adjointable operator. From lemma \ref{dom} it follows that for $a\in\mathscr{A}$,
\[\begin{split} [D,\theta_{n}(a)]&=u_{n}[D,\theta'_{n}(a)]u_{n}^{*}\\
&=u_{n}(p_{[n]}[D,\pi'_{n}(a)]p_{[n]}+v_{[n]}p_{[n]}v_{[n]}^{*}[D,\gamma^{n}\pi'_{n}(a)\gamma^{n}]v_{[n]}p_{[n]}v_{[n]}^{*})u_{n}^{*}\\
&=u_{n}(p_{[n]}[D,\pi'_{n}(a)]p_{[n]}+(-1)^{n}v_{[n]}p_{[n]}v_{[n]}^{*}\gamma^{n}[D,\pi'_{n}(a)]\gamma^{n}v_{[n]}p_{[n]}v_{[n]}^{*})u_{n}^{*}.\end{split}\]
Since $(\mathpzc{E},D)$ is regular, 
\[[D,\mathscr{A}]\subset\mathfrak{Dom}(\textnormal{ad }|D|)^{n},\]
which is the same as saying that 
\[(\textnormal{ad }|D|)^{n}(\mathscr{A})\subset \mathfrak{Dom}(\textnormal{ad } D).\]
Therefore we have that $[D,\pi'_{n}(a)]\in M_{2^{n}}(\Endst_{B}(\mathpzc{E}))$ for $a\in\mathscr{A}$. It follows that $\mathscr{A}\subset\mathcal{A}_{n+1}$ as desired.
\end{proof}

\section{Nonunital $C^{k}$-algebras}
In this appendix we describe a principle to reduce the defining representation of a $C^{k}$ algebra in $\Sob_{k}(D)$ to the essential case. Note that a homomorphism $\pi:\mathcal{A}\rightarrow\mathcal{B}$ between operator algebras is \emph{essential} when $\pi(\mathcal{A})\mathcal{B}\pi(\mathcal{A})$ is dense in $\mathcal{B}$. In that case $\pi$ extends to a map $\mathscr{M}(\mathcal{A})\rightarrow\mathscr{M}(\mathcal{B})$ (see \cite{Blechbook}). When $\mathcal{B}$ is unital and $\mathcal{A}$ is cb-isomorphic to $\pi(\mathcal{A})$, we have
\[\mathscr{M}(\mathcal{A})\cong\{T\in \mathcal{B}:T\pi(\mathcal{A}),\pi(\mathcal{A})T\subset\pi(\mathcal{A})\}.\]
In what follows,  we will use some well known properties of the strong and weak topologies on the algebra of bounded operators on a separable infinite dimensional Hilbert space.\newline 

Recall that, for a Hilbert space $\mathpzc{H}$, the \emph{strong operator topology} on $B(\mathpzc{H})$ is the topology of pointwise norm convergence: $T_{i}\rightarrow T$ strongly if $T_{i}\xi\rightarrow T\xi$ in norm for all $\xi\in\mathpzc{H}$. The \emph{weak operator topology} is the weakest topology that makes the functionals $T\mapsto\langle T\xi,\eta\rangle$ continuous, so $T_{i}\rightarrow T$ weakly when $\langle T_{i}\xi,\eta\rangle\rightarrow \langle T\xi,\eta\rangle$ for all $\xi,\eta\in\mathpzc{H}$. \newline 

These topologies are complete on the closed unit ball of $B(\mathpzc{H})$. Hence a sequence $T_{i}$ with $\sup_{i}\|T_{i}\|\leq C$ that is strongly or weakly  Cauchy has a limit in $B(\mathpzc{H})$ in the respective topologies. Moreover, operator multiplication is separately continuous for these topologies: if $T_{i}\rightarrow T$ strongly or weakly, then $ST_{i}\rightarrow ST, T_{i}S\rightarrow TS$ strongly or weakly, respectively. 
\begin{lemma}\label{ws} Let $a_{n},b\in\Sob_{i}(D)$ be a sequence such that $\pi_{i}(a_{n})\rightarrow\pi_{i}(b)$ strongly resp. weakly. Then $\theta_{i}(a_{n})\rightarrow\theta_{i}(b)$ strongly resp. weakly.
\end{lemma}
\begin{proof} We prove the statement for the weak topology: According to \eqref{trep} we have
\[\langle\theta_{i}(a_{n})\xi,\eta\rangle=\langle p_{i}p_{i-1}\pi_{i}(a_{n})p_{i-1}p_{i}\xi,\eta\rangle +\langle p_{i}^{\perp}p_{i-1}^{\perp}\pi_{i}(a_{n})p_{i-1}p_{i}\xi,\eta\rangle,\]
from which the statement is immediate.
\end{proof}
\begin{proposition}\label{projprop}Let $B$ be a nonunital $C^{k}$ algebra with spectral triple $(\mathpzc{H},D)$. Let $p$ be the projection onto the essential subspace $\overline{B\mathpzc{H}}$. Then $p\in\Sob_{k}(D)$ and $\pi_{k}(b)=\pi_{k}(p)\pi_{k}(a)\pi_{k}(p)$.
\end{proposition}
\begin{proof} When $B$ is unital there is nothing to prove. A nonunital $C^{k}$-algebra $B$ by definition posesses an even, increasing, contractive approximate unit $u_{n}$, which is completely bounded in $\mathcal{B}_{k}$. Denote by $p\in B(\mathpzc{H})$ the projection onto $\overline{B\mathpzc{H}}$, the essential subspace of $B$. It is well known that $u_{n}\rightarrow p$ in the weak operator topology on $\mathpzc{H}$. Since $u_{n}(1-p)\mathpzc{H}=u_{n}(B\mathpzc{H})^{\perp}=0$, and $u_{n}b\rightarrow b$ in norm, it follows that
\[(u_{n}-p)h=(u_{n}-p)(ph+(1-p)h)=(u_{n}-p)ph\rightarrow 0,\]
that is, $u_{n}\rightarrow p$ strongly.\newline
We will show that $p\in\Sob_{k}(D)$ by induction on $k$. Suppose that for $k-1$ we have shown that $p\in\Sob_{k-1}(D)$ and $\pi_{k-1}(u_{n})\rightarrow \pi_{k-1}(p)$ strongly. By lemma \ref{ws} we get that $\theta_{k-1}(u_{n})\rightarrow\theta_{k-1}(p)$ strongly. Then, we need to show that $\theta_{k-1}(p)\Dom D\subset\Dom D$ and $[D,\theta_{k-1}(p)]$ is bounded on $\Dom D$. To this end observe that $[D,\theta_{k-1}(u_{n})]$ is a uniformly bounded sequence of operators that is weakly Cauchy: 
\[\begin{split} \langle [D,\theta_{k-1}(u_{n})]\xi,\eta \rangle &=\langle \theta_{k-1}(u_{n})\xi, D\eta\rangle -\langle \theta_{k-1}(u_{n})D\xi,\eta\rangle \\ & \rightarrow \langle \theta_{k-1}(p)\xi,D\eta\rangle -\langle \theta_{k-1}(p)D\xi,\eta\rangle,\end{split}\]
for $\xi,\eta\in \Dom D$, which is dense. Therefore $\pi_{k}(u_{n})$ has a weak limit in $B(\bigoplus_{i=1}^{2^{k}}\mathpzc{H})$, which we denote by $q$. Denote by $p_{k}$ the projection onto $\overline{\pi_{k}(\mathcal{B}_{k})(\bigoplus_{i=1}^{2^{k}}\mathpzc{H})}$. Then since $\pi_{k}(u_{n})\pi_{k}(b)\rightarrow \pi_{k}(b)$ in norm, we find that $\pi_{k}(u_{n})\rightarrow 1$ strongly on $\im p_{k} $, and hence $qp_{k}=p_{k}$. On the other hand, taking weak limits we find \[p_{k}q=\lim_{n} p_{k}\pi_{k}(u_{n})=\lim_{n} \pi_{k}(u_{n})=q,\] and so \[\im p_{k}=\im q=\overline{\pi_{k}(\mathcal{B}_{k})(\bigoplus_{i=1}^{2^{k}}\mathpzc{H})}.\] Therefore $q$ is an idempotent:
\[q^{2}h=\lim \pi_{k}(u_{n})qh=qh.\]
Applying the above procedure to $\pi_{k}(u_{n})^{*}$ yields $\im q^{*}=\overline{\pi_{k}(\mathcal{B}_{k})^{*}(\bigoplus_{i=1}^{2^{k}}\mathpzc{H})}$ and thus 
\[\im(1-q)=(\im q^{*})^{\perp}=\overline{\pi_{k}(\mathcal{B}_{k})^{*}(\bigoplus_{i=1}^{2^{k}}\mathpzc{H})}^{\perp},\] from which we see that
\[\langle \pi_{k}(b)(1-q)\xi,\eta\rangle=\langle (1-q)\xi,\pi_{k}(b)^{*}\eta\rangle=0,\] for all $\xi,\eta$ so $\pi_{k}(b)(1-q)=0$. Consequently \[\pi_{k}(u_{n})\xi=\pi_{k}(u_{n})q\xi\rightarrow q\xi,\] i.e. $\pi_{k}(u_{n})\rightarrow q$ strongly. Taking $\xi=\begin{pmatrix}
\eta \\ D\eta\end{pmatrix}$, we find
\[\pi_{k}(u_{n})\begin{pmatrix}
\eta \\ D\eta\end{pmatrix}=\begin{pmatrix}
\theta_{k-1}(u_{n})\eta \\ D\theta_{k-1}(u_{n})\eta\end{pmatrix}\rightarrow \begin{pmatrix}
\theta_{k}(p)\eta \\ D\theta_{k}(p)\eta
\end{pmatrix},\]  so $\theta_{k-1}(p)$ preserves $\Dom D$ and $[D,\theta_{k-1}(p)]$ is a densely defined operator that is the weak limit of the bounded sequence $[D,\theta_{k-1}(u_{n})]$, so it must be bounded. Hence $q=\pi_{k}(p)$ and  $\pi_{k}(u_{n})\rightarrow \pi_{k}(p)$ in the strong topology, completing the induction step.
\end{proof}
\begin{theorem}\label{multipliersob} The inclusion $\mathcal{B}_{k}\subset\Sob_{k}(D)$ naturally extends to an inclusion  $\mathscr{M}(\mathcal{B}_{k})\subset\Sob_{k}(D)$ of involutive operator algebras. In particular $\mathscr{M}(\mathcal{B}_{k})$ is spectral invariant in its $C^{*}$-closure.
\end{theorem}
\begin{proof} Let $p$ be the projection on the essential subspace $\overline{B\mathpzc{H}}$. The inclusion $\mathcal{B}_{k}\subset\Sob_{k}(D)$ contracts to an inclusion $\mathcal{B}_{k}\subset \pi_{k}(p)\Sob_{k}(D)\pi_{k}(p)$. This inclusion is essential, and the operator algebra $\pi_{k}(p)\Sob_{k}(D)\pi_{k}(p)$ is unital. Therefore \[\mathscr{M}(\mathcal{B}_{k})=\{a\in \pi_{k}(p)\Sob_{k}(D)\pi_{k}(p): ab,ba\in\mathcal{B}_{k}\},\]
which is by definition a subalgebra of $\Sob_{k}(D)$.
\end{proof}
\begin{theorem}\label{essrep} Let $\mathpzc{E}\lrh B$ be a $C^{k}$-module over the $C^{k}$-algebra $B$, with spectral tiple $(\mathpzc{H},D)$. The map
\[\begin{split}\Endst_{\mathcal{B}_{k}}(E^{k})&\rightarrow \bigoplus_{i=0}^{k}B(E^{k}\tildeotimes_{\mathcal{B}_{k}} \bigoplus_{j=1}^{2^{i}}\mathpzc{H}) \\
T&\mapsto T\otimes\pi_{[k]}(p),\end{split}\]
is a cb-isomorphism onto its image. The same holds for $\K_{\mathcal{B}_{k}}(E^{k})$
\end{theorem}
\begin{proof} By proposition \ref{multipliersob}, the projection $p$ can be used to turn \[\pi_{[k]}:\mathcal{B}_{k}\rightarrow \bigoplus_{i=0}^{k}B(\bigoplus_{j=1}^{2^{i}}\mathpzc{H}),\](cf.\eqref{directpi}) into an essential representation on the Hilbert space 
\[\pi_{[k]}(p)\bigoplus_{i=0}^{k}(\bigoplus_{j=1}^{2^{i}}\mathpzc{H}).\] Realizing $E^{k}$ cb-isomorphically as a submodule of $\mathpzc{H}_{\mathcal{B}_{k}}$ and then using  \cite{Blech2}, theorem 6.10, we find that $T\mapsto T\otimes 1$ is a cb isomorphism for tensoring with this essential representation. Since $1=\pi_{[k]}(1)=\pi_{[k]}(p)+\pi_{[k]}(1-p)$ and $\pi_{[k]}(b)\pi_{[k]}(1-p)=0$ by proposition \ref{multipliersob}, this extends to a cb isomorphism $T\mapsto T\otimes\pi_{[k]}(p)$ on the full tensor product.
\end{proof}

\end{document}